\newtheorem{theo}{Theorem}
\newtheorem{lemma}{Lemma}
\newtheorem{prop}{Proposition}
\newtheorem{cor}{Corollary}
\newtheorem{rem}{Remark}
\newtheorem{defi}{Definition}
\DeclareMathOperator\supp{Supp}
\begin{document}
  


 \title{Degenerate Complex Monge-Amp\`ere Equation-Part I}


 \maketitle

\vspace{1cm}

 Alireza Bahraini
\footnote{bahraini@sharif.edu}

Dept. of Mathematical Sciences,

Sharif University of Technology,

P.O.Box 11365-9415, Tehran, Iran.

\vspace{1cm}

\begin{abstract}  
We describe the behavior of the  singularities of  solutions to degenerate complex
Monge-Amp\`ere equations on K\"ahler manifolds. This fundamental question had remained unsolved since the pioneering  paper by S-T Yau \cite{y} on this subject.
\end{abstract} 
\section{Introduction}

Complex Monge-Amp\`ere equation   were studied to resolve a conjecture of Calabi on the existence of canonical metrics over   K\"ahler manifolds. The problem was first treated in a paper by S-T Yau   and T. Aubin in 1978  \cite{y}, \cite{aub}.
Solution to complex Monge Amp\`ere equations has profound consequences in a wide range of areas in mathematics and in  physics. From the study  of K\"ahler-Einstein
 metrics  \cite{gabor} to  the geometry of the space of K\"ahler metrics on a compact manifold  \cite{Mab}.   
From  complex algebraic geometry \cite{dem}  to string theory,
  numerous significant progress has  arisen from the    solutions to complex Monge-Amp\`ere equations.\cite{syz}
  
One of the most general results in this regard has been proved by S. Kolodziej (\cite{kol}), who has shown that on a compact K\"ahler manifold $(M,\omega)$ the Monge-Amp\`ere equation
\[
(\omega +\partial\bar{\partial}\phi)^n =F\omega^n 
\]

admits a continuous  solution for $\phi$ if $F\in L^1(M)$  satisfies $\int F\omega^n =\int_M \omega^n =Vol (M)$.
The case where $F$ is  positive and  $C^k$  or the case where $F$ is $C^k$ with zeros along a smooth divisor had already been 
treated in \cite{y}.
 The  case where $F$ has zeros is known as Degenerate Complex Monge-Amp\`ere  (DCMA) equation. 
One of the most fundamental open problems  with profound  geometric consequences  consists of identifying  the behavior    of the solutions to DCMA equation  near its singular locus. 

In this paper we will   resolve this longstanding open problem and we will prove the following theorem
\begin{theo}
Assume that $(X,\omega)$ is a compact K\"ahler manifold  of complex dimension $n$.
 Let $D\subset X$ be a smooth divisor and $S$ be a holomorphic section of $L:=[D]$ vanishing along $D$. Let $G$ be $C^{k} (X)$ with $k\geq 3$ and $\int_X \exp \{G\} |S|^2 \omega^n =Vol (X)$.
 Then there exists  a function $\phi$ in $C^{k+1, \alpha} (X) $ for $0\leq \alpha <\frac{1}{2}$ such that 
$\omega':=\omega+\partial\bar{\partial}\phi$ defines a K\"ahler metric on $X\setminus D$ and $\omega'|_{D}$ is a K\"ahler metric on $D$ and 
\begin{equation}\label{maineq}
{\omega ' }^ n=|S|^2 \exp\{G\} \omega ^n 
\end{equation}

\end{theo}

To prove the theorem we start by  resolving    in section \ref{sec2}, the  (analytic)  DCMA  equation    locally in a neighborhood of a divisor . In Section \ref{sec3}   we construct global models for K\"ahler 
metrics that arise as solutions to DCMA
equations. Section \ref{sec4} is devoted to the development of a Schauder theorem for Laplacian type operators associated to our degenerate  metric models.
The discussion about second order estimates corresponding to the continuity method is carried out in section \ref{sec5}. Third order estimate is treated in two major steps in section \ref{III}.
Finally we conclude in section \ref{sec7}. 

This is the first part of a series of at least four parts on the study of these singularities and its consequences. In part II we have improved the so called Bogomolov–Miyaoka–Yau inequality for complex surfaces of general type. In part III which is under progress we study the singularities Hermitian Yang Mills equations. 
\\
\\

\textbf{Acknowledgment} The authors interest to this problem has its roots in his PhD thesis  \cite{bah} where he has  studied   a class of singular complex manifolds arising
from singularities of canonical K\"ahler metrics.  This work was  also motivated and proposed  as a conjecture to the  author in 2017 by prof. S-T Yau. The author  is grateful to professor Yau for several helpful discussions. 

 \section{Local Construction of Solutions to Degenerate Monge Amp\`ere Equations with Analytic Coefficients}\label{sec2}

Consider   a complex K\"ahler manifold  $(X,\omega)$ of  complex dimension $n$. Assume that $D\subset X$ is a smooth divisor in $X$ and let  $S$ denote the  holomorphic section of $L:=[D]$ vanishing  along $D$.  
Assuming $\omega$  to be  real analytic, we take a smooth fiberation in a neighborhood of $D$ such that all of its fibers are holomorphic discs and are orthogonal to $D$ with respect 
to the K\"ahler metric $\omega$.  Fixing this fiberation along with  a transversally flat connection on $L$  allow us to talk about global Taylor series expansion in terms of $S$  in this neighborhood. We are then able to decompose  
the K\"ahler form $\omega$ (and more generally any real analytic differential form) in this neighborhood into component induced by  the fiberation and the divisor $D$ in
 the form $\omega=\omega_{S\bar{T}}+ \omega_{T\bar{S}}+ \omega_{T\bar{T}}+\omega_{S\bar{S}}$.  The holomorphic (and antiholomorphic) part of a differential  form  in its
   Taylor series expansion in terms of $S$ and $\bar{S}$  are defined.  (see definition (\ref{holpart})). 
The definitions only depend on the choice of the 
fiberation  and a transversally flat hermitian metric on the line bundle $L$ and  the divisor $D$.

\begin{theo}\label{theoloc}
Let $\omega$ be a real analytic K\"ahler metric on $X$. Let $F:U\rightarrow \mathbb{R}$ be a real  analytic map in a neighborhood $U$ of $D$. Then there exists a unique  solution 
$\Phi: V\rightarrow \mathbb{R}$ to the  degenerate complex  Monge Amp\`ere equation

\begin{equation}\label{degmongeq}
(\omega+\partial\bar{\partial}\Phi ) ^n = F\omega ^n
\end{equation}

satisfying the two properties: 
\\

1. \hspace{0.5cm}  $((\omega+\partial\bar{\partial}\Phi)_{S\bar{T}})_{hol}= (\omega_{S\bar{T}})_{hol}|_{V}$
\\

2.\hspace{0.5cm} $((\omega+\partial\bar{\partial}\Phi)_{T\bar{T}})_{hol}= (\omega_{T\bar{T}})_{hol}|_{V}$
\\

Here $V$ is a small neighborhood of $D$.

  In the case where $F$
is positive outside $D$ which means that $F= e^{\tilde{F}} |S|^k$ for some analytic map $\tilde{F}:U\rightarrow \mathbb{R}$ and for some real  positive integer $k$ the solution $\omega+\partial\bar\partial{\Phi}$ is positive definite in some deleted neighborhood $V_0\setminus D$ of $D$.
\end{theo}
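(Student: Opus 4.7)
Real-analyticity of $\omega$, $F$, and the fibration makes it natural to seek $\Phi$ as a convergent Taylor series in the transverse variables $(S,\bar S)$:
\[
\Phi(T,\bar T,S,\bar S)=\sum_{p,q\geq 0}\Phi_{p,q}(T,\bar T)\,S^p\bar S^q,
\]
with coefficients that are real-analytic on $D$; the monomials $S^p\bar S^q$ are interpreted as scalars using the transversally flat Hermitian metric on $L$. The plan has three steps: (a) determine the coefficients recursively from \eqref{degmongeq} and the two normalizations, (b) prove convergence by a Cauchy--Kowalewskaya majorant argument, and (c) extract positivity in the degenerate case from the leading term of the resulting series.

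For step (a), rewrite \eqref{degmongeq} as
\[
\log\det\!\bigl(\delta^i_k+g^{i\bar\jmath}\Phi_{k\bar\jmath}\bigr)=\log F
\]
and expand both sides in $(S,\bar S)$. At bi-degree $(p,q)$ with $p,q\ge 1$ the equation is algebraic in $\Phi_{p,q}$, with principal coefficient proportional to $pq\cdot g^{S\bar S}$ (strictly positive near $D$) and right-hand side depending only on coefficients of strictly lower bi-degree. The mixed coefficients are therefore uniquely and recursively determined. The pure tails $\Phi_{p,0}$ and $\Phi_{0,q}$ are invisible to the equation: they form the holomorphic gauge. A direct unpacking of the definitions in the excerpt shows that normalization (1) kills the $\bar T$-derivatives of the $\Phi_{p,0}$ along $D$, while (2) kills their tangential Hessian; combined with reality of $\Phi$, these force $\Phi_{p,0}=\Phi_{0,q}=0$ for $p,q\ge 1$ modulo an irrelevant constant. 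This yields both the formal solution and the uniqueness clause.

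For step (b), I would use the classical majorant method: real-analyticity of the data provides a common geometric majorant for $g_{i\bar\jmath}$, $g^{i\bar\jmath}$, the fibration, and $F$; induction on $p+q$ shows that $\Phi_{p,q}$ is controlled by a geometric majorant of the same type, possibly after shrinking $V$. The principal obstacle is precisely this convergence estimate --- the recursion couples all bi-degrees through the expansion of $\log\det$, and nonlinear combinations must be shown to preserve the majorant. The simplification that makes the induction close is that the principal part on each bi-degree is multiplication by the nonvanishing factor $pq\cdot g^{S\bar S}$ rather than a differential operator, so no loss of derivatives or regularity occurs at any step.

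For step (c), when $F=e^{\tilde F}|S|^k$ the right-hand side of the recursion vanishes to order $k$ in $(S,\bar S)$, so the first nonzero coefficient of $\Phi$ occurs at bi-degree $(\tfrac{k}{2}+1,\tfrac{k}{2}+1)$ with a strictly positive profile on $D$ coming from inverting $g^{S\bar S}$ and evaluating $e^{\tilde F}$. A direct block computation of $\omega+\partial\bar\partial\Phi$ then shows: the transverse $S\bar S$-entry is of order $|S|^k$ and strictly positive away from $D$; the tangential $T\bar T$-block equals $\omega|_D+O(|S|)$ and is therefore positive definite for small $|S|$; the mixed blocks are $O(|S|^{k+1})$. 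A Schur-complement argument then yields positive definiteness of $\omega+\partial\bar\partial\Phi$ on some deleted neighborhood $V_0\setminus D$.
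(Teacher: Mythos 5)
Your overall strategy is the same as the paper's: expand the perturbation in powers of $S,\bar S$ with transversally parallel coefficients, fix the pure-in-$S$ (gauge) part by conditions (1)--(2) (this is the content of the paper's Lemma \ref{foot}), determine the mixed coefficients recursively from the Monge--Amp\`ere equation by dividing by a nonvanishing zeroth-order factor (the paper's recursion \eqref{bkl}, where the factor is $(k+1)(l+1)\det G_0$ with $G_0=\omega'|_D=\omega|_D$, rather than $pq\,g^{S\bar S}$ of the background), prove convergence by majorant-type estimates, and read off positivity from the leading terms. Two caveats on the first half. First, the reduction to $\log\det(\cdot)=\log F$ cannot be performed in the degenerate case of interest: when $F=e^{\tilde F}|S|^{k}$ vanishes along $D$, $\log F$ has no $(S,\bar S)$-expansion, so you must run the recursion on the determinant equation itself, exactly as the paper does via the cofactor identity \eqref{matma}; this is a repairable slip. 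Second, your remark that ``no loss of derivatives occurs at any step'' understates the real difficulty: each step of the recursion consumes tangential derivatives of previously computed coefficients ($\bar\partial B_{r+1,\bar s}$, $\partial B_{p,\overline{q+1}}$ in \eqref{bkl}), so the convergence induction must control all tangential derivatives with factorial weights simultaneously --- this is precisely the paper's induction hypothesis \eqref{indh} and the ensuing difference-inequality analysis. A classical majorant scheme can do this, but it is the substance of the proof, not a formality.

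The genuine gap is in step (c). You assert that the mixed $S\bar T$ blocks of $\omega+\partial\bar\partial\Phi$ are $O(|S|^{k+1})$, and the Schur-complement argument hinges on this. But the background contribution $\omega_{S\bar T}$ vanishes only along $D$ (the fibers are $\omega$-orthogonal to $D$ only at points of $D$), so generically $\omega'_{S\bar T}=O(|S|)$ and no better. In the case relevant to the main theorem, $F=e^{\tilde F}|S|^{2}$, the transverse entry is of size $|S|^{2}$, so the Schur test $|\omega'_{S\bar T}|^{2}/\omega'_{S\bar S}=O(|S|^{2})/O(|S|^{2})=O(1)$ is borderline and positivity does not follow from the orders of vanishing you state. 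The paper closes exactly this point with extra work: the canonical-coordinates Lemma \ref{rela} (appendix) together with the consequences of the recursion ($B_{1,\bar 1}(p)=0$, $\bar\partial B_{1,\bar 0}|_D(p)=0$, see Remark \ref{rem1}) give, in coordinates adapted to each fiber, $g'_{w_i\bar z}=O(|z|^{3})$ and $g'_{z\bar z}=|z|^{2}+O(|z|^{3})$ along that fiber, and only with this improved vanishing (uniform in the base point by compactness of $D$) does positive definiteness on a deleted neighborhood follow. To complete your proposal you must either prove this higher-order vanishing of the mixed block or perform the analogous fiberwise coordinate normalization.
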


In order to prove the above theorem we  show the existence and uniqueness of a solution locally  in a neighborhood of a point $p\in D$.    It is then clear that    local solutions glue together to 
form a global solution in a neighborhood of $D$. 
\\
\\

The idea of expansion with respect to $S$ was first communicated to the author by Prof. S-T Yau.
It has also been widely used in super-symmetric field theories in physics literature.
 
\subsection{Fibration over $D$ and transversally flat connection}\label{localmodel}
 
\vspace{1cm}

Our aim in  this section is to  construct an analytic  fiberation  on a small neighborhood $U$ of  $D$  as well as a  transversally flat connection over $L$. 
 Let $U$ be an open neighborhood of $D$ in $X$ and assume that $\pi_U : U\rightarrow D$ is a smooth fiberation   such that all the fibers   $\pi_U ^{-1} (\{p\})$, for $p\in D$ are  holomorphic disc  orthogonal to $D$ with respect to the K\"ahler metric $\omega$. 
We also assume that the boundary  of $U$, denoted by $\partial U$, is a smooth  submanifold of codimension one  in  $X$ and for all $p\in D$,  $\partial (\pi_U ^{-1} (\{p\}))$ is diffeomorphic to a circle.  
 We set
\[
N_p:=\pi_U ^{-1} (\{p\}), \hspace{1cm} \text{ for } p\in D
\]

\begin{lemma}
There exists a hermitian metric $||_{\mathfrak{h}}$ on $L|_{U}$ which is flat along all the fibers of $\pi_U$ by which we mean the restriction  $L|_{N_p}$ has zero curvature with respect $||_\mathfrak{h}$ for all $p\in D$.
\end{lemma}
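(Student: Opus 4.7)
The strategy is to start from an arbitrary Hermitian metric on $L|_U$ and modify it by a conformal factor chosen so that the new metric is flat along every fiber; the equation this conformal factor must satisfy is a fiberwise Poisson equation, which we will solve by a fiberwise Dirichlet problem.

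First I would fix any smooth (real analytic when $\omega$ is) Hermitian metric $h_0$ on $L|_U$; such a metric exists by the usual partition of unity argument. In a local holomorphic frame $e$ for $L$, write $h_0(e,e)=e^{-\phi_0}$, so that the Chern curvature of $(L,h_0)$ is $\partial\bar\partial\phi_0$. If we seek the new metric in the form $h=e^{-f}h_0$ for a real function $f$ on $U$, then the curvature of $(L,h)$ becomes $\partial\bar\partial(\phi_0+f)$, and its restriction to a fiber $N_p$ is
\[
\bigl(\partial\bar\partial(\phi_0+f)\bigr)\big|_{N_p}=\frac{\partial^2 (\phi_0+f)|_{N_p}}{\partial w\,\partial\bar w}\,dw\wedge d\bar w,
\]
where $w$ is a holomorphic coordinate on the disc $N_p$. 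Therefore flatness along each fiber is equivalent to the Poisson equation
\[
\frac{\partial^2 (f|_{N_p})}{\partial w\,\partial\bar w}=-\frac{\partial^2(\phi_0|_{N_p})}{\partial w\,\partial\bar w}\qquad\text{on each }N_p.
\]

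Second, I would solve this equation on every fiber $N_p$ by imposing the Dirichlet boundary condition $f|_{\partial N_p}=0$. By the classical Dirichlet theory on the disc, for each $p$ there is a unique smooth solution $f_p$, obtainable through the Green's function of the disc; setting $f(q):=f_{\pi_U(q)}(q)$ then yields a globally defined function on $U$, and the metric $h:=e^{-f}h_0$ is flat along every fiber of $\pi_U$ by construction.

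The delicate step, which I expect to be the main obstacle, is to check that the fiberwise Dirichlet solutions assemble into a function $f$ of the required regularity on $U$, including across the divisor $D$ where the fiber coordinate $w$ vanishes. To handle this I would trivialize $\pi_U$ locally over $D$, turning the family of Dirichlet problems into a single problem on the standard unit disc with coefficients depending smoothly (respectively real analytically) on the base parameter $p\in D$; smoothness (respectively real analyticity) of $f$ in $p$ then follows from the corresponding dependence of the Green kernel and of the data $\phi_0|_{N_p}$. Combined with smoothness along the fibers given by elliptic regularity, this produces the desired global Hermitian metric $|\cdot|_{\mathfrak h}=e^{-f/2}|\cdot|_{h_0^{1/2}}$ flat along all the fibers of $\pi_U$.
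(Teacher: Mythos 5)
Your construction is correct, but it takes a genuinely different route from the paper. The paper does not modify a background metric: it invokes the fiberwise Riemann mapping theorem, choosing for each $p\in D$ a biholomorphism $\mathfrak{g}_p:N_p\to\mathbb{D}$ with $\mathfrak{g}_p(p)=0$, sets $\mathfrak{h}=\log|\mathfrak{g}_p|$ (well defined on $U$ because $\mathfrak{g}_p$ is unique up to rotation) and declares $|S|_{\mathfrak{h}}=e^{\mathfrak{h}}$; fiberwise flatness is then immediate from harmonicity of $\log|\mathfrak{g}_p|$, and the real work is the smooth extension of the metric across $D$, done by factoring $s\circ\mathfrak{g}^{-1}(x,z)=z\,g(x,z)$ with $g$ nonvanishing and fiberwise holomorphic, using that $S$ vanishes to first order along $D$. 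Your route --- conformally correcting an arbitrary $h_0$ by a fiberwise Dirichlet solution --- trades that extension argument (and the implicit smooth dependence of the Riemann maps on $p$) for parameter-dependent elliptic theory: since each fiber meets $D$ only at its center, there is in fact no degeneration at $D$ at all, and the only issue is smooth (resp.\ real analytic) dependence of the Dirichlet solutions on the base point, which your trivialization-plus-Green-kernel argument handles. Two points you should make explicit: the right-hand side of your fiberwise Poisson equation must be read as the restriction of the globally defined curvature form of $h_0$, not of the frame-dependent potential $\phi_0$, so that $f$ is globally defined (this is automatic, since a change of holomorphic frame alters $\phi_0$ by a function harmonic on each holomorphic fiber), and you are using that pullback under the holomorphic inclusion $N_p\hookrightarrow U$ commutes with $\partial\bar{\partial}$. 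What the paper's construction buys in exchange is a canonically normalized metric, with $|S|_{\mathfrak{h}}=|\mathfrak{g}_p|$ equal to $1$ on $\partial N_p$, which is the specific form of $\mathfrak{h}$ exploited later in the paper; your metric differs from it by a fiberwise harmonic factor and a choice of $h_0$, which is harmless for the lemma as stated.
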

\begin{proof}
 
Let $\mathfrak{g}_p: \bar{N}_p\rightarrow \bar{\mathbb{D}}$ be a diffeomorphism between the closure of $N_p$ and the closure of the unit disc $\mathbb{D}$ in $\mathbb{C}$. We assume that $\mathfrak{g}_p|_{N_p}: N_p \rightarrow \mathbb{D}$      is a  biholomorphism  and  $\mathfrak{g}_p (p )=0$.  The existence of $\mathfrak{g}_p$  is ensured by Riemann mapping theorem and  we know that it is uniquely determined upto a rotation.
 Since the normal bundle of $D$ in $X$ is nontrivial $\mathfrak{g}_p$ can not be globally  defined 
throughout the  neighborhood $U$ as a smooth function of $p$. Nevertheless  the map $\mathfrak{h}$ defined as
\begin{equation}\label{hfrak0}
\mathfrak{h}: U \rightarrow \mathbb{R}, \hspace{1cm} \mathfrak{h}(x)= \log |\mathfrak{g}_p (x)|
\end{equation}
 is  smooth and is  defined without ambiguity   on  $U$.
This is because $\mathfrak{g}_{p}$ is well defined upto a rotation.
If we set $\mathfrak{h}_p :=\mathfrak{h}|_{N_p}$ then since  $\mathfrak{g}_p$ is holomorphic we have
\begin{equation}\label{ddhfrak}
\partial \bar{\partial} \mathfrak{h}_p=0
\end{equation}

which means that $\mathfrak{h}_p$ is harmonic with respect to the induced conformal structure  on $\bar{N}_p$.

Let $S\in H^0 (X,L)$ be the  holomorphic section of the line bundle $L$ vanishing along $D$. We define a hermitian metric $||_{\mathfrak{h}}$ on $L|_{U\setminus D}$
by
\begin{equation}\label{hfrak}
|S(x)|_{\mathfrak{h}}= e^{\mathfrak{h}(x)}, \hspace{1cm} \text{ for } x\in U\setminus D
\end{equation}
  
We want to prove that the above hermitian metric has a smooth extension along the divisor $D$.

Let $U_1$ be an open subset of $X$ such that $U_1 =(\pi_U )^{-1} (U_0) $ where $U_0:=U_1\cap D$. Assume that there exists a smooth map $\mathfrak{g}_1 : U_1 \rightarrow \mathbb{C}$  such that for all $p\in U_1 \cap D$, $\mathfrak{g}_p:=\mathfrak{g}_1|_{N_p}$
is a biholomorphism between $N_p$ and $\mathbb{D}$ with $\mathfrak{g}_p (p)=0$.
 
We define  a diffeomorphism $\mathfrak{g}: U_1 \rightarrow (U_1\cap D)\times \mathbb{D}$ by setting $\mathfrak{g}=\pi_U\times \mathfrak{g}_1$. More precisely   $\mathfrak{g}(x)= (p, \mathfrak{g}_p (x))$ where  $p=\pi_U (x)$.  
If $\sigma :U_1\rightarrow L|_{U_1} $ is a nowhere vanishing holomorphic section of $L|_{U_1}$ then we have
\begin{equation}\label{es}
S|_{U_1}=s \sigma
\end{equation}
where $s:U_1\rightarrow \mathbb{C}$ is a  holomorphic map. In addition since $S$ has a simple zero along $D$ the map
\[
s\circ (\mathfrak{g}_p)^{-1}: \mathbb{D}\rightarrow \mathbb{C} 
\]

for all $p\in D $ is a holomrphic map with a simple zero at the origin. Consequently  for $s \circ \mathfrak{g}^{-1}: (U_{1}\cap D)\times  \mathbb{D}\rightarrow \mathbb{C}$  we have
\begin{equation}\label{s0f}
s \circ \mathfrak{g}^{-1} (x, z)= zg (x,z) \hspace{1cm}  x\in U_1 \cap D \text{ and } z\in \mathbb{D}
\end{equation}

where  $g:  (U_{1}\cap D)\times  \mathbb{D}\rightarrow \mathbb{C}$ is a smooth nowhere vanishing   application  and  for any fixed $x$
the map $z\rightarrow g(x,z)$ is holomorphic.

From the relation \ref{es} we have 
\begin{equation}\label{sog}
S\circ \mathfrak{g}^{-1} = (s \circ \mathfrak{g}^{-1})(\sigma \circ \mathfrak{g}^{-1})
\end{equation}
and from (\ref{hfrak0}) and (\ref{hfrak})  we obtain
\begin{equation}\label{sogvrun}
|S\circ \mathfrak{g}^{-1}(x,z)|_{\mathfrak{h}} =e^{\mathfrak{h}\circ \mathfrak{g}^{-1}(x,z)}=e^{\log |z| }=|z|
\end{equation}
according to (\ref{sog}) we  have
\[
|S\circ \mathfrak{g}^{-1}(x,z)|_{\mathfrak{h}}= |(s \circ \mathfrak{g}^{-1})(x,z)||(\sigma \circ \mathfrak{g}^{-1})(x,z)|_{\mathfrak{h}}
\]
therefore using (\ref{s0f}) and (\ref{sogvrun})  we get to 
\[
|\sigma \circ \mathfrak{g}^{-1} (x,z)|_{\mathfrak{h}}=\frac{|S\circ \mathfrak{g}^{-1}(x,z)|_{\mathfrak{h}}}{ |s \circ \mathfrak{g}^{-1}(x,z)|}=\frac{|z|}{|z||g(x,z)|}=\frac{1}{|g(x,z)|}
\]

as can be seen  $|\sigma_0 \circ \mathfrak{g}^{-1} (x,z)|_{\mathfrak{h}}$ has an smooth extension to $z=0$.
\end{proof}

\subsection{Transversally parallel  basis and decomposition of differential forms  }
The proof of the following lemma follows from flatness of the connection $\mathfrak{h}$ along the fibers of $\pi_U$:

\begin{lemma}\label{partr}
Let $\mathfrak{t}\in \Gamma (D, L|_D)$ be a smooth section of $L|_D$. Then there exists a unique  extension $\hat{\mathfrak{t}}\in \Gamma (U, L|_U)$, obtained
by parallel transport with respect to $||_{\mathfrak{h}}$ satisfying 
\[
\partial^{\nabla} \hat{\mathfrak{t}}|_{N_p}= \bar{\partial } \hat{\mathfrak{t}}|_{N_p}=0 \hspace{1cm} \text{ for all} \hspace{1cm} p\in D
\]

Here $\partial^{\nabla} $ denotes  the $(1,0)$-component of the  covariant derivative with respect to the Chern connection  $\nabla$ associated with the hermitian metric $\mathfrak{h}$. 
\end{lemma}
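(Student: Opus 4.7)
The key fact is that the hermitian metric $||_{\mathfrak{h}}$ constructed in the previous lemma has vanishing curvature along every fiber $N_p$, so the Chern connection $\nabla$ restricts to a flat connection on each $L|_{N_p}$. Since each $N_p$ is biholomorphic to the disc $\mathbb{D}$ and in particular simply connected, parallel transport along any path in $N_p$ is path-independent. My plan is therefore to set $\hat{\mathfrak{t}}(x)$, for $x\in N_p$, equal to the $\nabla$-parallel transport of $\mathfrak{t}(p)\in L_p$ along any smooth curve in $N_p$ joining $p$ to $x$; this is unambiguous and uniquely determines $\hat{\mathfrak{t}}$ fiberwise.

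To see that $\hat{\mathfrak{t}}$ is a smooth section of $L|_U$, I would work in the local trivialization $\mathfrak{g}_1:U_1\to (U_1\cap D)\times\mathbb{D}$ and in the holomorphic frame $\sigma$ of $L|_{U_1}$ from the proof of the previous lemma. In this frame $\nabla=d+\alpha$ with $\alpha=\partial\log|\sigma|^2_{\mathfrak{h}}$, a smooth $1$-form in $(p,z)$. Parallel transport along radial paths in each fiber $\{p\}\times\mathbb{D}$ then reduces to a linear ODE whose coefficients depend smoothly on the parameter $p\in U_1\cap D$, and the standard smooth dependence of solutions of ODEs on initial data and parameters yields smoothness of $\hat{\mathfrak{t}}$ on $U_1$. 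Covering $D$ by such neighbourhoods and gluing gives smoothness on all of $U$.

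For the two pointwise identities I would use type considerations. The Chern connection splits as $\nabla=\partial^{\nabla}+\bar\partial$ into its $(1,0)$- and $(0,1)$-pieces, and since $N_p$ is a holomorphic submanifold of $X$ the complexified tangent space splits as $T_xN_p\otimes\mathbb{C}=T_x^{1,0}N_p\oplus T_x^{0,1}N_p$. By construction $\nabla_V\hat{\mathfrak{t}}=0$ for every real $V\in T_xN_p$; complexifying and comparing types yields $(\partial^{\nabla}\hat{\mathfrak{t}})(V^{1,0})=0$ and $(\bar\partial\hat{\mathfrak{t}})(V^{0,1})=0$ for every $V^{1,0}\in T^{1,0}_xN_p$ and $V^{0,1}\in T^{0,1}_xN_p$, which is exactly what it means for the two forms to restrict to zero on $N_p$.

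Uniqueness is then immediate: a section satisfying both equations is necessarily $\nabla$-parallel along each $N_p$, and by flatness such a section on $N_p$ is determined by its value at $p$. The only point which might require real care is the global smoothness of $\hat{\mathfrak{t}}$ as the fiber parameter varies, but this follows at once from the smooth dependence of the parallel transport ODE on its parameter together with the smoothness of the fibration $\pi_U$ and of the trivialization $\mathfrak{g}_1$ established in section \ref{localmodel}; beyond that I do not foresee any serious obstacle.
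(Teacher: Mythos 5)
Your proof is correct. Note that the paper states Lemma \ref{partr} without giving any proof, so there is nothing to compare against; your argument — flatness of $\nabla$ on each $L|_{N_p}$ from the preceding lemma plus simple connectedness of the disc fibers giving path-independent parallel transport, smooth parameter-dependence of the transport ODE in the trivialization $\mathfrak{g}_1$ and frame $\sigma$ for regularity across $D$, and the type decomposition of $\nabla=\partial^{\nabla}+\bar\partial$ on the complex submanifold $N_p$ for the two pullback identities — is precisely the argument the surrounding text of the paper presupposes, and it establishes both existence and uniqueness as claimed.
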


 \begin{defi}\label{gamhat}
We call the sections $\hat{\mathfrak{t}}$ constructed by lemma \ref{partr}  transversally parallel  sections of $L|_U$
and the space of such sections is denoted by $\hat{\Gamma}(U, L|_U)$. More generally any section $\hat{\mathfrak{s}}\in \Gamma (U, L^{\otimes i}\otimes \bar{L}^{\otimes j}) $ for $i\in \mathbb{Z}$ which is obtained from $\hat{\mathfrak{s}}|_{D}$ by parallel transport along the fibers of $\pi_U$
is called a transversally parallel section of $L^{\otimes i}\otimes \bar{L}^{\otimes j}$ and the space of all transversally parallel sections of $L^{\otimes i}\otimes \bar{L}^{\otimes j}$
is denoted by $\hat{\Gamma} (U, L^{\otimes i}\otimes \bar{L}^{\otimes j}|_U) $.
\end{defi}


Any  analytic map  $\Phi:U\rightarrow \mathbb{C}$ defined in a neighborhood $U$ of $D$ admits a Taylor series expansion like
\begin{equation}\label{Phi}
\Phi= \sum_{i,j=0} ^{\infty} \hat{\mathscr{C}}_{i,\bar{j}} S^i   \bar{S} ^j
\end{equation}

where the coefficients $\hat{\mathscr{C}}_{i.\bar{j}}\in \hat{\Gamma} (U,  \bar{L}^{\otimes (i)} \otimes  L^{\otimes (j)}|_U)$ are transversally parallel sections as  defined in  definition (\ref{gamhat}). Here we identify 
$L^{\otimes (i+j)}\otimes \bar{L}^{\otimes (i+j)}$ with the trivial bundle through the pairing induce by the hermitian structure $||_{\mathfrak{k}}$ on $L|_{U}$.
The existence of the above expansion can  be deduced from the fact that it can be  seen as one in each of the holomorphic fibers $\pi_U ^{-1} (p)$ for $p\in D$ and the fact that $\nabla$ is flat along each of these fibers.

\begin{defi}\label{hols}
$\Phi$ is called S-holomorphic if $\hat{\mathscr{C}} _{i,j} =0$  for all $j>0$. The space of all $S$- holomorphic maps over $U$ is denoted by $Hol_S (U) $  
\end{defi}

Let $U_{0}\subset D$ be an open subset of $D$ and let $U_1=\pi_{U}^{-1} (U_0)$. Consider $n-1$ transversally parallel sections $\hat{\mathfrak{t}}_i\in \hat{\Gamma} (U_0, L|_{U_0})$ for $i=1,...n-1$ such that 
  $\{\partial^{\nabla} \hat{\mathfrak{t}}_1,...,\partial^{\nabla} \hat{\mathfrak{t}}_{n-1}, \partial^{\nabla} S\}$ form a basis for $\Omega^{1,0}(L|_{U_1})$ everywhere over $U_1$. Assume that
\begin{equation}\label{etaa}
\partial^{\nabla}\bar{\partial} \bar{\hat{\mathfrak{t}}}_k= \sum_{i=1} ^{n-1} \eta^{i,\bar{n}} _{\bar{k}}  \partial^{\nabla}  \hat{\mathfrak{t}}_{i} \wedge \bar{\partial }\bar{S} + \sum_{i=1} ^{n-1}  \eta^{n,\bar{i}} _{\bar{k}}    \partial^{\nabla} S \wedge \bar{\partial} \bar{\hat{\mathfrak{t}}}_{i} 
 +\sum _{1\leq i<j\leq n-1}\eta^{i,\bar{j}} _{\bar{k}} \partial^{\nabla} \hat{\mathfrak{t}}_{i} \wedge  \bar{\partial} \bar{\hat{\mathfrak{t}}}_{j} 
\end{equation}
 where $\eta^{i,\bar{j}} _{\bar{k}}\in \Gamma (U_1, \bar{L}|_{U_1})$ for $1\leq i,j \leq n$ and for $1\leq k \leq n-1$  are smooth functions.

We can choose a no where vanishing transversaly parallel section $\sigma\in \Gamma (U_1, L)$ such that $\sigma|_{N_p}$ is holomorphic for all $p\in U_0$.
   We assume that $\partial^{\nabla}$ with respect to  the trivialization obtained by $\sigma$ is represented by

\[
\partial^{\nabla} = \partial + \beta
\]

where $\beta\in A^{1,0} (U_1)$ is a smooth $(1,0)$-form on $U_1$.
 Also since $\sigma|_{N_p}$ is parallel  for all $p\in U_0$ the decomposition of $\beta$ in 
the basis $\{\partial^{\nabla}\hat{\mathfrak{t}}_1,...,\partial^{\nabla}\hat{\mathfrak{t}}_{n-1}, \partial^{\nabla} S\}$  must be of the form
\begin{equation}\label{betsum}
\beta=\sum_{1\leq i \leq n-1} b_i \partial^{\nabla} \hat{\mathfrak{t}}_i 
\end{equation}
Thus if $\hat{\mathfrak{t}}_{k}=\hat{t}_k \sigma$ where $\hat{t}_k  :U_1\rightarrow \mathbb{C}$, then  the fact that $\hat{\mathfrak{t}}_{k}$ is transversally 
parallel  is equivalent to the following two relations
\begin{equation}\label{sigbet}
\bar{\partial}\hat{t}_k|_{N_p} =0, \hspace{0.5cm} (\partial +\beta)\sigma|_{N_p}=-\frac{\partial \hat{t}_k}{\hat{t}_k }\sigma|_{N_p}
\end{equation}

  so we obtain

\[
\begin{split}
\partial^{\nabla}\partial^{\nabla}\bar{\partial}\hat{\mathfrak{t}}_{k}= (\partial + \beta)( \partial + \beta) \big ( \bar{\partial}\hat{t}_k \sigma +\hat{t}_{k}\bar{\partial}\sigma \big )&=
\big [ (\partial+\beta)(\partial +\beta)\bar{\partial}\hat{t}_k\big ] \sigma\\
\quad & +(\partial + \beta)(\bar{\partial}\hat{t}_k )(\partial + \beta)(\sigma)+(\bar{\partial}\hat{t}_k )(\partial + \beta)(\partial + \beta)(\sigma)\\
\quad & =(\beta\wedge\partial \bar{\partial}\hat{t}_k)\sigma\\
\quad & +(\partial + \beta)(\bar{\partial}\hat{t}_k )(\partial + \beta)(\sigma)+(\bar{\partial}\hat{t}_k )(\partial + \beta)(\partial + \beta)(\sigma)
\end{split}
\]

Therefore according to (\ref{betsum}) and based on the fact that $\mathfrak{t}_k$  is holomorphic along all  the fibers of $\pi_U$ we must have
\[
i_{X}i_{\bar{X}}\partial^{\nabla}\partial^{\nabla}\bar{\partial}\hat{\mathfrak{t}}_{k} =0
\]\begin{equation}\label{etaa}
\partial^{\nabla}\bar{\partial} \bar{\hat{\mathfrak{t}}}_k= \sum_{i=1} ^{n-1} \eta^{i,\bar{n}} _{\bar{k}}  \partial^{\nabla}  \hat{\mathfrak{t}}_{i} \wedge \bar{\partial }\bar{S} + \sum_{i=1} ^{n-1}  \eta^{n,\bar{i}} _{\bar{k}}    \partial^{\nabla} S \wedge \bar{\partial} \bar{\hat{\mathfrak{t}}}_{i} 
 +\sum _{1\leq i<j\leq n-1}\eta^{i,\bar{j}} _{\bar{k}} \partial^{\nabla} \hat{\mathfrak{t}}_{i} \wedge  \bar{\partial} \bar{\hat{\mathfrak{t}}}_{j} 
\end{equation}

for any $(1,0)$-vector $X$ tangent to the fibers of $\pi_U$. This means that in the decomposition of  $\partial^{\nabla}\partial^{\nabla}\bar{\partial}\hat{\mathfrak{t}}_{k}$ with respect to the basis  $\{\partial^{\nabla}\hat{\mathfrak{t}}_1,...,\partial^{\nabla}\hat{\mathfrak{t}}_{n-1}, \partial^{\nabla} S\}$  all the terms of the form $\partial^{\nabla} S\wedge \bar{\partial}\bar{S}\wedge \partial^{\nabla}\hat{\mathfrak{t}}_k $, for $k=1,...,n-1$ will be of identically vanishing coefficients over $U_1$.
A similar result holds for   $\bar{\partial}\partial^{\nabla}\bar{\partial}\hat{\mathfrak{t}}_{k}$.  From this observation it follows that  if 

\begin{equation}\label{etahol0}
\eta^{n,\bar{i}} _{\bar{k}} (p)=\sum_{a,b} \eta^{n,\bar{i}} _{\bar{k}, a,\bar{b}} S^a\bar{S}^b 
\end{equation}
denotes the Taylor series expansion of $\eta^{i,\bar{n}} _{\bar{k}} (p)$ with $\hat{\eta}^{n,\bar{i}} _{\bar{k}, a,\bar{b}}\in \hat{\Gamma}(U_1, \bar{L}^{\otimes (b+1)}\otimes L^{\otimes a})$
then we must have
\begin{equation}\label{etahol}
\hat{\eta}^{n,\bar{i}} _{\bar{k}, a,\bar{b}} =0 \hspace{1cm} \text{ for } b>0  
\end{equation}
and similarly if 

\[
\hat{\eta}^{i,\bar{n}} _{\bar{k}} (p)=\sum_{a,b} \hat{\eta}^{i,\bar{n}} _{\bar{k}, a,\bar{b}} S^a\bar{S}^b 
\] 

denotes the Taylor series expansion of $\eta^{i,\bar{n}} _{\bar{k}} (p)$ where $\hat{\eta}^{i,\bar{n}} _{\bar{k}, a,\bar{b}}\in \hat{\Gamma}(U_1, \bar{L}^{\otimes (b+1)}\otimes L^{\otimes a})$ then we get
\begin{equation}\label{etaantihol}
\eta^{i,\bar{n}} _{\bar{k}, a,\bar{b}} =0 \hspace{1cm} \text{ for } a>0 
\end{equation}
 from  the following lemma we can also deduce that 
\[
\eta^{n,\bar{i}} _{\bar{k}, 0,\bar{0}} =0 \hspace{1cm} \text{ and } \hspace{1cm} \eta^{i,\bar{n}} _{\bar{k}, 0,\bar{0}} =0
\]

for all $1\leq k,i\leq n-1$.

\begin{lemma}\label{etzer}
$\eta^{n,\bar{i}} _{\bar{k}}|_D=\eta^{n,\bar{k}} _{\bar{i}}|_D\equiv 0$ for all $p\in D$ and for all $1\leq i,k\leq n-1$.
\end{lemma}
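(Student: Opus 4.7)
The plan is to isolate the coefficient $\eta^{n,\bar i}_{\bar k}|_D$ by pairing identity (\ref{etaa}) with carefully chosen vectors at a point $p\in D$, and then to prove that the resulting contracted two-form vanishes using the transversally parallel structure together with the fiberwise flatness of $||_{\mathfrak{h}}$.

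\paragraph{Step 1 (extraction).} At $p\in D$ I would select a $(1,0)$-vector $X\in T^{1,0}N_p$ tangent to the fiber and a $(0,1)$-vector $\bar Y\in T^{0,1}_pD$ tangent to $D$, and contract both sides of (\ref{etaa}) with $X\otimes\bar Y$. Lemma~\ref{partr} gives $(\partial^{\nabla}\hat{\mathfrak{t}}_i)(X)=0$, which kills the first and third sums of (\ref{etaa}). Since $S$ vanishes identically on $D$ and $\bar Y$ is tangent to $D$, one also has $(\bar\partial\bar S)(\bar Y)|_p=0$. Only the middle sum survives, producing
\[
(\partial^{\nabla}\bar\partial\bar{\hat{\mathfrak{t}}}_k)(X,\bar Y)\big|_p \;=\; (\partial^{\nabla} S)(X)\big|_p\cdot\sum_{i=1}^{n-1}\eta^{n,\bar i}_{\bar k}(p)\,(\bar\partial\bar{\hat{\mathfrak{t}}}_i)(\bar Y)\big|_p.
\]
Since $(\partial^{\nabla} S)(X)|_p=d\mathfrak{g}_p(X)|_p\neq 0$ and the forms $\{(\bar\partial\bar{\hat{\mathfrak{t}}}_i)|_D\}_{i=1}^{n-1}$ are a basis of $T^{0,1,*}D\otimes\bar L|_D$, it suffices to show that the left-hand side vanishes for every choice of such $\bar Y$.

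\paragraph{Step 2 (vanishing of the left-hand side).} Extend $X$ to a $(1,0)$-vector field $\tilde T$ everywhere tangent to the fibers of $\pi_U$; by the transversally parallel property, $\nabla_{\tilde T}\bar{\hat{\mathfrak{t}}}_k\equiv 0$ identically on $U_1$. Using the Cartan formula together with the curvature commutator $[\nabla_{\tilde T},\nabla_{\bar Y}]=R^{\bar L}(\tilde T,\bar Y)$ on $\bar L$, one obtains
\[
(\partial^{\nabla}\bar\partial\bar{\hat{\mathfrak{t}}}_k)(\tilde T,\bar Y) \;=\; R^{\bar L}(\tilde T,\bar Y)\,\bar{\hat{\mathfrak{t}}}_k \;+\; \nabla_{[\tilde T,\bar Y]^{(1,0)}}\bar{\hat{\mathfrak{t}}}_k,
\]
since the terms $\nabla_{\bar Y}\nabla_{\tilde T}\bar{\hat{\mathfrak{t}}}_k$ and $\nabla_{[\tilde T,\bar Y]^{(0,1)}}\bar{\hat{\mathfrak{t}}}_k$ (from the Cartan splitting) either vanish or recombine. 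Choosing K\"ahler normal coordinates at $p$ (so $g_{i\bar j}(p)=\delta_{ij}$ with vanishing first derivatives), the orthogonality condition shows that the fiber-tangent vector field satisfies $\tilde T=\partial_n+O(|\cdot|^2)$ on $D$ near $p$. Taking $\bar Y=\partial_{\bar j}$ for $j<n$, one computes $[\tilde T,\partial_{\bar j}]|_p=0$, so the bracket term is killed.

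\paragraph{Step 3 (curvature term and symmetric statement).} It remains to check that $R^{\bar L}(\tilde T,\bar Y)|_p=R^{\bar L}_{n\bar j}(p)$ vanishes. Writing $h=|\sigma|^2_\mathfrak{h}=|w/z^n|^2/|\tilde s|^2$ (where $w(z)=\mathfrak{g}_{\pi_U(z)}(z)$ and $s=z^n\tilde s$), the pluriharmonicity of $\log|\tilde s|^2$ yields $R^L=-\partial\bar\partial\log|w/z^n|^2$. The fiberwise flatness gives the $(n,\bar n)$ component vanishing; the key claim is that in K\"ahler normal coordinates, the mixed second-order derivatives of $\log|w/z^n|^2$ at $p$ also vanish, because the Taylor expansion of $w$ forced by $w|_D\equiv0$, the fiber-holomorphy $\partial_{\bar n}w|_{N_{p'}}=0$, and the second-order coincidence of the fiber with the $z^n$-axis combine so that $F=w/z^n$ satisfies $\partial_{\bar j}F(p)\cdot\partial_n F(p)=\partial_n\partial_{\bar j}F(p)\cdot F(p)$ to the relevant order. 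The statement $\eta^{n,\bar k}_{\bar i}|_D\equiv 0$ follows by the same argument with $i$ and $k$ interchanged.

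\paragraph{Main obstacle.} Step 3 is the delicate point: demonstrating $R^{\bar L}_{n\bar j}(p)=0$ really does rely on the specific Riemann-map construction of $||_{\mathfrak{h}}$ rather than on the fiberwise-flatness axiom alone. Once this curvature identity is established using the normal-coordinate expansion of the family $\mathfrak{g}_{p'}$ together with the geodesic (orthogonal) nature of the fibration, the conclusion $\sum_i\eta^{n,\bar i}_{\bar k}(p)(\bar\partial\bar{\hat{\mathfrak{t}}}_i)(\bar Y)|_p=0$ for all $\bar Y\in T^{0,1}_pD$ forces $\eta^{n,\bar i}_{\bar k}(p)=0$ by linear independence of $\bar\partial\bar{\hat{\mathfrak{t}}}_i|_D$, and the lemma follows.
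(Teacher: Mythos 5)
Your Step 1 is a legitimate extraction: pairing (\ref{etaa}) with a fiber $(1,0)$-vector and a $(0,1)$-vector tangent to $D$ does isolate $\sum_i\eta^{n,\bar i}_{\bar k}\,\partial^{\nabla}S(X)\,\bar\partial\bar{\hat{\mathfrak{t}}}_i(\bar Y)$, and your commutator identity in Step 2 correctly reduces the left-hand side (after disposing of the bracket) to the mixed curvature term $R^{\bar L}(\tilde T,\bar Y)\bar{\hat{\mathfrak{t}}}_k$ of the transversally flat metric $\mathfrak{h}$. But that is exactly where the proof stops being a proof: fiberwise flatness of $\mathfrak{h}$ only kills the $(n,\bar n)$ curvature component along each fiber and says nothing about the mixed components $R_{n\bar j}$ along $D$, and your Step 3 claim that these vanish (the asserted identity for $F=w/z^n$ ``to the relevant order'') is never established -- you yourself flag it as the main obstacle. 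Since, by your own Step 1--2 reduction, the statement $R^{\bar L}(\tilde T,\bar Y)|_D=0$ is equivalent to the lemma, the argument as written is circular-in-effect: it repackages the lemma as an unproven curvature identity for the Riemann-map metric. So there is a genuine gap at the central step.

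The paper avoids this entirely by choosing the complementary pairing: it contracts with $X\in T'_pD$ and $Y\in T''_xN_p$ and uses the Cartan formula $d^{\nabla}\alpha(X,Y)=X.\alpha(Y)-Y.\alpha(X)-\alpha([X,Y])$ in a trivialization with $d^{\nabla}=d$ at the chosen point. With that choice \emph{both} contraction functions are identically zero -- $\alpha(Y)\equiv0$ because the section is transversally parallel and $Y$ is everywhere fiber-tangent, $\alpha(X)\equiv0$ for type reasons -- so no derivative of a nonvanishing function (hence no curvature of $\mathfrak{h}$) ever enters; only the bracket term survives, and it is killed at the point by extending $Y$ via Levi-Civita parallel transport and using the orthogonality $N_p\perp D$, so that $[X,Y](q)=\nabla_XY-\nabla_YX=0$. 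If you want to salvage your route, the natural fix is to run your Step 1 with the paper's pairing on the conjugate expansion (equivalently, use the reality/conjugation symmetry relating the $\eta^{n,\bar i}$ and $\eta^{i,\bar n}$ coefficients along $D$), rather than trying to prove $R_{n\bar j}|_D=0$ directly from the Riemann-mapping construction, for which you have given no actual computation.
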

\begin{proof} We fix a point $q\in U_0$ and we choose a trivialization of $L$ in a neighborhood of $q$ such that $d^{\nabla}=d$ holds at $q$. Therefore we can apply the following identity at $q$:
\[
\partial^\nabla\bar{\partial} \hat{\mathfrak{t}}_i  (X,Y)= d^\nabla \bar{\partial} \hat{\mathfrak{t}}_i  (X,Y)= X. \bar{\partial} \hat{\mathfrak{t}}_i (Y)-Y.\bar{\partial} \hat{\mathfrak{t}}_i (X)-\bar{\partial} \hat{\mathfrak{t}}_i ([X,Y])
\]

We can assume that $Y(x)\in T'' _x  N_p$  and $X\in T' _p D$ for all $p\in D$ and for all $x\in N_p$. Since $\hat{\mathfrak{t}}_i \in \hat{\Gamma} (U_1, L|_{U_1})$
 we have $\bar{\partial} \hat{\mathfrak{t}}_i (Y)\equiv 0$ and since $X$ is a $(1,0)$- vector it follows that $\bar{\partial} \hat{\mathfrak{t}}_i (X)\equiv 0$.
In addition since $N_p \perp D$ for all $p\in D$ we can choose the vector fields $X$ and $Y$ in such a way that $[X,Y](q)= 0 $ at a fixed point $q\in D$.
To see this we fix $Y_q \in  T'' _q  N_q$ and we extend it  by parallel transport with respect to the Levi-Civita connection associated to the initial K\"ahler metric $\omega$ along all directions in a small neighborhood of $q$ in $D$. Since $N_p\perp D$ for all $p\in D$ we deduce that the vector filed $Y$
is tangent to all the discs $N_p$ along $D$ and thus it can be extended to a vector field which is everywhere  tangent to $N_p$ . For the vector field $X$ we can 
also assume that it is  parallel along the fiber $N_q$.   Then we use the fact that $ [X,Y](q)=(\nabla_X Y ) (q)- (\nabla_Y X) (q) =0$. The vanishing of $\partial^\nabla\bar{\partial} \hat{\mathfrak{t}}_i  (X,Y)$
 is equivalent to the assertion of the  lemma.

\end{proof}





Similar argument can show that if we set
\[
\bar{\partial}\partial ^{\nabla} \hat{\mathfrak{t}}_k = \sum_{i=1} ^{n-1} \theta^{i,\bar{n}} _{k} \partial^{\nabla}  \hat{\mathfrak{t}}_i\wedge \bar{\partial}\bar{S}+\sum_{i=1} ^{n-1}  \theta^{n,\bar{i}} _{k} \partial^{\nabla}S  \wedge \bar{\partial}\bar{\hat{\mathfrak{t}}}_i+ \sum_{i,j=1} ^{n-1}  \theta^{i,\bar{j}} _{k} \partial^{\nabla}\hat{\mathfrak{t}}_i  \wedge \bar{\partial}\bar{\hat{\mathfrak{t}}}_j
\]
 and if $ \theta^{i,\bar{n}} _{k}$ and $ \theta^{i,\bar{n}} _{k} $ are expanded as
\[
\theta^{i,\bar{n}} _{k}=\sum_{a,b }\theta^{i,\bar{n}} _{k,a,\bar{b}} S^a\bar{S}^b \text{ and } \theta^{n,\bar{i}} _{k}=\sum_{a,b }\theta^{n,\bar{i}} _{k,a,\bar{b}} S^a\bar{S}^b
\]
 then we have
\begin{equation}\label{tetahol}
\theta^{i,\bar{n}} _{k,a,\bar{b}}=0 \hspace{0.5cm} \text{ for   } a>0 \hspace{0.5cm}\text{ and }\hspace{0.5cm} \theta^{n,\bar{i}} _{k,a,\bar{b}}=0 \hspace{0.5cm} \text{ for   } b>0 
\end{equation}

and

\begin{equation}\label{tetahol1}
\theta^{i,\bar{n}} _{k,0,\bar{0}}=0 \hspace{0.5cm} \text{and}\hspace{0.5cm}  \theta^{i,\bar{n}} _{k,0,\bar{0}}=0
\end{equation}

for $1\leq k,i \leq n-1$.

From here onwaard, for the sake of simplicity, we use the notation $\partial$ instead of $\partial ^{\nabla}$.  Any analytic differential form $\alpha\in \Omega^{1,1} (U)$ admits an expansion as follows,
\begin{equation}\label{albast}
\begin{split}
\alpha&=\sum_{i=1} ^{n-1} \alpha^{i,\bar{n}}  \partial\hat{\mathfrak{t}}_{i} \wedge \bar{\partial }\bar{S} + \sum_{i=1} ^{n-1}  \alpha^{n,\bar{}i}     \partial S \wedge \bar{\partial} \bar{\hat{\mathfrak{t}}}_{i} \\
\quad & \sum _{1\leq i<j\leq n-1}\alpha ^{i,\bar{j}}  \partial \hat{\mathfrak{t}}_{i} \wedge  \bar{\partial} \bar{\hat{\mathfrak{t}}}_{j} +\alpha^{n,\bar{n}} \partial S\wedge \bar{\partial }\bar{S}+  
\end{split}
\end{equation}

where  $\alpha^ {i,\bar{j}}:U\rightarrow \mathbb{C}$ for $i,j=1,...,n$ are all analytic maps.  

We define $ \alpha_{T\bar{T}} $, $\alpha_{S\bar{T}}$, $\alpha_{T\bar{S}}$ and $\alpha_{S\bar{S}}$ by
\[
\alpha_{T\bar{T}} =  \sum _{1\leq i<j\leq n-1}\alpha^{i,\bar{j}}  \partial \hat{\mathfrak{t}}_{i} \wedge  \bar{\partial} \bar{\hat{\mathfrak{t}}}_{j} 
\]

\[
\alpha_{S\bar{T}}=  \sum_{i=1} ^{n-1}  \alpha^{n,\bar{i}}     \partial S \wedge \bar{\partial} \bar{\hat{\mathfrak{t}}}_{i}
\]

\[
\alpha_{T\bar{S}}=\sum_{i=1} ^{n-1} \alpha^{i,\bar{n}}  \partial \hat{\mathfrak{t}}_{i} \wedge \bar{\partial }\bar{S} 
\]
\[
\alpha_{S\bar{S}}=\alpha^{n,\bar{n}} \partial S\wedge \bar{\partial }\bar{S}
\]


Given a map $f:U_0 \rightarrow \mathbb{C}$  then the function $\hat{f}:U_1\rightarrow \mathbb{C}$ is defined by
$\hat{f}:=f\circ \pi_U$. We set
\[
\partial \hat{f}= \sum_{1\leq i\leq n-1} f^i \partial \hat{\mathfrak{t}}_i +f^n\partial S 
\]
where $f^i \in \Gamma (U_1, \bar{L}|_{U_1})$ for $1\leq i\leq n$.
 Since   $\pi_U ^{-1} (p)$ for all $p\in D$ is a holomorphic curve and since $\hat{f}$ is constant along all the fibers $\pi_U ^{-1} (\{p\})$ for $p\in D$ therefore we have 
\[
f^n =0
\]  

 From this observation it is not difficult to see that the differential forms $\alpha_{T\bar{T}}$, $\alpha_{S\bar{T}}$, $\alpha_{T\bar{S}}$  are  well-defined independent of the choice of $\hat{\mathfrak{t}}_1,...,\hat{\mathfrak{t}}_{n-1}$. 

\begin{lemma}\label{partrafo}
For any  $(p,q)$-form $\alpha\in A^{p,q} ((L^{\otimes i }\otimes \bar{L}^{\otimes j})|_{U_0} )$ defined on an open subset $U_0\subset D$, there exists a unique $(p,q)$-form $\hat{\alpha}\in A^{p,q} ((L^{\otimes i }\otimes \bar{L}^{\otimes j})|_{U_1})$  where $U_1=\pi_U ^{-1} (U_0)$ satisfying the following equations
\begin{equation}\label{ixixbar}
i_X \hat{\alpha}= i_{\bar{X}}\hat{\alpha}= i_{\bar{X}} \bar{\partial } \hat{\alpha}=i_X \partial ^{\nabla} \hat{\alpha}=0
\end{equation}

for any vector field $X\in T' (U_1)$ which is everywhere tangent to the fiberation obtained by $\pi_U$
\end{lemma}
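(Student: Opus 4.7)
The plan is to mimic the scalar construction of Definition \ref{gamhat} one frame component at a time. Choose transversally parallel sections $\hat{\mathfrak{t}}_1,\ldots,\hat{\mathfrak{t}}_{n-1}\in\hat{\Gamma}(U_1,L|_{U_1})$ whose $(1,0)$-covariant derivatives, together with $\partial^\nabla S$, form a pointwise basis of $\Omega^{1,0}$ over $U_1$; these exist by Lemma \ref{partr}. Since $D$ is $\omega$-orthogonal to every fiber $N_p$, any $\alpha\in A^{p,q}((L^{\otimes i}\otimes\bar L^{\otimes j})|_{U_0})$ has no $\partial S$ or $\bar{\partial}\bar S$ factor at a point of $U_0$, so it admits a unique expansion
$$\alpha=\sum_{\substack{|I|=p,\,|J|=q\\ I,J\subset\{1,\ldots,n-1\}}}\alpha_{I\bar J}\,\partial^\nabla\hat{\mathfrak{t}}_I\wedge\bar{\partial}\bar{\hat{\mathfrak{t}}}_J\Big|_{U_0},$$
with each coefficient $\alpha_{I\bar J}$ a section of an appropriate tensor power of $L\oplus\bar L$ on $U_0$. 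Using Definition \ref{gamhat}, extend each $\alpha_{I\bar J}$ to its unique transversally parallel extension $\hat\alpha_{I\bar J}$ on $U_1$, and set
$$\hat\alpha:=\sum_{I,J}\hat\alpha_{I\bar J}\,\partial^\nabla\hat{\mathfrak{t}}_I\wedge\bar{\partial}\bar{\hat{\mathfrak{t}}}_J.$$

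The first two equations $i_X\hat\alpha=i_{\bar X}\hat\alpha=0$ of (\ref{ixixbar}) are immediate: transversal parallelism of the frame gives $\partial^\nabla\hat{\mathfrak{t}}_k|_{N_p}\equiv 0$ for every $p\in D$, so every wedge factor annihilates any fiber-tangent $(1,0)$-vector, and the conjugate statement handles the $(0,1)$ case.

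For $i_{\bar X}\bar{\partial}\hat\alpha=0$ and $i_X\partial^\nabla\hat\alpha=0$, expand by the Leibniz rule. Because $\hat\alpha_{I\bar J}$ is transversally parallel, its derivatives contribute no fiber component after the contractions. The remaining terms come from $\bar{\partial}(\partial^\nabla\hat{\mathfrak{t}}_k)$ and $\partial^\nabla(\bar{\partial}\bar{\hat{\mathfrak{t}}}_k)$; expanding these in the basis via (\ref{etaa}), the structural coefficients which could still produce a term of type $\partial^\nabla S\wedge\bar{\partial}\bar S$ after contracting with a fiber direction are precisely $\eta^{n,\bar i}_{\bar k},\,\eta^{i,\bar n}_{\bar k}$ (and the analogues $\theta^{n,\bar i}_{k},\,\theta^{i,\bar n}_{k}$). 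By the vanishing statements (\ref{etahol}), (\ref{etaantihol}), (\ref{tetahol}), (\ref{tetahol1}) and Lemma \ref{etzer}, these coefficients vanish along $D$ and have the holomorphic/antiholomorphic character in their fiber Taylor series needed to kill the offending contractions identically.

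Uniqueness reduces to the linear statement that a form $\beta$ satisfying (\ref{ixixbar}) and vanishing on $U_0$ must vanish on $U_1$. Writing $\beta$ in the transversally parallel basis, the four conditions translate into a first-order linear system along the fibers of $\pi_U$ for the scalar coefficients, with zero initial data, so the coefficients vanish identically. The principal obstacle is the Leibniz-rule verification of the last two conditions: without the preparatory vanishing identities (\ref{etahol})--(\ref{tetahol1}) derived in the previous subsection, $\bar{\partial}\hat\alpha$ would pick up spurious $\bar{\partial}\bar S$ components from the torsion of the frame, and the construction would be incompatible with the splitting $\alpha_{T\bar T},\alpha_{S\bar T},\alpha_{T\bar S},\alpha_{S\bar S}$ introduced earlier. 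Once those identities are in hand, the bundle-valued extension is the expected matrix analogue of the scalar case.
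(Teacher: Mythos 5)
There is a genuine gap in your existence step. You take the coefficients $\hat\alpha_{I\bar J}$ to be the transversally parallel (fiberwise constant) extensions of the restricted coefficients and then claim that the Leibniz terms coming from $\bar{\partial}(\partial^{\nabla}\hat{\mathfrak{t}}_k)$ and $\partial^{\nabla}(\bar{\partial}\bar{\hat{\mathfrak{t}}}_k)$ are killed by the identities (\ref{etahol}), (\ref{etaantihol}), (\ref{tetahol}), (\ref{tetahol1}) and Lemma \ref{etzer}. But those identities only say, e.g., that $\theta^{i,\bar n}_{k}$ has no constant term and no monomials containing a positive power of $S$; they do not make $\theta^{i,\bar n}_{k}$ vanish identically on $U_1$ (it is in general a nonzero series in $\bar S$). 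Consequently, for $\bar X$ tangent to a fiber, $i_{\bar X}\bar{\partial}\hat\alpha$ contains the uncancelled term $\pm\,\theta^{i,\bar n}_{k}\,\bar{\partial}\bar S(\bar X)\,\partial^{\nabla}\hat{\mathfrak{t}}_{i}\wedge\cdots$, and $\bar{\partial}\bar S(\bar X)\neq 0$ along the fibers, while your fiberwise constant coefficients contribute nothing that could cancel it. So your candidate $\hat\alpha$ does not in general satisfy the last two equations of (\ref{ixixbar}).

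What the paper actually does is the opposite of your construction: writing $\hat\alpha=\sum\beta^{i\bar j}\partial^{\nabla}\hat{\mathfrak{t}}_i\wedge\bar{\partial}\bar{\hat{\mathfrak{t}}}_j$ with $\beta^{i\bar j}=\sum_{a,b}\beta^{i\bar j}_{a,\bar b}S^a\bar S^b$, the conditions $i_{\bar X}\bar{\partial}\hat\alpha=i_X\partial^{\nabla}\hat\alpha=0$ together with (\ref{tetahol}) and (\ref{tetahol1}) force the mixed coefficients $\beta^{i\bar j}_{a,\bar b}$ ($a,b>0$) to vanish and \emph{inductively determine} the nontrivial fiberwise holomorphic and antiholomorphic corrections $\beta^{i\bar j}_{a,\bar 0}$ and $\beta^{i\bar j}_{0,\bar b}$ ($a,b\geq 1$), precisely so as to offset the structure terms $\theta^{i,\bar n}_k$, $\theta^{n,\bar i}_k$; these corrections are in general nonzero, so the coefficients of $\hat\alpha$ are not transversally parallel. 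Your uniqueness argument (zero initial data along $D$ propagates along the fibers) is in the right spirit and matches the paper's recursion, but the existence half needs to be replaced by this inductive determination of the coefficients rather than the naive parallel extension.
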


\begin{proof}
 We prove the above lemma  for the case $p=q=1$ and it can be obviously stated for arbitrary $p$ and $q$.   Assume that  $\beta\in A^{p,q}((L^{\otimes i }\otimes \bar{L}^{\otimes j})|_{U_1}) $ satisfies the relation (\ref{ixixbar}). Since $i_X \beta= i_{\bar{X}}\beta =0 $  we have
\[
\beta= \sum_{1\leq i,j\leq n-1} \beta^{i,\bar{j}}\partial^{\nabla} \hat{\mathfrak{t}}_i\wedge \bar{\partial}\bar{\hat{\mathfrak{t}}}_{j}
\]
therefore   $\bar{\partial}\beta$ is given by
\begin{equation}\label{betaexp}
\bar{\partial}\beta=\sum_{1\leq i,j\leq n-1}\bar{\partial} (\beta^{i,\bar{j}}) \partial^{\nabla} \hat{\mathfrak{t}}_i\wedge \bar{\partial}\bar{\hat{\mathfrak{t}}}_{j}+\sum_{1\leq i,j\leq n-1} \beta^{i,\bar{j}}\bar{\partial}\partial^{\nabla} \hat{\mathfrak{t}}_i\wedge \bar{\partial}\bar{\hat{\mathfrak{t}}}_{j}
\end{equation}
 We consider the expansion of the coefficients $\beta^{i,\bar{j}}$, for $1\leq i,j\leq n-1$ in the form
\[
\beta^{i,\bar{j}}=\sum_{a,b} \beta^{i,\bar{j}}_{a,\bar{b}} S^a\bar{S}^b 
\]
Since $\beta|_{D\cap U_0}= \alpha$  we have
\[
\beta^{i,\bar{j}}_{0,\bar{0}}= \alpha^{i,\bar{j}}  \hspace{1cm} \text{ for } 1\leq i,j \leq n-1
\]
 Here $\alpha^{i,\bar{j}} $ comes from the  expansion of $\alpha=\sum_{1\leq i,j \leq n-1} \alpha^{i,\bar{j}} \partial^{\nabla} \hat{\mathfrak{t}}_i\wedge \bar{\partial}\bar{\hat{\mathfrak{t}}}_{j}$ .
Due to relation (\ref{tetahol})  if $ i_X d^{\nabla} \hat{\alpha}=i_{\bar{X}} d^{\nabla}\hat{\alpha}=0$ holds for   $X\in T' (U_1)$, which is everywhere tangent to the fibers of  $\pi_U$, then according to the  relation (\ref{betaexp}) we must have
\[
\beta^{i,\bar{j}}_{a,\bar{b}}=0 \hspace{1cm} \text{ for } a,b >0
\]
Also from $i_{\bar{X}}\bar{\partial} \beta =i_{X}\partial^{\nabla}\beta=0$  by using the relations (\ref{tetahol}) and (\ref{tetahol1}) we can inductively determine all the coefficients $\beta^{i,\bar{j}}_{a,\bar{0}}$ and $\beta^{i,\bar{j}}_{0,\bar{b}}$ for $a,b>0$. 
\end{proof}

\begin{defi}
The space of differential forms obtained as in lemma (\ref{partrafo}) is called the space of transversally parallel differential forms and is denoted by $\hat{A} _T^{p,q} (L|_{U_1})$ we also define 
\[ 
\begin{split}
\hat{A}^{p,q} ((L^{\otimes i }\otimes \bar{L}^{\otimes j})|_{U_1})=& \{\hat{f}_1\hat{\alpha}_1 + \hat{f}_2\hat{\alpha}_2\wedge \partial S +\hat{f}_3\hat{\alpha}_3\wedge\bar{\partial}\bar{S} +\hat{f}_4\hat{\alpha}_4\wedge  \partial S \wedge \bar{\partial}\bar{S} \text{ such that } \\
\quad & \hat{\alpha}_1 \in \hat{A}^{p,q} _T ((L^{\otimes i_1 }\otimes \bar{L}^{\otimes j_1})|_{U_1}), \hat{\alpha}_2 \in \hat{A}^{p-1,q} _T ((L^{\otimes i_2 }\otimes \bar{L}^{\otimes j_2})|_{U_1}),\\
\quad & \hat{\alpha}_3 \in \hat{A}^{p,q-1} _T ((L^{\otimes i_3 }\otimes \bar{L}^{\otimes j_3})|_{U_1}),\hat{\alpha}_4 \in \hat{A}^{p-1,q-1} _T ((L^{\otimes i_4 }\otimes \bar{L}^{\otimes j_4})|_{U_1}) \\
\quad & \hat{f}_a\in \hat{\Gamma} {(L^{\otimes {i-i_a} }\otimes \bar{L}^{\otimes {j-j_a}})|_{U_1}}, i_a\leq i , j_a \leq j ,a=1,2,3,4 \}
\end{split}
\]

\end{defi}
Any $(p,q)$-form $\alpha\in A^{p,q} ((L^{\otimes i }\otimes \bar{L}^{\otimes j})|_{U_1})$ admits an expansion
\begin{equation}\label{alfexp}
\alpha =\sum \hat{\alpha}_{a,b} S^a \bar{S}^b
\end{equation}
where $ \hat{\alpha}_{a,b}\in \hat{A}^{p,q} ((L^{\otimes i }\otimes \bar{L}^{\otimes j})|_{U_1})$.

\begin{lemma}\label{lemhol}
If $\alpha_i\in A^{0,1} (\bar{L}|_{U_0})$, is given by  $\alpha_i=\bar{\partial}\mathfrak{t}_i$ then $\hat{\alpha}_i$ has the form 
\[
\hat{\alpha}_i= \sum_j \alpha_i ^j \bar{\partial}\mathfrak{t}_j
\]

where
\[
\alpha_i ^j \in Hol_S (U)
\]
 (see definition (\ref{hols}))
Conversely if  we decompose $\bar{\partial}\mathfrak{t}_i$ in terms of $\hat{\alpha}_1,...,\hat{\alpha}_{n-1} $  as follows
\[
\bar{\partial}\mathfrak{t}_i=\sum t^j _i\hat{\alpha}_j
\]
then we have
\[
 t^j _i\in Hol_S (U)
\]

\end{lemma}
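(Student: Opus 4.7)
The plan is to exploit the four characterizing equations \eqref{ixixbar} for the transversally parallel extension $\hat{\alpha}_i$ together with the analogous properties enjoyed by $\bar\partial\hat{\mathfrak{t}}_j$ that follow from Lemma \ref{partr}. First, both $\hat{\alpha}_i$ and $\bar\partial\hat{\mathfrak{t}}_j$ lie in the rank $n-1$ sub-bundle of $(0,1)$-forms annihilated by contraction with every antiholomorphic vector $\bar X$ tangent to the fibres of $\pi_U$: for $\hat{\alpha}_i$ this is built into \eqref{ixixbar}, and for $\bar\partial\hat{\mathfrak{t}}_j$ it is immediate from $\bar\partial\hat{\mathfrak{t}}_j|_{N_p}=0$. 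Since $\{\bar\partial\hat{\mathfrak{t}}_1,\dots,\bar\partial\hat{\mathfrak{t}}_{n-1}\}$ is pointwise a frame of this sub-bundle on $U_1$, there exist unique smooth functions $\alpha_i^j\colon U_1\to \mathbb{C}$ with $\hat\alpha_i = \sum_j \alpha_i^j\, \bar\partial\hat{\mathfrak{t}}_j$, whose restriction to $D$ recovers the decomposition of $\alpha_i$ in the frame $\{\bar\partial\mathfrak{t}_j\}$ on $U_0$.

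The key step is then to differentiate this relation. Since $\bar\partial^2=0$, Leibniz yields
\[
\bar\partial\hat\alpha_i \;=\; \sum_j \bar\partial\alpha_i^j \wedge \bar\partial\hat{\mathfrak{t}}_j .
\]
Contracting with a fibre-tangent $\bar X$ and invoking the vanishing $i_{\bar X}\bar\partial\hat\alpha_i = 0$ from \eqref{ixixbar} on the left together with $i_{\bar X}\bar\partial\hat{\mathfrak{t}}_j = 0$ on the right, the identity collapses to
\[
\sum_{j=1}^{n-1} \bar X(\alpha_i^j)\, \bar\partial\hat{\mathfrak{t}}_j \;=\; 0 .
\]
Linear independence of the $\bar\partial\hat{\mathfrak{t}}_j$ then forces $\bar X(\alpha_i^j)=0$ for every such $\bar X$, so $\alpha_i^j$ is holomorphic on each fibre $N_p$. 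Writing the Taylor series $\alpha_i^j=\sum_{a,b}(\alpha_i^j)_{a,\bar b}S^a\bar S^b$ as in \eqref{Phi}, and using that the fibre coordinate $\mathfrak{g}_p$ differs from $S$ by a nowhere vanishing holomorphic factor along each fibre, by \eqref{s0f}, the vanishing of $\partial/\partial\bar S$ along fibres is equivalent to the vanishing of all coefficients with $b>0$; this is precisely $\alpha_i^j\in \mathrm{Hol}_S(U)$ in the sense of Definition \ref{hols}.

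For the converse I would run the mirror argument: decompose $\bar\partial\hat{\mathfrak{t}}_i = \sum_j t_i^j\,\hat\alpha_j$, apply $\bar\partial$, and combine $\bar\partial^2\hat{\mathfrak{t}}_i = 0$ with $i_{\bar X}\bar\partial\hat\alpha_j=0$ from \eqref{ixixbar} to conclude $\bar X(t_i^j)=0$ and hence $t_i^j\in \mathrm{Hol}_S(U)$. The main obstacle I anticipate is purely bookkeeping near $D$: one must verify that the $(0,1)$-type and conjugation conventions make $\{\bar\partial\hat{\mathfrak{t}}_j\}$ a genuine frame of the stated sub-bundle right up to the divisor $\{S=0\}$ using Lemma \ref{partrafo}, and that the transversally parallel condition $i_{\bar X}\bar\partial\hat\alpha_i=0$ really propagates to the horizontal-derivative identity used above without introducing extra terms coming from the curvature of $|\,\cdot\,|_{\mathfrak h}$; both of these points follow from Lemmas \ref{partr} and \ref{partrafo} but need to be traced carefully along the fibres.
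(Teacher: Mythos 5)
Your argument is sound, and since the paper states Lemma \ref{lemhol} without any proof, the natural comparison is with the method it uses for the neighbouring results (the proof of Lemma \ref{partrafo} and the relations \eqref{etahol}, \eqref{tetahol}, \eqref{tetahol1}): there one expands everything in $S^a\bar S^b$ with transversally parallel coefficients and kills the unwanted coefficients order by order, inductively. You instead contract the Leibniz identity $\bar\partial\hat\alpha_i=\sum_j\bar\partial\alpha_i^j\wedge\bar\partial\bar{\hat{\mathfrak t}}_j$ with a fibre-tangent $\bar X$ and use $i_{\bar X}\bar\partial\hat\alpha_i=0$ from \eqref{ixixbar} together with $i_{\bar X}\bar\partial\bar{\hat{\mathfrak t}}_j=0$ from Lemma \ref{partr} to get $\bar X(\alpha_i^j)=0$ in one stroke; fibrewise holomorphicity of the coefficients is then equivalent to membership in $Hol_S(U)$ in the sense of Definition \ref{hols}, because in a fibrewise parallel frame $S$ restricts to each disc as a holomorphic function with a simple zero (relation \eqref{s0f}) and the Taylor coefficients are parallel along the fibres. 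This is more direct than the coefficient induction, and it makes transparent exactly which of the conditions in \eqref{ixixbar} is used; the inductive expansion, on the other hand, is the pattern that carries over to the later statements where holomorphic and antiholomorphic parts interact (Lemma \ref{lemfive}, Lemma \ref{foot}), so both routes have their use.

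Two points you flagged should be pinned down explicitly. First, the frame must be the conjugate one: the standing hypothesis is that $\{\partial^{\nabla}\hat{\mathfrak t}_1,\dots,\partial^{\nabla}\hat{\mathfrak t}_{n-1},\partial^{\nabla}S\}$ is a frame, so it is $\{\bar\partial\bar{\hat{\mathfrak t}}_j\}$ (completed by $\bar\partial\bar S$) that frames the $\bar L$-valued $(0,1)$-forms annihilated by fibre-tangent $i_{\bar X}$ — consistent with $\alpha_i\in A^{0,1}(\bar L|_{U_0})$ — whereas the literal $\bar\partial\hat{\mathfrak t}_j$ need not be independent and can even vanish along $D$ (e.g.\ if $\mathfrak t_j|_D$ is holomorphic), so the decomposition would not exist in that reading. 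With the conjugate reading your Leibniz step needs $\bar\partial\bigl(\bar\partial\bar{\hat{\mathfrak t}}_j\bigr)=\overline{\partial^{\nabla}\partial^{\nabla}\hat{\mathfrak t}_j}=0$, which holds because the Chern curvature of $|\cdot|_{\mathfrak h}$ is of type $(1,1)$; no curvature terms enter, as you hoped. Second, in the converse the conclusion $\bar X(t_i^j)=0$ requires the $\hat\alpha_j$ to be pointwise linearly independent; this is implicit in the hypothesis that the decomposition $\bar\partial\bar{\hat{\mathfrak t}}_i=\sum_j t_i^j\hat\alpha_j$ exists and is unique, but it should be stated.
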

\subsection{Proof of the Existence and Uniqueness } 
\begin{defi}\label{holpart}
By the holomorphic part of $\alpha$ we mean the summation of those terms in (\ref{alfexp}) which do not contain $\bar{S}$
\[
(\alpha)_{hol}= \sum_{a=0} ^{\infty} \hat{\alpha}_{a,0} S^a
\]
similarly we can define the antiholomorphic part of $\alpha$.
\[
(\alpha)_{antihol}= \sum_{b=0} ^{\infty} \hat{\alpha}_{0,b} \bar{S}^b
\]
\end{defi}
Let $\hat{\mathfrak{f}}\in \hat{\Gamma} (U_1, \mathbb{C})$ be a smooth map. So

\begin{equation}\label{ddbarf}
\partial\bar{\partial} \hat{\mathfrak{f}}= \sum_{i=1} ^{n-1} \gamma^{i,\bar{n}}  \partial^{\nabla}  \hat{\mathfrak{t}}_{i} \wedge \bar{\partial }\bar{S} + \sum_{i=1} ^{n-1}  \gamma^{n,\bar{i}}     \partial^{\nabla} S \wedge \bar{\partial} \bar{\hat{\mathfrak{t}}}_{i} 
 +\sum _{1\leq i<j\leq n-1}\gamma^{i,\bar{j}}  \partial^{\nabla} \hat{\mathfrak{t}}_{i} \wedge  \bar{\partial} \bar{\hat{\mathfrak{t}}}_{j} 
\end{equation}
 where $\gamma ^{i,\bar{j}}\in \Gamma (U_1, \mathbb{C})$ for $1\leq i,j \leq n$.  

\begin{lemma}\label{lemfive}
If we expand $\gamma^{i,\bar{j}}$ for $1\leq i,j \leq n$   
\[
\gamma^{i,\bar{j}} =\sum_{a,b\geq 0}   \hat{\gamma}^{i,\bar{j}} _{ a,\bar{b}}S^a \bar{S}^b
\]
where $  \hat{\gamma}^{i,\bar{j}} _{ a,\bar{b}}\in\hat{\Gamma} (U_1, \bar{L}^{\otimes a}\otimes L^{\otimes b}|_{U_1})$,
then we have
\[
 \hat{\gamma}^{i,\bar{n}}_{a,\bar{b}}=0 \hspace{0.5cm} \text{ for } a>0 \hspace{0.5cm}   \text{ and }\hspace{0.5cm}   \hat{\gamma}^{n,\bar{j}}_{a,\bar{b}}=0 \hspace{0.5cm} \text{ for } b>0
\]

\[
 \hat{\gamma}^{i,\bar{j}} _{ a,\bar{b}}=0 \hspace{0.5cm} \text{for }\hspace{0.5cm}   a,b>0\hspace{0.5cm} \text{ and for }\hspace{0.5cm} 1\leq i,j\leq n-1
\]
 
 This is equivalent to say that  the expansion  of 
\begin{equation}\label{ddbf1}
(\partial \bar{\partial} \hat{\mathfrak{f}})_{S\bar{T}}=((\partial \bar{\partial} \hat{\mathfrak{f}})_{S\bar{T}})_{hol}
\end{equation}
and  
\begin{equation}\label{ddbf2}
(\partial \bar{\partial} \hat{\mathfrak{f}})_{T\bar{S}}=((\partial \bar{\partial} \hat{\mathfrak{f}})_{T\bar{S}})_{antihol}
\end{equation}
    Also non-zero terms in the expansion of  $(\partial \bar{\partial} \hat{\mathfrak{f}})_{T\bar{T}}$ can only occur in its holomorphic and anti holomorphic parts. 
\end{lemma}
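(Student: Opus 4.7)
My plan is to exploit the particularly rigid form of $\partial\bar\partial\hat{\mathfrak f}$ in local coordinates adapted to $\pi_U$ and $S$, then read off all the Taylor coefficients $\hat\gamma^{i,\bar j}_{a,\bar b}$ at once. Working in coordinates $(z^1,\ldots,z^{n-1},w)$ with $D=\{w=0\}$, the condition $\hat{\mathfrak f}\in\hat\Gamma(U_1,\mathbb C)$ says that $\hat{\mathfrak f}$ is constant along fibers: $\hat{\mathfrak f}=f(z,\bar z)$. Hence
\[
\partial\bar\partial\hat{\mathfrak f}=\sum_{i,j=1}^{n-1} f_{i\bar j}(z,\bar z)\, dz^i\wedge d\bar z^j,
\]
a $(1,1)$-form with no $dw$ or $d\bar w$ component whatsoever; this is the crux.

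Next I unwind the four kinds of basis two-forms in (\ref{ddbarf}) using the structural facts already established. The vanishing $\partial^\nabla\hat{\mathfrak t}_k|_{N_p}=0$ in Lemma \ref{partr} precisely means that the $(1,0)$-form $\partial\hat{\mathfrak t}_k$ carries no $dw$ component anywhere on $U_1$; solving the accompanying ODE $\partial_w\hat t_k=-\hat t_k\beta_w$ with $\beta_w=-\partial_w g/g$ gives the explicit normal form $\hat t_k=(\hat t_k(0)/g(0))g$, hence
\[
\partial\hat{\mathfrak t}_k=\tfrac{g}{g(0)}\sum_i A^k_i(z,\bar z,0)\,dz^i\cdot\sigma,
\]
and symmetrically for $\bar\partial\bar{\hat{\mathfrak t}}_l$. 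In contrast, a direct computation using $\beta=-\partial\log g$ shows $\partial^\nabla S=g\,dw\cdot\sigma$ and $\bar\partial\bar S=\bar g\,d\bar w\cdot\bar\sigma$, both supported purely in the fiber directions. After the hermitian pairing $\sigma\otimes\bar\sigma\mapsto 1/|g|^2$ the fiber factors collapse via $(g\bar g)/(g(0)\bar g(0)|g|^2)=1/|g(0)|^2$, giving
\[
\partial\hat{\mathfrak t}_k\wedge\bar\partial\bar{\hat{\mathfrak t}}_l=\tfrac{1}{|g(0)|^2}\sum_{i,j}A^k_i(0)\overline{A^l_j(0)}\,dz^i\wedge d\bar z^j,
\]
independent of $(w,\bar w)$, whereas $\partial\hat{\mathfrak t}_i\wedge\bar\partial\bar S$, $\partial S\wedge\bar\partial\bar{\hat{\mathfrak t}}_j$ and $\partial S\wedge\bar\partial\bar S$ each carry an essential $d\bar w$ or $dw$ factor.

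Matching coefficients against $\sum f_{i\bar j}\,dz^i\wedge d\bar z^j$ then forces $\gamma^{i,\bar n}\equiv\gamma^{n,\bar j}\equiv\gamma^{n,\bar n}\equiv 0$ and leaves $\gamma^{i,\bar j}$ (for $1\le i,j\le n-1$) equal to a function of $(z,\bar z)$ alone. A transversally parallel section of $\bar L^a\otimes L^b$ has trivialization $c_{a,b}(z,\bar z)(g/g(0))^b(\bar g/\bar g(0))^a$, and a short computation reveals that the Taylor term $\hat\gamma^{i,\bar j}_{a,\bar b}S^a\bar S^b$ reduces, after hermitian pairing, to $c_{a,b}\,w^a\bar w^b/(g(0)^b\bar g(0)^a)$; $(w,\bar w)$-independence of the sum therefore forces $c_{a,b}=0$ unless $(a,b)=(0,0)$, a conclusion strictly stronger than what the lemma asserts. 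The only real obstacle is the bookkeeping of the line-bundle and hermitian factors, which in the end all reduce to the identity $(g\bar g)/(g(0)\bar g(0)|g|^2)=1/|g(0)|^2$; every other ingredient follows directly from Lemma \ref{partr} and the explicit construction of $||_{\mathfrak h}$ in the preceding subsection.
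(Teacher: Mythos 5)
Your argument hinges on the assumption that one can choose \emph{holomorphic} coordinates $(z^1,\dots,z^{n-1},w)$ in which the fibration $\pi_U$ is the standard projection, so that a transversally parallel function is literally $\hat{\mathfrak f}=f(z,\bar z)$ and $\partial\bar\partial\hat{\mathfrak f}$ has no $dw$ or $d\bar w$ component. That assumption is not available here: $\pi_U$ is only a \emph{real-analytic} fibration whose individual fibers are holomorphic discs orthogonal to $D$ with respect to $\omega$; the family of discs varies only real-analytically with the base point (see (\ref{piu}), where $\pi_U$ agrees with the coordinate projection only to first order at one point), and orthogonality to a general real-analytic K\"ahler metric generically forces the family to be non-holomorphic. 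Consequently $\hat{\mathfrak f}=f\circ\pi_U$ does depend on the transverse coordinate through the non-holomorphic part of $\pi_U$, and $\partial\bar\partial\hat{\mathfrak f}$ does in general acquire $S\bar T$, $T\bar S$ and $T\bar T$ components. This is why your conclusion comes out ``strictly stronger'' than the lemma: you deduce $\gamma^{i,\bar n}\equiv\gamma^{n,\bar j}\equiv 0$ and that $\gamma^{i,\bar j}$ has only the $(a,b)=(0,0)$ Taylor coefficient, whereas the lemma only restricts \emph{which} powers of $S$ and $\bar S$ may occur. The stronger statement is false in general and would trivialize the surrounding machinery: for instance $\hat{\mathscr C}_{0,\bar 0}$ is itself transversally parallel, yet Lemma (\ref{foot}) is devoted to identifying the generally nonzero component $(\partial\bar\partial\hat{\mathscr C}_{0,\bar 0})_{S\bar T}$ in terms of $(\omega_{S\bar T})_{hol}$, and the coefficients $\eta^{n,\bar i}_{\bar k}$, $\theta^{i,\bar n}_k$ in (\ref{etahol})--(\ref{tetahol}) are likewise nonzero in general, only with constrained expansions.

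There are secondary unjustified steps of the same origin: the normal form $\hat t_k=(\hat t_k(0)/g(0))\,g$ and the claimed $(w,\bar w)$-independence of the coefficients $A^k_i$ (hence of $\partial\hat{\mathfrak t}_k\wedge\bar\partial\bar{\hat{\mathfrak t}}_l$ after the hermitian pairing) again presuppose that parallel transport along the fibers and differentiation in the base directions can be separated in a holomorphic product structure, which the real-analytic fibration does not provide. The paper's own route avoids coordinates altogether: transversal parallelism is used only to see that $\partial\bar\partial\hat{\mathfrak f}$ has no $\partial S\wedge\bar\partial\bar S$ term (this is why (\ref{ddbarf}) omits it), and then the vanishing of the forbidden Taylor coefficients is forced by the $\partial$- and $\bar\partial$-closedness of $\partial\bar\partial\hat{\mathfrak f}$ together with the structural relations for the adapted frame and Lemma (\ref{lemhol}). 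To repair your argument you would either have to prove the fibration can be taken holomorphic (it cannot, in general) or replace the coordinate computation by a closedness/frame argument of this kind.
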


\begin{proof} To prove this we first note that since $\partial\bar{\partial}\hat{\mathfrak{f}}$ is both $\partial $ and $\bar{\partial}$-closed we have
\[
 \hat{\gamma}^{i,\bar{n}}_{a,\bar{b}}=0 \hspace{0.5cm} \text{ for } a>0 \hspace{0.5cm}  \text{ and }  \hspace{0.5cm} \hat{\gamma}^{n,\bar{j}}_{a,\bar{b}}=0 \hspace{0.5cm} \text{ for } b>0
\]
This is because otherwise $\partial\bar{\partial}\hat{\mathfrak{f}}$ would not be $\partial $ or $\bar{\partial }$-closed due to existence of non-zero terms of the  form $\partial^{\nabla} \hat{\mathfrak{t}}_i\wedge \partial S \wedge \bar{\partial }\bar{S}$ or $\bar{\partial} \bar{\hat{\mathfrak{t}}}_j\wedge \partial S \wedge \bar{\partial }\bar{S}$. From this observation we can also  deduce that  if 
there exists non-zero coefficients of the form  $\hat{\gamma}^{i,\bar{j}} _{ a,\bar{b}}$ for  $a,b>0$  and for $1\leq i,j\leq n-1$
 then we would get to a contradiction with $\partial$ or $\bar{\partial}$-closedness of $\partial\bar{\partial}\hat{\mathfrak{f}}$. The proof of (\ref{ddbf1}) and (\ref{ddbf2}) follow from lemma (\ref{lemhol})

\end{proof}

Assume  now that $\partial\bar{\partial }\Phi =\omega|_{U_1}$ where $\Phi:U_1\rightarrow \mathbb{R}$ is defined by (\ref{Phi}) and $\omega$ is the K\"ahler metric on $X$. We set
 \begin{equation}\label{phibarexp}
\Phi=\sum_{l\geq 0} \Phi_{\bar{l}} \bar{S}^l
\end{equation}
 where

\begin{equation}\label{filbar}
\Phi_{\bar{l}} := \sum _{k\geq 0} \hat{\mathscr{C}}_{k\bar{l}}S^k
\end{equation}
 and $\hat{\mathscr{C}}_{k\bar{l}}\in \hat{\Gamma} (L^{\otimes l}\otimes \bar{L}^{\otimes k})$
so we have
\[
\bar{\partial }\Phi_{\bar{l}}= \sum_{k\geq 0} \bar{\partial } \hat{\mathscr{C}}_{k,\bar{l}}  S^k  
\]

then
\[
\partial \bar{\partial} \Phi=\sum_{l\geq 0}  \partial \bar{\partial}\Phi_{\bar{l}}\bar{S}^l +\sum_{l\geq 1} l \partial\Phi_{\bar{l}}\bar{S}^{l-1}\bar{\partial}\bar{S}
\]
This shows that
\begin{equation}\label{ddbarfist}
(\partial \bar{\partial} \Phi)_{S\bar{T}}=\sum_{l\geq 0}  (\partial \bar{\partial}\Phi_{\bar{l}} )_{S\bar{T}}\bar{S}^l 
\end{equation}

Let
\[
\bar{\partial } \hat{\mathscr{C}}_{k,\bar{l}}=\sum_{1\leq i\leq n-1}  \mathscr{C}_{k,\bar{l}} ^{\bar{i}} \bar{\partial } \bar{\hat{\mathfrak{t}}}_i
\]
with   $\mathscr{C}_{k,\bar{l}} ^{\bar{i}} \in \Gamma (\bar{L}^{\otimes (k)}\otimes L^{\otimes (l+1)}) $ for $1\leq i \leq n-1$.
We claim that the Taylor series expansion of $\mathscr{C}_{k,\bar{l}} ^{\bar{i}} $ has the form $
  \mathscr{C}_{k,\bar{l}} ^{\bar{i}} =\sum_{l\geq 0} \hat{  \mathscr{C}}^{\bar{i}} _{j,k,\bar{l}} S^{j}
$ where $\hat{  \mathscr{C}}^{\bar{i}} _{j,k,\bar{l}}\in \hat{\Gamma} (\bar{L}^{\otimes (k+j)}\otimes L^{\otimes (l+1)})$ since if there exists non zero terms containing $\bar{S}$ then $\bar{\partial}\bar{\partial }\hat{\mathscr{C}}_{k,\bar{l}}\neq 0$.
 Therefore $\bar{\partial }\Phi_{\bar{l}}$ can be expanded under the form,


\begin{equation}\label{Phizer}
\bar{\partial }\Phi_{\bar{l}} =\sum_{a\geq 0} (\sum _{\substack{k+j=a\\ 1\leq i\leq n-1}} \hat{  \mathscr{C}}^{\bar{i}} _{j,k,\bar{l}}  \bar{\partial}\bar{\hat{\mathfrak{t}}}_i) S^{a}
\end{equation}


using (\ref{etaa}) we obtain

\begin{equation}\label{parPhiST}
\begin{split}
\partial\bar{\partial }\Phi_{\bar{l}} &=(\sum_{a\geq 1} \sum _{\substack{k+j=a\\ 1\leq i\leq n-1}} a\hat{  \mathscr{C}}^{\bar{i}} _{j,k,\bar{l}} \partial S\wedge   \bar{\partial}\bar{\hat{\mathfrak{t}}}_i ) S^{a-1}+\sum_{a\geq 0} (\sum _{\substack{k+j=a\\ 1\leq i\leq n-1}} \partial{\hat{  \mathscr{C}}^{\bar{i}} _{j,k,\bar{l}} } \wedge \bar{\partial}\bar{\hat{\mathfrak{t}}}_i) S^{a}\\
\quad & +\sum_{a\geq 0}\sum_{\substack{k+j=a\\ 1\leq i\leq n-1}}\hat{  \mathscr{C}}^{\bar{i}} _{j,k,\bar{l}}   S^{a}\eta^{p,\bar{n}} _{\bar{i}}  \partial\hat{\mathfrak{t}}_{p} \wedge \bar{\partial }\bar{S}+\sum_{a\geq 0}\sum_{\substack{k+j=a\\ 1\leq i\leq n-1}}\hat{  \mathscr{C}}^{i} _{j,k,\bar{l}}   S^{a}\eta^{n,\bar{p}}_ {\bar{i}} \partial S\wedge \bar{\partial}\bar{\hat{\mathfrak{t}}}_{p}\\
\quad &+\sum_{a\geq 0}\sum_{\substack{k+j=a\\ 1\leq i\leq n-1}}\hat{  \mathscr{C}}^{i} _{j,k,\bar{l}}   S^{a}\eta^{p,\bar{q}} _{\bar{i}} \partial \hat{\mathfrak{t}}_p\wedge \bar{\partial}\bar{\hat{\mathfrak{t}}}_{q}
\end{split}
\end{equation}

Hence
\begin{equation}\label{Phizer1}
(\partial\bar{\partial }\Phi_{\bar{l}} )_{S\bar{T}}=\sum_{1\leq p \leq n-1}\sum_{a\geq 0} (\sum _{\substack{k+j=a+1\\ 1\leq i\leq n-1}} (a+1)\hat{  \mathscr{C}}^{\bar{p}} _{j,k,\bar{l}}   +\sum_{\substack{k+j=a\\ 1\leq i\leq n-1}}\hat{  \mathscr{C}}^{\bar{i}} _{j,k,\bar{l}}  \eta^{n.\bar{p}} _{\bar{i}}  )S^{a} \partial S\wedge \bar{\partial}\bar{\hat{\mathfrak{t}}}_{p}
\end{equation}
 if $\omega$  is given by
\[
\omega =\sum \hat{\omega}_{a,\bar{b}} S^a \bar{S}^b
\]
where $\hat{\omega}_{a,\bar{b}}\in \hat{\Gamma }(U_1, L^{\otimes a}\otimes \bar{L}^{\otimes b}|_{U_1})$ then  by (\ref{ddbarfist}) we have $\omega_{S\bar{T}}=\sum_b (\partial\bar{\partial}\Phi_{\bar{b}})_{S\bar{T}} \bar{S}^b$ and  by applying (\ref{Phizer1})
 
\begin{equation}\label{alfst1}
(\hat{\omega}_{a,\bar{b}})_{S\bar{T}}=\sum_{1\leq p\leq n-1}\big ( \sum _{\substack{k+j=a+1\\ 1\leq i\leq n-1}} (a+1)\hat{  \mathscr{C}}^{p} _{j,k,\bar{b}}   +\sum_{\substack{k+j=a\\ 1\leq i\leq n-1}}\hat{  \mathscr{C}}^{i} _{j,k,\bar{b}}  \eta_{n.p} ^i  \big )  \partial S\wedge \bar{\partial}\bar{\hat{\mathfrak{t}}}_{p}
\end{equation}

 In particular we obtain

 \begin{equation}\label{alfst1}
(\hat{\omega}_{a,\bar{0}})_{S\bar{T}}=\big ( \sum _{\substack{k+j=a+1\\ 1\leq i\leq n-1}} (a+1)\hat{  \mathscr{C}}^{p} _{j,k,\bar{0}}   +\sum_{\substack{k+j=a\\ 1\leq i\leq n-1}}\hat{  \mathscr{C}}^{i} _{j,k,\bar{0}}  \eta_{n.p} ^i  \big )  \partial S\wedge \bar{\partial}\bar{\hat{\mathfrak{t}}}_{p}
\end{equation}

\begin{lemma}\label{foot}
Let $\Phi:U_1\rightarrow \mathbb{R}$, defined by the relation (\ref{Phi}), be an arbitrary potential for the K\"ahler metric $\omega$,
  \[
 \omega|_{U_1}=\partial \bar{\partial}\Phi  
\]
Then $(\bar{\partial }\Phi_{\bar{0}})^+ :=\bar{\partial }\Phi_{\bar{0}}-\sum _i \hat{\mathscr{C}}^{i} _{0,0,\bar{0}} \bar{\partial}\bar{\hat{\mathfrak{t}}}_i$ 
is independent of the choice of the potential $\Phi$ and is uniquely determined in terms of $(\omega_{S\bar{T}})_{hol}$ (see  relation (\ref{Phizer})). If in addition we set

 \[
\Phi_{\mathcal{I}}:=\sum_{k=0 \text{ or } l=0} \hat{\mathscr{C}}_{k,\bar{l}}S^k \bar{S}^l
\]
then $\partial\bar{\partial}\Phi_{\mathcal{I}}$  is independent of the choice of $\Phi$ and is uniquely determined in terms of 
  $(\omega_{S\bar{T}})_{hol}$ and $(\omega_{T\bar{T}})_{hol}$.  Moreover  $(\partial\bar{\partial}\hat{\mathscr{C}}_{0,\bar{0}})_{S\bar{T}}$   is thoroughly characterized  by $(\omega _{S\bar{T}})_{hol}$
 and $(\partial\bar{\partial}\hat{\mathscr{C}}_{0,\bar{0}})_{T\bar{S}}$ is thoroughly characterized in terms of $(\omega_{T\bar{S}})_{hol}$. Also $(\partial\bar{\partial}\hat{\mathscr{C}}_{0,\bar{0}})_{T\bar{T}}$  is 
characterized in terms of $(\omega_{T\bar{T}})_{hol}$ and $(\omega_{T\bar{T}})_{antihol}$.
\end{lemma}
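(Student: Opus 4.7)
The plan is to exploit the triangular structure of the recursion (\ref{alfst1}) and its analogs for the $T\bar{T}$ slice and the conjugate direction, together with the vanishing relations already established (Lemma \ref{etzer}, (\ref{etahol}), (\ref{etaantihol}), (\ref{tetahol}), (\ref{tetahol1})), to single out which Taylor coefficients of $\Phi$ are pinned down by $\omega$ and which remain free.

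For the first assertion, specialize (\ref{alfst1}) to $\bar{b}=\bar{0}$. The resulting identity expresses $(\hat{\omega}_{a,\bar{0}})_{S\bar{T}}$ in terms of the order-$(a+1)$ combination $\sum_{j+k=a+1}\hat{\mathscr{C}}^{p}_{j,k,\bar{0}}$ with nonzero coefficient $a+1$, plus a linear combination of order-$a$ data weighted by $\eta^{n,\bar p}_{\bar i}$. This is upper-triangular in $a$ and hence invertible: once the initial datum $\sum_i \hat{\mathscr{C}}^{i}_{0,0,\bar{0}}\,\bar\partial\bar{\hat{\mathfrak{t}}}_i$ is fixed, the coefficients of $(\omega_{S\bar{T}})_{hol}$ determine the sums $\sum_{j+k=a}\hat{\mathscr{C}}^{\bar i}_{j,k,\bar{0}}$ inductively for all $a\geq 1$. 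Expression (\ref{Phizer}) at $\bar l = 0$ then shows $\bar\partial\Phi_{\bar{0}}=\sum_{a\geq 0}S^{a}\sum_{i}\bigl(\sum_{j+k=a}\hat{\mathscr{C}}^{\bar i}_{j,k,\bar{0}}\bigr)\bar\partial\bar{\hat{\mathfrak{t}}}_i$, so subtracting the $a=0$ slice yields exactly $(\bar\partial\Phi_{\bar{0}})^{+}$, and this remainder depends only on $(\omega_{S\bar{T}})_{hol}$.

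For $\partial\bar\partial\Phi_{\mathcal{I}}$, well-definedness comes first: two potentials $\Phi,\tilde\Phi$ of $\omega$ differ locally by a pluriharmonic function $H=h+\bar h$ with $h$ holomorphic, and the $(S,\bar S)$-expansions of $h$ and $\bar h$ consist only of pure $S^{k}$ and $\bar S^{l}$ monomials respectively, so $H=H_{\mathcal{I}}$ and $\partial\bar\partial H_{\mathcal{I}}=0$. For the identification with $\omega$-data, use the reality decomposition $\Phi_{\mathcal{I}}=\Phi_{\bar 0}+\bar\Phi_{\bar 0}-\hat{\mathscr{C}}_{0,\bar 0}$. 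The $S\bar T$-slice of $\partial\bar\partial\Phi_{\bar 0}$ is controlled by the previous step; its $T\bar T$-slice is handled by the analogous triangular recursion for $(\hat\omega_{a,\bar 0})_{T\bar T}$, whose invertibility rests on (\ref{tetahol}) and (\ref{tetahol1}); the conjugate piece $\partial\bar\partial\bar\Phi_{\bar 0}$ is governed by the symmetric argument on the antiholomorphic side. Finally $\hat{\mathscr{C}}_{0,\bar 0}$ lies in $\hat\Gamma$ and its $\partial\bar\partial$-image decomposes into three slices whose $a=\bar b=0$ behaviour is read off from the leading order of the $S\bar T$, $T\bar S$, and $T\bar T$ recursions; Lemma \ref{etzer} kills the $\eta$-correction at $D$ so the $a=0$ equation isolates the leading coefficient.

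The three "Moreover" assertions then follow by this same leading-order extraction: the $S\bar T$ slice of $\partial\bar\partial\hat{\mathscr{C}}_{0,\bar 0}$ is read from the $(a,\bar b)=(0,\bar 0)$ component of (\ref{alfst1}), hence from $(\omega_{S\bar T})_{hol}$; the $T\bar S$ slice is read symmetrically from $(\omega_{T\bar S})_{hol}$; and the $T\bar T$ slice requires both $(\omega_{T\bar T})_{hol}$ and $(\omega_{T\bar T})_{antihol}$, because the restriction to $D$ of $\partial\bar\partial\hat{\mathscr{C}}_{0,\bar 0}$ is a genuine $(1,1)$-form whose reconstruction needs mixed holomorphic and antiholomorphic Taylor data. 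The main obstacle is the bookkeeping: the multi-index Taylor coefficients $\hat{\mathscr{C}}^{\bar i}_{j,k,\bar l}$ interact nontrivially with the structure constants $\eta^{\cdot,\cdot}_{\cdot}$ and $\theta^{\cdot,\cdot}_{\cdot}$, and maintaining the upper-triangular form at every stage requires the vanishing relations to be invoked at exactly the right moments, together with Lemma \ref{lemfive} to ensure that the $\bar l$-slices decouple.
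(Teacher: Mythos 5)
Your argument follows essentially the same route as the paper: the inductive, triangular determination of the coefficients $\hat{\mathscr{C}}^{\bar{p}}_{j,k,\bar{0}}$ from (\ref{doalf})/(\ref{alfst1}) using Lemma \ref{etzer} and (\ref{etahol}), the decomposition of $\Phi_{\mathcal{I}}$ into $\Phi_{\bar{0}}$, its conjugate and $\hat{\mathscr{C}}_{0,\bar{0}}$, and Lemma \ref{lemfive} together with reality of $\Phi$ for the $T\bar{T}$ slice. The only addition is your explicit well-definedness step via a pluriharmonic difference $H=h+\bar{h}$ having no mixed $S^{k}\bar{S}^{l}$ monomials, which the paper obtains implicitly by expressing every quantity directly in terms of the data $(\omega_{S\bar{T}})_{hol}$, $(\omega_{T\bar{S}})_{antihol}$ and $(\omega_{T\bar{T}})_{hol/antihol}$.
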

\begin{proof}
According to the definition \ref{holpart})

\begin{equation}\label{khash1}
(\omega_{S\bar{T}})_{hol}=\sum_a( \hat{\omega}_{a,\bar{0}}) _{S\bar{T}}S^a
\end{equation}

From the equation  $\partial \bar{\partial}\Phi = \omega$  and relation (\ref{ddbarfist}) we get  
\begin{equation}\label{khash2}
(\omega_{S\bar{T}})_{hol}=(\partial\bar{\partial}\Phi_{\bar{0}})_{S\bar{T}})_{hol}
\end{equation}

and from  (\ref{Phizer1})  we have

\begin{equation}\label{doalf}
 (\partial\bar{\partial}\Phi_{\bar{0}})_{S\bar{T}}=\sum_a\big ( \sum _{\substack{k+j=a+1\\ 1\leq i\leq n-1}} (a+1)\hat{  \mathscr{C}}^{\bar{p}} _{j,k,\bar{0}}   +\sum_{\substack{k+j=a \\1\leq i\leq n-1}}\hat{  \mathscr{C}}^{\bar{i}} _{j,k,\bar{0}}  \eta^{n.\bar{p}} _{\bar{i}}  \big ) S^a \partial S\wedge \bar{\partial}\bar{\hat{\mathfrak{t}}}_{p} 
\end{equation}

According to the equation (\ref{Phizer1}) and  lemma  (\ref{lemhol}),  
  we see that 
\begin{equation}\label{fi=fihol}
 (\omega_{S\bar{T}})_{hol}=((\partial\bar{\partial}\Phi_{\bar{0}})_{S\bar{T}})_{hol}= (\partial\bar{\partial}\Phi_{\bar{0}})_{S\bar{T}}
\end{equation}
and  from lemma (\ref{etzer}) and  relations (\ref{doalf}) and (\ref{fi=fihol}) , it turns out that the value of the coefficients
  $ \sum _{\substack{k+j=a+1\\ 1\leq i\leq n-1}}\hat{  \mathscr{C}}^{\bar{p}} _{j,k,\bar{0}}|_D$ for $1\leq p \leq n-1$ and for $\{k,j\in \mathbb{N}\cup \{0\}|k+j\geq 1\}$ can be  inductively  determined by  $(\omega_{S\bar{T}})_{hol}$.   Therefore according to (\ref{Phizer})
 $(\bar{\partial }\Phi_0)^+ :=\bar{\partial }\Phi_0-\sum _i \hat{\mathscr{C}}^{i} _{0,0,\bar{0}} \bar{\partial}\bar{\hat{\mathfrak{t}}}_i $ is uniquely determined in terms of  $(\omega_{S\bar{T}})_{hol}$. 

We also know from (\ref{phibarexp}) that $(\omega_{T\bar{T}})_{hol}=((\partial\bar{\partial}\Phi_{\bar{0}})_{T\bar{T}})_{hol} $. 
 Let
 \[
\Phi_{\mathcal{I}}:=\sum_{k=0 \text{ or } l=0} \hat{\mathscr{C}}_{k,\bar{l}}S^k \bar{S}^l
\]
Following the above discussion we know that $\bar{\partial} (\sum_{k=1} ^{+\infty} \hat{\mathscr{C}}_{k,\bar{0}} S^k )$ is determined in terms of $(\omega_{S\bar{T}})_{hol}$. Similarly $\partial (\sum_{l=1} ^{+\infty} \hat{\mathscr{C}}_{0,\bar{l}} \bar{S}^l )$  is determined in terms of $(\omega_{T\bar{S}})_{antihol}$. Therefore 
\[
\partial\bar{\partial} (\sum_{k=1} ^{+\infty} \hat{\mathscr{C}}_{k,\bar{0}} S^k ) \hspace{0.5cm} \text{ and } \hspace{0.5cm} \bar{\partial}\partial (\sum_{l=1} ^{+\infty} \hat{\mathscr{C}}_{0,\bar{l}} \bar{S}^l )
\]

are known if $(\omega_{S\bar{T}})_{hol}+(\omega_{T\bar{S}})_{antihol}$ is fixed. In addition  according to (\ref{doalf}) and (\ref{fi=fihol}) we have
\[
(\omega_{S\bar{T}})_{hol}=(\partial\bar{\partial}\Phi_{\bar{0}})_{S\bar{T}}= (\partial\bar{\partial}\hat{\mathscr{C}}_{0,\bar{0}})_{S\bar{T}}+(\partial\bar{\partial}(\sum_{k=1} ^{+\infty} \hat{\mathscr{C}}_{k,\bar{0}} S^k ))_{S\bar{T}}
\]

Hence we can also characterize $(\partial\bar{\partial}\hat{\mathscr{C}}_{0,\bar{0}})_{S\bar{T}}$   in terms of $(\omega_{S\bar{T}})_{hol}$. Similarly $(\partial\bar{\partial}\hat{\mathscr{C}}_{0,\bar{0}})_{T\bar{S}}$ is determined in terms of $(\omega_{S\bar{T}})_{antihol}$.

Since we also know from (\ref{phibarexp}) that $(\omega_{T\bar{T}})_{hol}=((\partial\bar{\partial}\Phi_{\bar{0}})_{T\bar{T}})_{hol} $ therefore $((\partial\bar{\partial}\hat{\mathscr{C}}_{0,\bar{0}})_{T\bar{T}})_{hol}$ is also
 uniquely determined if we fix $(\omega_{T\bar{T}})_{hol}$ and $(\omega_{S\bar{T}})_{hol}$. This is because $\partial ( \bar{\partial} \Phi_{\bar{0}})^{+}$ is determined in terms of $(\omega_{S\bar{T}})_{hol}$ 
as proved  above and moreover we have 
$((\partial\bar{\partial}\hat{\mathscr{C}}_{0,\bar{0}})_{T\bar{T}})_{hol}=((\partial\bar{\partial}\Phi_{\bar{0}})_{T\bar{T}})_{hol} -((\partial ( \bar{\partial} \Phi_{\bar{0}})^{+})_{T\bar{T}})_{hol}$.
Similarly  $((\partial\bar{\partial}\hat{\mathscr{C}}_{0,\bar{0}})_{T\bar{T}})_{antihol}$ is uniquely determined in terms of $(\omega_{T\bar{T}})_{antihol}$. To complete the proof we  note that according to lemma (\ref{lemfive})
$(\partial\bar{\partial}\hat{\mathscr{C}}_{0,\bar{0}})_{T\bar{T}}$ contains only holomorphic and antiholomorphic parts and $\Phi$ is real valued.
\end{proof}

\begin{lemma}\label{lemhasht}
Any map $\Phi:U\rightarrow \mathbb{C}$ which is  real analytic in  the neighborhood $U$ of $D$ and which is analytic on each of the fibers of $\pi_U$  admits a Taylor series expansion of the form
\begin{equation}\label{Phi1}
\Phi= \sum_{i,j=0} ^{\infty} \hat{\mathscr{C}}_{i,\bar{j}} S^i   \bar{S} ^j
\end{equation}

Moreover if 
\begin{equation}
\Phi=\sum B_{a,\bar{b}}(w_1,...,w_{n-1}) z^a \bar{z}^b
\end{equation}

is a Taylor series expansion in a local holomorphic coordinates system $(w_1,...,w_{n-1},z)$  in which $D=\{z=0\}$, then  

\begin{equation}
\bar{\partial} B_{a,0}=\frac{\partial ^a}{\partial z ^a} \bigg [ \bar{\partial} (\sum_{k=1} ^{+\infty} \hat{\mathscr{C}}_{k,\bar{0}} S^k ) \bigg ]\bigg\vert_{D}
\end{equation}
 for $a\geq 1$. In other words $\bar{\partial} B_{a,0}$ only depends on  $\bar{\partial} (\sum_{k=1} ^{+\infty} \hat{\mathscr{C}}_{k,\bar{0}} S^k ) $ and hence on  $(\omega_{S\bar{T}})_{hol}$.
\end{lemma}

\begin{proof}
Let $\hat{\sigma}_0\in \hat{\Gamma} (U_1 , L|_{U_1})$ be a nowhere vanishing transversally parallel   real analytic section of $L|_{U_1}$  satisfying  $\|\sigma_0\|_{\mathfrak{h}}=1$. We assume that there exists a   holomorphic coordinates system $(w_1,...,w_{n-1},z)$  over $U_1$ such that $D\cap U_1 = \{z=0\}$. We also  assume that $\sigma_0$  is a no-where vanishing   holomorphic section of $L|_{U_1}$.

Let
\[
S= \xi \sigma_0
\] 
where $\xi:U_1 \rightarrow \mathbb{C}$ is a holomorphic map vanishing along $D\cap U_1$ so $\frac{\xi}{z}$ admits a holomorphic extension through $D\cap U_1$. Consider the map $\Phi:U_1\rightarrow \mathbb{C}$ defined by a Taylor series expansion like (\ref{Phi}). 
  Then in  the above coordinates the coefficients  $\hat{\mathscr{C}}_{k,\bar{l}}\in \hat{\Gamma}(L^{\otimes k}\otimes \bar{L}^{l}|_{U_1})$ for  $k,l\in \mathbb{N}\cup \{0\}$ can be represented as
\[
\hat{\mathscr{C}}_{k,\bar{l}}=\hat{C}_{k,\bar{l}} \hat{\sigma}_0 ^{l}\otimes \bar{\hat{\sigma}}_{0} ^l
\]

where $\hat{C}_{k,\bar{l}}=c_{k,\bar{l}}\circ \pi_U$ for some real analytic function $c_{k,\bar{l}}:U_1 \cap D\rightarrow \mathbb{C}$. Thus  if we assume that $\hat{\sigma}_0 =e^{\tau} \sigma_0$  for a smooth map $\tau$ then 
\begin{equation}\label{firep}
\Phi= \sum  \mathcal{C}_{k,\bar{l}}\xi^k \bar{\xi}^l
\end{equation}

where $\mathcal{C}_{k,\bar{l}}=|e^{k\tau+l\bar{\tau}}|^2 \hat{C}_{k,\bar{l}}$. It is thus clear that $\mathcal{C}_{k,\bar{l}}$ is holomorphic and can be determined by its restriction to $D\cap U_1$ as well.
Since $S$ is a holomorphic section of $L$ with simple zero along $D$, we can take the section $\sigma_0$ in such a way that in local coordinates $(w_1,...,w_{n-1},z)$, the function $\xi$   admits the following expansion:
\begin{equation}\label{xis}
\xi=\sum_{p\geq 1, q\geq 0}\xi_{p,\bar{q}}(w_1,...,w_{n-1}) z^p \bar{z}^q
\end{equation}

We also consider the corresponding Taylor series expansion of    $\mathcal{C}_{k,\bar{l}}$ in this coordinates:
\[
\mathcal{C}_{k,\bar{l}}= \sum \mathcal{C}_{k,\bar{l}}^{i, \bar{j}}(w_1,...,w_{n-1})z^i \bar{z}^j
\]
thus we get
\[
\Phi=\sum B_{a,\bar{b}}(w_1,...,w_{n-1}) z^a \bar{z}^b= \sum_{a,b}\big ( \sum_{\substack{i+\sum_{e=1} ^k  p_e + \sum_{f=1} ^l q_{k+f} =a   
\\ j+\sum_{l=1} ^k q_e + \sum _{f=1} ^l p_{k+f}=b}} \mathcal{C}_{k,\bar{l}} ^{i,\bar{j}} \xi_{p_1\bar{q}_1}...\xi_{p_k\bar{q}_k} \bar{\xi}_{p_{k+1}\bar{q}_{k+1}}...\bar{\xi}_{p_{k+l}\bar{q}_{k+l}} \big ) z^a \bar{z}^b
\]
 
where $B_{a,\bar{b}}$ is defined by the second equality above.
Now we claim that any analytic map $\Phi: U_1 \rightarrow \mathbb{C}$ admits  a representation of the form (\ref{Phi}) in a neighborhood of $D$. This is clearly equivalent to obtain an expansion as  (\ref{firep}) in local holomorphic  coordinates.
To see this we first note that $B_{0,\bar{0}} = \mathcal{C}_{0,\bar{0}} ^{0,\bar{0}}=\mathcal{C}_{0,\bar{0}}|_{D}=c_{0,\bar{0}}$. Moreover  since $\mathcal{C}_{0,\bar{0}}= c_{0,\bar{0}}\circ \pi_U$,
$\mathcal{C}_{0,\bar{0}}$ is uniquely determined in terms of $\mathcal{C}_{0,\bar{0}} ^{0,\bar{0}}$. 
Therefore the  coefficient of $z^a \bar{z}^b$  consists of: 
\[
B_{a,\bar{b}}=\mathcal{C}^{0,\bar{0}} _{a,\bar{b}}(\xi_{1,0})^a(\bar{\xi}_{1,0})^b+\sum_{k+l<a+b} \sum_{\substack{i+\sum_{e=1} ^k  p_e + \sum_{f=1} ^l q_{k+f} =a   
\\ j+\sum_{l=1} ^k q_e + \sum _{f=1} ^l p_{k+f}=b}} \mathcal{C}_{k,\bar{l}} ^{i,\bar{j}} \xi_{p_1\bar{q}_1}...\xi_{p_k\bar{q}_k} \bar{\xi}_{p_{k+1}\bar{q}_{k+1}}...\bar{\xi}_{p_{k+l}\bar{q}_{k+l}}
\] 
Thus by induction it is always possible to determine $\mathcal{C}^{0,\bar{0}} _{k,\bar{l}}$  in terms of $B_{a,\bar{b}}$'s from which $\mathcal{C}_{k,\bar{l}}$ is uniquely determined.  It can also be seen that the coefficient  $ \mathcal{C}_{a,\bar{b}}$ depends only on $\{B_{k,\bar{l}}\}_{k+l\leq a+b}$.
We thus obtain  an expansion like (\ref{firep})
with mentioned properties   for $\Phi$.


Also from   (\ref{xis})   we know that   the terms $B_{a,0} z^a$ for $a\geq 1$ is known if we know $\sum_{k=1} ^{+\infty} \hat{\mathscr{C}}_{k,\bar{0}} S^k $ due to the following

\begin{equation}
\frac{\partial ^a}{\partial z ^a} \bigg [ \bar{\partial} (\sum_{k=1} ^{+\infty} \hat{\mathscr{C}}_{k,\bar{0}} S^k ) \bigg ]\bigg\vert_{D\cap U_1}=\bar{\partial} B_{a,0}
\end{equation}
\end{proof}
\subsection{Proof of theorem \ref{theoloc}}

In order to prove the above lemma we first choose  an arbitrary potential $\Phi: U_1\rightarrow \mathbb{R}$ for $\omega$  
\[
\partial\bar{\partial}\Phi = \omega|_{U_1}
\]

  We assume that $\Phi$ has an expansion of the form $\Phi=\sum_{k,l\geq 0}\hat{\mathscr{C}}_{k,\bar{l}}S^k \bar{S}^l$. We define $\omega'_1$ by
\begin{equation}\label{omgp1}
\omega'_1 = \partial\bar{\partial}\Phi' 
\end{equation}
where 
\[
\Phi' = \sum_{k=0 \text{  or } l=0} \hat{\mathscr{C}}_{k,\bar{l}}S^k \bar{S}^l +\Phi''
\]
and $\Phi''$ has the form
\[
\Phi''= \sum_{k>0 \text{  and } l>0} \hat{\mathscr{B}}_{k,\bar{l}}S^k \bar{S}^l 
\]
We then prove that  the  map $\Phi'' $ with the above expansion can be found in such a way that   $\omega'_1 $ defined by relation (\ref{omgp1}) satisfies the DCMA equation (\ref{degmongeq}).  According to lemma (\ref{foot})  we know that $\partial\bar{\partial} (\sum _{k=0 \text{ or } l=0} \hat{\mathscr{C}}_{k,\bar{l}} S^k \bar{S} ^{l} )$  is uniquely determined in terms of $((\omega)_{S\bar{T}})_{hol}$ and $(\omega_{T\bar{T}})_{hol}$.
 Since we have

\[
\omega'_1 = \partial\bar{\partial} \Phi' = \partial\bar{\partial} ( (\sum _{k=0 \text{ or } l=0} \hat{\mathscr{C}}_{k,\bar{l}} S^k \bar{S} ^{l} ))+ \partial\bar{\partial}\Phi''
\]
 the  above lemma is proved  if  we show that   $\partial\bar{\partial}\Phi''$ does not depend on the choice of the potential $\Phi$.
  
To this end we  prove that  the coefficients  $\hat{\mathscr{B}}_{k,\bar{l}}$ for $k,l >0$
are uniquely determined in terms of $\bar{\partial} (\sum _{k=1} ^{\infty} \hat{\mathscr{C}}_{k,\bar{0}} S^k)=(\bar{\partial} \Phi_{\bar{0}})^+$ and $\omega|_D$ and we apply lemma (\ref{foot}).  
Consider a holomorphic coordinates system $(w_1,...,w_{n-1},z)$ in a neighborhood  $U_{1}$  as described in the proof of lemma (\ref{lemhasht})

 If we assume that
\[
\Phi' = \sum B_{i,\bar{j}}z^i \bar{z}^j
\]

 then by lemma (\ref{lemhasht}) we know that   $B_{k,\bar{0}}$ for $k\geq 1$ is determined in terms of   $(\omega_{S\bar{T}})_{hol}$.

 It is also clear that $(\partial\bar{\partial} B_{0,\bar{0}}|_{D})=(\omega |_{D}) (p)$.  In the coordinates $(w_1,...,w_{n-1},z)$  we have
\[
\partial\bar{\partial} \Phi=  \sum_{1\leq i,j\leq n-1} g'_{i,\bar{j}}  dw_i \wedge d\bar{w}_{j}+ \sum_{1\leq i\leq n-1} g'_{i\bar{n}}dw_i \wedge d\bar{z} +\sum_{1\leq i\leq n-1} g'_{n\bar{j}}dz\wedge d\bar{w}_j  +g'_{n\bar{n}}dz\wedge d\bar{z} 
\]
 therefore

\begin{equation}\label{g'nn}
g'_{n\bar{n}}=\sum B_{i\bar{j}}z^{i-1}\bar{z}^{j-1}
\end{equation}
\begin{equation}\label{g'jn}
g'_{j\bar{n}}=\sum l\frac{\partial B_{kl}}{\partial w_j}z^k\bar{z}^{l-1}
\end{equation}

\begin{equation}\label{g'ni}
g'_{n\bar{i}}= \sum k\frac{\partial B_{kl}}{\partial \bar{w}_i}z^{k-1}\bar{z}^l
\end{equation}

\begin{equation}\label{g'ij}
g'_{i\bar{j}}=\sum \frac{\partial^2 B_{kl}}{\partial w_i\partial\bar{w}_j}  z^{k}\bar{z}^l
\end{equation}

From (\ref{g'ni}) and (\ref{g'jn}) we obtain
\begin{equation}\label{g'nig'}
g'_{n\bar{i}}g'_{j\bar{n}}=\sum_{\substack{r+p=k\\s+q=l}}(r+1)(q+1)\frac{\partial B_{r+1 s}}{\partial \bar{w}_i}\frac{\partial B _{p q+1}}{\partial w_j} 
\end{equation}





 we define   the matrix $G=[g_{i\bar{j}}]_{n\times n}$    as follows

\[
G= \left [\begin{array}{ccc|c}
& & & g'_{1\bar{n}}\\
&G_0&& \vdots\\
& & &g'_{n-1\bar{n}}\\ \hline
g'_{n\bar{1}}&\hdots &g'_{n\overline{n-1}}& g'_{n\bar{n}}
\end{array}\right ]
\]
Thus

\begin{equation}\label{ann0}
\det G = g'_{n\bar{n}}\det [G_0]+  \sum_{i=1} ^{n-1} (-1)^{i+n}  g'_{n\bar{i}}\sum_{j=1} ^{n-1} (-1)^{j+n-1}g'_{j\bar{n}}(-1)^{i+j}M^{j, i}
\end{equation}

where $M^{j,i}$ is the $(j ,i )$ cofactor of $G_0$. Therefore we get to
\begin{equation}\label{matma}
g'_{n\bar{n}}\det [G_0]=\det G-  \sum_{i=1} ^{n-1} (-1)^{i+n}  g'_{n\bar{i}}\sum_{j=1} ^{n-1} (-1)^{j+n-1}g'_{j\bar{n}}(-1)^{i+j}M^{j, i}
\end{equation}

If $\Phi'$ satisfies the Monge Amp\`ere equation (\ref{degmongeq}) then $\det G$ corresponds to  the term $F_1 (\omega|_{V_1})^n$  in the right hand side of (\ref{degmongeq})  written in local coordinates. We consider the corresponding Taylor series expansion   $\det G= \sum_{k,l=0} ^{\infty} (\det G)_{k,\bar{l}} z^k \bar{z}^{l}$   of $\det G$ in terms of $(z, \bar{z})$ where the coefficients $(\det G )_{k,\bar{l}}$ depend only on $w_1,...,w_{n-1}$. Substituting   (\ref{g'nig'}) into  (\ref{matma}) leads to

\begin{equation}\label{bkl}
\begin{split}
B_{k+1. \bar{l+1}}=&\frac{(\det G )_{k, \bar{l}}}{(k+1)(l+1)\det G_0 } - \sum_{\substack{r+p=k\\s+q=l}} \frac{(r+1)(q+1)}{(k+1)(l+1)}\langle \bar{\partial} B_{r+1,\bar{s}} , \partial B_{p,\bar{q+1}}\rangle\\
\end{split}
\end{equation}
where the inner product $\langle, \rangle $ is the one induced by the restriction  $\omega'|_{D}=\omega|_D$.
 
 Applying the operator $\partial$ on both sides of (\ref{bkl}) we obtain

\[
\begin{split}
\partial B_{k+1.\bar{ l+1}}=&\partial (\frac{(\det G )_{k,\bar{l}}}{(k+1)(l+1)\det G_0 } )- \sum_{\substack{r+p=k\\s+q=l}} \frac{(r+1)(q+1)}{(k+1)(l+1)}\langle \partial\bar{\partial} B_{r+1,\bar{s}} , \partial B_{p,\bar{q+1}}\rangle\\
\end{split}
\]

Thus we can inductively determine all the coefficients $B_{k+1,\bar{l+1}}$ for $k,l\geq 0$  in terms of $\bar{\partial}B_{k,\bar{0}}$'s and  $\partial B_{0,\bar{l}}$'s  for $k,l\geq 1$ as well as  $\det G_0$.
If we can prove the convergence of the Taylor series whose coefficients are inductively determined according to (\ref{bkl}) then the  theorem will follow from   lemmas (\ref{foot}) and (\ref{lemhasht}).

In order to prove the convergence of the series $\sum_{k,l} B_{k,\bar{l}} z^k \bar{z}^l$ we assume by induction that  the series $\sum_{\substack{k\geq 0\\ l\leq n}}(D_{i_1},...,D_{i_{j_1}}\bar{D}_{i_{j_1+1}},...,\bar{D}_{i_j}B_{k,\bar{l}}) z^k \bar{z}^l$ is convergent for any choice of  the operators $D_{i_1},...,D_{i_j}\in \{\frac{\partial }{\partial w_1},...,\frac{\partial }{\partial w_{n-1}}\}$ and for all $l\leq m$. Thus there exists $R$ such that 
\begin{equation}\label{indh}
\limsup_{k}\big (|D_{i_1},...,\bar{D}_{i_j}B_{k,\bar{l}}|^{\frac{1}{k+l}} \big ) < R \hspace{0.5cm} \text{ for } \hspace{0.5cm}   k\geq 0  \text{ and } l\leq m
\end{equation}
and since $\det G$ is analytic we can choose $R$ in such a way that
\[
|(\det G)_{k,\bar{l}}|<R^{k+l} ,\hspace{0.5cm} \text{ for all } k,l \geq 0
\]

We want to prove that

\begin{equation}\label{indh2}
\limsup_{k}|D_{i_1},...,D_{i_j}B_{k,\bar{m+1}}|^{\frac{1}{k+m+1}}< R \hspace{0.5cm}  
\end{equation}
for all $k\geq 0 $ for all $j\geq 0$ and for all the choices of the operators $D_{i_1},...,D_{i_j}$.
 We set 
\begin{equation}\label{bkna}
A_{k, l}:=\max\{ | B_{k ,\bar{l}}|, |\bar{\partial}  B_{k ,\bar{l}}| \}
\end{equation}
We prove (\ref{indh2}) for $j=0$ the higher order case follows similarly. From (\ref{bkl}) we have
 
\[
\begin{split}
A_{k+1, \bar{m+1}} &\leq \frac{R^{k+m}}{(k+1)(m+1)\det G_0}+ \sum_{\substack{r+p=k\\s+q=m\\q<m}} \frac{(r+1)(q+1)}{(k+1)(m+1)}|\langle \bar{\partial} B_{r+1,\bar{s}} , \partial B_{p,\bar{q+1}}\rangle|\\
\quad &+  \sum_{\substack{r+p=k}}\frac{(r+1)}{(k+1)}|\langle \bar{\partial} B_{r+1,\bar{0}} , \partial B_{p,\bar{m+1}}\rangle|\\
\quad & \leq   \frac{R^{k+m}}{(k+1)(m+1)\det G_0}+  \frac{(k+2)m}{4}R^{k+m+2}+\sum_{\substack{0\leq p\leq k \\ p+r=k}}  \frac{(r+1)}{(k+1)}A_{p,m+1}R^{r+1}
\end{split}
\]

where in the last line we are applying the induction  hypothesis (\ref{indh}) and  the relation (\ref{bkna}) and we deduce

 \begin{equation}\label{ak1ie}
A_{k+1, m+1}\leq  \frac{R^{k+m}}{(k+1)(m+1)\det G_0}+\frac{(k+2)m}{4}R^{k+m+2}+\sum_{p=0} ^k\frac{k-p+1}{k+1} A_{p,m+1}R^{k-p+1},\hspace{1cm} k\geq 0
\end{equation}

  We define $\alpha_{p,q}$ by
\begin{equation}\label{aalf}
A_{p,q}= \alpha_{p,q}R^{p+q}
\end{equation}
 Then from the   inequality (\ref{ak1ie}) we have
\begin{equation}\label{ak2ie}
\alpha_{k+1, m+1}R^{k+m+2}\leq \frac{R^{k+m}}{(k+1)(m+1)\det G_0}+[\frac{(k+2)m}{4} +  \sum_{p=0} ^k\frac{k-p+1}{k+1} \alpha_{p,m+1} ]R^{k+m+2}
\end{equation}

We define the sequence $\{\beta_{k,m+1}\}$ as follows

  \begin{equation}\label{betalfa}
\beta_{k,m+1}:=\sum_{p=0} ^k \alpha_{p,m}
\end{equation}

Hence 

\[
\sum_{p=0} ^k \beta_{p,m+1}=\sum_{p=0} ^k  (k-p+1)\alpha_{p,m+1}
\]
 If $c$ is an upper bound for  $\frac{1}{(k+1)(m+1)\det G_0 \times R^2}$ then from  (\ref{ak2ie}) we conclude that  
\begin{equation}\label{bet-bet}
\beta_{k+1,m+1}-\beta_{k,m+1}\leq c+\frac{(k+2)m}{4}+\frac{\sum_{p=0} ^k \beta_{p,m+1}}{k+1}
\end{equation}
If  we  define the sequence  $\{\gamma_{k,m+1}\}$ by

\begin{equation}\label{gamabeta}
\gamma_{k,m+1}:=\sum_{p=0} ^k \beta_{p, m+1}
\end{equation}
 
then from (\ref{bet-bet}) we deduce that

\begin{equation}\label{gamkn}
\gamma_{k+1,m+1}-(2+\frac{1}{k+1})\gamma_{k,m+1}+\gamma_{k-1,m+1}\leq  c+\frac{(k+2)m}{4}
\end{equation}

It  can be easily verified that the sequence $\{\delta_{k,m+1}\}_{k\geq 0}$ defined by
\begin{equation}\label{deltasarih}
\delta_{k,m+1}=-(c+\frac{3m}{4})(k+1)- \frac{m}{4} (k+1)^2
\end{equation}

satisfies
\[
\delta_{k+1,m+1}-(2+\frac{1}{k+1})\delta_{k,m+1}+\delta_{k-1,m+1}=c+\frac{(k+2)m}{4}
\]

Thus if we define the sequence $\{\eta_{k,m+1}\}_{k\geq 0}$ by 
\begin{equation}\label{etagamadelta}
\eta_{k,m+1}=\gamma_{k,m+1}-\delta_{k,m+1}
\end{equation}
 
then the inequality  (\ref{gamkn}) will be equivalent to 
\[
\eta_{k+1,m+1}-(2+\frac{1}{k+1})\eta_{k,m+1}+\eta_{k-1,m+1}\leq 0
\]

Now we take a  fixed  $k_0$ and we define  the sequence $\{u^{k_0} _j\}_{j\geq k_0-1}$ by
\[
u^ {k_0}_{j+1}-(2+\frac{1}{k_0+1})u^{k_0} _{j}+u^{k_0} _{j-1}=0, \hspace{1cm} u^{k_0}_{k_0-1}=\eta_{k_0-1, m+1},  u^{k_0}_{k_0}=\eta_{k_0, m+1}
\]

 then it is not difficult to see that

\begin{equation}\label{tetatakhmin}
\eta_{i,m+1} \leq u^{k_0} _i \hspace{1cm} \text{for } i\geq k_0-1
\end{equation}

This follows from the fact that if  $b_k-a_k \geq b_{k-1}-a_{k-1}\geq 0$ then 
\[
((2+\frac{1}{k+1})b_{k}-b_{k-1})-((2+\frac{1}{k+1})a_{k}-a_{k-1})\geq b_k-a_k
\]

On the other hand we know that

\begin{equation}\label{usarih}
u^{k_0} _j = a(1+\frac{1+\sqrt{4k_0+5}}{2(k_0+1)})^j + b(1+\frac{1-\sqrt{4k_0+5}}{2(k_0+1)})^j
\end{equation}

where $a$ and $b$ are determined in terms of $u^{k_0} _{k_0-1}$ and $u^{k_0} _{k_0}$. From  (\ref{deltasarih})  ,   (\ref{etagamadelta}) and (\ref{tetatakhmin}) we have

\begin{equation}\label{gamanam}
0\leq \gamma_{k,m+1}\leq u^{k_0} _{k} +|\delta _{k, m+1}|
\end{equation}

  From (\ref{betalfa}) and(\ref{gamabeta}) we see that
\begin{equation}\label{alfagama}
\alpha_{k,m+1}= \gamma_{k, m+1}-2\gamma_{k-1,m+1}+\gamma_{k-2, m+1}
\end{equation}
It then follows from (\ref{usarih}), (\ref{gamanam})and  (\ref{alfagama}) that
\begin{equation}\label{lsalf}
\limsup_{k\rightarrow \infty} (\alpha_{k,m+1})^{\frac{1}{k+m+2}}\leq 1+\frac{c_0}{\sqrt{k_0+1}}
\end{equation}

for  some  $c_0$ independent of $k$, $m$ and $k_0$. Since $k_0$ is arbitrary therefore from (\ref{aalf}) and (\ref{lsalf}) we conclude that
\[
\limsup_{k} (B_{k,m+1})^{\frac{1}{k+m+1}}<R
\]

If $\tilde{F}>0$ outside $D$ and $\tilde{F}$ vanishes along $D$ since it is assumed to be analytic we have $\tilde{F} = |S|^{2k}e^F$ for an analytic map $F:U\rightarrow \mathbb{R}$. We prove the lemma for the case $k=1$ the argument is similar for $k>1$. Following lemma (\ref{rela}) in the appendix \ref{app5} we can take a holomorphic coordinates system in the neighborhood of a point $p\in D$  in such a way that $p=(0,...,0)$ in this coordinates system and  the corresponding  potential $\Phi' _1$ of $\omega' _1$  in this neighborhood satisfies $\bar{\partial} B_{1,\bar{0}}|_D (p)= \partial B_{0,\bar{1}}|_{D} (p)= \bar{\partial} B_{1,\bar{1}}|_D (p)= \partial B_{1,\bar{1}}|_{D} (p)=0$.  By the same lemma in this coordinates we have $g'_{w_i \bar{z}}=O(|z|^3)$ and $g'_{z\bar{z}}=O(|z|^2)$
for $i=1,...,n-1$, along the fiber $w_1=...=w_{n-1}=0$. Therefore the  positivity of the matrix $G$ in this coordinates for small values of $|z|$
is obvious.
\\

 \textbf{Remark} It is well-known that in each K\"ahler class there exists an analytic metric hence we can apply the above result for 
constructing a degenerate K\"ahler metric as in the following section on a general K\"ahler manifold.

\section{Construction of Global degenerate K\"ahler metrics}\label{sec3}

In this section we construct globally defined K\"ahler metrics $\omega'$ in each K\"ahler class $[\omega]\in H^{1,1} (X)$ resolving a DCMA equation   of the form 
\[
{\omega ' }^ n = |S|^2 e^F \omega^n
\]

such that $\omega'|_D$ defines  a  K\"ahler metric over $D$. 
 Here   $F$ is a smooth map $F: X\rightarrow \mathbb{R}$ which can be determined in terms of $\omega'$.

   The idea is to glue a local degenerate  metric constructed by theorem (\ref{theoloc}) in a sufficiently small neighborhood of $D$,  to $\omega$ outside  another sufficiently small neighborhood of $D$.


 
\begin{lemma}\label{lem9}
For any $0<m<2$
there exists  a function $h:[0,+\infty)\rightarrow \mathbb{R }$  and positive numbers $0<\epsilon<\epsilon '$ such that $h(x)=x^2$, for $0\leq x<\epsilon $, $h(x) = 0$ for $x>\epsilon '$ and $x\rightarrow x^2 -h(x)$ is a convex function.  Moreover if $u$ is defined by $u=\frac{x^2 -h}{x^2}$ then $u$ is an increasing  function    satisfying $x^2 u'' +4xu' +mu\geq 0$ and it is strictly positive on $[\epsilon, \epsilon']$.
\end{lemma}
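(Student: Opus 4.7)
The plan is to reformulate everything in terms of $v(x):=x^{2}-h(x)$ and $u(x):=v(x)/x^{2}$, and then write down a one-parameter ansatz for $v$ on the transition interval $[\epsilon,\epsilon']$. From $v=x^{2}u$ one gets the bookkeeping identity
\[
v''\;=\;x^{2}u''+4xu'+2u,
\]
so the differential inequality $x^{2}u''+4xu'+mu\ge 0$ is equivalent to
\[
x^{2}v''\;\ge\;(2-m)\,v.
\]
Convexity of $v$ is just $v''\ge 0$; monotonicity of $u$ is $(v/x^{2})'\ge 0$; and the boundary data for $h$ become $v\equiv 0$ on $[0,\epsilon]$ and $v(x)=x^{2}$ on $[\epsilon',\infty)$.

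On the middle interval I would try the ansatz $v(x)=c(x-\epsilon)^{p}$, and fix $c$ and $p$ by matching $v$ and $v'$ to $x^{2}$ and $2x$ at $x=\epsilon'$. Dividing the two matching equations yields
\[
p\;=\;\frac{2(\epsilon'-\epsilon)}{\epsilon'}\;=\;2\Bigl(1-\tfrac{\epsilon}{\epsilon'}\Bigr),\qquad c\;=\;\frac{\epsilon'^{\,2}}{(\epsilon'-\epsilon)^{p}}.
\]
For $p>1$ the matching at the left endpoint is automatic since $v(\epsilon)=v'(\epsilon)=0$. Convexity of the piecewise $v$ is then transparent: on $(\epsilon,\epsilon')$ one has $v''=cp(p-1)(x-\epsilon)^{p-2}\ge 0$, while $v'$ is continuous at both junctions by construction. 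A direct quotient-rule computation gives
\[
u'(x)\;=\;\frac{c(x-\epsilon)^{p-1}\bigl((p-2)x+2\epsilon\bigr)}{x^{3}};
\]
the bracket vanishes exactly at $x=2\epsilon/(2-p)=\epsilon'$ and is positive on $[\epsilon,\epsilon')$, so $u'\ge 0$ on $[\epsilon,\epsilon']$. Positivity of $u$ on $(\epsilon,\epsilon']$ is immediate from $v>0$ there.

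What remains is the differential inequality itself. Substituting the ansatz, $x^{2}v''\ge(2-m)v$ reduces to
\[
p(p-1)\,x^{2}\;\ge\;(2-m)(x-\epsilon)^{2},
\]
which, because $x\ge x-\epsilon$, is implied by the single algebraic condition $p(p-1)\ge 2-m$, equivalently $p\ge p^{\ast}:=(1+\sqrt{9-4m})/2$. For $m\in(0,2)$ one has $p^{\ast}\in(1,2)$, so the condition can be arranged by taking the ratio $\epsilon/\epsilon'$ sufficiently small, namely
\[
\frac{\epsilon}{\epsilon'}\;\le\;\frac{3-\sqrt{9-4m}}{4}.
\]
The expected main obstacle lies precisely here: this admissible band shrinks as $m\to 0$ and expands only up to $1/2$ as $m\to 2$, so one must commit at the outset to a ratio $\epsilon'/\epsilon$ that depends on $m$. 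All other checks (convexity across the two junctions, integrability of $v''$ near $\epsilon$ when $p<2$, and strict positivity of $u$ on $[\epsilon,\epsilon']$, possibly after shifting the left endpoint by an arbitrarily small amount to promote $u(\epsilon)=0$ to $u(\epsilon)>0$) are then routine.
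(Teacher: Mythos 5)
Your reformulation is correct and your construction does verify the lemma, but by a genuinely different and much more economical route than the paper. The paper keeps $h$ as the unknown, translates the requirements into the pair of inequalities $m+\tfrac{(2-m)h}{x^2}\ge h''$ and $2h\ge xh'$, and then builds $\tilde h=h'$ by hand from the flat bump $2x+\tfrac{ab}{(x-\epsilon)^3}e^{-b/(x-\epsilon)^2}$, reflecting about the maximum point and chasing the parameters $a,b,\epsilon$ (with $\epsilon\to\infty$) through a long case analysis. You instead pass to $v=x^2-h$, observe that the ODE inequality is exactly $x^2v''\ge(2-m)v$, and interpolate by the single power law $v=c(x-\epsilon)^p$ with $C^1$ matching to $x^2$ at $\epsilon'$; everything then collapses to $p=2(1-\epsilon/\epsilon')$ and the one algebraic condition $p(p-1)\ge 2-m$, i.e.\ $\epsilon/\epsilon'\le\tfrac{3-\sqrt{9-4m}}{4}$, and your identities for $u'$ (vanishing of the bracket exactly at $\epsilon'$) and for the convexity of the spliced $v$ check out. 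What your approach buys is transparency: the dependence of the admissible ratio $\epsilon/\epsilon'$ on $m$ is explicit, whereas the paper only gets existence for $\epsilon$ large. The issue of $u(\epsilon)=0$ versus the claimed strict positivity on $[\epsilon,\epsilon']$ is present in the paper's own construction as well, so your endpoint-shifting remark is the right (and needed) repair of the statement, not a defect of your argument.

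The one substantive caveat is regularity. Your matching forces $1<p<2$, so $v''$ (hence $u''$ and $h''$) blows up like $(x-\epsilon)^{p-2}$ as $x\to\epsilon^+$ and jumps at $\epsilon'$: the differential inequality holds only away from the two junctions (equivalently a.e.\ or distributionally), and $u$ is merely $C^{1}$. The paper's heavier exponential-bump construction is precisely what buys smoothness of $h$ across the inner junction, and this matters downstream: in Proposition 2 and Proposition 3 the second derivatives $\alpha_{rr}$, i.e.\ $u''$, enter the coefficients of the glued form $\omega_\lambda=\omega+\partial\bar\partial(\beta_\lambda\Phi)$, so an unbounded $u''$ would produce a glued "metric" with unbounded Hessian near $\{\lambda|S|=\epsilon\}$. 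So before your lemma can be used as the paper uses it, you must carry out the smoothing you call routine — e.g.\ round off the junctions using that $x^2v''>(2-m)v$ holds with strict margin on both sides of each junction (near $\epsilon$ the left side blows up while the right side vanishes, and beyond $\epsilon'$ one has margin $m x^2$), while checking that the mollification does not destroy $xv'-2v\ge 0$ near $\epsilon'$, where this quantity vanishes identically; that last point is where the "routine" step actually requires an argument.
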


\begin{proof} As can be seen by a simple computation the required inequality is equivalent to
\[
(x^2 u)''\geq (2-m)u
\]

Substituting $u$ in terms of $h$ we get to 

\[
(x^2 -h)''\geq (2-m) (1-\frac{h}{x^2})
\]
 
or equivalently

\begin{equation}\label{hm}
m+\frac{(2-m)h}{x^2}\geq h''
\end{equation}

Moreover $u$ is increasing iff $(1-\frac{h}{x^2})'\geq 0$ which simplifies to
\begin{equation}\label{hinc}
2h>xh'
\end{equation}

Therefore  if we can find $\tilde{h}: \mathbb{R}\rightarrow \mathbb{R}$ for which  the following five properties hold 

1) 
\begin{equation}\label{p1}
\tilde{h}(x) = 2x  \text{ for } x<\epsilon
\end{equation}

2) 
\begin{equation}\label{p2}
 \tilde{h}(x) = 0 \text{ for }  x>\epsilon'
\end{equation}

3) 
\begin{equation}\label{p3}
|\tilde{h}'(x)|\leq m +\frac{(2-m)\int_ 0 ^x \tilde{h}}{x^2}
\end{equation}

4) 
\begin{equation}\label{p4}
\int_0 ^{\epsilon'} \tilde{h}(x) dx =0 
\end{equation}
 
5) 
\begin{equation}\label{p5}
2\int_0 ^x \tilde{h}>x\tilde{h}
\end{equation}


Then (\ref{hm}) and (\ref{hinc}) will be established for  the map $h:=\int_0 ^x \tilde{h}$ and   lemma \ref{lem9} is proved.

To prove the existence of $\tilde{h}$ we note that  inequality (\ref{p3}) in property (3) is automatically satisfied for $x\leq \epsilon$ due to the fact that by property 1 we have  $\tilde{h}(x) = 2x$.
 
We first extend the map $\tilde{h} (x)=2x$ initially defined on the interval $[0,\epsilon ]$ to a map defined on a  larger interval $[0, x_0 ]$ for some $x_0>\epsilon $
  in such a way that  it becomes concave and increasing  on $[\epsilon, x_0]$ and  satisfies the inequality of \ref{p3}. Moreover we require that the following relation holds: 
\[
\lim_{t\rightarrow 0^-} \frac{\tilde{h}(x_0-t) -\tilde{h}(x_0)}{t}=0
\]
Then we extend $\tilde{h}$ over the whole interval $[0, 2x_0]$ by setting
\[
 \tilde{h}(x)=\tilde{h}(2x_0 -x) \hspace{0.5cm}\text{for}  \hspace{0.5cm} x_0\leq x\leq 2x_0
\]
 Our construction will be so that the strict inequality 

\begin{equation}\label{p3strict}
|\tilde{h}'(x)|< m +\frac{(2-m)\int_ 0 ^x \tilde{h}}{x^2}
\end{equation}
holds for $0\leq x\leq 2x_0$.

In order to define $\tilde{h}$ for $x>2x_0$ we first take a small $\epsilon_0$ and we extend $\tilde{h}$ on $[x_0 , x_0 + \epsilon _0]$ such that the inequality $(\ref{p3})$ still holds on this interval and at the same time the derivative $|\tilde{h}' (2x_0+ \epsilon_0)|$ becomes smaller than $m$. The map  $\tilde{h}'$ is continued  beyond $2x_0 +\epsilon $ so that the ineqauality $|\tilde{h}' (x)|<m$ keeps holding true,   $\int_0 ^x \tilde{h} \geq 0$, for all $x$ and, $\int_0 ^x \tilde{h}= 0$ if  $x\geq \epsilon '$ for some $\epsilon '$.

     The inequality (\ref{p5}) is automatically true for $x>2x_0$  since its right hand side is negative. Therefore all we need to do is to establish  the two inequalities (\ref{p3}) and (\ref{p5})  over the interval $[0,2x_0]$. In order to

  To this end we set

\begin{equation}\label{tilh}
\tilde{h}(x) = 2x+ \frac{ab}{(x-\epsilon)^3} e^{-\frac{b}{(x-\epsilon)^2}}1_{x\geq \epsilon}
\end{equation}

where $a<0<b$ are the parameters to be determined and $1_{x\geq \epsilon}$  denotes the characteristic function of 
the
 interval $[\epsilon, +\infty)\subset\mathbb{R}$.

We first  prove that  $a,b$ and $\epsilon$  can be chosen in such a way that $\tilde{h}$   on the interval  $x\in [\epsilon , x_0]$  satisfies the properties 1-5, where  $x_0$ is the point at which the maximum of $\tilde{h}$ occures; $\tilde{h}'(x_0)=0$. The derivative $\tilde{h}$ is given by

\[
\tilde{h}'(x)=2+[-\frac{3ab}{(x-\epsilon)^4} +\frac{2ab^2}{(x-\epsilon)^6}]e^{-\frac{b}{(x-\epsilon)^2}}
\]

thus $\tilde{h}'(x_0)=0$ iff
\[
e^{-\frac{b}{(x_0-\epsilon)^2}}=-\frac{2(x_0-\epsilon)^6}{2ab^2-3ab (x_0-\epsilon)^2}
\]

or

\begin{equation}\label{x0}
e^{-\frac{b}{(x_0-\epsilon)^2}}=(-\frac{2}{ab})\frac{(x_0-\epsilon)^6}{2b-3 (x_0-\epsilon)^2}
\end{equation}

The map $x\rightarrow (-\frac{2}{ab})\frac{(x-\epsilon)^6}{2b-3 (x-\epsilon)^2}$ on the right hand side  of (\ref{x0}) is positive increasing for $\epsilon<x <\epsilon+\sqrt{\frac{2b}{3}}$ and it is negative for 
$x >\epsilon+\sqrt{\frac{2b}{3}}$. Also the function $x\rightarrow e^{-\frac{b}{(x-\epsilon)^2}}$ on the left hand side  of (\ref{x0})
is increasing  for $x\geq \epsilon$ and takes values in the interval $(0,1)$.

The inequality
\begin{equation}\label{ineq3}
|\tilde{h}'(x)|\leq m +\frac{(2-m)\int_{0} ^x \tilde{h}}{x^2}
\end{equation}

holds if and only if

\[
\frac{2ab^2-3ab (x-\epsilon)^2}{(x-\epsilon)^6}e^{-\frac{b}{(x-\epsilon)^2}}\leq  \frac{(2-m)a}{2x^2}e^{-\frac{b}{(x-\epsilon)^2}}
\]

or

\begin{equation}\label{nms}
(2-m)(x-\epsilon)^6\leq 4b^2 x^2 -6b (x-\epsilon)^2 x^2
\end{equation}

which is equivalent to 

\begin{equation}\label{nms2}
\begin{split}
(2-m)(x-\epsilon)^6 &\leq  (4b^2- 6b(x-\epsilon )^2)  \bigg[(x-\epsilon)^2+2\epsilon (x-\epsilon)+\epsilon ^2\bigg ]
\\
\quad & = -6b(x-\epsilon)^4-12b\epsilon (x-\epsilon)^3+[-6b\epsilon ^2+4b^2](x-\epsilon)^2+8\epsilon b^2 (x-\epsilon)+4b^2\epsilon ^2
\end{split}
\end{equation}

We claim that it is 
possible to  determine the  parameters $a$ and $b$ and $\epsilon$ in such a way that the equation (\ref{x0}) admits a solution  for $x_0$ and  moreover  the   inequality (\ref{nms2}) is valid for 
$\epsilon\leq x\leq x_0$.

To prove this claim we fix $b$ and we determine $a$ such that (\ref{x0}) admits a solution denoted by $x_0$. We then prove that for $\epsilon$ large enough  the inequality (\ref{nms2}) holds on the interval $[\epsilon, x_0]$
 
 if we set $y:=x-\epsilon$ and we regroup the inequality  (\ref{nms2}) in terms of $\epsilon$ we obtain
\begin{equation}\label{nms22}
[4b^2-6by^2]\epsilon ^2+[8b^2y-12by^3]\epsilon+[-(2-m)y^6 +4b^2y^2-6by^4]\geq 0
\end{equation}
 Similarly  the equation  (\ref{x0})  can be stated   as
\begin{equation}\label{x0y}
e^{-\frac{b}{y^2}}=-\frac{2}{ab}\frac{y^6}{2b-3y^2}
\end{equation}

We note that the  function  $y\rightarrow e^{-\frac{b}{y^2}}$ on  left hand side of the equation (\ref{x0y}) takes its values  in the interval $[0,e^{-3/2}]$ when $y\in [0, \sqrt{\frac{2b}{3}}]$. Also the function
$y\rightarrow (-\frac{2}{ab})\frac{2y^6}{2b-3 y^2}$ on the right hand side for $y$ in the interval 
$[0, \sqrt{\frac{2b}{3}}]$ starts from $0$ and tends towards $+\infty$ as $y$ approaches $ \sqrt{\frac{2b}{3}}$. Thus for any fixed $b$
 if $|a|$  is large enough the equation (\ref{x0y}) (and hence (\ref{x0})) will admit a solution  $y_0$  in the interval $(0,\sqrt{\frac{2b}{3}})$ 
\[
y_0\in (0, \sqrt{\frac{2b}{3}})
\]

This is equivalent to say that for any fixed $b>0$ there exists $a<0$ such that the equation (\ref{x0}) admits a solution  
\[
x_0\in (\epsilon, \epsilon+\sqrt{\frac{2b}{3}})
\]

 since $0<y_0< \sqrt{\frac{2b}{3}}$, for $0\leq y\leq y_0$ we have $4b^2-6by^2\geq 4b^2-6by_0^2>0$ thus for a fixed $b$ and for $0\leq y\leq y_0$
the inequality (\ref{nms22}) will be established if $\epsilon $ is chosen to be large enough.


As mentioned before e extend  the function $\tilde{h}$ defined on the interval $[0, x_0]$ by (\ref{tilh}) according to 
\[
\tilde{h} (x)= \tilde{h}(2x_0-x) \hspace{1cm} \text{for }  x_0\leq x\leq 2x_0 
\]
 
and we can assume that for $x>2x_0$ we have $h(x_0)\leq 0$. Therefor  the inequality of property 5 is established for $x\geq 2x_0$ and we need to prove it for $0\leq x\leq 2x_0$.

\[
\int_0 ^x \tilde{h} = x^2+\frac{1}{2} ae^{-\frac{b}{(x-\epsilon)^2}}
\]

 For the inequality $2\int_0 ^x \tilde{h}\geq x\tilde{h}$  of property 5, it is obviously true if $0\leq x \leq \epsilon$.  If $\epsilon< x\leq x_0$  we have

\[
\int_0 ^x \tilde{h} = x^2+\frac{1}{2} ae^{-\frac{b}{(x-\epsilon)^2}}
\]
and the inequality
 
\[
2x^2+ ae^{-\frac{b}{(x-\epsilon)^2}}\geq 2x^2 + \frac{abx}{(x-\epsilon )^3}e^{-\frac{b}{(x-\epsilon)^2}}
\]
 
holds if 
 
\[
(x-\epsilon)^3\leq bx =b (x-\epsilon)+ b\epsilon
\]

By the assumption $\epsilon< x\leq x_0$ the left hand side of the above equation depends only on $b$ and the right hand side grows to infinity as $\epsilon \rightarrow \infty$.
  
For $x_0\leq x \leq 2x_0$ we use the relation $h(x)=h(2x_0-x)$ to derive:

\[
\begin{split}
\int_0 ^x \tilde{h}&= 2\int_0 ^{x_0}\tilde{h}-\int_0^{2x_0-x}\tilde{h}\\
\quad & =  2x_0 ^2+ ae^{-\frac{b}{(x_0-\epsilon)^2}}- (2x_0-x)^2-\frac{1}{2} ae^{-\frac{b}{(2x_0-x-\epsilon)^2}} 1_{2x_0-x>\epsilon}
\end{split}
\]

and by definition of (\ref{tilh}) we have

\[
x\tilde{h}(2x_0-x)= 2x(2x_0-x)+\frac{abx}{(2x_0-x-\epsilon)^3}e^{-\frac{b}{(2x_0-x-\epsilon)^2}}1_{2x_0-x>\epsilon}
\]

hence the inequality (\ref{p5}) of property 5 is equivalent to 
\[
4x_0 ^2+ 2ae^{-\frac{b}{(x_0-\epsilon)^2}}- 2(2x_0-x)^2- ae^{-\frac{b}{(2x_0-x-\epsilon)^2}} 1_{2x_0-x>\epsilon}
\geq  2x(2x_0-x)+\frac{abx}{(2x_0-x-\epsilon)^3}e^{-\frac{b}{(2x_0-x-\epsilon)^2}}1_{2x_0-x>\epsilon}
\]

or 

\begin{equation}\label{nequ1}
 4x_0 (x-x_0)\geq - 2ae^{-\frac{b}{(x_0-\epsilon)^2}} +\big [ ae^{-\frac{b}{(2x_0-x-\epsilon)^2}} +
  \frac{abx}{(2x_0-x-\epsilon)^3}e^{-\frac{b}{(2x_0-x-\epsilon)^2}}\big ]1_{2x_0-x>\epsilon}
\end{equation}

Fixing a positive constant $\delta >0 $ such that $\delta< x_0 -\epsilon$ and assuming that $x_0<x<x_0+\delta$ then one can see that
\begin{equation}\label{2x0x}
2x_0-x-\epsilon >x_0-\delta-\epsilon >0
\end{equation}

and thus the right hand side of (\ref{nequ1}) is upper estimated as follows
\[
\begin{split}
- 2&ae^{-\frac{b}{(x_0-\epsilon)^2}} +\big [ ae^{-\frac{b}{(2x_0-x-\epsilon)^2}} +
  \frac{abx}{(2x_0-x-\epsilon)^3}e^{-\frac{b}{(2x_0-x-\epsilon)^2}}\big ]1_{2x_0-x>\epsilon}\\
\quad & \leq  \frac{(2x_0-x-\epsilon)^3
\big [ - 2ae^{-\frac{b}{(x_0-\epsilon)^2}} +ae^{-\frac{b}{(x_0-\delta -\epsilon)^2}} \big ]+
  abx_0e^{-\frac{b}{(x_0-\delta-\epsilon)^2}} }{(2x_0-x-\epsilon)^3}
\end{split}
\]

We also know that $x_0 <\epsilon +\sqrt{\frac{2b}{3}}$ thus we have
\[
e^{-\frac{b}{(x_0-\epsilon)^2}}\leq e^{-3/2} \leq 1
\]
and
\[
e_0:=e^{-\frac{b}{(\sqrt{\frac{2b}{3}}-\delta)^2}}\leq e^{-\frac{b}{(x_0-\delta -\epsilon)^2}}\leq 1
\]

\begin{equation}\label{avdo}
\begin{split}
 &\frac{(2x_0-x-\epsilon)^3
\big [ - 2ae^{-\frac{b}{(x_0-\epsilon)^2}} +ae^{-\frac{b}{(x_0-\delta -\epsilon)^2}} \big ]+
  abx_0e^{-\frac{b}{(x_0-\delta-\epsilon)^2}} }{(2x_0-x-\epsilon)^3}\\
\quad & \leq \frac{(2x_0-x-\epsilon)^3
\big [ - 2a +a e_0 \big ]+
  abx_0e_0 }{(2x_0-x-\epsilon)^3}
\end{split}
\end{equation}
utilizing  $x_0-\epsilon <\sqrt{\frac{2b}{3}}$, $x_0-x<0$ and (\ref{2x0x}) yields
\begin{equation}\label{av1}
0< 2x_0 -x-\epsilon<\sqrt{\frac{2b}{3}}
\end{equation}

and 

\begin{equation}\label{do2}
bx_0 >b\epsilon
\end{equation}

from (\ref{av1}) and (\ref{do2}) we conclude that for $\epsilon $ large enough the right hand side of (\ref{avdo}) becomes negative.
Therefore the inequality (\ref{nequ1}) is established for $x_0<x<x_0+\delta$ if $\epsilon$ is taken to be large enough.

If $x_0+\delta< x< 2x_0$, then   we have $4x_0 (x-x_0)> 4\epsilon \delta$ and  thus the left hand side of (\ref{nequ1}) tends to $+\infty
 $ as $\epsilon \rightarrow +\infty$ while the right hand side remains bounded. Hence the inequality (\ref{nequ1}) is again established for large values of $\epsilon$.
This completes the proof of lemma (\ref{lem9})
\end{proof}

We now set 
\begin{equation}\label{alph1}
\alpha:= 1-u
\end{equation}

 and 

\begin{equation}\label{alph}
\alpha_{\lambda } (x):= \alpha(\lambda x)
\end{equation}

where  $u$ is  defined by  lemma (\ref{lem9}) and  $\lambda\in \mathbb{R}$ is a real parameter. 
 We  assume that
\begin{equation}\label{tilal}
 \tilde{\alpha}_{\lambda} (r):=\alpha[\lambda (ar+ O(r^2))]
\end{equation}
 where $a>0$ is a constant, and $ar +O(r^2)$ is a first order Taylor series expansion of a smooth function which depends only on the  variable $r$ in polar coordinates system $(r,\theta)$.

\begin{prop}\label{222}
Let $f,g:\mathbb{R}^2 \rightarrow \mathbb{R}$ be two strictly subharmonic functions such that $f(0,0)=g(0,0)$ and $Df(0,0)=Dg(0,0)$.  Let also
\begin{equation}\label{hessi}
 Hess (g)(0,0)>Hess (f)(0,0)\geq 0
\end{equation}

and 

\begin{equation}\label{hessi}
Hess(g-f) (0,0)>m_0>0
\end{equation}
 for some positive constant $m_0$. Then for large values of $\lambda$ the map $T_{\lambda}:\mathbb{R}^2 \rightarrow \mathbb{R}$
defined by 
\[
 T_{\lambda}:=\tilde{\alpha}_{\lambda} f+ (1-\tilde{\alpha}_{\lambda}) g
\]

  is a  strictly subharmonic map which glues $f$ and  $g$  together in a small neighborhood of  the origin $(0,0)$.
\end{prop}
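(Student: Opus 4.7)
The plan is to set $h := g - f$ so that $T_\lambda = g - \tilde{\alpha}_\lambda h$, and to use the expansion
\begin{equation*}
\Delta T_\lambda = \Delta g - \tilde{\alpha}_\lambda \Delta h - 2\tilde{\alpha}_\lambda'(r)\,\partial_r h - h\bigl(\tilde{\alpha}_\lambda''(r) + r^{-1}\tilde{\alpha}_\lambda'(r)\bigr),
\end{equation*}
which is valid because $\tilde{\alpha}_\lambda$ is radial. Outside the transition annulus $\tilde\alpha_\lambda$ is constant equal to $0$ or $1$, so $T_\lambda$ coincides with $g$ or $f$ and is strictly subharmonic by hypothesis. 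Hence all the work is on the annulus $r \in [\epsilon/(\lambda a),\epsilon'/(\lambda a)]$ (modulo the $O(1/\lambda^2)$ perturbation of the argument of $\alpha$), where $r = O(1/\lambda)$.

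On this annulus I would Taylor-expand $h = \tfrac{1}{2} r^2 Q(\theta) + O(r^3)$ with $Q(\theta) := \operatorname{Hess}(h)_{0}(\hat r,\hat r)$ satisfying $m_0 \leq Q(\theta) \leq \Delta h(0,0)$ by the hypotheses $\operatorname{Hess} h(0,0) > m_0$ and $\operatorname{Hess} g(0,0) > \operatorname{Hess} f(0,0) \geq 0$. The key identity is $\partial_r h = 2h/r + O(r^2)$, which is what lets the two ``mixed'' terms combine cleanly with the pure second-derivative term. Writing $s = \lambda(ar+O(r^2))$ and using $\tilde{\alpha}_\lambda' = \lambda a\,\alpha'(s)+O(1)$, $\tilde{\alpha}_\lambda'' = \lambda^2 a^2\alpha''(s)+O(\lambda)$, a direct substitution gives
\begin{equation*}
-2\tilde{\alpha}_\lambda'\partial_r h - h\bigl(\tilde{\alpha}_\lambda'' + r^{-1}\tilde{\alpha}_\lambda'\bigr) = -\frac{Q(\theta)}{2}\bigl(s^2\alpha''(s) + 5s\,\alpha'(s)\bigr) + O(1/\lambda).
\end{equation*}

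Now I would invoke Lemma \ref{lem9}. With $\alpha = 1-u$ the inequality $s^2 u'' + 4s u' + m u \geq 0$ becomes $s^2\alpha''(s) + 4s\alpha'(s) \leq m(1-\alpha(s))$, and since $u$ is increasing we have $\alpha'(s) \leq 0$, so $s^2\alpha''(s)+5s\alpha'(s) \leq m(1-\tilde\alpha_\lambda)$. Combining with $\Delta g - \tilde\alpha_\lambda\Delta h = \tilde\alpha_\lambda\Delta f + (1-\tilde\alpha_\lambda)\Delta g$ and $Q(\theta)\leq \Delta h(0,0)$,
\begin{equation*}
\Delta T_\lambda \geq \tilde\alpha_\lambda\,\Delta f(0,0) + (1-\tilde\alpha_\lambda)\bigl(\Delta g(0,0) - \tfrac{m}{2}\Delta h(0,0)\bigr) - O(1/\lambda).
\end{equation*}
Strict subharmonicity of $f$ forces $\Delta f(0,0)>0$, whence $\Delta g = \Delta f+\Delta h > \Delta h$, so the coefficient $\Delta g - \tfrac{m}{2}\Delta h$ is strictly positive for any $m \in (0,2)$; and such an $m$ is exactly what Lemma \ref{lem9} provides. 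Taking $\lambda$ large enough to absorb the error then yields $\Delta T_\lambda > 0$ throughout the annulus.

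The main technical obstacle is the rigorous bookkeeping of the $O(1/\lambda)$ remainders. Two sources contribute: the $O(r^3)$ tail in the Taylor expansion of $h$ multiplies $\Delta\tilde\alpha_\lambda = O(\lambda^2)$, producing $O(r^3\lambda^2) = O(1/\lambda)$ on the annulus; and the $O(r^2)$ perturbation inside the argument of $\alpha$ in \eqref{tilal} contributes subleading pieces once $\tilde\alpha_\lambda$ is differentiated twice in $r$. Both are harmless because the factor $h = O(r^2)$ suppresses one power of $\lambda$, but a careful verification is needed. A secondary point is that $\Delta f(0,0)>0$ has to be bounded away from zero uniformly over the (shrinking) annulus, which follows from continuity of $\Delta f$ and strict subharmonicity.
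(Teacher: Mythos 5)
Your proof is correct and follows essentially the same route as the paper: the same product-rule expansion of $\Delta T_{\lambda}$, the same rescaling $s=\lambda(ar+O(r^2))$ reducing the singular terms to the quantity $s^2\alpha''(s)+5s\alpha'(s)$, and the same appeal to Lemma \ref{lem9} (the inequality $x^2u''+4xu'+mu\geq 0$ together with monotonicity of $u$), with the $O(1/\lambda)$ remainders absorbed for large $\lambda$ exactly as in the paper. The only deviation is in the final bookkeeping: where the paper expands $\Delta(g-f)$ into $4c+c''$ and picks $m$ subject to a condition involving $c(\theta)$, you keep $(1-\tilde{\alpha}_{\lambda})\Delta g$ intact and use $Q(\theta)\leq \Delta h(0,0)$, so that any $m\in(0,2)$ works --- a slightly cleaner endgame, but the same argument.
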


\begin{proof} We have
 
\begin{equation}\label{delt}
\begin{split}
\Delta T_{\lambda} &= \Delta \tilde{\alpha}_{\lambda} (f-g) + 2(\tilde{\alpha}_{\lambda})_r (f_r - g_r )+ \tilde{\alpha}_{\lambda} \Delta f + (1- \tilde{\alpha}_{\lambda})\Delta g\\
\quad &= ((\tilde{\alpha}_{\lambda})_{rr}+ \frac{1}{r} (\tilde{\alpha}_{\lambda})_r ) (f-g)+ 2(\tilde{\alpha}_{\lambda})_r (f_r- g_r) +\tilde{\alpha}_{\lambda} \Delta f + (1- \tilde{\alpha}_{\lambda})\Delta g\\
\quad &= ((\tilde{\alpha}_{\lambda})_{rr}+ \frac{1}{r} (\tilde{\alpha}_{\lambda})_r ) (f-g)+ 2(\tilde{\alpha}_{\lambda})_r (f_r- g_r) + (1- \tilde{\alpha}_{\lambda})(\Delta g-\Delta f) +\Delta f
\end{split}
\end{equation}

and from (\ref{tilal}) it follows that 
\[
\frac{d}{dr}(\tilde{\alpha} _{\lambda} (r) )  = \lambda (a+O(r) )\alpha_r [\lambda (ar+ O(r^2))]
\]
and
\[
\frac{d^2}{dr^2}(\tilde{\alpha} _{\lambda} (r) )=\lambda O(1)   \alpha_r [\lambda (ar+ O(r^2))]+ \lambda^2 (a+O(r) )^2\alpha_{rr} [\lambda (ar+ O(r^2))]
\]

hence if we assume that $g-f= c(\theta)r^2+O(r^3)$ then since from (\ref{hessi}) it is known  that $c(\theta)>0$,  one can deduce

\[
\begin{split}
\Delta T_{\lambda}&= - (a+O(r))^2 \alpha_{rr}(\lambda (ar+ O(r^2))) )[c(\theta)(\lambda  r)^2 +\lambda^2 O(r^3)] \\
\quad & - O(1) \alpha_r (\lambda (ar+ O(r^2)))[c(\theta)\lambda r^2 +\lambda O(r^3)]\\
\quad &-(a+O(r))\alpha_{r}(\lambda (ar+ O(r^2)) )[c(\theta) (\lambda r )+\lambda O(r^2) ] \\
\quad  &-4 (a+O(r))\alpha _r (\lambda (ar+ O(r^2))) [c(\theta)(\lambda r) +\lambda O(r^2)]+ \\ 
\quad & \bigg (1-\alpha_{\lambda} \big ( \lambda (ar+ O(r^2)) \big ) \bigg ) (4c(\theta )+c''(\theta )+O(r))+\Delta f\\
\quad &=\bigg [-  (a+O(r))^2\alpha_{rr}(\lambda (ar+ O(r^2))) (c(\theta)(\lambda r)^2  )-(a+O(r))\alpha_{r}(\lambda (ar+ O(r^2)))(c(\theta) (\lambda r))\\
\quad &  -4(a+O(r)) \alpha _r (\lambda (ar+ O(r^2))) c(\theta)(\lambda r )\\
\quad & +\bigg (1-\alpha_{\lambda} \big ( \lambda (ar+ O(r^2)) \big ) \bigg )(4c(\theta )+c''(\theta )+O(r))+\Delta f\bigg ]\\
\quad &- (a+O(r))^2 \alpha_{rr}(\lambda (ar+ O(r^2))) )[\lambda^2 O(r^3)]  - O(1) \alpha_r (\lambda (ar+ O(r^2))) [\lambda O(r^2)] \\
\quad & -(a+O(r))\alpha_{r}(\lambda (ar+ O(r^2)) )[ \lambda O(r^2) ] +-4 (a+O(r))\alpha _r (\lambda (ar+ O(r^2))) [\lambda O(r^2)] \\
\end{split}
\]

 Substituting $R:=\lambda r  $ and $u(r) :=1-\alpha (r)$ where $u$ is introduced  by relation (\ref{alph1})  yields

\begin{equation}\label{pert}
\begin{split}
\Delta T_{\lambda}&=a^2 c(\theta)u''(aR+O(\frac{R}{\lambda}))R^2+5au' (aR+O(\frac{R}{\lambda}))c(\theta) R+(4c+c'')u(aR+O(\frac{R}{\lambda}))+\Delta f+O(\frac{R}{\lambda})\\
\quad & =c(\theta)[u''(aR+O(\frac{R}{\lambda})) (aR)^2+ 4u' (aR+O(\frac{R}{\lambda}))(aR)] +u'(aR+O(\frac{R}{\lambda}))c(\theta)(aR)\\
\quad & + (4c+c'')u(aR+O(\frac{R}{\lambda}))+\Delta f+O(\frac{R}{\lambda}))
\end{split}
\end{equation}

 The hypothesis $\Delta (g-f) (0.0)>m_0$ (\ref{hessi}) implies that  $c''+ 4c>  m_0$.  Hence  $m<2$  can be chosen such that it satisfies
\begin{equation}\label{c''}
c''+4c> (2-m)c
\end{equation}

 This is becase  we can find    $m$   such that $\frac{m_0}{2-m}>c$.  We also know
that  $c(\theta)=\frac{\partial ^2 (g-f)}{\partial r^2} (0,0)>0$ which follows by  positivity of $Hess (g-f)$. Since $u$ is increasing (lemma \ref{lem9}) we have 
\begin{equation}\label{u'}
u'(R)c(\theta)R\geq 0 
\end{equation}
from relations (\ref{pert}), (\ref{c''}) and (\ref{u'}) and lemma (\ref{lem9}) one can  conclude that $\Delta T_{\lambda}>0$  for large values of $\lambda$ on 
$\epsilon  \leq R\leq \epsilon '$.

 \end{proof}

 Let $\beta_{\lambda}:X\rightarrow \mathbb{R}$ be defined by 
\[
\beta_{\lambda} (x)=\alpha_{\lambda} (|S|)
\]
 where $S$ is as before the holomorphic section of the line bundle  $L=[D]$ with simple zero along 
$D$. 
Let $\Phi : U\rightarrow \mathbb{R}$ be  a real analytic map defined in a neighborhood $U$ of $D$ such that 
$\omega':=(\omega+\partial\bar{\partial}\Phi )|_{U}$  defines a degenerate K\"ahler metric as described in theorem (\ref{theoloc}).

\begin{prop}\label{pgl1}
For large enough $\lambda$ the $(1,1)$-form $\omega_{\lambda}$ given by
\[
\omega_{\lambda}:=  \omega+\partial\bar{\partial}(\beta_{\lambda} \Phi )
\]
 defnes  a K\"ahler metric  on $X\setminus D$ gluing together the  degenerate metric $(\omega+\partial\bar{\partial}\Phi )|_{U_1}$ in a neighborhood $U_1$ of $D$ and $\omega|_{X\setminus U_2}$ outside a neighborhood $U_2$ of $D$ with $U_1\subset U_2$. 
\end{prop}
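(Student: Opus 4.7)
The plan is to verify positivity of $\omega_\lambda$ on each of the three zones determined by $\beta_\lambda$: the interior $U_1=\{|S|<\epsilon/\lambda\}$ where $\beta_\lambda\equiv 1$, so $\omega_\lambda=\omega+\partial\bar{\partial}\Phi=\omega'$, positive on $U_1\setminus D$ by Theorem~\ref{theoloc}; the exterior $X\setminus U_2$ with $U_2=\{|S|<\epsilon'/\lambda\}$ where $\beta_\lambda\equiv 0$, so $\omega_\lambda=\omega$; and the transition annulus $A=U_2\setminus U_1$. Everything reduces to strict positivity of $\omega_\lambda$ on $A$.

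On $A$ I work in local holomorphic coordinates $(w_1,\dots,w_{n-1},z)$ at a point $p\in D$ adapted so that $D=\{z=0\}$, $N_p=\{w=0\}$, and $\partial/\partial z(p)\perp D$ for both $\omega$ and $\omega'$ (Remark~\ref{rem1}). The Hermitian matrix of $\omega_\lambda$ splits into the tangential $(n{-}1)\times(n{-}1)$ block $A_{w\bar{w}}$, the $1\times 1$ normal block $C_{z\bar{z}}$, and the mixed block $B_{w\bar{z}}$. The normal block is handled fiberwise by Proposition~\ref{222}: along $N_p$ we have $|S|=a|z|+O(|z|^2)$ with $a>0$, so $\beta_\lambda|_{N_p}$ realizes the model cutoff $\tilde{\alpha}_\lambda$. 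Taking $f=\phi_0+\Phi|_{N_p}$ and $g=\phi_0$, where $\phi_0$ is a local potential for $\omega|_{N_p}$, the Hessian hypotheses of Proposition~\ref{222} are verified by Remark~\ref{rem1}: $(\omega')_{z\bar{z}}(p)=0<\omega_{z\bar{z}}(p)$, with a uniform lower bound on $D$. Proposition~\ref{222} then yields $\partial_z\partial_{\bar{z}}(\phi_0+\beta_\lambda\Phi|_{N_p})>0$ uniformly on $A\cap N_p$ for $\lambda$ large, so $C_{z\bar{z}}>0$ with a uniform positive lower bound.

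To upgrade fiberwise positivity to positivity of the full matrix on $A$, I apply the Schur complement criterion: it suffices to show $A_{w\bar{w}}-B_{w\bar{z}}C_{z\bar{z}}^{-1}B_{w\bar{z}}^{*}>0$. Expanding
\begin{equation*}
\partial\bar{\partial}(\beta_\lambda\Phi)=\beta_\lambda(\omega'-\omega)+\partial\beta_\lambda\wedge\bar{\partial}\Phi+\partial\Phi\wedge\bar{\partial}\beta_\lambda+\Phi\,\partial\bar{\partial}\beta_\lambda,
\end{equation*}
and using the bound $\partial_{w_i}|S|=O(|S|)$ together with the Remark~\ref{rem1} vanishings $(\omega-\omega')|_{TD}=0$ and $\omega_{w_i\bar{z}}(p)=\omega'_{w_i\bar{z}}(p)=0$, one checks that the tangential block $A_{w\bar{w}}$ stays uniformly positive definite and the mixed block $B_{w\bar{z}}$ remains controlled relative to $C_{z\bar{z}}$; combined with the uniform lower bound on $C_{z\bar{z}}$, the Schur complement is strictly positive for $\lambda$ sufficiently large.

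The main obstacle is the control of the tangential and mixed blocks in the transition annulus, where normal derivatives of $\beta_\lambda$ blow up at rates $\lambda$ and $\lambda^2$. The essential cancellation in the normal direction is supplied by the specific cutoff $u$ from Lemma~\ref{lem9}, encoded in Proposition~\ref{222}; positivity of the remaining blocks then rests on the geometric normalizations from Theorem~\ref{theoloc} and Remark~\ref{rem1}, which force the tangential components of $\omega-\omega'$ and the mixed entries $\omega_{w\bar{z}},\omega'_{w\bar{z}}$ to vanish on $D$. Without these normalizations the perturbation estimates would fail.
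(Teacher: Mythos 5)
Your overall route is the one the paper takes: positivity is only at stake in the transition annulus $\epsilon\le\lambda|S|\le\epsilon'$; there the normal (fiber) direction is handled by Proposition \ref{222} applied to the restricted potentials, the tangential and mixed entries are controlled through the normalizations of Theorem \ref{theoloc} and Remark \ref{rem1} (equivalently, $\Phi=\Phi_1-\Phi_0=O(|S|^2)$ with no purely holomorphic or antiholomorphic terms), and the conclusion is a Cauchy--Schwarz type comparison of diagonal against mixed entries, which is your Schur-complement step. The only genuine difference is that the paper works in the transversally parallel $S/T$ decomposition, where $(\partial\beta_\lambda)_T$, $(\partial\bar{\partial}\beta_\lambda)_{T\bar{T}}$ and $(\partial\bar{\partial}\beta_\lambda)_{S\bar{T}}$ vanish identically (relation (\ref{betlatt})), whereas in your generic coordinates the corresponding terms are only $O(|S|)$-small; that is harmless.

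The step you should not pass over is the mixed block. The term $\partial\beta_\lambda\wedge\bar{\partial}\Phi$ contributes $\lambda\,O(|S|^2)$ to $B_{w\bar{z}}$ (this is the paper's (\ref{filamst})), i.e. a quantity of size $R\,|S|$ with $R=\lambda|S|\in[\epsilon,\epsilon']$, whose constant does not improve as $\lambda\to\infty$. If the only lower bound on $C_{z\bar{z}}$ is the degenerate one $C_{z\bar{z}}\gtrsim|S|^2$ --- which is all that the statement of Proposition \ref{222}, or the paper's (\ref{filamss}), actually provides --- then the Schur quotient $|B_{w\bar{z}}|^{2}/C_{z\bar{z}}$ is merely bounded, not small, and positivity of the Schur complement is not forced. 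Your claim of a ``uniform positive lower bound'' on $C_{z\bar{z}}$ is exactly what rescues the argument, but it is not a consequence of Proposition \ref{222} as stated; it must be extracted from its proof, namely from the term $(4c+c'')u$ in (\ref{pert}) together with the strict positivity of $u$ on $[\epsilon,\epsilon']$ from Lemma \ref{lem9} and compactness of $D$, which give a $\lambda$-independent lower bound for the fiberwise $\partial_z\partial_{\bar z}$ of the glued potential on the annulus. Once you state and use that quantitative bound (the paper itself is terse at this very point), your Schur step closes, since then $|B_{w\bar{z}}|^{2}/C_{z\bar{z}}=O(|S|^{2})\to 0$ as $\lambda\to\infty$, while $A_{w\bar{w}}$ stays uniformly positive.
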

\begin{proof}
Let $U_p$ be a neighborhood of a point $p\in D$
 and assume that $\Phi_0:U_p\rightarrow \mathbb{R}$ is a potential for $\omega|_{U_{p}}$:
\begin{equation}\label{fi0}
\omega|_{U_p}= \partial\bar{\partial}\Phi_0
\end{equation}

We assume that on $U_p$ there exists a potential $\Phi_{1}:U_p\rightarrow \mathbb{R}$ for $\omega'$ 

\begin{equation}\label{fi0}
\omega'|_{U_p}= \partial\bar{\partial}\Phi_1
\end{equation}

According to theorem (\ref{theoloc})  $\Phi_1$ can be chosen  in such a way that
\begin{equation}\label{fizfik}
(\Phi_0)_{hol}= (\Phi_1)_{hol} \hspace{0.5cm} \text{ and }\hspace{0.5cm} (\Phi_0)_{antihol}= (\Phi_1)_{antihol}
\end{equation}
from  theorem  (\ref{degmongeq}) we also know  that
\begin{equation}\label{fizfik1}
(\omega_{T\bar{T}})_{hol}=(\omega'_{T\bar{T}})_{hol}\hspace{0.5cm} \text{and} \hspace{0.5cm} (\omega_{S\bar{T}})_{hol}= (\omega' _{S\bar{T}})_{hol}
\end{equation}

 Consider the map $\Phi_{\lambda}:U_{p}\rightarrow \mathbb{R}$ defined by 
\[
\Phi_{\lambda}:=\beta_{\lambda}\Phi_1+ (1-\beta_{\lambda} )\Phi_0 :U_{p}\rightarrow \mathbb{R}
\]

As we have seen in  theorem (\ref{theoloc}) $\Phi=\Phi_1-\Phi_0$   does not depend on the choice of $\Phi_0$ and $\Phi_1$. Therefore
from the obvious identity $\beta_{\lambda}\Phi_1+ (1-\beta_{\lambda} )\Phi_0= \Phi_0+ \beta_{\lambda}\Phi$  we obtain
\[
\omega_{\lambda}=\partial\bar{\partial}\Phi_{\lambda} = \omega+ \partial\bar{\partial}(\beta_{\lambda}\Phi)
\]

and this  shows that $\partial\bar{\partial}\Phi_{\lambda} $ is well-defined on the neigborhood $U$ of $D$. Now according to (\ref{fizfik})
we have $(\Phi_{\lambda})_{hol}=(\Phi_0)_{hol}= (\Phi_1)_{hol}$
 and $(\Phi_{\lambda})_{antihol}=(\Phi_0)_{antihol}= (\Phi_1)_{antihol}$. Again from theorem (\ref{degmongeq}) it follows that
\[
((\omega_{\lambda})_{S\bar{T}})_{hol}= ((\omega)_{S\bar{T}})_{hol} \hspace{0.5cm} \text{ and }\hspace{0.5cm}((\omega_{\lambda})_{T\bar{T}})_{hol}= ((\omega)_{T\bar{T}})_{hol}
\]

Since 
\begin{equation}\label{fi1122}
\Phi_1-\Phi_0 = O(|S|^2)
\end{equation}

 one can show that

\begin{equation}\label{tomed1}
\begin{split}
(\partial\bar{\partial} \Phi_{\lambda})_{T\bar{T}}&= ((\omega)_{T\bar{T}})_{hol}+ ((\omega)_{T\bar{T}})_{antihol}+O(|S|^2)\\
\quad &+ (\partial \beta_{\lambda})_T\wedge (\bar{\partial} (\Phi_{1}-\Phi_0))_{\bar{T}}\\
\quad & -(\bar{\partial} \beta_{\lambda})_{\bar{T}}\wedge (\partial (\Phi_{1}-\Phi_0))_{T}\\
\quad & +(\partial\bar{\partial} \beta_{\lambda})_{T\bar{T}}(\Phi_1-\Phi_0)
\end{split}
\end{equation}

 and 

\begin{equation}\label{tomed2}
\begin{split}
(\partial\bar{\partial} \Phi_{\lambda})_{S\bar{T}}&= ((\omega)_{S\bar{T}})_{hol}+ ((\omega)_{S\bar{T}})_{antihol}+O(|S|^2)\\
\quad &+ (\partial \beta_{\lambda})_S\wedge (\bar{\partial} (\Phi_{1}-\Phi_0))_{\bar{T}}\\
\quad & -(\bar{\partial} \beta_{\lambda})_{\bar{T}}\wedge (\partial (\Phi_{1}-\Phi_0))_{S}\\
\quad & + (\partial\bar{\partial} \beta_{\lambda})_{S\bar{T}}(\Phi_1-\Phi_0)
\end{split}
\end{equation}

From the  definition $\beta_{\lambda} = \alpha (\lambda |S|)$  we obtain 

\begin{equation}\label{betlam0}
\partial \beta_{\lambda}= \lambda \alpha' (\lambda |S|)\partial |S|
\end{equation}

\begin{equation}\label{betlam}
\partial\bar{\partial} \beta_{\lambda}= \lambda^2   \alpha'' (\lambda |S|)\partial |S|\wedge \bar{\partial} |S|+ \lambda \alpha' (\lambda |S|)\partial\bar{\partial}|S|
\end{equation}

Since 

\begin{equation}\label{pares}
\bar{\partial}|S|=\bar{\partial} \sqrt{|S|^2}= \frac{\bar{\partial} |S|^2}{2\sqrt{|S|^2}}=\frac{S\bar{\partial} \bar{S}}{2|S|}
\end{equation}

 and 

\[
\partial \frac{1}{|S|}=\partial [ (|S|^2)]^{-1/2}= -\frac{1}{2}\partial (|S|^2) (|S|^2)^{-3/2}=-\frac{1}{2}\frac{\bar{S}\partial S}{|S|^{3}}
\]

 we have

\begin{equation}\label{betlam2}
\partial\bar{\partial}|S|=\frac{\partial S\wedge \bar{\partial} \bar{S}}{2|S|}-\frac{\partial S\wedge \bar{\partial}\bar{S}}{4|S|}
\end{equation}

from (\ref{betlam0}),  (\ref{betlam}) and (\ref{betlam2}) it follows that 
\begin{equation}\label{betlatt}
(\partial\beta_{\lambda})_T =0\hspace{1cm} (\partial\bar{\partial} \beta_{\lambda})_{T\bar{T}}= (\partial\bar{\partial} \beta_{\lambda})_{S\bar{T}}=0
\end{equation}

and from (\ref{fi1122}) (\ref{pares}) and (\ref{betlam0}) we get

\begin{equation}\label{betlatt2}
(\partial\beta_{\lambda})_{S}  (\bar{\partial} (\Phi_{1}-\Phi_0))_{\bar{T}}=\lambda O( |S|^2)
\end{equation}

Therefore from (\ref{tomed1}),  (\ref{tomed2}), (\ref{betlatt}) and  (\ref{betlatt2})   one concludes that
\begin{equation}\label{filamtt}
(\partial\bar{\partial} \Phi_{\lambda})_{T\bar{T}}= ((\omega)_{T\bar{T}})_{hol}+ ((\omega)_{T\bar{T}})_{antihol}+O(|S|^2)
\end{equation}

and
 
\begin{equation}\label{filamst}
(\partial\bar{\partial} \Phi_{\lambda})_{S\bar{T}}= ((\omega)_{S\bar{T}})_{hol}+ ((\omega)_{S\bar{T}})_{antihol}+O(|S|^2)+\lambda O(|S|^2)
\end{equation}

According to  (\ref{filamtt}) and  theorem (\ref{theoloc})  we find 
\[
(\partial\bar{\partial} \Phi_{\lambda})_{T\bar{T}}= \omega'_{T\bar{T}}+ O(|S|^2)
\]

and the relation  (\ref{filamst}) and  theorem (\ref{theoloc}) imply 

\[
(\partial\bar{\partial} \Phi_{\lambda})_{S\bar{T}}= \omega'_{S\bar{T}}+ O(|S|^2)+\lambda O(|S|^2)
\]
If we set $R:=\lambda |S|$,  from proposition (\ref{222}) we know that  in the limit $\lambda\rightarrow \infty$
\begin{equation}\label{filamss}
(\partial \bar{\partial}\Phi_{\lambda})_{S\bar{S}}\geq \omega' _{S\bar{S}}\hspace{1cm} \text{ over } \epsilon<R<\epsilon '
\end{equation}

Also the $(1,1)$-form 

\begin{equation}\label{omegatild}
\tilde{\omega}'=\omega'_{S\bar{S}}+ ((\omega ')_{S\bar{T}})_{hol}+ ((\omega')_{S\bar{T}})_{antihol}+((\omega')_{T\bar{T}})_{hol}+ ((\omega')_{T\bar{T}})_{antihol}
\end{equation}

is positive definite for $|S|$ small enough. This is because $\omega'$ is positive definite and $g'_{i\bar{j}}-\tilde{g}'_{i\bar{j}}=O(|S|^2)$ for $i\neq n$
 or $j\neq n$, and $g'_{n\bar{n}}= \tilde{g}'_{n\bar{n}}$ where $\tilde{g}'$ denotes the matrix associated with  $\tilde{\omega}'$ defined by (\ref{omegatild}) in canonical coordinates introduced in the appendix (\ref{app5}).
(One can use an inequality of the form $\sum_{i=1} ^{n-1}|z|^3 x_i x_n\leq x_n ^2  |z|^2+ \sum_{i=1} ^{n-1} |z| |x_i|^2$ to prove this)

Similarly   according to (\ref{filamtt}) (\ref{filamst}) (\ref{filamss})      if $\lambda\rightarrow \infty$  
 it can be shown that  $\partial \bar{\partial}\Phi_{\lambda}>0$ for $\epsilon<R<\epsilon'$.

  \end{proof}

\section{Schauder Estimates}\label{sec4}

Shcauder estimate as an essential tool in the proof of regularity of the solutions through continuity method has been developed  for cone metrics by S.Donaldson (\cite{d})  in order to prove the existence of K\"ahler -Einstein metrics for Fano manifolds. More  precisely given a smooth  anticanonical divisor $D$ in a complex Fano manifold $X$,  in order to study the existence of smooth K\"ahler-Einstein metrics on $X$, 
 Donaldson first studies K\"ahler metrics with cone singularities of cone angle $2\pi \beta$ transverse to $D$, where $0<\beta <1$ , and then he takes the limit when $\beta$ tends to $1$.  
Meanwhile the Schauder estimate provided by  (\cite{d}) is not appropriate in our case. First of all the degenerate K\"ahler metric we are considering correspond to cone metrics with $\beta>1$. In addition the type of third order estimates we have derived in section (\ref{III}) must be taken into account for exploring the Shauder  type inequality that is expected.

Here we  apply the method in (\cite{d})  to formulate  and prove the Shauder estimate for the case where the angle $\beta$ is bigger than $1$.

Let $g'_0$ denote the degenerate metric on $\mathbb{C}^n=\{(w_1,...,w_{n-1},z)\}\sim \mathbb{R}^{m}$, associated with  the degenerate K\"ahler form 
\begin{equation}\label{gflat}
\omega' _0=|z|^2dz\wedge d\bar{z}+ dw_1 \wedge d\bar{w}_1 +...+\wedge dw_{n-1} \wedge d\bar{w}_{n-1}
\end{equation}

where $m=2n$.
Consider the application 
\begin{equation}\label{pii}
\pi_2:\mathbb{C}^n \setminus  \mathbb{C}^{n-1} \times \{0\} \rightarrow \mathbb{C}^n \setminus  \mathbb{C}^{n-1}\times \{0\}  
\end{equation}

defined as
\[
\pi_2 (w_1,...,w_{n-1},z) \rightarrow (w_1,...,w_{n-1}, z|z|)
\]

If $z=\rho e^{i\theta}$ we have $z|z|= \rho ^2 e^{i\theta}$.  
  Then by setting    $r=\rho^2$ we write the  metric $g_0:=\pi_{2*} g'_0$ as 

\[
g_0=dr^2 + \beta^2 r^2 d\theta ^2+ \sum ds_i^2
\]

with $\beta=2$. Thus the  associated Laplace operator has the form

 \[
\Delta_{g_0}= 4 (\frac{\partial ^2 }{\partial r ^2} +\frac{1}{r}\frac{\partial }{\partial r} + \frac{1}{4 r^2 } \frac{\partial ^2}{\partial \theta ^2})+\Delta_w
\]

where $\Delta_w$ denotes the standard Euclidean Laplacian in $w$.
 

 Let
$H$ be the completion of $C^{\infty} _c$
under the Dirichlet norm $\|\nabla f\|_{L^2}$.  As is put forward  in (\cite{d}) for $q=2m/(m+2)$ and any $\psi\in L^q$ the linear map
\[
\begin{split}
&T_{\psi}  : H\rightarrow \mathbb{R},\\
\quad & T_{\psi}(f)= \int f\psi
\end{split}
\]

is bounded with respect to the $H$ norm, so  by Riesz representation theorem there is 
 a unique linear map $G:L^q \rightarrow H$  
 such that
\[
\int f\psi =\int (\nabla f , \nabla G\psi)_{g_0}
\]

 The following proposition is proved  in \cite{d}
\begin{prop} \cite{d}
There is a locally-integrable kernel function $G(x, y)$ such that

\[
G\psi (x) =\int G(x,y)\psi (y) dy
\]

for $\psi \in C^\infty _c$. The function $G(x, y)$ is smooth away from the diagonal and points
$x, y \in D$ where $D=\{0\}\times \mathbb{R}^{2n-2}$.

\end{prop}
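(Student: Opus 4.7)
The plan is to construct $G(x,y)$ as the time integral of the heat kernel of $\Delta_{g_0}$ and to verify the listed properties by combining an explicit computation, Gaussian upper bounds, and a standard Riesz-representation argument. The key structural point is the product decomposition $(\mathbb{R}^m,g_0)=C_\beta\times\mathbb{R}^{m-2}$, where $C_\beta$ denotes the two-dimensional cone with metric $dr^2+\beta^2r^2\,d\theta^2$ of total angle $2\pi\beta=4\pi$. Consequently $\Delta_{g_0}=\Delta_C+\Delta_w$ and the heat kernel factors as $p_t(x,y)=p_t^C((r,\theta),(r',\theta'))\,p_t^E(w,w')$, where $p_t^E$ is the standard Euclidean heat kernel on $\mathbb{R}^{m-2}$ and $p_t^C$ is the classical cone heat kernel admitting the Fourier--Bessel representation
\[
p_t^C(r,\theta;r',\theta')=\frac{1}{4\pi\beta t}\exp\Big(-\frac{r^2+r'^2}{4t}\Big)\sum_{n\in\mathbb{Z}}e^{in(\theta-\theta')/\beta}\,I_{|n|/\beta}\Big(\frac{rr'}{2t}\Big).
\]

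Setting $G(x,y):=\int_0^\infty p_t(x,y)\,dt$ produces the candidate kernel. Convergence as $t\to\infty$ is controlled by the Euclidean factor under the assumption $m\geq 3$, while convergence as $t\to 0^+$ off the diagonal is the usual Gaussian decay. The short-time bound $p_t(x,y)\leq Ct^{-m/2}\exp(-c\,d_{g_0}(x,y)^2/t)$, combined with the large-argument asymptotics of the modified Bessel functions, yields the near-diagonal estimate $G(x,y)\leq C\,d_{g_0}(x,y)^{2-m}$. Since the $g_0$-volume form differs from Lebesgue only by the smooth factor $\beta r$ across $D$, local integrability of $y\mapsto G(x,y)$ for each fixed $x$ follows immediately.

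To identify this kernel with the Riesz operator $G$, for $\psi\in C_c^\infty$ I would set $f(x):=\int G(x,y)\psi(y)\,dy$, differentiate under the integral using $\partial_tp_t=\Delta_{g_0}p_t$ together with $p_t(x,\cdot)\to\delta_x$ as $t\to 0^+$, and obtain $-\Delta_{g_0}f=\psi$ distributionally. A standard energy estimate combined with Sobolev embedding on the cone places $f\in H$, and uniqueness of the Riesz representative then forces $f=G\psi$. Smoothness of $G(x,y)$ away from the diagonal and from $D$ finally comes from standard elliptic regularity: on the open set where $x,y\notin D$ and $x\neq y$, the metric $g_0$ is smooth and uniformly elliptic and $G(x,\cdot)$ solves $\Delta_{g_0}G(x,\cdot)=0$ locally.

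The principal technical obstacle I foresee is establishing the uniform Gaussian upper bound for $p_t^C$ across the cone tip, since the Fourier--Bessel series converges only conditionally and term-by-term estimates are insufficient. My preferred route is Carslaw's contour-integral representation of $p_t^C$, which for the rational cone angle $4\pi$ collapses essentially into a finite sum of Euclidean-type heat kernels over the deck transformations of the two-fold branched cover of flat $\mathbb{R}^2$; from that explicit expression the required Gaussian bound is elementary. Once this estimate is in hand the remainder of the argument runs in parallel with the $\beta<1$ construction carried out in \cite{d}.
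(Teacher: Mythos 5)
Your construction is correct in outline, but it takes a genuinely different route from the paper. The paper does not prove this proposition at all: it is quoted from Donaldson \cite{d}, where the kernel is built directly by separation of variables, $G=\sum_k G_k(r,r',\mathfrak{R})\cos k(\theta-\theta')$ with $G_k$ given by the explicit Bessel integrals reproduced later as (\ref{10}) and (\ref{11}); existence, local integrability and smoothness off the diagonal and off $D$ are read off from those representations and their polyhomogeneous expansions. You instead integrate the heat kernel in time, exploiting the splitting $C_\beta\times\mathbb{R}^{m-2}$ and a Gaussian upper bound for the cone factor. This is a clean and more conceptual proof of the proposition as stated, but note that the explicit Fourier--Bessel form of $G$ is not dispensable in this section: the subsequent Schauder estimates (Lemma \ref{schauderpr1}, the homogeneity relation (\ref{hamgeni}), the half-integer powers of $R$) are extracted from exactly those expansions, so Donaldson's representation would still have to be established separately.

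One caveat inside your argument: the phrase ``finite sum of Euclidean heat kernels over the deck transformations'' is not accurate. The method of images in that literal form works for cone angles $2\pi/N$ (quotients of $\mathbb{R}^2$), not for the $4\pi$ cone, which is a branched double cover. Carslaw's contour formula there gives a Gaussian multiplied by an error-function factor, $\frac{1}{8\pi t}e^{-(r^2+r'^2-2rr'\cos\phi)/4t}\bigl(1+\mathrm{erf}\bigl(\sqrt{rr'/t}\,\cos(\phi/2)\bigr)\bigr)$, and the Gaussian bound with respect to the cone distance then follows from the identity $(r^2+r'^2-2rr'\cos\phi)+4rr'\cos^2(\phi/2)=(r+r')^2$, which controls the regime where the geodesic passes through the tip. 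With that correction your estimate, and hence the identification of $\int_0^\infty p_t\,dt$ with the Riesz representative $G$, goes through.
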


\subsection{Laplacian around singular points of the metric}\label{lpls}
Consider a degenerate K\"ahler manifold $(X,D,\omega')$ as in section 3.1 and assume that $(w_1,...,w_{n-1},z)$ is a holomorphic coordinates system  near a point $p\in D$ where in this coordinates system we have $p=(0,...,0)$ and  $D$ is described by $\{z=0\}$.  Assume  that $\{ \frac{\partial}{\partial w_1},...,\frac{\partial}{\partial w_{n-1}}  \}$ forms an orthogonal frame w.r.t. $\omega' $ at $0$ and moreover the metric $j_D^* \omega '$ in this coordinates  is written like $\sum (\delta_{ij} +O(|w|^2))dw_i \wedge d \bar{w_j}$ where $j_D:D\rightarrow X$ denotes the inclusion map.
\\
In this coordinates system  the metric $\omega '$ is described as
\[
\begin{split}
\omega' = &(|z|^2+ O(3)) dz\wedge d\bar{z} + \sum (\delta_{ij} +  b_{ijn}z+ c_{ijn}\bar{z} +O(2)) dw_i \wedge d\bar{w}_j\\
\quad & + \sum O(2) dw_{i} \wedge  d\bar{z} +\sum  O(2)  dz\wedge d\bar{w}_{i}
\end{split}
\]
We then perform a  holomorphic change of coordinates
\[
w_i \rightarrow w_i +  \sum_{1\leq i,j\leq n-1} b_{ijn} w_j z
\]
 to eliminate the first order terms $b_{ijn} z+ c_{ijn}\bar{z}$ in the above expansion of  $\omega'$. In fact  under this change of coordinates $dw_i$, for $i=1,...,n-1$ turns into

\[
dw_i \rightarrow dw_i + \sum_j b_{ijn}w_j dz + \sum_m  b_{imn} z dw_m
\]

so $dw_k\wedge d\bar{w}_l$ for $1\leq k,l\leq n-1$ transforms as 
 
\[
\begin{split}
dw_k \wedge d\bar{w}_l\rightarrow & dw_k \wedge d\bar{w}_l +\sum b_{kjn}w_j dz \wedge d\bar{w}_l
 + \overline{(\sum b_{ljn} w_j )} dw_k \wedge d\bar{z}\\
& + \sum_m \overline{( b_{lmn}z)} dw_k \wedge d\bar{w}_m+ \sum_m (  b_{kmn}z)dw_m  \wedge d\bar{w}_l+O(2)
\end{split}
\]
 and we obaine
\[
\begin{split}
\sum(1+ a_{kk}z +\bar{a}_{kk} \bar{z} )dw_k\wedge d\bar{w}_k &\rightarrow \sum(1+ a_{kk}z +\bar{a}_{kk} \bar{z} +  
b_{kkn}z+ \bar{b}_{kkn} \bar{z})dw_k\wedge d\bar{w}_k \\
\quad & +\sum O(|w|) dz\wedge d\bar{w_k} +\sum O(|w|)dw_k \wedge d\bar{z}\\
\quad &  +\sum_{m\neq k} O(|z|)dw_k\wedge d\bar{w}_m+\sum_{m\neq k} O(|z|) dw_m\wedge d\bar{w}_k+O(2)
\end{split}
\]

thus  if we set $b_{kk1}=-a_{kk}$ we can prove the following lemma:
\begin{lemma}\label{eslo}
In an appropriate holomorphic  coordinates system  we can assume that

\[
\begin{matrix}
g'_{z\bar{z}}= |z|^2 +O(3) & g'_{w_{\bar{i}}w_i}=1+O(2) & g'_{w_i w_{\bar{j}}}= O(|z|) &  g'_{z,w_{\bar{i}} }= O(|w|)
\end{matrix}
\] 
\end{lemma}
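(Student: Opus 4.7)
My plan is to follow precisely the construction sketched in the paragraph preceding the statement, namely to exhibit an explicit holomorphic change of coordinates that normalizes $\omega'$ to the desired form, and then to verify the four asymptotic bounds.

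I would start from the Taylor expansion in the original coordinates,
\[
\omega' = (|z|^2 + O(3))\,dz\wedge d\bar{z} + \sum_{i,j}\bigl(\delta_{ij} + b_{ijn}z + c_{ijn}\bar{z} + O(2)\bigr) dw_i\wedge d\bar{w}_j + \sum_i O(2)\,dw_i\wedge d\bar{z} + \sum_i O(2)\,dz\wedge d\bar{w}_i,
\]
which relies on Remark \ref{rem1} to ensure that $g'_{z\bar z}$ starts at order $|z|^2$ along $N_p$ and that $g'_{z\bar{w}_i}$ is of order $|z|^3$ along the fiber. Hermitian symmetry gives $c_{ijn}=\bar b_{jin}$, so on the diagonal the $z$- and $\bar z$-linear terms take the form $b_{iin}z+\bar b_{iin}\bar z$.

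Next I would introduce the change $w_i\mapsto w_i + \sum_j \gamma_{ij}w_j z$ with $\gamma_{ij}$ to be determined, compute $dw_i\mapsto dw_i+\sum_j \gamma_{ij}w_j\,dz+\sum_m \gamma_{im}z\,dw_m$, and expand $dw_k\wedge d\bar{w}_l$ as in the display already written out in the text. The resulting four types of contributions are: modifications on the $dw\wedge d\bar w$ block linear in $z$; new $O(|w|)\,dz\wedge d\bar w_k$ and conjugate terms; new $O(|z|)\,dw_k\wedge d\bar w_m$ off-diagonal terms; and $O(2)$ remainders. I would then set $\gamma_{kk}=-a_{kk}$ (equivalently, what the paper calls $b_{kk1}=-a_{kk}$) so that the linear-in-$z$ coefficient of $dw_k\wedge d\bar w_k$ at the origin cancels; by Hermitian symmetry the $\bar z$-linear coefficient cancels simultaneously. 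The off-diagonal $\gamma_{ij}$, $i\neq j$, can be chosen freely (e.g.\ zero) since the off-diagonal $w$-block is only required to be $O(|z|)$, and the starting expansion already provides this bound through the linear $b_{ijn}z+c_{ijn}\bar z$ terms.

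Finally I would check the four claims: $g'_{z\bar z}=|z|^2+O(3)$ is preserved because the correction to $dz\wedge d\bar z$ is quadratic in the $w_j$-terms coming from the substitution, hence vanishes to order three at the origin; $g'_{w_i\bar w_i}=1+O(2)$ follows from the cancellation arranged by $\gamma_{kk}$; $g'_{w_i\bar w_j}=O(|z|)$ for $i\neq j$ from the remaining linear-in-$z$ pieces plus the freshly introduced $O(|z|)$ contributions; and $g'_{z\bar w_i}=O(|w|)$ because the original $O(2)$ entry is, along $\{w=0\}$, already $O(|z|^3)$ by Remark \ref{rem1} (c) and hence divisible by $|w|$ after combining with the new $\overline{(\sum_j \gamma_{lj}w_j)}\,dw_k\wedge d\bar z$ contribution.

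The main obstacle I expect is the bookkeeping: making sure that absorbing the $b_{kkn}z$ terms on the diagonal does not spoil either the $O(|z|)$ control off-diagonal or the $O(|w|)$ control in the $z\bar w$ slot. Apart from that, the argument is a direct change-of-variables computation, with Remark \ref{rem1} providing the key input that the $z\bar w$ entry of $\omega'$ restricted to $\{w=0\}$ already vanishes to sufficiently high order in $z$ for the vanishing along $w=0$ to hold.
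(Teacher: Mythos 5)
Your proposal is correct and follows essentially the same route as the paper: the explicit holomorphic shear $w_i \mapsto w_i + \sum_j b_{ijn} w_j z$ with the diagonal coefficients chosen to cancel the $z$- and $\bar z$-linear terms (the paper's $b_{kkn}=-a_{kk}$), combined with the normalizations of $g'_{z\bar z}$ and $g'_{z\bar w_i}$ supplied by Remark \ref{rem1} and Lemma \ref{rela}, which is exactly the computation the paper carries out in the paragraphs preceding the lemma.
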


Now  with the above  local model as before we compute the highest order terms of the laplacian of the degenerate metric around some point $(0,0)$ on $D$. 

\[
\Delta_{g'} \phi= |\det ({g'_{i\bar{j}}) }|^{-1 } \frac{\partial}{\partial z_i} (g'^{i\bar{j}}|\det(g'_{i\bar{j}})| \frac{\partial \phi}{\partial z_{\bar{j}}} ) 
\]

\[
\begin{split}
\Delta_{g'}& =\frac{1}{|\det(g')|} (\frac{\partial}{\partial z}[ |\det (g')|(g'^{z\bar{z}} \frac{\partial }{\partial \bar{z}} +\sum g^{z\bar{w_i}} \frac{\partial}{\partial \bar{w_i}})] +\sum \frac{\partial}{\partial w_i}[|\det(g')| (g'^{w_i\bar{z}} \frac{\partial}{\partial \bar{z}} +g'^{w_i\bar{w_i}} \frac{\partial}{\partial \bar{w_i}})])\\
\end{split}
\]

From lemma (\ref{eslo}) it follows that 
\[
\det (g') g'^{z\bar{z}} =\det (g'_{w_i \bar{w_j}}) =1+O(2), \hspace{1cm} \det(g')g'^{z\bar{w_i}}=O(2)
\]

Hence we obtain

\[
\frac{\partial}{\partial z}\det(g')g'^{z\bar{z}}=\frac{\partial}{\partial z}\det(g')g'^{z\bar{w_i}}=O(1)
\]

\[
\frac{\partial}{\partial w_i}\det(g')g'^{z\bar{w_i}}=O(1)
\]
So 
\begin{equation}\label{lpl}
|z|^2 \Delta_{g'} = |z|^2 \Delta_w + \Delta _z + O(1)
\end{equation}
 
Now we consider the operator  $\frac{1}{|z|^2} \Delta _z + \Delta _w$. Let $(R, \theta)$ denote the polar coordinates in $z$ space. Then we can write
\[
\frac{1}{|z|^2} \Delta _z = \frac{1}{R^2} (\frac{\partial ^2}{\partial R^2} +\frac{1}{R} \frac{\partial}{\partial R}+ \frac{1}{R^2}\frac{\partial ^2}{\partial \theta ^2})
\]
\\

 after a change of coordinates $R=\sqrt{r}$ the oparator $\frac{1}{|z|^2} \Delta _z$  turnes into 
\[
\frac{1}{|z|^2} \Delta _z = 4 (\frac{\partial ^2 }{\partial r^2} +\frac{1}{r}\frac{\partial }{\partial r} + \frac{1}{4 r^2 } \frac{\partial ^2}{\partial \theta ^2})
\]

the term in the parenthesis is nothing but the laplace operator associated to the metric $dr^2 +\beta ^2 r^2 d\theta$ with $\beta =2$.
\\

\textbf{Schauder theory for  conical metrics with $\beta >1$}.
 As we mentioned earlier Shauder theory for conical metrics of the type  (\ref{metric})
 has been developed by Donaldson in \cite{d} with emphasises on the case where  $\beta <1$.  
We would like to treat here the case where $\beta >1$  which corresponds to metrics with branched
 type singularities.

As before let  $D= \mathbb{R}^{m-2}\times\{0\}\subset \mathbb{R}^m = \mathbb{R}^{m-2}\times\mathbb{R}^2 $. Consider polar coordinates $r$, $\theta$ on $\mathbb{R}^2$ and standard coordinates $(s_1,...,s_{m-2})$ on $\mathbb{R}^{m-2}$. Let $\pi_2 :\mathbb{R}^m\rightarrow \mathbb{R}^m$ be defined by 
\[
\pi _2:\mathbb{R}^2\setminus \{0\}\times \mathbb{R}^{m-2} \rightarrow \mathbb{R}^2\setminus \{0\}\times \mathbb{R}^{m-2}
\]
\begin{equation}\label{p2}
\pi_2 (s_1,...,s_{m-2},R\cos \theta, R\sin \theta  )=( s_1,...,s_{m-2}, r\cos \theta, r\sin \theta )
\end{equation}

where $r=R^2$,
 then the push forward $(\pi_2 ) _* g'$ of the degenerate metric $g'$ under $\pi_2$ is a singular metric  of the type

\begin{equation}\label{metric}
g=d\rho ^2 +\beta ^2 r^2 d\theta ^2 +\sum ds_i ^2
\end{equation}

where here  $\beta>1$.   The  Green function associated to  the operator 
$\Delta = \Delta _{\beta}+ \Delta _{\mathbb{R} ^{m-2}} $ is calculated in (\cite{d})   under the form :

\begin{equation}\label{green}
G(r,\theta , s; r',\theta' , s')=\sum G_k (r,r', \mathfrak{R})\cos k(\theta -\theta ' )
\end{equation}

 where $\mathfrak{R}=|s-s'|$. Two representations for $G_{k} (r,r ',\mathfrak{R})=\frac{1}{2\pi ^m} \mathfrak{R}^{2-m/2} g_k$ are considered:
\\

1)

\begin{equation}\label{10}
g_k=2\int_0 ^{\infty} \lambda ^{m/2 -2} K_{m/2-2} (\mathfrak{R}\lambda) J_{\nu} (r\lambda ) J_{\nu}(r'\lambda)d\lambda
\end{equation}

for which the corresponding expansion $G=(\frac{2}{r'})^{m-3} \sum a_{j,k} (\frac{2\mathfrak{R}}{r'}) (\frac{r}{2r'})^{\nu+2j} \cos k(\theta-\theta')$ is shown to be absolutely convergent for $\mathfrak{R}>0$ and $r,r'<\mathfrak{R}/2$.
\\

2)

\begin{equation}\label{11}
g_k= 2\int_0 ^{\infty} \lambda^{m/2-2} J_{m/2 -2}(\mathfrak{R}\lambda) K_{\nu} (r'\lambda) I_{\nu} (r\lambda)d\lambda
\end{equation}

where  the corresponding  series
\begin{equation}\label{green2}
G=G(r,\theta ,s ; r',\theta ' , s' )=\sum_{k\geq 0} b_{j,j',k} (\mathfrak{R}) r^{\nu + 2j} (r') ^{\nu + 2j'} cos k(\theta -\theta ') 
\end{equation}
is absolutely convergent for $r<r'/2$ and for any $\mathfrak{R}\geq 0$.
\\

( See (\cite{d}) for the definition of $a_{j,k}$ and $b_{j,j',k}$). We  set
\[
\tilde{G}(x,y):=(\pi_2\times \pi_2 ) ^* G(x,y)=G\circ (\pi_2\times \pi_2)(x,y)
\]
where $\pi_2$ is defined by (\ref{p2})
and we define
\\

\begin{equation}\label{rtg1}
\tilde{\partial}_i :=
\begin{cases}
R\frac{\partial}{\partial s_i}& \text{ for } i\leq m-2\\
\frac{\partial}{\partial R} & \text{ for } i=m-1\\
\frac{1}{R}\frac{\partial}{\partial \theta}&\text{ for } i=m
\end{cases}
\end{equation}

For a real positive $\gamma$ we also set
\begin{equation}\label{rtg2}
\tilde{\partial}_{i,\gamma}=\begin{cases}\frac{1}{R^{\gamma}}\tilde{\partial}_i,&  i=1,...,m-2\\
\tilde{\partial}_i & i=m-1,m
\end{cases}
\end{equation}


Define $H'$ as the completion of $C^{\infty}_c$ with respect to the  Dirichlet norm $\|\nabla_{g'} f\|_{g'}$  where 
$\nabla _{g'}$ denotes the gradient  of $f$ with respect to the degenerate metric $g'$  and  $\|\|_{g'}$ denotes the $L^2$ norm with respect to $g'$.  We have in fact
\begin{equation}\label{hh}
H'=\pi_2 ^* H
\end{equation}

where $H$ is the Hilbert space defined for the cone metric $g$ for $\beta=2$ as in (\cite{d}). This is because $\pi_2 ^* C_{c} ^{\infty}\subset C^{\infty} _c$.  
 

 For  the space of $L^q$
functions with respect to the weighted volume form  $dvol_{g'}$, denoted by  $L'^q$,   we also have:
\begin{equation}\label{ll}
L'^q=\pi_2 ^* L^q
\end{equation}

(ordinary $L^q$). 
Thus if for  $\rho\in L'^q$ we consider  the  linear map $H'\rightarrow \mathbb{R}$
\begin{equation}\label{hl}
f\rightarrow \int f\rho\, d vol_{g'} 
\end{equation}
then due to   (\ref{ll}) and (\ref{hh}) and the identity $\int f\rho\, d vol_{g'}=\int\underline{f}\underline{\rho}\,  d vol_g$
where $\underline{f}={\pi_2} _* f\in H$ and $\underline{\rho}={\pi_2 }_* \rho\in L^q$   we find that (\ref{hl}) is a bounded map.  Therefore the Green function $\tilde{G}:L ^{'q}\rightarrow H'$ can be defined by the relation
\begin{equation}\label{frogp}
\int f\rho dvol_{g'}=\int\langle \nabla_{g'} f, \nabla_{g'} \tilde{G}\rho \rangle_{g'},\hspace{1cm} \text{ for all }  \rho\in C^{\infty} _c
\end{equation}

 It also follows that $\phi:=\tilde{G} \rho$ is a weak solution of the equation $\Delta_{g'}\phi =\rho$, where "weak sense" is  with respect to the weighted norms and  weighted Sobolev spaces. ($\int \phi\Delta_{g'} \eta\, dvol_{g'} =\int\rho\eta\,dvol_{g'}$ for any test fucntion $\eta$.)

The kernel function $\tilde{G}(x,y)=\pi_2 ^* G(x,y)$  satisfies also the following relation
\[
\tilde{G}\rho=\int\tilde{G}(x,y)\rho (y) dvol_{g'} (y) \text{ for all }  \rho\in C^{\infty} _c
\]

To see this we note that  the relation
\begin{equation}\label{groul}
G\underline{\rho}=\int G(\underline{x},\underline{y})\underline{\rho} (\underline{y}) dvol_{g} (\underline{y})
\end{equation}

holds for $\underline{\rho}\in (\pi_2)_* (C^{\infty}_c )  $ with the same arguments as in  (\cite{d}).   (We note that $\nabla_g  (\sqrt{r})\in L^2 $ and the relation (\ref{groul}) holds also for $\rho$ with compact support and smooth as a function of $\sqrt{r}$).

Consider the transformation $a_{\lambda}$ defined by
\begin{equation}\label{alambda}
(w_1,...,w_{n-1},z)\xrightarrow{a_{\lambda}} (\lambda w_1,...,\lambda w_{n-1}, \sqrt{\lambda}z)
\end{equation}

If

\[
|z|^2\Delta_{g'}\phi=|z|^2\rho
\]

then we have

\begin{equation}\label{lnd1}
(\Delta_{z}+|z|^2\Delta_{w})\phi( \lambda w, \sqrt{\lambda}z)=\lambda \left[ (\Delta _z \phi )+|\sqrt{\lambda}z|^2 (\Delta_{w} \phi) \right] ( \lambda w,\sqrt{\lambda}z)=\lambda \left( |\sqrt{\lambda}z|^2\rho ( \lambda w,\sqrt{\lambda} z) \right)
\end{equation}


For two given  real positive numbers  $\gamma_1, \gamma_2\in \mathbb{R}^+$ we set
\[
D_{\gamma_1,\gamma_2}=\tilde{\partial}_{i,\gamma_1}\tilde{\partial}_{j,\gamma_2}
\]

where  $\tilde{\partial}_{i,\gamma}$ is defined by (\ref{rtg2}) and we define,
\begin{equation}\label{defitp}
T'_{\gamma_1, \gamma_2}=D_{\gamma_1,\gamma_2}\circ\tilde{G}
\end{equation}

where $D_{\gamma_1,\gamma_2}=\tilde{\partial}_{i,\gamma_1}\tilde{\partial}_{j,\gamma_2}$ is the derivation with respect to the variable $x$ in $\tilde{G}(x,y)$.
 
 Likewise for $f:\mathbb{R}^m\rightarrow \mathbb{R}$,   the map $f_{\lambda}$   is defined by,

\begin{equation}\label{dilation}
f_{\lambda}:=f\circ a_{\lambda^{-1}}  
\end{equation}

where $a_{\lambda}$ is given by  (\ref{alambda}).
 Then the above relation (\ref{lnd1}) can be restated in the form 
\[
\Delta_{g'} (\phi_{\lambda})=\lambda^{-1}( |z|^2 \rho)_{\lambda}=\lambda^{-2}|z|^2 (\rho)_{\lambda}
\]

Consequently we deduce that,

\begin{equation}\label{giro}
(\tilde{G}\rho)_{\lambda}=\lambda^{-2}\tilde{G}(\rho_{\lambda})
\end{equation}

From the relations

\begin{equation}\label{dtg1}
R^{-\gamma} \frac{\partial }{\partial R} f_{\lambda}= \frac{R^{-\gamma}}{\sqrt{\lambda}}(\frac{\partial f}{\partial R})_{\lambda}=\lambda^{-\frac{\gamma}{2}-\frac{1}{2}} (R^{-\gamma} \frac{\partial f}{\partial R})_{\lambda}
\end{equation}

\begin{equation}\label{dtg2}
R^{-\gamma+1} \frac{\partial }{\partial s_i} f_{\lambda} = \frac{R^{-\gamma+1}}{\lambda} (\frac{\partial f}{\partial s_i})_{\lambda}=\lambda^{-\frac{1}{2}-\frac{\gamma}{2}} (R^{-\gamma+1}\frac{\partial f}{\partial s_i})_{\lambda}
\end{equation}

\begin{equation}\label{dtg3}
R^{-\gamma-1}\frac{\partial}{\partial \theta}f_{\lambda}=R^{-\gamma-1} (\frac{\partial f}{\partial \theta})_{\lambda}=\lambda^{-\frac{1}{2}-\frac{\gamma}{2}} (R^{-\gamma-1}\frac{\partial f}{\partial \theta})_{\lambda}
\end{equation}
 
we can conclude that  
 
\begin{equation}
\tilde{\partial}_{i,\gamma} (f_{\lambda})=\lambda^{-\frac{1}{2}-\frac{\gamma}{2}} (\tilde{\partial}_{i, \gamma} f)_{\lambda}
\end{equation}
 
Therefore by setting $f:=\tilde{G}\rho$ we obtain
\begin{equation}\label{dtg4}
\tilde{\partial}_{i,\gamma}( (\tilde{G}\rho)_{\lambda})= \lambda^{-\frac{1}{2}-\frac{\gamma}{2}}(\tilde{\partial}_{i,\gamma} \tilde{G} \rho)_{\lambda}
\end{equation}

also from (\ref{giro})
\begin{equation}\label{dtg5}
\tilde{\partial}_{i,\gamma}( (\tilde{G}\rho)_{\lambda})=\lambda^{-2} \tilde{\partial}_{i,\gamma}\tilde{G}(\rho_{\lambda})
\end{equation}

thus we get 

\begin{equation}\label{dgro1}
(\tilde{\partial}_{i,\gamma} \tilde{G} \rho)_{\lambda}= \lambda^{-2+\frac{1}{2}+\frac{\gamma}{2}} \tilde{\partial}_{i,\gamma}\tilde{G}(\rho_{\lambda})
\end{equation}
 Applying (\ref{dtg4}) and (\ref{dtg3}) we see that
\[
\begin{split}
T'_{\gamma_1,\gamma_2} (\rho_{\lambda})&=\tilde{\partial}_{i,\gamma_1}\tilde{\partial}_{j,\gamma_2} \tilde{G} (\rho_{\lambda})=  \lambda^{2-\frac{1}{2}-\frac{\gamma_2}{2}}\tilde{\partial}_{i,\gamma_1}(\tilde{\partial}_{j,\gamma_2} \tilde{G} (\rho))_{\lambda}\\
\quad & 
=\lambda^{1-\frac{\gamma_1+\gamma_2}{2}}(\tilde{\partial}_{i,\gamma_1}\tilde{\partial}_{j,\gamma_2} \tilde{G} (\rho))_{\lambda}
\end{split}
\]

So

\begin{equation}\label{homg1}
(T' _{\gamma_1,\gamma_2}\rho)_{\lambda}=\lambda^{\frac{\gamma_1+\gamma_2}{2}-1} T'_{\gamma_1,\gamma_2} (\rho_{\lambda})
\end{equation}

 if 
\begin{equation}\label{pidef}
\pi:\mathbb{R}^2\times \mathbb{R}^{m-2}\rightarrow \mathbb{R}^2
\end{equation}

denotes the projection map, then from the definition of $T'_{\gamma_1, \gamma_2}$ in  (\ref{defitp})

\begin{equation}\label{dtg7}
\begin{split}
T' _{\gamma_1, \gamma_2}\rho_{\lambda}&=\int \tilde{\partial}_{i,\gamma_1} \tilde{\partial}_{i,\gamma_2}\tilde{G}(x,y)\rho_{\lambda} (y) |\pi (y)|^2 dy\\
\quad & =\lambda^n\int \tilde{\partial}_{i,\gamma_1} \tilde{\partial}_{i,\gamma_2}\tilde{G}(x,a_{\lambda} (y))\rho(y)  |\pi (y)|^2dy\\
\end{split}
\end{equation}
 
and 

\begin{equation}\label{dtg8}
(T' _{\gamma_1, \gamma_2} \rho )_{\lambda}=\int  \tilde{\partial}_{i,\gamma_1} \tilde{\partial}_{i,\gamma_2}\tilde{G} (a_{\lambda^{-1}}(x), y)\rho (y) |\pi (y)|^2dy
\end{equation}
thus  from (\ref{homg1}) (\ref{dtg7}) and (\ref{dtg8}) we obtain
\begin{equation}\label{hamgeni}
 \tilde{\partial}_{i,\gamma_1} \tilde{\partial}_{i,\gamma_2}\tilde{G} (a_{\lambda}(  x), a_{\lambda} (y))=\lambda^{-n+1-\frac{\gamma_1+\gamma_2}{2}} \tilde{\partial}_{i,\gamma_1} \tilde{\partial}_{i,\gamma_2}\tilde{G} (x ,  y)
\end{equation}

The  properties I-IV in the lemma (\ref{schauderpr1}) below,   follow by the same line of arguments as in   proposition 3 in (\cite{d}).
We recall that the derivatives  $\tilde{\partial}$  and
the derivative $\nabla$ in part IV  are taken with respect to the first variable in $\tilde{G}$. Also note that $\pi_{2*}\frac{\partial}{\partial R}=2\sqrt{r}\frac{\partial}{\partial r} $ and in polyhomogeneous expansion (\ref{green2}) $r$ and $r'$  appear with powers of the form
$\frac{1}{2}+2j$ for $j\in \mathbb{N}\cup\{0\}$. Therefore in  the corresponding expansion of $\tilde{G}$ only integer powers of $R$ can occur. If we assume that $\gamma_1, \gamma_2\in \{0,\frac{1}{2}, 1\}$ then  in the polyhomogenous expansion of 
$ \tilde{\partial}_{i,\gamma_1}\tilde{\partial}_{j,\gamma_2}\tilde{G}$  powers of the form $R^{\frac{1}{2}}$  occur only  if  $i\leq m-2$ and $\gamma_1=\frac{1}{2}$ and $j\in\{m-1, m\}$, or  if $j\leq m-2$ and $\gamma_2=\frac{1}{2}$ and $i\in\{m-1, m\}$. This proves the second part of the following lemma with the same argument as in (\cite{d}).

\begin{lemma}\label{schauderpr1}
 
For $ \gamma_1, \gamma_2\in \{0,\frac{1}{2}, 1\}$
\\

I)

\[
\hspace{1cm} |\tilde{\partial}_{i,\gamma_1}\tilde{\partial}_{j,\gamma_2}\tilde{G} (0, \zeta)|\leq \kappa \hspace{1cm}  \text{ for } \|\zeta\|_{g'}=1 
\]
\\

\hspace{3.5cm}

 II) \hspace{1cm}  If $\|\zeta \|_{g'}=1$ then  If $\|\zeta \|_{g'}=1$ then 

\[
| \tilde{\partial}_{i,\gamma_1}\tilde{\partial}_{j,\gamma_2}\tilde{G}(w_1,\zeta)- \tilde{\partial}_{i,\gamma_2}\tilde{\partial}_{j,\gamma_1}\tilde{G}(w_2,\zeta)   |\leq \kappa_2 \|w_1-w_2\|_{g'} ^{1/2}
\]
 for any $w_1$, $w_2$ with $\|w_i\|_{g'} \leq 1/2$
\\

III)
If $\|\zeta \|_{g'}=1$ and $\|\pi (\zeta )\|_{g'}\geq  1/2$ then  
\[
|\tilde{\partial}_{i,\gamma_1}\tilde{\partial}_{j,\gamma_2} G(w,\zeta ) |\leq \kappa_3 \|\zeta-w\|_{g'} ^{-n+1-\frac{\gamma_1+\gamma_2}{2}}
\]
for $\|w\|_{g'}\leq 5$. ($\pi$ is defined  by (\ref{pidef}).
\\

IV) If $\|\zeta \|_{g'}=1$ and $\|\pi(\zeta )\|_{g'} \geq 1/2$ then 
\[
|\nabla  \tilde{\partial}_{i,\gamma_1}\tilde{\partial}_{j,\gamma_2}\tilde{ G}(w,\zeta )|\leq \kappa _3 \|\zeta-w \|_{g'}^{-n+\frac{\gamma_1+\gamma_2}{2}}
\]

for $\|w\|_{g'} \leq 5$.
\\
\end{lemma}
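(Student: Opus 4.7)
The plan is to derive each of the four bounds by pulling back through $\pi_2$ to the cone Green function $G$, invoking (or adapting) Donaldson's Proposition 3 in \cite{d} for the case $\beta=2$, and then using the scaling identity (\ref{hamgeni}) to pass from unit-scale estimates to the general case. The identity $\tilde{G}(x,y)=G(\pi_2(x),\pi_2(y))$ is the basic link, and the change of variables $r=R^2$ shows that $\|\cdot\|_{g'}$ on $\mathbb{C}^n$ corresponds, up to multiplicative constants, to $\|\cdot\|_g$ on the cone $(\mathbb{R}^m,g)$ under $\pi_2$, so the norm conditions translate cleanly.

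For part (I), the point $\zeta$ with $\|\zeta\|_{g'}=1$ is pushed by $\pi_2$ to a point at unit $g$-distance from the origin, and one of the two series representations (\ref{10}), (\ref{11}) converges there (depending on whether $\pi(\zeta)$ is close to or away from the cone axis). The pointwise bound for $G(0,\pi_2(\zeta))$ is then the content of Donaldson's Proposition 3(I), and it carries over to $\tilde{G}(0,\zeta)$ unchanged.

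For parts (III) and (IV), I would apply the scaling identity (\ref{hamgeni}) with $\lambda=\|\zeta-w\|_{g'}^{-1}$: after this rescaling, the arguments lie at unit $g'$-distance, and the hypothesis $\|\pi(\zeta)\|_{g'}\geq 1/2$ places us in the region where (\ref{10}) converges absolutely and may be differentiated term by term. This yields a uniform bound at unit scale, and the original inequality follows by scaling back: the power of $\|\zeta-w\|_{g'}$ in (III) is exactly the scaling exponent $-n+1-(\gamma_1+\gamma_2)/2$ read off from (\ref{hamgeni}), and (IV) is obtained identically with one further weighted derivative falling on the kernel.

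The main obstacle is part (II), the H\"older-$\tfrac{1}{2}$ estimate. Here the argument requires a finer analysis of the polyhomogeneous expansion of $\tilde{\partial}_{i,\gamma_1}\tilde{\partial}_{j,\gamma_2}\tilde{G}(x,\zeta)$ in the variable $x$. As already observed in the discussion preceding the lemma, only integer powers of $R$ occur in this expansion except for powers $R^{1/2+j}$ ($j\geq 0$), and these appear only when exactly one of $\gamma_1,\gamma_2$ equals $\tfrac{1}{2}$ with the corresponding index tangential to $D$. Since $R(x)$ is smooth in $(s_1,\dots,s_{m-2},\theta)$ but $\sqrt{R(x)}$ is only $\tfrac{1}{2}$-H\"older with respect to the $g'$-distance, the integer-power terms are Lipschitz and contribute $O(\|w_1-w_2\|_{g'})$, while the half-integer-power terms contribute at most $O(\|w_1-w_2\|_{g'}^{1/2})$. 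Higher-order remainders in the expansion are subleading by a further application of the scaling identity, and assembling these contributions in the manner of Donaldson's argument gives the stated H\"older bound. The first inequality of (II) is produced by the same analysis with one fewer weighted derivative.
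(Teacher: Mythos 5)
Your proposal follows essentially the same route as the paper: the paper's own proof consists precisely of the appeal to Proposition 3 of \cite{d} for the cone metric with $\beta=2$, the homogeneity relation (\ref{hamgeni}), and the remark preceding the lemma that the pulled-back polyhomogeneous expansion of $\tilde{G}$ contains only integer powers of $R$, with half-integer powers $R^{\frac12+j}$ entering $\tilde{\partial}_{i,\gamma_1}\tilde{\partial}_{j,\gamma_2}\tilde{G}$ only in the mixed tangential/normal cases with weight $\tfrac12$ --- exactly the structure you invoke for part (II). One bookkeeping caveat: since the $g'$-distance to $D$ in the fibre direction scales like $R^2$, an integer power $R^{k}$ with $k\geq 1$ is H\"older-$\tfrac12$ rather than Lipschitz in $\|\cdot\|_{g'}$, and $R^{1/2}$ by itself would only be H\"older-$\tfrac14$, so the H\"older-$\tfrac12$ conclusion in (II) really depends on which half-integer powers (and accompanying angular factors) actually occur in the expansion --- a point that your sketch, like the paper, delegates to ``the same argument as in \cite{d}''.
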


 \begin{lemma}\label{schauderpr}

i) For     $B_1= \{\|(y_1,..,y_{n-2},x_1,x_2)\|_{g'}<1\}$, if $n+k+c-1>0$, we have
\[
I_1=\int_{B_1} ((x_1^2+x_2^2 )^2+y_1^2+...+y_{n-2} ^2)^{k/2} (x_1^2 +x_2 ^2)^c     dx_1dx_2dy_1...dy_{n-2}<+\infty
\]
\\

ii) For  $B_2= \{\|(y_1,..,y_{n-2},x_1,x_2)\|_{g'}>1\}$ if $n+k+c-1<0$ we have
\[
I_1=\int_{B_2} ((x_1^2+x_2^2 )^2+y_1^2+...+y_{n-2} ^2)^{k/2}  (x_1^2 +x_2 ^2)^c     dx_1dx_2dy_1...dy_{n-2}<+\infty
\]

\end{lemma}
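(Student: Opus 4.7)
My plan is to reduce both integrals to a single radial Lebesgue integral on a half-space of $\mathbb{R}^{n-1}$, where convergence becomes a transparent dimension count. The point is that the weight $(x_1^2+x_2^2)^c$, the polar-coordinate Jacobian, and the substitution $r=x_1^2+x_2^2$ together absorb the degeneracy of the metric $g'$ (equivalently, the effect of the map $\pi_2$ used earlier in this section).

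First I will pass to polar coordinates $(R,\theta)$ in the $(x_1,x_2)$-plane, so that $dx_1\,dx_2=R\,dR\,d\theta$ and the $\theta$-integral contributes $2\pi$. I will then substitute $r=R^2$, for which $R^{2c+1}\,dR=\tfrac{1}{2}r^c\,dr$; crucially, the $g'$-norm linearizes to $\|(y,x_1,x_2)\|_{g'}^2=(x_1^2+x_2^2)^2+|y|^2=r^2+|y|^2$, so the $g'$-ball (respectively its complement) becomes an ordinary Euclidean ball (respectively its complement) on $(r,y)\in\mathbb{R}_{>0}\times\mathbb{R}^{n-2}$. The integral then reduces to
\[
I_1=\pi\int r^c\,(r^2+|y|^2)^{k/2}\,dr\,dy_1\cdots dy_{n-2}
\]
over $\{r^2+|y|^2<1\}$ in case (i) and $\{r^2+|y|^2>1\}$ in case (ii).

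Next I will introduce spherical coordinates on the $(r,y)$-half-space by writing $r=\rho\cos\phi$, $|y|=\rho\sin\phi$ with $\phi\in[0,\pi/2]$, together with $dy=|y|^{n-3}\,d|y|\,d\sigma$ on $\mathbb{R}^{n-2}$. The combined Jacobian gives $dr\,dy=\rho^{n-2}\sin^{n-3}\phi\,d\rho\,d\phi\,d\sigma$, and the integrand becomes $\rho^{n+k+c-2}\cos^c\phi\sin^{n-3}\phi$. The angular integral is a finite Beta-type integral, so convergence is controlled purely by $\int\rho^{n+k+c-2}\,d\rho$: this converges on $(0,1)$ if and only if $n+k+c-1>0$, giving part (i); and on $(1,\infty)$ if and only if $n+k+c-1<0$, giving part (ii).

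The argument is routine bookkeeping once the right change of variables is in place, and no step presents a genuine obstacle. The only conceptual observation worth stating is that the map $(x_1,x_2)\mapsto(R^2\cos\theta,R^2\sin\theta)$, which is essentially $\pi_2$, simultaneously linearizes the $g'$-distance and converts the weighted Lebesgue measure $(x_1^2+x_2^2)^c\,dx_1\,dx_2$ into the smooth measure $\tfrac{1}{2}r^c\,dr\,d\theta$ on the image; this is precisely what makes the threshold $n+k+c-1\gtrless 0$ emerge as a pure dimension count in $\mathbb{R}^{n-1}$.
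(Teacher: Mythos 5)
Your proposal is correct and follows essentially the same route as the paper: polar coordinates in the $(x_1,x_2)$-plane, the substitution $r=R^2$ (the paper's $u=r_1^2$), hyperspherical coordinates in $y$, and finally a two-dimensional polar change combining the two radial variables, yielding the same radial power $\rho^{\,n+k+c-2}$ and hence the same threshold $n+k+c-1\gtrless 0$. The only cosmetic difference is that you phrase the middle step via the map $\pi_2$ linearizing the $g'$-norm, which is exactly the paper's implicit bookkeeping.
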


\emph{Proof:}  Taking polar coordinates  for the variables $(x_1,x_2)$ with radial component denoted by $r_1$ and hyper spherical coordinates for the variables $(y_1,...,y_{n-2})$  with radial components  denoted by $r_2$ we can write

\[
\begin{split}
I_1=V_0 \int_{r_1^4+r_2^2\leq 1} (r_1^4+r_2 ^2)^{k/2}r_1 ^{2c+1}dr_1 r_2^{n-3}dr_2
\end{split}
\]
where $V_0$  is a constant which depends on $n$.

by setting $r_1 ^2 =u$  and $r_2=v$ we get to

\[
I_1=\frac{V_0}{2} \int_{r_1^4+r_2^2\leq 1}  (u^2+v^2)^{k/2}u^c v^{n-3}du dv
\]

Then if we take $u=r\cos \theta$ and $v=r\sin \theta$ we obtain

\[
I_1=\frac{V_0}{2}\int_{r_1^4+r_2^2\leq 1}  r^{n+k+c-2} (\cos \theta)^c (\sin\theta )^{n-3} drd\theta
\]

which is convergent if $n+k+c-1>0$.

Part (ii) follows similarly.
$\square$
\\


From now we assume that $T'$
$T'_{\gamma_1, \gamma_2}=\tilde{\partial}_{i,\gamma_1}\tilde{\partial}_{j,\gamma_2}$
has one of the following forms: 

\begin{enumerate}
\item  $T'=\tilde{\partial}_{i,\gamma_1}\tilde{\partial}_{j,\gamma_2} \tilde{G}$  for  $1\leq i,j\leq m-2$
\item  $T'=\tilde{\partial}_{i,\gamma_1}\tilde{\partial}_{j,\gamma_2} \tilde{G}$  for $j=m-1, m-2$ and $1\leq i \leq m-2$
\item  $T'= \frac{\partial ^2 }{\partial z\partial {\bar{z}}} \tilde{G}$
\end{enumerate}

\begin{theo}\label{main3}
For $0<\alpha <1/2$ and for any $0\leq \gamma_1, \gamma_2\leq 1$ there exists a constant $C$ which depends on $\alpha, m, \gamma_1 $ and $\gamma_2$ such that for all the functions $\rho$ such that  $|\pi|^b\rho\in C^{\infty} _c (\mathbb{R}^m)$ we have
\[
[T'  \rho]_{\alpha}\leq C[(|\pi|^b \rho )]_{\alpha} 
\]
where $b=2-(\gamma_1+\gamma_2)$. We recall that $\pi: \mathbb{R}^{m}\rightarrow \mathbb{R}^2$ is defined as
$\pi (w_1,...,w_{n-1},z)=z$ written in holomorphic coordinates $(w_1,...,w_{n-1},z)$
on $\mathbb{R}^m =\mathbb{C}^n$.

\end{theo}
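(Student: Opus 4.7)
The plan is to follow Donaldson's strategy for Schauder estimates on conical metrics \cite{d}, adapted to the $\beta>1$ case and extended to the weighted operator $T'_{\gamma_1,\gamma_2}$.

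First, I would reduce to a canonical scale. By the homogeneity \eqref{homg1} together with $a_\lambda^* g' = \lambda^2 g'$ (so that $g'$-Hölder seminorms transform as $[f_\lambda]_\alpha = \lambda^{-\alpha}[f]_\alpha$), the inequality to be proved is invariant under $a_\lambda$, so I may assume $\|x_1-x_2\|_{g'}=1$. Using the isometries of $g'$ given by translations in the $w$-directions tangent to $D$ and rotations in the $\theta$-angle, I further normalize the midpoint $x_*:=\tfrac12(x_1+x_2)$ so that its projection onto $D$ sits at the origin.

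Second, I would write the increment as a weighted kernel integral. With $K(x,y):=\tilde\partial_{i,\gamma_1}\tilde\partial_{j,\gamma_2}\tilde G(x,y)$ and $\psi:=|\pi|^b\rho$, the integral representation of $T'_{\gamma_1,\gamma_2}$ (cf.\ \eqref{dtg7}) gives
\[
\phi(x_1)-\phi(x_2) \;=\; \int \bigl[K(x_1,y)-K(x_2,y)\bigr]\,\psi(y)\,|\pi(y)|^{\gamma_1+\gamma_2}\,dy.
\]
Because only the seminorm $[\psi]_\alpha$ is to appear on the right, I subtract the constant $\psi(x_*)$, showing through a cutoff/limiting argument that the residual term $\psi(x_*)\int[K(x_1,y)-K(x_2,y)]|\pi(y)|^{\gamma_1+\gamma_2}dy$ vanishes (it is an integral of the \emph{difference} of the kernel evaluated at two points, against a scale-homogeneous weight, and can be killed by invoking the scaling \eqref{hamgeni} and the symmetry structure). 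This replaces $\psi(y)$ in the integrand by $\psi(y)-\psi(x_*)$, for which $|\psi(y)-\psi(x_*)|\leq [\psi]_\alpha\|y-x_*\|_{g'}^\alpha$ holds pointwise.

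Third, I would decompose the domain dyadically into shells $A_k=\{2^{k-1}<\|y-x_*\|_{g'}\leq 2^k\}$ and estimate each shell by rescaling back to the unit configuration. On far shells ($2^k\gg 1$) the mean value theorem applied in the first slot of $K$, combined with Lemma~\ref{schauderpr1}(IV) and the kernel scaling \eqref{hamgeni}, gives
\[
\sup_{A_k}|K(x_1,y)-K(x_2,y)|\;\leq\; C\,\|x_1-x_2\|_{g'}\,2^{-k(n+(\gamma_1+\gamma_2)/2)}.
\]
On near shells ($2^k\lesssim 1$) I would rescale via $a_{2^{-k}}$ so that $y$ sits at unit $g'$-distance from the origin, then apply Lemma~\ref{schauderpr1}(II) to obtain
\[
\sup_{A_k}|K(x_1,y)-K(x_2,y)|\;\leq\; C\,2^{-k(n-1+(\gamma_1+\gamma_2)/2)}\,\|x_1-x_2\|_{g'}^{1/2}.
\]
The weighted shell volume $\int_{A_k}|\pi(y)|^{\gamma_1+\gamma_2}\|y-x_*\|_{g'}^\alpha\,dy$ is controlled by a power of $2^k$ via Lemma~\ref{schauderpr}(i)-(ii). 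Multiplying by the kernel bounds and summing the two resulting geometric series in $k$ yields $|\phi(x_1)-\phi(x_2)|\leq C[\psi]_\alpha$, as required.

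The main obstacle is convergence of the two geometric series, and this is precisely what forces $0<\alpha<1/2$: the near-shell contribution picks up a factor $2^{-k/2+k\alpha}$ whose summability as $k\to-\infty$ is equivalent to $\alpha<\tfrac12$, reflecting that the kernel is only $\tfrac12$-Hölder in its first variable (item II). A secondary technicality is that the pointwise estimates in Lemma~\ref{schauderpr1} are formulated with $\zeta$ normalized at unit $g'$-distance; applying them on a generic shell requires first translating along $D$ and rescaling by $a_{2^{-k}}$ to reach the standard position, and one must track carefully that the weights $|\pi|^{\gamma_1+\gamma_2}$, the volume element, and the kernel derivatives all transform consistently under this reduction—this is where \eqref{hamgeni} and the translation/rotation invariance of $\tilde G$ do the crucial work.
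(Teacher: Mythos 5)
Your step 1 (scaling reduction to $\|x_1-x_2\|_{g'}=1$, $[\,|\pi|^b\rho]_\alpha=1$) matches the paper, but step 2 contains a genuine gap that the rest of the argument inherits. You replace the paper's reduction by subtracting the constant $\psi(x_*)$ at the midpoint and assert that the residual term $\psi(x_*)\int[K(x_1,y)-K(x_2,y)]\,|\pi(y)|^{\gamma_1+\gamma_2}dy$ vanishes ``by scaling and symmetry.'' Nothing of the sort is established: this would require a mean-zero cancellation property of the weighted second derivatives of $\tilde G$ (the analogue of the spherical mean-zero property of $D^2N$ for the Newtonian kernel), which is neither proved in the paper nor obvious for a non-convolution kernel attached to a degenerate metric. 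Worse, even absolute convergence near the kernel singularities fails at your level of gain: by Lemma~\ref{schauderpr1}(III) the kernel blows up like $\|y-x_i\|_{g'}^{-n+1-\frac{\gamma_1+\gamma_2}{2}}$, and with the weight $|\pi|^{\gamma_1+\gamma_2}$ Lemma~\ref{schauderpr}(i) is exactly borderline ($n+k+c-1=0$), so one needs the extra factor $\|y-x_i\|_{g'}^{\alpha}$ \emph{at the singular points} $x_1,x_2$. Subtracting $\psi(x_*)$ only gives vanishing at the midpoint, not at $x_1$ or $x_2$, so your near-shell integrals are not controlled. This is precisely why the paper (following Donaldson) constructs the corrections $\sigma_0=\rho(x_2)\chi_\lambda$ and $\sigma_1=(\rho(0)-\rho(x_2))\chi$, with $\chi=\Delta_{g'}\psi$ for a compactly supported $\psi$ and the scale $\lambda$ chosen as in (\ref{landaa}): this makes $\tilde\rho$ vanish at \emph{both} base points while keeping $[\,|z|^b\sigma_i]_\alpha$ and $[T'\sigma_i]_\alpha$ under control, and it is the step your proposal is missing.

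Two further problems follow from the same choice. On your ``near shells'' you invoke Lemma~\ref{schauderpr1}(II), but after rescaling by $a_{2^{-k}}$ with $2^k\lesssim 1$ the increment $x_1-x_2$ is no longer small compared to the distance to $y$, so the hypotheses $\|w_i\|_{g'}\le 1/2$ of (II) are violated; near the singularities one must estimate the two kernels separately, which again needs the $\alpha$-vanishing of $\rho$ at $x_1$ and $x_2$. Relatedly, the constraint $0<\alpha<1/2$ enters the paper's proof in the \emph{far} region: the kernel-difference decay $\|y\|_{g'}^{-(n-1)-\frac12-\frac{\gamma_1+\gamma_2}{2}}$ from (II) plus homogeneity (\ref{hamgeni}), tested against $\|y\|^{\alpha}|\pi(y)|^{\gamma_1+\gamma_2}$, converges by Lemma~\ref{schauderpr}(ii) exactly when $\alpha<1/2$ (while $\alpha>0$ is what saves the near region); your attribution of $\alpha<1/2$ to summability of near shells as $k\to-\infty$ is incorrect. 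Finally, the paper's case analysis (Cases A/B and subcases A1/A2 according to the distance of $x_2$ and of $y$ to the divisor $D$) is not decorative: the estimates (I)--(IV) of Lemma~\ref{schauderpr1} carry hypotheses such as $\|\pi(\zeta)\|_{g'}\ge 1/2$, so a shell decomposition that ignores the position of the points relative to $D$ cannot legitimately use them.
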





  \begin{proof}   The equality
\[
\frac{|(T'\rho)_{\lambda}(x_1)- (T'\rho)_{\lambda}(x_2)|}{\|a_{\lambda^{-1}}(x_1)-a_{\lambda^{-1}}(x_2)\|_{g'}^{\alpha}}=\lambda^{\frac{\gamma_1+\gamma_2}{2}+\alpha-1} \frac{|T'(\rho_{\lambda})(x_1)-T'(\rho_{\lambda})(x_2)|}{\|x_1-x_2\|^{\alpha}}
\]
 shows that

\begin{equation}\label{tprimro}
[T'\rho_{\lambda}]_{\alpha}= \lambda^{\frac{b}{2}-\alpha}[T'\rho]_{\alpha}
\end{equation}

where  $b=2-\gamma_1-\gamma_2$.
Since we also have

\begin{equation}\label{landalf}
 [|z|^b \rho_{\lambda}]_{\alpha}=\lambda^{\frac{b}{2}-\alpha} [|z|^b \rho]_{\alpha}
\end{equation}

Using the above inequalities and linearity of $T'$ 
the inequality $[T'\rho]_{\alpha}\leq C [|z|^b\rho]_{\alpha}$ is proved  for $\{\rho| |z|^b\rho\in C^{\alpha}\}$   if we can show that there exists a constant $C$ for which 
\[
|T'\rho (x_1)-T' \rho (x_2)|\leq C
\]

holds for any  $\rho\in C^{\infty}_c$  satisfying $[|z|^b\rho]_{\alpha}=1$ and for any  $x_1,x_2\in \mathbb{R}^m$
such that $\|x_1-x_2\|_{g'}=1$.
 With similar argument as in (\cite{d}) we can assume that $x_1=0$ and $\|x_2\|_{g' } =1$

We consider a  smooth function $\psi$ supported in the unit ball 
(with respect to $g'$)  such that $\Delta_{g'}\psi =1$ on the $\delta$-ball for some fixed 
$\delta>0$. The existence of $\psi$ follows from the theory of Grushin and Vishik for degenerate elliptic operators.  We then set $\chi=\Delta_{g'} \psi$ so $\chi$ has compact support,
is equal  to 1 on the $\delta$-ball and $T' \chi =D_{\gamma_1, \gamma_2}\psi$. Taking
\begin{equation}\label{landaa}
\lambda=\max \{\rho (x_2)^{\frac{1}{\alpha}} , \delta^{-1}\}
\end{equation}

and
\[
\sigma_0=\rho (x_2) \chi_{\lambda}
\]

\[
T'\chi_{\lambda}=\lambda^{b/2}(T' \chi )_{\lambda}=\lambda^{b/2}(D_{\gamma_1, \gamma_2}\psi)_{\lambda}
\]

According to the definition of $T'$ we have $ (T' \chi)_{\lambda}=(R^{b}\frac{\partial ^2 }{\partial s_i \partial s_j} \psi)_{\lambda}$
, $(R^{b/2} \frac{\partial ^2 }{\partial s_i \partial R} \psi)_{\lambda}$ or $(\frac{\partial ^2}{\partial z\partial {\bar{z}}} \psi)_{\lambda} $.
Also according to the equation $\chi = \Delta_{g'} \psi$  we have  
$\frac{\partial ^2}{\partial z\partial {\bar{z}}} \psi=|z|^2\chi -|z|^2 \sum_{i=1} ^{n-1} \frac{\partial ^2 }{\partial w_i \partial \bar{w}_i}\psi$.
Therefore it follows that  
\[
[T'\chi_{\lambda}]_{\alpha}=\lambda^{-\alpha} [\tilde{\psi}]_{\alpha}
\] 

for a  $C^{\alpha}$  map   $\tilde{\psi}$  which only depends on $\psi$. We also have 

\[
[|z|^b \sigma_0]_{\alpha}=\rho(x_2) [|z|^b \chi_{\lambda}]_{\alpha}\leq C\rho(x_2) ( [|z|^b]_{\alpha} +[\chi]_{\alpha})\leq \lambda^{-\alpha}\rho(x_2) C'
\]

for some constant $C'$  which only depends on $\chi$.

 The above relations  ensure that  for $\lambda $ large enough we have  
\begin{equation}\label{150}
[T'\sigma_0]_{\alpha}\leq [T' \chi]_{\alpha} 
\end{equation}.

 Similar properties hold for $\sigma_1:= (\rho (0)-\rho (x_2))\chi$.
Thus if we define $\tilde{\rho}:=\rho-\sigma_0-\sigma_1$ then 
\[
\tilde{\rho}(0)= \rho (0) - \rho(x_2)\chi_{\lambda} (0)- (\rho(0)- \rho(x_2))\chi(0)=0
\]
since we have $\chi(0)=\chi_{\lambda}(0) =1$. We also have 
\[
\tilde{\rho}(x_2)= \rho (x_2) - \rho(x_2)\chi_{\lambda} (x_2)- (\rho(0)- \rho(x_2))\chi(x_2)=0
\]
since  according to (\ref{landaa})  we have $\chi_{\lambda} (x_2) =1$ and $\chi(x_2) =0$. 
Hence $\tilde{\rho}$ vanishes at 
$0$ and $x_2$.
 From  (\ref{landaa})  we can  also deduce that
\begin{equation}\label{152}
[|z|^b \sigma_0]_{\alpha}=\rho(x_2) [|z|^b \chi_{\lambda}]_{\alpha}\leq C\rho(x_2) ( [|z|^b]_{\alpha} +[\chi]_{\alpha})\leq \lambda^{-\alpha}\rho(x_2) C'\leq C'
\end{equation}
Here   $C$ and $C'$ are constants   which only depend on $\chi$.
 From (\ref{150}) and (\ref{152}) weconclude that

\[
[|z|^b \tilde{\rho} ]_{\alpha}\leq [|z|^b\rho ]_{\alpha}+2[|z|^b\chi]_{\alpha}
\]
\[
|T'\rho (x_2) -T'\rho (0)|\leq |T'\tilde{\rho}(x_2) -T'\tilde{\rho} (0)|+2[T'\chi]_{\alpha}
\]
Consequently in the proof of theorem (\ref{main3}) it is possible to assume  $\rho$ vanishes at $x_2$ and $0$.
\\ 
We consider two cases A and B corresponding to $d\leq 2$ and  $d>2$, respectively,
where $d=\min \{\ \|\pi (x_1)\|_{g'}, \|\pi (x_2)\|_{g'} \}$.
\\

\textbf{Case A}
If $x' _1$ and $x'_ 2$ denote the projections of respectively $x_1$ and $x_2$ on $D$  in the case A we have
$
\|x_1-x'_1\|_{g'}, \|x'_1 -x'_2\|_{g'},\|x_2-x'_2\|_{g'}\leq 3
$. Thus as in (\cite{d}) we are reduced to one of the following two subcases
\\

\textbf{Sub-case A1} $x_1=0$ ,$\|x_2\|_{g'}=1$, $x_2\in D$
\\

\textbf{Sub-case A2} $x_1=0$, $\|x_2\|_{g'} =1$, $x'_2=0$
\\ 

In order to estimate 
\[
\int  D_{\gamma_1,\gamma_2}\tilde{G} (0,y) \rho (y) \pi(y)^2 dy  -  \int D_{\gamma_1, \gamma_2}\tilde{G}(x_2,y) \rho (y) \pi(y)^2 dy
\]

we consider the contribution from the  two regions  $\|y\|_{g'}\geq 2$ and $\|y\|_{g'}\leq 2$ separately.
Then we  have
\[
\begin{split}
|\int D_{\gamma_1,\gamma_2}\tilde{G} (0,y) \rho (y) |\pi(y)|^2 dy - \int D\tilde{G}(x_2,y) \rho (y) |\pi(y)|^2 dy |\leq I'_1+I'_2
\end{split}
\]
where
\[
I'_1=|\int_{\|y\|_{g'}>2} D_{\gamma_1,\gamma_2}\tilde{G} (0,y) \rho (y) |\pi(y)|^2 dy - \int_{\|y\|_{g'}>2} D_{\gamma_1,\gamma_2}\tilde{G}(x_2,y) \rho (y) |\pi(y)|^2 dy |
\]

and

\[
I'_2= |\int_{\|y\|_{g'}<2} D_{\gamma_1,\gamma_2}\tilde{G} (0,y) \rho (y) |\pi(y)|^2 dy - \int_{\|y\|_{g'}<2} D_{\gamma_1,\gamma_2}\tilde{G}(x_2,y) \rho (y) |\pi(y)|^2 dy |
\]

In the case A by the same argument as in (\cite{d}) we can assume that if $\|y\|_{g'}\geq 2$
then    from  relation (\ref{hamgeni})   for $D_{\gamma_1, \gamma_2}\tilde{G}$  
\[
|D_{\gamma_1,\gamma_2}\tilde{G} (x_2,y) -D_{\gamma_1,\gamma_2}\tilde{G} (0,y)|=\|y\|^{-n+1-\frac{\gamma_1+\gamma_2}{2}} _{g'} (|D_{\gamma_1,\gamma_2}\tilde{G} (\frac{x_2}{\|y\|_{g'}},\frac{y}{\|y\|_{g'}}) -D_{\gamma_1,\gamma_2}\tilde{G} (0,\frac{y}{\|y\|_{g'}})|)
\]
hence  by properties (II)  of lemma (\ref{schauderpr1}) we get
\begin{equation}\label{dg1}
|D_{\gamma_1,\gamma_2}\tilde{G} (x_2,y) -D_{\gamma_1,\gamma_2}\tilde{G} (0,y)|\leq \kappa_2 \|y\|_{g'} ^{-(n-1)-\frac{1}{2}-\frac{\gamma_1+\gamma_2}{2}}
\end{equation}

Now if we assume that  $\rho\in C^{\infty} _c$ has the property $\rho(0)=\rho (x_2)=0$ and  assuming that
$
[\rho |\pi|^b]_{\alpha}<C_0
$ we get to 
\[
|\pi(y)|^b |\rho (y)|\leq C_0 \|y\|_{g'} ^{\alpha}
\]

Therefore from the assumption that $|\pi|^{b}\rho\in C^{\alpha}$ we have 
\[
|\pi |^b |\rho||D_{\gamma_1,\gamma_2}\tilde{G} (x_2,y) -D_{\gamma_1,\gamma_2}\tilde{G} (0,y)||\pi|^{2-b} \leq C_1 \|y\|_{g'} ^{-n+\frac{1}{2}-\frac{\gamma_1 + \gamma_2}{2}+ \alpha}|\pi|^{2-b}
\]
for some constant $C_1$.
Now we can  apply 
 part (ii) of lemma (\ref{schauderpr})to conclude that 
 $I'_1$ is bounded if  $\alpha<\frac{1}{2}$. (Here we  set $k=-n+\frac{1}{2}-\frac{\gamma_1+\gamma_2}{2}$ and  $c=\frac{2-b}{2}=\frac{\gamma_1+\gamma_2}{2}$).

Also for  boundedness of $I'_2$  one approximates the following two integrals separately
\[
|\int_{\|y\|_{g'}<2} D_{\gamma_1,\gamma_2}\tilde{G} (0,y) \rho (y) |\pi(y)|^2 dy|  \hspace{2cm} |\int_{\|y\|_{g'}<2} D_{\gamma_1,\gamma_2}\tilde{G}(x_2,y) \rho (y) |\pi(y)|^2 dy | 
\]


 We  consider the two subcases A1 and A2. In the subcase A1  from  part (I) of lemma (\ref{schauderpr1})  

\[
|\int_{\|y\|_{g'}<2}D_{\gamma_1,\gamma_2}\tilde{G} (0,y) \rho (y) |\pi(y)|^2 dy|\leq \kappa ' \int_{\|y\|_{g'} \leq 2} \|y\|_{g'} ^{\alpha-n+1-\frac{\gamma_1+\gamma_2}{2}}|\pi (y)|^{{\gamma_1+\gamma_2}}dy
\]
 
which is bounded  by first part of lemma (\ref{schauderpr}) since $\alpha>0$. The constant $\kappa'$ depends on  $C^{\alpha}$-norm of
 $|\pi (y)|^b\rho(y)$ and the constant $\kappa$ in lemma (\ref{schauderpr1}).

To estimate 
\[
 |\int_{\|y\|_{g'}<2}D_{\gamma_1,\gamma_2}\tilde{G}(x_2,y) \rho (y) |\pi(y)|^2 dy | 
\]

we again consider the two subcases A1 and A2. The subcase A1 is similar to  the above argument  and the subcase A2 follows from third part of lemma 
(\ref{schauderpr1}) and the first part of lemma (\ref{schauderpr}). Case B can also be treated similarly.\end{proof}

 If $\|\zeta \|_{g'}=1$ then  If $\|\zeta \|_{g'}=1$ then  for $\gamma=1$  similar to  part II of lemma (\ref{schauderpr1})  and relations  (\ref{dgro1}) and (\ref{hamgeni})  we can derive

\[
|  \tilde{\partial}_{j,\gamma}\tilde{G}(w_1,\zeta)- \tilde{\partial}_{j,\gamma}\tilde{G}(w_2,\zeta)   |\leq \kappa ' _2\|w_1-w_2\|_{g'} ^{1/2}
\]

\begin{equation}\label{dgro46}
 \tilde{\partial}_{i,\gamma}\tilde{G}(\rho_{\lambda})=\lambda (\tilde{\partial}_{i,\gamma} \tilde{G} \rho)_{\lambda}
\end{equation}

\begin{equation}\label{hamgeni46}
 \tilde{\partial}_{i,\gamma}\tilde{G} (a_{\lambda}(  x), a_{\lambda} (y))=\lambda^{-n+1} \tilde{\partial}_{i,\gamma} \tilde{G} (x ,  y)
\end{equation}

The by repeating the same argument as in the proof of (\ref{main3}) and taking into account the fact $C^{,\alpha}$ functions can be approximated by smooth functions in the $C^{,\underline{\alpha}}$-norm for any $\underline{\alpha}<\alpha$ as explained in \cite{d}  we can also conclude that  

\begin{cor}\label{corhom}
If $|\pi|^b \rho\in C^{\alpha} _c (\mathbb{R}^m)$    for $\gamma=1$  and for any $0<\alpha<\underline{\alpha} <\frac{1}{2}$ we have
\[
\bigg [ \tilde{G} (\rho)\bigg ]_{\underline{\alpha}} \leq  C \bigg [ |\pi|^b \rho\bigg ]_{\alpha}
\]

\[
\bigg[\tilde{\partial}_{i,\gamma}\tilde{G} (\rho) \bigg]_{\underline{\alpha}}
 \leq C\bigg[ |\pi|^b \rho\bigg]_{\alpha}
\]

we also have 
\[
\bigg[\tilde{\partial}_{j,\gamma_2}\tilde{\partial}_{i,\gamma_1}\tilde{G} (\rho) \bigg]_{\alpha}
 \leq C\bigg[ |\pi|^b \rho\bigg]_{\alpha}
\]

 for some constant $C$ which depends on $m, \gamma_1$ and $\gamma_2$.
\end{cor}



\subsection{Local theory }
 

 Consider a smooth cut-off function $\chi$ such that $supp (\chi)\subset B'_2$
and $\chi^{-1} (1)=B'_2$ where $B_2\subset B_1$ are two $g'$ balls in $\mathbb{R}^m$. We  have
\[
\Delta_{g'}(\chi \phi)=\phi \Delta_{g'}(\chi)+ \chi \Delta_{g'}\phi+ \nabla_{g'}\phi.\nabla_{g'}\chi
\]   
It is clear that 
\[
\chi\phi=\tilde{G} \bigg ( \phi \Delta_{g'}(\chi)+ \chi \Delta_{g'}\phi \bigg )+\tilde{G}\bigg (\nabla_{g'}\phi.\nabla_{g'}\chi  \bigg ) 
\]



If $|\pi|^{b}\Delta_{g'}\phi$, $\phi$ and $d\phi$ are assumed to belong to $C^{\alpha}(B'_2)$, for some $0\leq b \leq 2$ then we have
\[
\begin{split}
[\tilde{\partial}_{i,\gamma_1}\tilde{\partial}_{j,\gamma_2}   (\chi\phi)]_{\alpha} &=\bigg [\tilde{\partial}_{i,\gamma_1}\tilde{\partial}_{j,\gamma_2}  \tilde{G} \bigg (\chi\Delta_{g'} \phi + \phi\Delta_{g'} \chi +\nabla_{g'}\chi.\nabla_{g'}\phi \bigg )\bigg ]_{\alpha}\\
\quad & = \bigg  [\tilde{\partial}_{i,\gamma_1}\tilde{\partial}_{j,\gamma_2}  \tilde{G} \bigg (\chi\Delta_{g'} \phi + \phi\Delta_{g'} \chi \bigg )\bigg ]_{\alpha}+\bigg  [\tilde{\partial}_{i,\gamma_1}\tilde{\partial}_{j,\gamma_2}  \tilde{G} \bigg ( \nabla_{g'}\chi.\nabla_{g'}\phi\bigg )\bigg ]_{\alpha}\\
\end{split}
\]
According to theorem  (\ref{main3}) we have
\[
\bigg [\tilde{\partial}_{i,\gamma_1}\tilde{\partial}_{j,\gamma_2}  \tilde{G} \bigg (\chi\Delta_{g'} \phi + \phi\Delta_{g'} \chi \bigg )\bigg ]_{\alpha, B'}\leq  C\bigg   [|\pi|^{b}\bigg (\chi\Delta_{g'} \phi + \phi\Delta_{g'} \chi\bigg  )\bigg  ]_{\alpha, B}
\]
 and
\[
 [\tilde{\partial}_{i,\gamma_1}\tilde{\partial}_{j,\gamma_2}  \tilde{G} ( \nabla_{g'}\chi.\nabla_{g'}\phi)]_{\alpha, B'}\leq C [|\pi|^b( \nabla_{g'}\chi.\nabla_{g'}\phi)]_{\alpha, B}
\]

In fact $\chi$ can be chosen in such a way that $\frac{1}{|\pi|^2}\chi_z\in C^{\infty}$ thus 
\[
 [|\pi|^b( \nabla_{g'}\chi.\nabla_{g'}\phi)]_{\alpha}\leq C'( [d\phi]_{\alpha, B'_2}
\]
 
here $\nabla$ denotes the ordinary derivative of $\phi$.

for some constant $C'$ depending on $\chi$ and  $C$  and $b$. hence we have proved the follwonig theorem

 \begin{theo}\label{the2}
Let $B'_2\subset B'_1$ be two small relatively compact  $g'$-balls  in some open set $U\subset\mathbb{R}^n$.  Assume that $\phi\in C^{,\alpha} (U)$ is such that $\Delta_{g'} \phi$ is defined pointwise outside $D$ and such that  $ |\pi|^b \Delta_{g'} \phi, \phi $ and $d\phi$ have bounded $C^{\alpha}$-norm over $B'_2$ for some $0\leq b\leq 2$.
Then for any $0\leq  \gamma_1, \gamma_2\leq 1$ with $\gamma_1 + \gamma_2=2-b$, 
and for all  $i,j$ , $\tilde{\partial}_{i,\gamma_1}\tilde{\partial}_{j, \gamma_2} \phi\in C^{,\alpha}$ and   there exists a constant $C$   depending on  $B'_1$  $B'_2$, $\gamma_1$  and $\gamma_2 $  such that


\[
[\tilde{\partial}_{i, \gamma_1}\tilde{\partial}_{j, \gamma_2}\phi]_{\alpha;B'_{1}}\leq C\big ( [\phi]_{\alpha;B'_{2}}+[d \phi]_{\alpha, B'_2}+ [|\pi |^b\Delta_{g'}\phi]_{\alpha; B'_{2}}\big )
\]

\begin{rem}
By taking $B' _1$ and $B' _2$ small enough the term  $[d \phi]_{\alpha, B'_2}$ can be ignored in the above relation.
\end{rem}


\end{theo}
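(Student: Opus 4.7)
The plan is to apply the cutoff-and-representation machinery that was assembled right before the theorem, namely the identity $\chi\phi=\tilde G\bigl(\chi\Delta_{g'}\phi+\phi\Delta_{g'}\chi+\nabla_{g'}\chi\cdot\nabla_{g'}\phi\bigr)$ combined with Corollary \ref{corhom}. First I would pick a smooth cutoff $\chi$ equal to $1$ on the inner ball and supported in the outer one, with the crucial further requirement that, near $D=\{z=0\}$, $\chi$ depend on $z$ only through $|z|^2$. This choice forces $\partial_z\partial_{\bar z}\chi$ to vanish to order two along $D$, so that $\tfrac{1}{|z|^2}\partial_z\partial_{\bar z}\chi$ extends smoothly; this is what is needed to tame the factor $\tfrac{1}{|z|^2}$ appearing in $\Delta_{g'}$ via equation (\ref{lpl}), and it is the same observation that underlies the remark $\tfrac{1}{|\pi|^2}\chi_z\in C^{\infty}$ made on the line just above the theorem.

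With $\chi$ fixed, applying $\tilde\partial_{i,\gamma_1}\tilde\partial_{j,\gamma_2}$ to both sides of the Green's function identity and restricting to the ball on which $\chi\equiv 1$ gives, thanks to Corollary \ref{corhom} with $b=2-(\gamma_1+\gamma_2)$,
\[
[\tilde\partial_{i,\gamma_1}\tilde\partial_{j,\gamma_2}\phi]_{\alpha}
\le C\bigl([|\pi|^b\chi\Delta_{g'}\phi]_{\alpha}
+[|\pi|^b\phi\Delta_{g'}\chi]_{\alpha}
+[|\pi|^b\nabla_{g'}\chi\cdot\nabla_{g'}\phi]_{\alpha}\bigr),
\]
all seminorms being taken on the larger ball $B'_2$.

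It remains to estimate the three source terms. The first is controlled by the product rule in terms of $[|\pi|^b\Delta_{g'}\phi]_{\alpha}$ together with a fixed $C^{\alpha}$ norm of $\chi$. For the second, the choice of $\chi$ ensures that $|\pi|^b\Delta_{g'}\chi$ is itself a smooth function with bounded $C^{\alpha}$ norm, so the term is bounded by $[\phi]_{\alpha}$. For the third, decomposing $\nabla_{g'}\chi$ into its $w$- and $z$-components and using the inverse metric from Lemma \ref{eslo} (the inverse metric coefficient $g'^{z\bar z}\sim |z|^{-2}$ is precisely canceled once it meets $\partial_z\chi=O(|z|^2)$), one finds that $|\pi|^b\nabla_{g'}\chi\cdot\nabla_{g'}\phi$ lies in $C^{\alpha}$ with norm controlled by $[d\phi]_{\alpha}$. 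Summing the three contributions yields the claimed inequality.

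The main technical hurdle is the engineering of the cutoff to absorb the weight and the $\tfrac{1}{|z|^2}$ singularity of $\Delta_{g'}$ simultaneously; a naive cutoff depending on $|z|$ would fail smoothness at $D$, while a generic smooth cutoff in $(z,\bar z)$ would give $\Delta_{g'}\chi$ of size $|z|^{-2}$, producing an unbounded weighted Hölder norm. Once the dependence on $|z|^2$ is built in, and the $C^{\alpha}$-vs-$C^{\underline\alpha}$ discrepancy of Corollary \ref{corhom} is absorbed by taking $\underline\alpha$ arbitrarily close to $\alpha$ (and appealing to density of smooth functions in the weighted $C^{\alpha}$ space), the estimate reduces to the homogeneous bound of Theorem \ref{main3} applied to three compactly-supported source distributions.
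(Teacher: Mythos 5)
Your overall route is exactly the paper's: write $\Delta_{g'}(\chi\phi)=\chi\Delta_{g'}\phi+\phi\Delta_{g'}\chi+\nabla_{g'}\chi\cdot\nabla_{g'}\phi$, invert with $\tilde{G}$, apply $\tilde{\partial}_{i,\gamma_1}\tilde{\partial}_{j,\gamma_2}$ and Corollary \ref{corhom}, and then control the three source terms, the only delicate point being a cutoff adapted to the weight. So there is no divergence of method; the issue is in the one place where the proof actually needs care.

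Your cutoff construction does not do what you claim. If near $D$ the function $\chi$ depends on $z$ only through $|z|^2$, say $\chi=F(|z|^2,w)$, then $\chi_z=\bar z\,F_1$ is only $O(|z|)$ and $\chi_{z\bar z}=F_1+|z|^2F_{11}$ is generically nonzero on $D$ in the transition annulus (which does meet $D$ when the balls are centered on or near the divisor). Hence $\chi_z/|\pi|^2$ and $\chi_{z\bar z}/|\pi|^2$ are unbounded, so $\partial_z\partial_{\bar z}\chi$ does \emph{not} vanish to second order along $D$, and with $g'^{z\bar z}\sim|z|^{-2}$ the terms $|\pi|^b\,\phi\,\Delta_{g'}\chi$ and $|\pi|^b\,\nabla_{g'}\chi\cdot\nabla_{g'}\phi$ behave like $|z|^{b-2}$ and $|z|^{b-1}$ respectively; they fail to lie in $C^{\alpha}$ for the small values of $b$ the theorem allows (in particular $b=0$, i.e.\ $\gamma_1=\gamma_2=1$), so your estimates of the second and third source terms collapse precisely where the weight must be absorbed. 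The paper's (implicit) requirement is stronger: $\chi_z/|\pi|^2\in C^{\infty}$, which one gets for instance by taking $\chi$ locally independent of $z$ in a full neighborhood of $D$ (e.g.\ a product $\chi_1(w)\chi_2$ with the $z$-factor identically $1$ near $z=0$), or by letting $\chi$ depend on $z$ through $|z|^4$, for which $\chi_z=2\bar z|z|^2F'$ and $\chi_{z\bar z}=4|z|^2F'+4|z|^6F''$ so that both quotients by $|z|^2$ are smooth. With that repair (and keeping your remark about passing from $\underline{\alpha}<\alpha$ to $\alpha$ in Corollary \ref{corhom}, which is the same liberty the paper takes), your argument coincides with the paper's proof.
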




\subsection{Global case}
Define the opeator $\tilde{\partial}_{\gamma}$ by
\[
\begin{split}
\tilde{\partial}_{\gamma}&=\frac{\partial}{\partial z} dz +\sum_{i=1} ^{n-1} \frac{z}{|z|^\gamma}\frac{\partial}{\partial w_i} dw_i\\
\quad & = \tilde{\partial}_{z}dz+\sum_{i=1} ^{n-1} \tilde{\partial}_{w_i, \gamma}dw_i= \sum_{i=1} ^n  \tilde{\partial}_{z_i,\gamma}dz_i
\end{split}
\]
where in the second line we are defining the partial derivative operators $\tilde{\partial}_z$ and $\tilde{\partial} _{w_i}$, $1\leq i\leq n-1$.
We also use the new notation $(z_1,...,z_n)$ for the coordinates $ (w_1,...,w_{n-1},z)$ to simplify summation symbol.

Let  $0\leq b\leq 2$  and $0\leq \gamma_1, \gamma_2\leq 1$ be such that $\gamma_1+\gamma_2 = 2-b$, and  conider an operator
of the form 

\begin{equation}\label{coor0}
|\pi|^2\Delta ' _{\eta}:=|\pi|^2\Delta_{g'}+\epsilon\eta.i\partial\bar{\partial}
\end{equation}

where $\eta$ is a  section of the bundle of $(1,1)$ forms and where  the hermitian  product  $\eta.i\partial\bar{\partial}$ is the standard (nondegenerate) hermitian product on $\mathbb{C}^n$.

Assume that around each point $p\in D$ there exists a holomorphic coordinates system  $(w_1,...,w_{n-1},z)$ such that  $p=(0,...,0)$ and $D=\{z=0\}$   and such that the operator $|\pi|^2\Delta ' _{\eta}$ in this coordinates system  is represented in the form
\begin{equation}\label{coor1}
|\pi|^2\Delta ' _{\eta}:=|\pi|^2\Delta_{g'}+\epsilon\tilde{\eta}.i\tilde{\partial}_{\gamma_1}\bar{\tilde{\partial}}_{\gamma_2}
\end{equation}

 where $\tilde{\eta}$ is a section of the  bundle of $(1,1)$ forms which has a  $C^{\alpha}$-norm along the $z$-axis $w_1=...=w_{n-1}=0$ smaller that a constant $\tilde{C}$ independent of the point $p$. 
  If  we set

\[
B=\tilde{\eta}.i\tilde{\partial}_{\gamma_1}\tilde{\partial}_{\gamma_2}
\]



then the   Neumann series expansion  
\[
H=\tilde{G}\sum_{n=0} ^{\infty} \epsilon^n (B\tilde{G})^n 
\]
defines  a right inverse for $|\pi|^2\Delta' _{\eta}$.
Consider the equation 
\[
\Delta'_{\eta} \phi =\rho
\]
where $\phi$ and $\rho$ are both of compact support and 
with $|\pi|^b \rho\in C^{\alpha}$. Then we have $H\rho=\phi$.
In the above mentioned coordinates system  along the $z$-axis  we have
\[
B=\epsilon\tilde{\eta}.i\tilde{\partial}_{\gamma_1}\bar{\tilde{\partial}}_{\gamma_2}=\epsilon\sum \tilde{a}^{i\bar{j}}\tilde{\partial}_{i,\gamma_1}\bar{\tilde{\partial}}_{j, \gamma_2}
\]
 where $\tilde{a}^{i\bar{j}}$'s   have $C^{\alpha}$-norm along the $z$ axis, smaller than $\tilde{C}$.
 We also assume thatwe have

\begin{equation}\label{rel159}
\frac{\tilde{a}^{i\bar{j}}}{|z|^{1/2}} \in C^{\alpha}, \hspace{0.5cm} \text{ for } i=n, j\neq n  \text{ or } i\neq n,  j=n  \text{ along the } z\text{ -axis} 
\end{equation}
If  we assume that the support of $\eta$ lies in an open  relatively compact neighborhood   $U_p$ of the origin  then we claim that for $\epsilon $ small enough  and for any  $\rho\in C^{\infty} _c (U_p)$ the following inequality holds
\begin{equation}\label{rondh}
[\tilde{\partial}_{i, \gamma_1} \bar{\tilde{\partial}}_{j,\gamma_2} H \rho ]_{\alpha, N_p}\leq C_0\big( [|\pi|^b\rho]_{\alpha,N_p}+[\rho]_{0, N_p}\big ) 
\end{equation}

where
\[
N_p=U_p\cap \{w_1=...=w_{n-1}=0\}
\]

and  where  $C_0$ depends on $[\tilde{a}^{i\bar{j}}]_{\alpha,N_p}, [\tilde{a}^{i\bar{j}}]_{0,N_p}$ and $U_p$.
 
To prove this  we observe that

\[
\begin{split}
[B\tilde{G}\rho]_{\alpha ;N_p}= &\epsilon  [\sum_{i,j} \tilde{a}^{i\bar{j}}\tilde{\partial}_{i, \gamma_1} \bar{\tilde{\partial}}_{j,\gamma_2}\tilde{ G}\rho ]_{\alpha; N_p}=\epsilon\sum_{i,j} \bigg \{[\tilde{a}^{i\bar{j}}]_{0;N_p}[\tilde{\partial}_{i,\gamma_1} \bar{\tilde{\partial}}_{j,\gamma_2}\tilde{ G}\rho]_{\alpha;N_p}+[\tilde{\partial}_{i,\gamma_1} \bar{\tilde{\partial}}_{j,\gamma_2}\tilde{ G}\rho]_{0;N_p}[\tilde{a}^{i\bar{j}}]_{\alpha;N_p} \bigg \}\\
\quad & \leq \epsilon \sum_{i,j} \bigg \{C[\tilde{a}^{i\bar{j}}]_{0;N_p}[|\pi|^b\rho]_{\alpha;N_p}+C'[\rho]_{0;N_p}[\tilde{a}^{i\bar{j}}]_{\alpha;N_p}\bigg  \}\\
\quad  & \leq   \epsilon C'' ([|\pi|^b \rho]_{\alpha;N_p}+ [\rho]_{0;N_p})
\end{split}
\]

where $C''=\max\{C\sum_{i,j}[\tilde{a}^{i\bar{j}}]_{0;N_p}, C' \sum_{i,j} [\tilde{a}^{i\bar{j}}]_{\alpha;N_p}\}$ and 
for the inequality  $[\tilde{\partial}_{i,\gamma_1} \bar{\tilde{\partial}}_{j,\gamma_2}\tilde{G} \rho]_{0;N_p}\leq C  [\rho]_{0;N_p}$  in the second line we are utilizing corollary (\ref{corhom}) . Now we do induction and  we assume that  $[(B\tilde{G})^{n-1}\rho]_{\alpha, N_p}\leq C''_{n-1}\epsilon ^{n-1}([|\pi |^b\rho]_{\alpha, N_p}+[\rho]_{0,N_p})$ for some constant $C''_{n-1}$. Then we have,

\[ 
\begin{split}
[(B\tilde{G})^n\rho]_{\alpha, N_p}&\leq C''_{n-1}\epsilon ^{n-1}([|\pi |^bB\tilde{G}\rho]_{\alpha,N_p}+[B\tilde{G}\rho]_{0,N_p})\\
\quad &\leq  C''_{n-1}\epsilon ^{n-1}([|\pi |^b]_{0}[B\tilde{G}\rho]_{\alpha,N_p}+ [|\pi |^b]_{\alpha}[B\tilde{G}\rho]_{0,N_p}+[B\tilde{G}\rho]_{0,N_p})\\
\quad &  = C''_{n-1}\epsilon ^{n-1}([|\pi |^b]_{0}[B\tilde{G}\rho]_{\alpha,N_p}+ ([|\pi |^b]_{\alpha,N_p}+1)[B\tilde{G}\rho]_{0,N_p})\\
\quad & \leq C''_n \epsilon ^n ([|\pi |^b \rho]_{\alpha,N_p}+[\rho]_{0,N_p})
\end{split}
\]

This shows that 
\[
[(B\tilde{G})^n\rho]_{\alpha;N_p}\leq \epsilon ^n \kappa ^n ([|\pi |^b\rho]_{\alpha;N_p}+[\rho]_{0;N_p})
\]

where $\kappa$ depends on $[|\pi |^b]_{\alpha;N_p},[|\pi|^b]_{0;N_p}$, $[\tilde{a}^{ij}]_{0;N_p},  [\tilde{a}^{ij}]_{\alpha;N_p}$.
Thus if $\epsilon<\frac{1}{\kappa}$   (\ref{rondh}) is satisfied. 
Since these quantities are assumed to be uniformly bounded on $U_p$ we get to 

\begin{equation}\label{rondh2}
[\tilde{\partial}_{i, \gamma_1} \bar{\tilde{\partial}}_{j,\gamma_2} H \rho ]_{\alpha, U_p}\leq C_0([|\pi|^b\rho]_{\alpha,U_p}+[\rho]_{0, U_p}) 
\end{equation}

Now suppose  $\eta$ is not  of compact support and  assume that the equation
\[
\Delta' _{\eta}\phi =\rho
\]
holds  point-wise  in $\{z\neq 0\}$ for $\phi \in C^{1+\alpha}$ and  
\begin{equation}\label{piroalfa}
|\pi |^b\rho \in C^{\alpha} (U_p).
\end{equation}
 
   We take a smooth cut-off function $\chi$ with support in some $g'$-ball $ B_1\subset U_p$.

Then we can write
\begin{equation}\label{ppp}
\begin{split}
|\pi|^2\Delta' _{\eta} (\chi \phi)= (|\pi|^2\Delta' _{\eta} \chi)\phi+\chi (|\pi|^2\Delta' _{\eta} \phi)+D^1\chi.D^2 \phi
\end{split}
\end{equation}

where $D^1$ and $D^2$ are first order operator.  We set
 
\begin{equation}\label{rtild1}
\tilde{\rho}:=  (\Delta' _{\eta} \chi)\phi+\chi (\Delta' _{\eta} \phi)+\frac{1}{|\pi |^2}D^1\chi.D^2\phi 
\end{equation}

to get  the equation $ |\pi |^2 \Delta' _{\eta} (\chi \phi) =\tilde{\rho}$. We also take another smooth cut-off function $\tilde{\chi}$ such that 
\[
B_1=Supp\, \chi \subset \tilde{\chi}^{-1} (1) \subset B_2:= supp \, \tilde{\chi}\subset U_p
\]
 if we set
\[
|\pi |^2\tilde{\Delta}_{\eta}:=|\pi|^2\Delta_{g'}+\epsilon \tilde{\chi}\eta. i\tilde{\partial}_{\gamma_1} \bar{\tilde{\partial}}_{\gamma_2}
\]

then we still have
\[
|\pi|^2\tilde{\Delta}_{\eta}(\chi \phi)=|\pi|^2\tilde{\rho}
\]


Since $\chi\phi$ is of compact support the equation

\[
\tilde{G}\Delta_{g'} (\chi\phi)=\chi\phi
\]

 holds.
So if we define

\[
\tilde{B}=\epsilon\tilde{\chi}\eta.i\tilde{\partial}_{\gamma_1}\bar{\tilde{\partial}}_{\gamma_2}=\epsilon\sum \tilde{a}^{i\bar{j}}\tilde{\partial}_{i,\gamma_1}\bar{\tilde{\partial}}_{j, \gamma_2}
\]

then we get 

\[
(I+ \epsilon \tilde{B} \tilde{G})\Delta_{g'} (\chi\phi) = \tilde{\Delta} _{\eta} (\chi \phi )=\tilde{\rho}
\]

So 

\[
\Delta_{g'} (\chi\phi) =(I+ \epsilon \tilde{B} \tilde{G})^{-1} \tilde{\rho}
\]

and again using the fact that $\chi\phi $ has compact support  we get

\[
\chi\phi= \tilde{G}(I+ \epsilon \tilde{B} \tilde{G})^{-1} \tilde{\rho}=\tilde{H}\tilde{\rho}
\]

where $\tilde{H}:=\tilde{G}(I+ \epsilon \tilde{B} \tilde{G})^{-1}$.
We now prove that an  appropriate choice  of $\chi$ ensures that $|\pi|^b\tilde{\rho}\in C^{\alpha}$.
First we observe that $\chi$ can be chosen  in such a way that $\Delta' _{\eta}\chi$ to be $C^{\alpha}$.

We claim that by an appropriate choice of $\chi$, the term $|\pi|^{b-2}D^1\chi.D^2\phi=|\pi|^{-\gamma_1-\gamma_2}D^1\chi.D^2\phi$ becomes $C^{\alpha}$ along $z$ axis.  This suffices to conclude that  $\tilde{\rho}$  is $C^{\alpha}$ over $U_p$.
Because by change of coordinates   we can deduce the same result for different points $q\in U_p\cap D$.
  \\
In fact  $|\pi|^{-\gamma_1}\tilde{\partial}_{i,\gamma_1}\phi$   and $|\pi|^{-\gamma_2}\tilde{\partial}_{i,\gamma_2}\phi$
 for $i=1,...,n-1$, belong to $C^{\alpha}$ since $\phi$ does so. Also for the terms like $|\pi|^{-\gamma_1}\tilde{\partial}_{n,\gamma_1}\phi |\pi|^{-\gamma_2}\tilde{\partial}_{j,\gamma_2}\chi$ we can apply the assumption  (\ref{rel159}) to conclude that 
$|\pi|^{b-2}D^1\chi.D^2\phi$ is $C^{\alpha}$ along $z$-axis and therfore $|\pi|^b \tilde{\rho} \in C^{\alpha}$.
Now from  (\ref{rondh2}) for $\epsilon $ small enough we  get

\begin{equation}\label{aha1}
[ \tilde{\partial}_{i,\gamma_1}\tilde{\partial}_{j, \gamma_2}\phi]_{\alpha; B_1 }\leq [\tilde{\partial}_{i, \gamma_1}\tilde{\partial}_{j,\gamma_2} \tilde{H}\tilde{\rho}]_{\alpha; B_2}\leq C_0 ([|\pi|^b\tilde{\rho}]_{\alpha; B_2} +[\tilde{\rho}]_{0; B_2} ) 
\end{equation}


where the appropriate range of $\epsilon$ depends on $[\tilde{\chi}\tilde{a}^{ij}]_{\alpha}$,  $[\tilde{\chi}\tilde{a}^{ij}]_{0}$ $[|\pi|^2]_{\alpha; U_p}$ and $[|\pi|^2]_{0; U_p}$.

From (\ref{aha1}) 
 and defining relation  (\ref{rtild1}) and  (\ref{rondh2}) we obtain

\begin{equation}\label{aha3}
[ \tilde{\partial}_{i,\gamma_1}\tilde{\partial}_{j,\gamma_2}\phi]_{\alpha; B_1 } \leq C_0 ([|\pi|^b\Delta' _{\eta} \phi]_{\alpha; B_2} +[|\pi|^b\Delta' _{\eta} \phi]_{0; B_2} +[\tilde{\partial} \phi]_{\alpha, B_2}+[\phi]_{0; B_2} ) 
\end{equation}

where $[\tilde{\partial} \phi]_{\alpha, B_2} =\max_{i} [\tilde{\partial_{i} } \phi]_{\alpha, B_2}$  and $\tilde{\partial}_i$ is defined in (\ref{rtg1}) 



\begin{lemma}\label{lem121}
Assume that $\eta (p) =0$ and it satisfies (\ref{rel159})  and  $\eta\in C^{\alpha}(U_p) $. Then for any two open balls $B_1\subset B_2$ around $p$  which are    relatively compact  in $U_p$ there exists a constant $C_0$  such  that for any $\phi\in C^{\alpha} (U_p)$ all of whose second partial derivatives exist and such that  $|\pi|^2\Delta '_{\eta} \phi\in C^{\alpha} (U_p)$ the inequalities (\ref{aha5})   holds.


\end{lemma}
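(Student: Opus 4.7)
The plan is to exploit the hypothesis $\eta(p)=0$ by using the anisotropic dilation $a_\lambda$ from (\ref{alambda}) to turn the lack of an external small parameter $\epsilon$ into an effective smallness of the coefficient. Since $a_\lambda$ multiplies the degenerate metric $g'$ by $\lambda^2$, it multiplies $g'$-distances by $\lambda$. Consequently, on any fixed $g'$-ball around $p$, the rescaled coefficient $\eta_\lambda := \eta\circ a_{\lambda^{-1}}$ satisfies
\[
\|\eta_\lambda\|_{0}\ \leq\ [\eta]_\alpha\,\lambda^{-\alpha},\qquad [\eta_\lambda]_\alpha\ \leq\ [\eta]_\alpha\,\lambda^{-\alpha},
\]
using $\eta(p)=0$ together with $\eta\in C^\alpha$. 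In particular, for $\lambda$ large enough both quantities drop below the Neumann-series threshold $1/\kappa$ that governed the convergence of $\sum(B\widetilde G)^n$ and led to (\ref{rondh}). Applying the argument producing (\ref{aha3}) to the rescaled equation $|\pi|^2\Delta'_{\eta_\lambda}\phi_\lambda=\rho_\lambda$, now with ``$\epsilon=1$'' since the smallness has been absorbed into $\eta_\lambda$, yields a Schauder estimate for the weighted second derivatives of $\phi_\lambda$ on a fixed $g'$-ball around $p$.

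Translating back to the original coordinates via the homogeneity identities (\ref{homg1}) and (\ref{hamgeni}), which were set up precisely to record how $\tilde\partial_{i,\gamma_1}\tilde\partial_{j,\gamma_2}$ and the weight $|\pi|^b$ transform under $a_\lambda$, gives the estimate on some small $g'$-ball $B'_{r_0}(p)$, with $r_0$ determined by $[\eta]_\alpha$, $\alpha$, and the fixed data of $g'$. To obtain the estimate on arbitrary balls $B_1\subset B_2$ around $p$ with $B_2$ relatively compact in $U_p$, one covers $B_2\setminus B'_{r_0/2}(p)$ by finitely many ordinary Euclidean balls sitting at positive distance from $D$, on which $|\pi|^2$ is bounded below and the operator $\Delta'_\eta$ is uniformly elliptic with $C^\alpha$ coefficients so that classical interior Schauder estimates apply. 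Overlapping the small-ball estimate near $p$ with these non-degenerate estimates, via cutoff functions of the type already used in deriving (\ref{ppp})--(\ref{aha3}), patches everything together; the cutoff-induced commutator terms are absorbed into the lower-order quantities $[\tilde\partial\phi]_{\alpha,B_2}$ and $[\phi]_{0;B_2}$ on the right-hand side.

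The main obstacle is bookkeeping the weights: one must verify that the factors of $\lambda^{(\gamma_1+\gamma_2)/2-1}$ coming from (\ref{hamgeni}) and of $\lambda^{b/2-\alpha}$ coming from (\ref{tprimro})--(\ref{landalf}) balance against the $\lambda^{-\alpha}$ gained from $\eta(p)=0$ to yield a constant $C_0$ that depends only on $B_1$, $B_2$, $[\eta]_\alpha$, $\alpha$, $\gamma_1$, $\gamma_2$ and the fixed background data, but not on $\lambda$. A secondary subtlety is that the Neumann-series estimate (\ref{rondh}) was stated along the axis $N_p=\{w_1=\cdots=w_{n-1}=0\}$; one must promote it to a genuine $g'$-ball estimate, which however follows by repeating the same Neumann series argument verbatim with $N_p$ replaced by a $g'$-ball, since the Hölder seminorms of $\tilde a^{i\bar j}$ and $|\pi|^b$ remain uniformly controlled there.
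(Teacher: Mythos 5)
Your core argument is the same as the paper's. The paper proves Lemma \ref{lem121} by exactly the anisotropic rescaling $(u_1,\dots,u_{n-1},v)=(\lambda^{-1}w_1,\dots,\lambda^{-1}w_{n-1},\lambda^{-1/2}z)$, rewriting the rescaled equation as (\ref{newcdn}) with coefficient $\lambda^{\alpha}c^{i\bar j}_{\lambda}$, where $[c^{i\bar j}_{\lambda}]_{0}$ and $[c^{i\bar j}_{\lambda}]_{\alpha}$ are bounded uniformly in $\lambda$ precisely because $\tilde a^{i\bar j}(p)=0$; the explicit factor $\lambda^{\alpha}$ then plays the role of $\epsilon$ in the Neumann-series estimate (\ref{rondh2})--(\ref{aha3}), and scaling back yields (\ref{aha5}). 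Your bookkeeping of the homogeneity factors via (\ref{homg1}), (\ref{hamgeni}), (\ref{tprimro}) is the same mechanism the paper relies on, and your remark about promoting the axis estimate to a ball estimate matches how the paper passes from $N_p$ to $U_p$.

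The one step where you go beyond the paper, the covering argument for arbitrary $B_1\subset B_2$, does not work as written. If $B_2$ meets $D$ outside the small ball $B'_{r_0/2}(p)$ produced by the rescaling, then $B_2\setminus B'_{r_0/2}(p)$ cannot be covered by Euclidean balls at positive distance from $D$; at points of $D\cap B_2$ away from $p$ the operator $\Delta'_{\eta}$ is still degenerate, so classical interior Schauder theory does not apply there, and $\eta$ need not be small at such points, so the Neumann-series threshold is not automatically met either. The paper sidesteps this issue: its rescaling argument really produces the estimate on balls whose size is dictated by the admissible $\lambda$ (hence by $[\eta]_{\alpha}$ and the background data), and in the global statement, Theorem \ref{schth}, the needed smallness at every point of the divisor is supplied by the hypothesis that $\tilde\eta$ vanishes at each $p\in D$ with uniformly bounded $C^{\alpha}$ norm. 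So either restrict your patching region to balls on which the rescaled smallness is available (which is what the paper implicitly does), or supply an argument at the points of $D\cap B_2$ far from $p$, where $\eta(p)=0$ alone gives you nothing; as stated, that part of your proposal has a genuine gap, while the rest coincides with the paper's proof.
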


\begin{proof} To prove  this  lemma  we first  make a change of coordinates like 
\[
(u_1,...,u_{n-1}, v)=(\frac{1}{\lambda} w_1,...,\frac{1}{\lambda} w_{n-1}, \frac{1}{\sqrt{\lambda}} z)
\]
and we set 
\[
b^{i\bar{j}}:=\tilde{a}^{i\bar{j}}(\lambda u_1,...,\lambda u_{n-1}, \sqrt{\lambda} v),\hspace{1cm} \psi :=\phi (\lambda u_1,...,\lambda u_{n-1}, \sqrt{\lambda} v ) 
\]
\[
\sigma:= \rho (\lambda u_1,...,\lambda u_{n-1}, \sqrt{\lambda} v ), \hspace{1cm} \theta :=\frac{1}{\sqrt{\lambda}}\pi (\lambda u_1,...,\lambda u_{n-1}, \sqrt{\lambda} v )
\]
 Then in new  coordinates the equation $\Delta'_{\eta} \phi =\rho$ is given by 
\begin{equation}\label{newcdn}
(|\theta|^2\Delta_g'+\lambda^{\alpha}\sum \frac{b^{i\bar{j}}}{\lambda ^{\alpha}} \tilde{\partial}_{i}\bar{\tilde{\partial}}_{j} ) \psi =\lambda ^2  |\theta|^2\sigma 
\end{equation}
Here  $b^{i\bar{j}} (u_1,...,u_{n-1},v) = a^{i\bar{j}} (\lambda u_1,...,\lambda u_{n-1},\sqrt{\lambda}v) $ and  to avoid the complexity of notation we apply the $\Delta_{g'}$ and $\tilde{\partial_i}$  for new coordinates  $(u_1,...,u_{n-1}, v)$ as well.

We note that if we set
\[
c^{i\bar{j}} _{\lambda}(u_1,...,u_{n-1}, v) :=\frac{a^{i\bar{j}}(\lambda u_1,...,\lambda u_{n-1}, \sqrt{\lambda} v) }{\lambda ^{\alpha}}
\]

then for any neighbourhood $U$ of $p$ there exists a constant $M$ independent of $\lambda $ such that
\[
[c^{i\bar{j}} _{\lambda} ]_{0}, [c^{i\bar{j}} _{\lambda} ]_{\alpha}\leq M
\]

The boundedness of $[c^{i\bar{j}} _{\lambda} ]_{0}$ is a consequence of the hypothesis that $\tilde{a}^{ij}(0)=0$.  It is also clear that $[c^{i\bar{j}} _{\lambda} ]_{\alpha}= [a^{i\bar{j}}]_{\alpha}$.
This means that in (\ref{newcdn}) the coefficient $\lambda^{\alpha}$ can play the role of $\epsilon$ thus for $\lambda$
small enough we can derive an  inequalities  similar to (\ref{aha3}) 

\begin{equation}\label{aha7}
[ \tilde{\partial}_{i,\gamma_1}\tilde{\partial}_{j,\gamma_2}\psi]_{\alpha; \tilde{B}_1 } \leq C_0 ([|\theta|^b\Delta' _{\eta} \psi]_{\alpha; \tilde{B}_2} +[|\theta|^b\Delta' _{\eta} \psi]_{0; \tilde{B}_2}+[\tilde{\partial} \psi]_{\alpha; \tilde{B}_2} +[\psi]_{0; \tilde{B}_2} ) 
\end{equation}

where $\tilde{B}_1$ and $\tilde{B}_2$ are appropriate  $g'$-balls  in coordinates $(u_1,...,u_{n-1},v)$.

Returning to initial $(w_1,...,w_{n-1}, z)$ coordinates we get to

\begin{equation}\label{aha5}
[ \tilde{\partial}_{i, \gamma_1}\tilde{\partial}_{j,\gamma_2}\phi]_{\alpha; B_1 } \leq C_1 ([|\pi|^b\Delta' _{\eta} \phi]_{\alpha; B_2} +[|\pi|^b\Delta' _{\eta} \phi]_{0; B_2} +[\tilde{\partial}\phi]_{\alpha;B_2}+[\phi]_{0; B_2} ) 
\end{equation}

for appropriate constant $C_1$.
\end{proof}




 
 \begin{theo}\label{schth}
Consider an operator  $\Delta'$ defined in an open neighborhood of $D$. Assume that there exists $\gamma_1=\gamma_2=\frac{1}{2}$ and for any point $p\in D$ there exists a coordinates system $(w_1,..,w_{n-1},z)$  in a neighborhood $U_p$ of $p$ such that $p=(0,...,0)$ and $D\cap U_p = \{z=0\}$ and such that 

\[
|\pi |^2\Delta' = |\pi |^2\Delta_{g'}+ \tilde{\eta}.i\tilde{\partial}_{\gamma_1}\bar{\tilde{\partial}}_{\gamma_1}
\]
 where $\tilde{\eta}$ is a section of the  bundle of $(1,1)$ forms which has a  $C^{\alpha}$-norm along the axis $w_1=...=w_{n-1}=0$ smaller that a constant $\tilde{C}$ independent of the point $p$. We also assume that  $\tilde{\eta}(p) =0$ and relation (\ref{rel159})  is satisfied in this coordinates..   Let $\phi\in C^{1+\alpha} (U)$ and assume that  the equation
\[
\Delta_{\eta}\phi' = \rho
\]
holds for  a function $\rho$ satisfying $|\pi |^b \rho \in C^{\alpha}$  where  $b=2-\gamma_1-\gamma_2 $. Let
$|\pi|^{-\gamma_1-\gamma_2}\tilde{\partial}_{i,\gamma_1}\phi$   and $|\pi|^{-\gamma_1-\gamma_2}\tilde{\partial}_{i,\gamma_2}\phi$
 for $i=1,...,n-1$, belong to $C^{\alpha}$.

Then for  relatively compact balls $B_1\subset B_2$ centered at $p$ in  $U_p$ we have
\[
[\tilde{\partial}_{i,\gamma_1}\tilde{\partial}_{j,\gamma_2 }\phi]_{\alpha;B_1}\leq C\big ( [\phi]_{\alpha;B_2}+ [\tilde{\partial} \phi]_{\alpha; B_2}+[|\pi|^b\Delta_{\eta}\phi]_{\alpha; B_2}\big )
\]


\end{theo}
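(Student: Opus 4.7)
The statement is essentially a repackaging of Lemma~\ref{lem121} in a form suitable for the continuity method, so my plan is to reduce it to that lemma. First I would fix a point $p \in D$, choose the coordinate chart $(w_1,\dots,w_{n-1},z)$ furnished by the hypothesis, and shrink $B_1 \subset B_2$ if necessary so both balls lie in the chart's domain $U_p$. Since in these coordinates
\[
|\pi|^2 \Delta' = |\pi|^2 \Delta_{g'} + \tilde{\eta}\cdot i\tilde{\partial}_{\gamma_1}\bar{\tilde{\partial}}_{\gamma_2}
\]
holds with $\tilde{\eta}(p) = 0$ and $[\tilde{\eta}]_\alpha \leq \tilde{C}$, Lemma~\ref{lem121} applies verbatim and yields
\[
[\tilde{\partial}_{i,\gamma_1}\tilde{\partial}_{j,\gamma_2}\phi]_{\alpha;B_1} \leq C_1\bigl([|\pi|^b\Delta'_\eta\phi]_{\alpha;B_2}+[|\pi|^b\Delta'_\eta\phi]_{0;B_2}+[\tilde{\partial}\phi]_{\alpha;B_2}+[\phi]_{0;B_2}\bigr).
\]

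The theorem differs from this only in that the two $C^0$ terms on the right-hand side are dropped. I would handle this by a routine interpolation: on the relatively compact ball $B_2$, for any continuous $f$ one has $\|f\|_{C^0(B_2)} \leq |f(p_0)| + (\operatorname{diam} B_2)^\alpha [f]_{\alpha;B_2}$ at any reference point $p_0$, so $[f]_{0;B_2}$ is dominated by $[f]_{\alpha;B_2}$ up to a constant depending only on $\operatorname{diam} B_2$. Applying this to $f = \phi$ and $f = |\pi|^b\Delta'_\eta\phi$ absorbs both $C^0$ terms into their $C^\alpha$ counterparts and produces the stated bound, at the cost of enlarging the constant $C$.

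The real technical content sits inside Lemma~\ref{lem121}, so the main obstacle is not this final packaging but the scaling argument used there. That argument exploits the anisotropic rescaling $(u_i,v) = (w_i/\lambda,\, z/\sqrt{\lambda})$, which preserves the form of $|\pi|^2\Delta_{g'}$ while replacing $\tilde{a}^{i\bar{j}}$ by $\tilde{a}^{i\bar{j}}(\lambda u,\sqrt{\lambda}v)/\lambda^\alpha$; the hypothesis $\tilde{\eta}(p) = 0$ is precisely what prevents the rescaled coefficient from blowing up in $C^0$ as $\lambda \to 0$, allowing $\lambda^\alpha$ to serve as the small parameter in the Neumann series $\sum \epsilon^n(B\tilde{G})^n$ and thereby giving a right inverse $H = \tilde{G}(I+\epsilon B\tilde{G})^{-1}$. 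This in turn rests on the compactly supported estimate (\ref{rondh2}), which is proved from the kernel bounds of Lemma~\ref{schauderpr1} together with the main Schauder theorem~\ref{main3}. Once this chain is traversed, Theorem~\ref{schth} follows immediately by the absorption step above.
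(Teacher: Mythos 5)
Your reduction is the same one the paper intends: Theorem \ref{schth} is not given a separate argument in the text, it is simply Lemma \ref{lem121} (inequality (\ref{aha5})) read off in the coordinate chart supplied by the hypotheses, with the whole analytic burden carried by the Neumann-series construction of $H$, the compact-support estimate (\ref{rondh2}), and the anisotropic rescaling that exploits $\tilde{\eta}(p)=0$ — exactly the chain you describe. So in structure your proposal matches the paper.

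The one step you add that the paper does not make explicit — discarding the two $C^0$ terms of (\ref{aha5}) — is argued incorrectly. From $\|f\|_{C^0(B_2)} \leq |f(p_0)| + (\operatorname{diam} B_2)^{\alpha}[f]_{\alpha;B_2}$ you cannot conclude that $[f]_{0;B_2}$ is dominated by $[f]_{\alpha;B_2}$: the term $|f(p_0)|$ does not go away (a nonzero constant function has vanishing H\"older seminorm and arbitrarily large sup norm), and neither $\phi$ nor $|\pi|^b\Delta'_{\eta}\phi$ is known to vanish at any reference point. If the bracket $[\,\cdot\,]_{\alpha}$ in the statement is read as the full H\"older norm (which is evidently how the paper uses it when it drops the $[\,\cdot\,]_0$ terms in passing from (\ref{aha5}) to Theorems \ref{the2} and \ref{schth}), then no interpolation is needed at all, since $[f]_0\leq [f]_{\alpha}$ trivially; if it is the seminorm, the conclusion as you derive it is false and the $C^0$ terms (or a normalization such as subtracting $\phi(p)$, which only handles the $\phi$ term, not the $\Delta'_{\eta}\phi$ term) must be retained. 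Either fix is minor, but as written the absorption step is a genuine logical gap rather than a routine estimate.
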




\section{Upto Second Order Estimates for Monge-Amp\`ere equation}\label{sec5}

Let $(X, \omega_{reg})$ be a K\"ahler manifold  and let $D$ be a smooth divisor on $X$. Assume that $S$ is a holomorphic section of $L:=[D]$ with simple zero along $D$. Let $|.|$ denote a hermitian metric on $L$.
  We consider  the following Monge-Amp\`ere equation:  
\begin{equation}\label{maso}
(\omega_{reg} +\partial\bar\partial \phi) ^n = |S|^2e^G \omega  ^n
\end{equation}

where $G$ is assumed to be in $C^{3} (X)$.  We also assume that $\phi\in C^5 (X)$ and we set

\[
\omega':=\omega_{reg}+ \partial\bar{\partial} \phi
\]
 
The metric  $\omega'$  is assumed to degenerate transversally and is nondegenerate when restricted to $T D$.

 

If $g_{reg}$ and $g'$ denote  the  metrics associated  respectively to the  K\"ahler form $\omega_{reg}$ and  degenerate K\"ahler form $\omega'$  we want to prove the following second order estimations on $\phi$:

\begin{prop}\label{secondorder}
 There exists a constant $C_{\ref{secondorder1}}$  depending on $G$, $g_{reg}$  such that
\begin{equation}\label{secondorder1}
0\leq (n+\Delta_{g_{reg}} \phi)(x)\leq C_{\ref{secondorder1}}
\end{equation}
for all $x\in X$, 
where $\Delta _{g_{reg}}$ denotes the Laplacian associated to the degenerate metric $g_{reg}$.  


 Then there are positive constants
$C_1$, $C_2$, $C_3$, and $C_4$, depending on $G$, $g_{reg}$ and $U_p$   such
that $\sup _X |\phi|\leq C_1$, $sup_X |\nabla_{g_{reg}} \phi| \leq C_2$, $0\leq C_3\leq 1+\phi_{i\bar{i}}\leq C_4$ for all $i$. Where $\nabla_{g_{reg}}$ denotes the covariant derivative with respect to the degenerate metric $g_{reg}$.
\end{prop}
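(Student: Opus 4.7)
The plan is to follow Yau's classical $C^2$-estimate, carried out on $X\setminus D$, with the degeneracy of $\omega'$ along $D$ controlled by the local models of Theorem \ref{theoloc}.

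\textbf{Step 1 ($C^0$-estimate).} After normalizing $\phi$ by $\int_X\phi\,\omega_{reg}^n=0$, the integrability condition $\int_X|S|^2 e^G\,\omega_{reg}^n=\operatorname{Vol}(X)$ and the boundedness of $F:=|S|^2e^G$ allow a standard Moser iteration (or Ko{\l}odziej's $L^\infty$ estimate) applied to (\ref{maso}) to give $\|\phi\|_\infty\le C_1$, with $C_1$ depending only on $(X,\omega_{reg})$ and $\|F\|_\infty$.

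\textbf{Step 2 (Laplacian estimate).} I would apply the maximum principle on $X\setminus D$ to the auxiliary function
\[
H \;:=\; \log(n+\Delta_{g_{reg}}\phi)\;-\;A\phi,
\]
where $A$ is a large constant depending only on a lower bound for the bisectional curvature of $\omega_{reg}$. Yau's inequality gives, at a point where $\omega'$ and $\omega_{reg}$ are simultaneously diagonalized,
\[
\Delta_{g'}\log(n+\Delta_{g_{reg}}\phi)\;\geq\;\frac{\Delta_{g_{reg}}\log F}{n+\Delta_{g_{reg}}\phi}\;-\;C_R\,\operatorname{tr}_{\omega'}\omega_{reg}.
\]
Away from $D$, $\partial\bar\partial\log|S|^2=-R_L$ is (minus) the Chern curvature of $(L,|\cdot|_{\mathfrak h})$, so $\Delta_{g_{reg}}\log F=-\operatorname{tr}_{\omega_{reg}}R_L+\Delta_{g_{reg}}G$ is uniformly bounded on $X\setminus D$. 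Using the identity $\Delta_{g'}\phi=n-\operatorname{tr}_{\omega'}\omega_{reg}$ and choosing $A=C_R+1$, one finds at an interior maximum $p\in X\setminus D$ of $H$ that $\operatorname{tr}_{\omega'}\omega_{reg}(p)$ is bounded; the elementary inequality
\[
n+\Delta_{g_{reg}}\phi\;\leq\;\tfrac{1}{(n-1)!}\,F\,\bigl(\operatorname{tr}_{\omega'}\omega_{reg}\bigr)^{n-1}
\]
then bounds $(n+\Delta_{g_{reg}}\phi)(p)$, and the definition of $H$ together with the $C^0$-bound upgrades this to a uniform bound on $X\setminus D$, yielding (\ref{secondorder1}).

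\textbf{Step 3 (gradient and individual-eigenvalue bounds).} The gradient bound $\sup_X|\nabla_{g_{reg}}\phi|\le C_2$ follows from the standard Yau--Aubin maximum-principle argument applied to $|\nabla_{g_{reg}}\phi|^2e^{-A'\phi}$, using Steps 1 and 2. Finally, $0\le 1+\phi_{i\bar i}\le C_4$ is immediate from the positive semidefiniteness of $\omega'$ and the trace bound just obtained.

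\textbf{Main obstacle.} The delicate point is that $H$ is only \emph{a priori} defined on $X\setminus D$ and might in principle blow up or approach its supremum as one approaches $D$: the Poincar\'e--Lelong term in $\partial\bar\partial\log|S|^2$ means that the bound on $\Delta_{g_{reg}}\log F$ invoked above is only an off-$D$ bound. I expect this to be handled by combining the asymptotic description of $\omega'$ near $D$ furnished by Theorem \ref{theoloc} (which shows that $\omega'$ degenerates transversally like $|z|^2\,dz\wedge d\bar z+g_D$, so that $\operatorname{tr}_{\omega_{reg}}\omega'$ is automatically bounded in a neighborhood of $D$) with, if needed, a regularization of the form $\log(n+\Delta_{g_{reg}}\phi)-A\phi-\varepsilon\log(|S|^2+\delta)$, applying the maximum principle on all of $X$, and passing to the limit $\varepsilon,\delta\to 0$.
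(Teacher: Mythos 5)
Your Steps 1--3 reproduce the skeleton of what the paper does (Moser iteration for $C^0$, Yau's differential inequality for $n+\Delta_{g_{reg}}\phi$, with the $\log|S|^2$ contribution to $\Delta_{g_{reg}}\log F$ controlled off $D$ by Poincar\'e--Lelong), and you correctly identify the real difficulty: the maximum of the auxiliary function may sit on $D$, where $g'^{z\bar z}\sim|S|^{-2}$ and the inequality $\Delta_{g'}H\le 0$ cannot be invoked. But neither of your proposed resolutions of that difficulty works as stated. First, appealing to Theorem \ref{theoloc} to say that $\operatorname{tr}_{\omega_{reg}}\omega'$ is ``automatically bounded'' near $D$ is not legitimate here: that theorem constructs a particular local model solution for real-analytic data under extra normalization conditions on holomorphic parts, and says nothing about the global solution $\phi$ whose a priori bound you are trying to prove; moreover a merely qualitative bound (which already follows from $\phi\in C^5(X)$ and compactness) is useless --- the whole point is a constant depending only on $G$ and $g_{reg}$, uniform along the continuity path. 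Second, your barrier $-\varepsilon\log(|S|^2+\delta)$ has the wrong sign: as $\delta\to0$ it tends to $+\infty$ on $D$, so it attracts the maximum to the degenerate locus rather than repelling it, and for fixed $\delta>0$ it is bounded and does nothing to prevent the maximum from landing on $D$, where $\Delta_{g'}$ of your test function is not even defined. The workable version of this trick is the opposite perturbation $+\varepsilon\log|S|^2$ (tending to $-\infty$ at $D$), whose contribution $-\varepsilon\operatorname{tr}_{\omega'}R_L$ is dominated by $C\varepsilon\operatorname{tr}_{\omega'}\omega_{reg}$ and can be absorbed into the $A\operatorname{tr}_{\omega'}\omega_{reg}$ term before letting $\varepsilon\to0$; as written, your limit $\varepsilon,\delta\to0$ does not close the argument.

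For comparison, the paper resolves the boundary case differently: it localizes with rescaled cut-off functions $\chi_\lambda$ chosen so that $\Delta_{g'}\chi_\lambda-|\nabla_{g'}\chi_\lambda|^2_{g'}/\chi_\lambda$ is bounded uniformly, and when the maximum point $p$ of $\chi_\epsilon e^{-C\phi}(m+\Delta_{g_{reg}}\phi)$ lies on $D$ it passes to the canonical coordinates of Lemma \ref{rela} (deduced from the Monge--Amp\`ere equation itself, giving $g'_{z\bar z}=|z|^2+O(|z|^3)$ and $g'_{w_i\bar z}=O(|z|^3)$ along the normal disc) and produces a sequence $q_i\to p$ on the fiber with $f_{z\bar z}(q_i)<0$, together with $\sum_i f_{w_i\bar w_i}(p)\le0$, to conclude $\limsup_i\Delta_{g'}f(q_i)\le 0$; Yau's inequality is then exploited in this limiting sense. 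So your proposal would need either that limiting argument at $D$ or the correct-sign logarithmic barrier to be complete; as it stands the key step is missing.
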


\begin{proof}  We first note that the $C^0$ estimation for $\phi$ can be obtained by Moser iteration method exactly as in non-degenerate case. For first and second order estimate we start  by  the following inequality  that can be proved by the same way as  in \cite{y} (relation (2.22)) on $X\setminus D$ 

Let $\tilde{\chi}: \mathbb{R}^{2n}\rightarrow [0,1]$ be a cut-off function such that $Supp (\tilde{\chi})\subset B_{2} (0)$ and $\tilde{\chi}^{-1} (\{1\})= B_{1}(0)$, where $B_{r} (0)$ denotes the ball of radius $r$ centered at the origin. We assume that $\tilde{\chi}= e^{u}$ for some smooth function $u:\mathbb{R}^{2n}\rightarrow \mathbb{R}$.

Consider a point $p\in D$  and a neighborhood $U_p $ over which there exists a coordinates system $(w_1,...,w_{n-1},z)$
 such that $p=(0,...,0)$ and $D\cap U_p = \{z=0\}$. Let $\epsilon >0$ be such that   $\bar{B}_{2\epsilon} (0)\subset U_p$. We assume that $\chi_{\epsilon}:X\rightarrow [0,1]$ is a cut-off function which in this coordinates is defined by
\begin{equation}\label{chilambd}
 \chi_{\epsilon} (w_1,...,w_{n-1},z)= e^{\epsilon^2 (u(\frac{w_1}{\epsilon},..., \frac{w_{n-1}}{\epsilon}, \frac{z}{\epsilon})) }. 
\end{equation}
then we have $supp( \chi_{\epsilon} )\subset B_{2\epsilon} (0)$ and $\chi_{\epsilon } ^{-1} (\{1\})= B_{\epsilon} (0)$  moreover since we have
\begin{equation}\label{karandar}
(\Delta_{g'} \chi_{\epsilon}-\frac{| \nabla_{g'} \chi _{\epsilon}|_{g'}^2}{\chi_{\epsilon}})(w_1,...,w_{n-1},z)=\sum g'^{\alpha\bar{\alpha}} (u_{\alpha\bar{\alpha}}e^{\epsilon^2 u}) (\epsilon^{-1} w_1,...,\epsilon^{-1} w_{n-1}, \epsilon^{-1} z)
\end{equation}
  we can choose $\tilde{\chi}$ such that $(\Delta_{g'} \chi_{\epsilon}-\frac{| \nabla_{g'} \chi _{\epsilon}|_{g'}^2}{\chi_{\epsilon}})<C$  for some $C$ independent of $\epsilon$.

According to relation (\ref{f}) in the appedix (\ref{app6}) we have

\begin{equation}\label{ffchiep}
\begin{split}
\Delta_{g'} (\chi_{\epsilon} \exp \{-C\phi\} (m+\Delta_{g_{reg}} \phi))\geq &\chi_{\epsilon} \exp\{-C\phi\} \big \{  \Delta_{reg} (G )\\
\quad & - C\chi_{\epsilon} m^2 \inf _{i\neq l} R_{i\bar{i} l\bar{l}} \\
\quad &exp \{-C\phi\} (-Cm\chi_{\epsilon} +\Delta_{g'} \chi_{\lambda}-\frac{| \nabla_{g'} \chi _{\epsilon}|_{g'}^2}{\chi_{\epsilon}}) (m+\Delta_{reg} \phi ) \\
\quad  &+  \chi_{\epsilon} (C+\inf_{i\neq l} R_{i\bar{i}l\bar{l}})\exp \{\frac{-G}{m-1}\}  (m+\Delta_{reg} \phi) ^{1+1/(m-1)}\big \}\\
\end{split}
\end{equation}

where $R_{i\bar{i}j\bar{j}}$ denotes the curvature tensor associated to $g_{reg}$ and the constant $C$ is chosen in such a way that 
$C+\inf_{i\neq l} R_{i\bar{i} l\bar{l}}>0$. 
\\

 Assume that $p\in X$ is a point at which $\exp (-C\phi )[   (m+\Delta_{reg} \phi )]$  attains its maximum.  If $p\in  X\setminus D$  then  using the inequality (\ref{f}) we obtain

\begin{equation}\label{f}
\begin{split}
\Delta_{g'} (\chi \exp \{-C\phi\} (m+\Delta_{g_{reg}} \phi))\geq & \exp\{-C\phi\} \bigg ( \chi  \Delta_{reg} (G )- C\chi m^2 |S|^2 \inf _{i\neq l} R_{i\bar{i} l\bar{l}}\bigg ) \\
\quad &\exp \{-C\phi\} (-Cm\chi +\Delta_{g'} \chi-\frac{| \nabla_{g'} \chi|_{g'}^2}{\chi}) (m+\Delta_{reg} \phi ) \\
&\quad +\exp \{-C\phi\}   \chi (C+\inf_{i\neq l} R_{i\bar{i}l\bar{l}})\exp \{\frac{-G}{m-1}\}  (m+\Delta_{reg} \phi) ^{1+1/(m-1)}\\
\end{split}
\end{equation}

\begin{equation}\label{f1}
\begin{split}
0\geq  \chi_{\epsilon} \Delta_{reg} G - &Cm^2   \inf _{i\neq l} R_{i\bar{i} l\bar{l}}+\frac{( -C  m\chi_{\epsilon}+\frac{1}{\chi_{\lambda}}(\Delta_{g'}\chi_{\epsilon} -\frac{|\nabla_{g'} \chi_{\epsilon}|^2 _{g'}}{\chi_{\epsilon}} )}{\chi_{\epsilon} ^{\frac{m-1}{m}}} (\chi_{\epsilon} ^{\frac{m-1}{m}})(m+\Delta_{reg} \phi ) \\
&\quad + (C+\inf_{i\neq l} R_{i\bar{i}l\bar{l}})\exp \{\frac{-G}{m-1}\}\big ( \chi_{\epsilon} ^{\frac{m-1}{m}}  (m+\Delta_{reg} \phi)\big ) ^{1+1/(m-1)}
\end{split}
\end{equation}
where the right hand side is evaluated at $p$. Due to (\ref{karandar}) $\frac{( -C  m\chi_{\epsilon}+\frac{1}{\chi_{\epsilon}}(\Delta_{g'}\chi_{\epsilon} -\frac{|\nabla_{g'} \chi_{\epsilon}|^2 _{g'}}{\chi_{\epsilon}} )}{\chi_{\epsilon} ^{\frac{m-1}{m}}}$  has an upper bound independent of $\epsilon$. Therefore by the same argument as  in (\cite{y})  we can deduce that
\begin{equation}\label{cemd1}
\chi_{\epsilon} ^{\frac{m-1}{m}}  (m+\Delta_{reg} \phi)\leq C_{\ref{cemd1}}
\end{equation}
 for some constant $C_{\ref{cemd1}}$ which only depends on $G$ and $g_{reg}$ and does not depend on the  open set $U_p$. In particular we obtain

\begin{equation}\label{nambep}
 (m+\Delta_{reg} \phi)|_{B_{\epsilon}}\leq C_{\ref{cemd1}}
\end{equation}


If $p\in D$    we consider a canonical coordinates system $(w_1,...,w_{n-1},z)$  in a neighborhood $U_p$ of $p$ in such a way that 
$p=(0,...,0)$ in this coordinates and 
\[
g'_{w_i\bar{z}}=g'_{z\bar{w}_i}=O(|z|^3)\hspace{1cm} g'_{z\bar{z}}=|z|^2+O(|z|^3), \hspace{1cm} \text{ for } i=1,..., n-1
\]
 see appendix (\ref{app5}).
Thus we have $g'^{z\bar{z}}=\frac{1}{|z|^2}+O(\frac{1}{|z|})$, $g'^{z\bar{w}_i}=O(|z|)$.
 Then the Laplacian $\Delta_{g'}$ has the following form along $z$-axis $w_1=...=w_{n-1}=0$: 
\begin{equation}\label{del1}
\Delta_{g'} =(\frac{1}{|z|^2}+O(\frac{1}{|z|}))\frac{\partial ^2 }{\partial z\partial \bar{z}} + \sum_{i,j=1} ^{n-1} (\delta_{ij}+ A_{ij})\frac{\partial^2}{\partial w_i\partial\bar{w}_j}+\sum_{i=1} ^{n-1} A_{i\bar{n}}\frac{\partial^2}{\partial w_i\partial\bar{z}}+\sum_{i=1} ^{n-1} A_{n\bar{i}}\frac{\partial^2}{\partial z\partial \bar{w}_i}
\end{equation}
\begin{equation}\label{del2}
A_{ij}= O(|z|),\hspace{1cm}  \text{for } i,j=1,....,n-1
\end{equation}
\begin{equation}\label{del3}
A_{i\bar{n}}=A_{n\bar{i}}=O(|z|),\hspace{1cm} \text{ for } i=1,...,n-1
\end{equation}
 If we set $f=\chi_{\epsilon} e^{-C\phi}(m+\Delta_{reg} \phi)$, since the maximum of $f$ has occurred at $p=(0,...,0)$, then $\frac{\partial ^2 f}{\partial z\partial \bar{z}}$ can not be non negative at all the points of any   neighborhood of $p$  on the  $z$ axis $w_1=...=w_{n-1}=0$. Since otherwise $f$  would be sub-harmonic and it cannot attain a maximum at the interior point $p$. Thus we can find a sequence $\{q_i\}_{i\in \mathbb{N}}$
on the $z$ axis such that $q_i\rightarrow p$ as $i\rightarrow \infty$ and  $\frac{\partial ^2 f}{\partial z\partial \bar{z}} (q_i)<0$.
 Since $p$   is also a maximum for $f|_{D\cap U_p}$  we have $\sum_{i=1} ^{n-1} \frac{\partial^2 f}{\partial w_i\partial\bar{w}_i} (p)\leq 0$.  From these observations and relations (\ref{del1})(\ref{del2}) and (\ref{del3}) we find that 
\begin{equation}\label{limsup}
 \limsup _{i\rightarrow \infty} \Delta_{g'} f(q_i)\leq 0
\end{equation}

 Hence we have $\Delta_{g'} f\leq 0$ and we can  deduce the inequality (\ref{nambep}).

Using Schauder estimate  the proposition (\ref{secondorder}) is proved.

\end{proof}




\section{Third Order Estimates}\label{III}

Let $(X,D,\omega_{deg})$ be a degenerate K\"ahler manifold in the sense described in  section 3.1. Let $\omega_{reg}$ be an ordinary K\"ahler metric in the same cohomology class as $\omega_{deg}$ and     assume that $\phi:X\rightarrow \mathbb{R}$ solves the degenerate Monge-Amp\`ere equation

\begin{equation}\label{ma3rd}
(\omega_{reg}+\partial\bar\partial \phi )^n =e^{G}|S|^2 \omega_{reg} ^n
\end{equation}

  where   $G\in C^3 (X)$.    We set 
\[
\omega':=\omega_{reg}+\partial\bar{\partial}\phi
\]
and we assume that
in local holomorphic coordinates 
\[
\omega'=g'_{i\bar{j}}dz^i\otimes d\bar{z}^j
\]


As in (\cite{y}) we define
\begin{equation}\label{say}
\Psi:=\sum g'^{i\bar{r}} g'^{j\bar{s}} g'^{k\bar{t}}\phi_{i\bar{j}k}\phi_{\bar{r}s\bar{t}}
\end{equation}

where $g'^{-1}=(g'^{i\bar{j}})$ is the inverse of the matrix  $g'=(g'_{i\bar{j}})$.

Since $g'$  degenerates along
$D$, the map   $\Psi$ is only defined on $X\setminus D$. 
In fact $\Psi$ has the following representation

\[
\|\nabla_{g_{reg}} \circ \bar{\partial} \circ \partial \phi \|^2 _{g'}
\]

where $\nabla_{g_{reg}}$ is the covariant derivative induced by $g_{reg}$  such that  $\nabla_{g_{reg}} \circ \partial \circ \bar{\partial} \phi  $ defines   a section of 
$T^{'*}X  \otimes T^{''*} X\otimes T^{'*} X$ and $\|\|_{g'}$ denotes the  norm induced on this bundle by $g'$. (Here  $TX=T'X\oplus T'' (X)$ is the decomposition into holomorphic and antiholomorphic tangent bundles).

In   subsection  \ref{esti61} below  we will prove   there exists  a constant $\mathscr{C}$ depending only on $\omega_{reg}$ and $G$   in the right hand side of the  Monge Amp\`ere equation (\ref{ma3rd}) such that
$\Psi\leq \frac{\mathscr{C}}{|S|^4}$.
\\

 In   subsection \ref{esti62} we then define $\Psi_1$    as follows
\[
\Psi_1:=\| \nabla_{reg} \circ \partial \circ  \partial_D \phi \|_{g'} ^2 =\sum_{\substack{1\leq i,r\leq n-1 \\ 1\leq j,k,s,t\leq n} } g'^{i\bar{r}}g'^{\bar{j}s}g'^{k\bar{t}}\phi_{i\bar{j}k}\phi_{\bar{r}s\bar{t}}
\]
(see (\ref{sayek}) and we will prove that $\Psi_1\leq \mathscr{C}_1$ for  some constant $\mathscr{C}_1$ which  depends only on $\omega_{reg}$ and $G$.

\subsection{Estimation on $\Psi$}\label{esti61}

We consider a holomorphic coordinate system $(\tilde{w}_1,...,\tilde{w}_{n-1},\tilde{z})$ on an open set $U_p$ containing  $p\in D$ such that $D=\{\tilde{z}=0\}$. We  equippe this neighborhood with a moving frame $V_1,...,V_n$ as described in the appendix (\ref{app7}).
In order to do computation and make approximations on $U_p$ we take  a point $x$  and in a neighborhood of this point we consider an other holomorphic 
coordinates system $(z_1,...,z_n)$ such that $\frac{\partial}{\partial z_i}$ coincide with $V_i $ at  $x$ for $i=1,...,n$.
By making a correction on $w_{reg}$  as discussed in the appendix (\ref{app4}) we can assume that
  %
the equalities  

\begin{equation}\label{eqlits}
 g'_{i\bar{j}}=\delta_{ij}g'_{i\bar{j}} \hspace{0.5cm} \text{ and  }\hspace{0.5cm} \frac{\partial g_{reg, i\bar{j}}}{\partial z^k}= \frac{\partial g_{reg, i\bar{j}}}{\partial \bar{z}^l}=0
\end{equation} 
hold at $x$.  It can be seen that all the computation regarding the third order estimates in  \cite{y} holds true in this coordinates. 

  We then define


\[
A_{ijk\alpha  1}:=    (g'^{i\bar{i}})^{1/2}  (g'^{j\bar{j}})^{1/2}   (g'^{k\bar{k}})^{1/2} ( (\phi_{i\bar{j}k\bar{\alpha}}-\sum_p \phi_{i\bar{p}k}\phi_{p\bar{j}\bar{\alpha}}   g'^{p\bar{p}} )
\]

and 

\[
A_{ijk\alpha 2}:=   (g'^{i\bar{i}})^{1/2}   (g'^{j\bar{j}})^{1/2}   (g'^{k\bar{k}})^{1/2}(\phi_{\bar{i}j\bar{k}\bar{\alpha}}- \sum_p (\phi_{p\bar{i}\bar{\alpha}}  \phi_{\bar{p}j\bar{k}} + \phi_{p\bar{i}\bar{k}} \phi_{\bar{p}j\bar{\alpha}})   g'^{p\bar{p}}) ) 
\]

According to (A.9) in page (406) of \cite{y} we know that

\begin{theo}\cite{y}\label{yau406}
$\Delta_{g'}\Psi =\sum _{ijk\alpha s} g'^{\alpha\bar{\alpha}} |A_{ijk\alpha  s}|^2+R$ for some remainder term $R$.
\end{theo}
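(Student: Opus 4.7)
The plan is to prove the identity by a direct pointwise calculation at an arbitrary $x\in X\setminus D$, following Yau's original third-order computation. Because $x\notin D$ the metric $g'$ is genuinely positive definite there, the normalization \eqref{eqlits} is available, and at $x$ the Laplacian reduces to
\[
\Delta_{g'}=\sum_\alpha (1+\phi_{\alpha\bar{\alpha}})^{-1}\frac{\partial^2}{\partial z^\alpha \partial\bar{z}^\alpha}.
\]
The only new ingredient compared with the non-degenerate case is that the right-hand side of the logarithmic Monge-Amp\`ere equation now carries a $\log|S|^2$ term, which contributes a bounded piece to the remainder $R$ but does not alter the algebraic structure of the formula.

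I would begin by expanding $\Delta_{g'}\Psi$ by the Leibniz rule, classifying the resulting contributions according to where the two derivatives $\partial_\alpha$ and $\partial_{\bar{\alpha}}$ fall among the three inverse-metric factors and the two cubic $\phi$-factors in
\[
\Psi=\sum g'^{i\bar{r}}g'^{j\bar{s}}g'^{k\bar{t}}\phi_{i\bar{j}k}\phi_{\bar{r}s\bar{t}}.
\]
For any derivative hitting an inverse metric I would use $\partial_\alpha g'^{i\bar{r}}=-g'^{i\bar{p}}g'^{q\bar{r}}\phi_{p\bar{q}\alpha}$, which holds at $x$ because $\partial_\alpha g_{reg,p\bar{q}}=0$ there by \eqref{eqlits}. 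The family in which both derivatives land on the same cubic $\phi$-factor contributes the main terms $\sum g'^{i\bar{r}}g'^{j\bar{s}}g'^{k\bar{t}}\phi_{i\bar{j}k\alpha\bar{\alpha}}\phi_{\bar{r}s\bar{t}}$ and its conjugate, and I would trade the resulting fourth-order derivatives against curvature of $g_{reg}$, $\partial\bar{\partial}G$, $\partial\bar{\partial}\log|S|^2$, and the square $\sum_p (1+\phi_{p\bar{p}})^{-2}|\phi_{p\bar{p}\alpha}|^2$ by differentiating the Monge-Amp\`ere equation $\log\det(g'_{i\bar{j}})=G+\log|S|^2+\log\det g_{reg,i\bar{j}}$ twice.

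The families in which one derivative hits a cubic $\phi$-factor and the other hits an inverse metric produce exactly the cross terms needed to complete the squares defining $A_{ijk\alpha 1}$ and $A_{ijk\alpha 2}$. In detail, the correction $-\sum_p \phi_{i\bar{p}k}\phi_{p\bar{j}\bar{\alpha}}(1+\phi_{p\bar{p}})^{-1}$ appearing in $A_{ijk\alpha 1}$ is generated by the single derivative of the inverse metric $g'^{j\bar{s}}$, while the two corrections inside $A_{ijk\alpha 2}$ arise from differentiating $g'^{i\bar{r}}$ and $g'^{k\bar{t}}$. Completing the squares produces the stated sum $\sum_{ijk\alpha s}(1+\phi_{\alpha\bar{\alpha}})^{-1}|A_{ijk\alpha s}|^2$, and everything remaining --- the family in which both derivatives hit inverse metrics, the curvature contributions, and the derivatives of $G$ and $\log|S|^2$ --- is absorbed into $R$.

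The main obstacle is the combinatorial bookkeeping: tracking the nine families of products produced by the Leibniz rule and verifying that the sign and coefficient of every cross term match exactly the expansion of $|A_{ijk\alpha s}|^2$ for $s=1,2$. This is precisely the identification carried out in Yau's appendix \cite{y}, so apart from the harmless $\partial\bar{\partial}\log|S|^2$ contribution to $R$, no computation genuinely new to the degenerate setting is required here; the identity is a pointwise algebraic statement that is indifferent to whether $g'$ degenerates elsewhere on $X$.
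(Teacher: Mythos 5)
Your proposal is correct and follows essentially the same route as the paper: the paper does not reprove this identity but invokes Yau's appendix computation (A.9), valid pointwise on $X\setminus D$ where $g'$ is nondegenerate, exactly as you outline with the Leibniz expansion, the differentiated Monge--Amp\`ere equation, and the completion of squares, the only new feature being that $F=G+\log|S|^2$ contributes extra (singular along $D$) terms to $R$, which the paper only estimates afterwards in Proposition~\ref{remainder}. Your sketch is therefore adequate, up to minor bookkeeping slips (the leading terms $\phi_{i\bar{j}k\alpha\bar{\alpha}}$ are fifth-order in $\phi$, handled by differentiating the Monge--Amp\`ere equation three times together with the commutation identity (A.1), not twice).
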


We note that the first term on the right hand side $\sum _{ijk\alpha s}g'^{\alpha\bar{\alpha}} |A_{ijk\alpha  s}|^2$ and thus the remainder $R$ are both  independent of the choice of particular coordinates system.
\\

In the ordinary non-degenerate case $R$ can be approximated as
\[
|R|\leq C_1+ C_2\sqrt{\Psi} +C_3\Psi
\]

for some constants $C_1,C_2$ and $C_3$. But in the degenerate case we consider an open neighborhood  $U_p$ of some point $p\in D $   such that   in this neighborhood the coefficients $C_1,C_2$ and $C_3$  can only be replaced by terms of the form $\frac{C_i}{|S|^{a_i}}$ for $i=1,,3$  and  we want  to identify their corresponding orders of  singularities $a_i$'s.

In order to do this we first recall the relation (A.1) in page (403) of (\cite{y}):

\begin{equation}\label{403a1}
\begin{split}
\phi_{i\bar{j}k\bar{\beta}\alpha}=\phi_{i\bar{\beta}\alpha \bar{j}k}+&\big( \sum_{p} \phi_{i\bar{p}} R^{\bar{p}} _{\bar{\beta} \bar{j} \alpha}-\sum_p \phi_{p\bar{\beta}} R^p_{i\alpha\bar{j}} \big)_k\\
\quad & +\big( \sum_{p} \phi_{i\bar{p}} R^{\bar{p}} _{\bar{j}\bar{\beta}  k}-\sum_p \phi_{p\bar{j}} R^p_{ik\bar{\beta}} \big)_{\alpha}
\end{split}
\end{equation}

Here $R^{j} _{ik\bar{l}}$ denotes the curvature tensor associated to the metric $g_{reg}$ which in the above mentioned coordinates  is given by
\begin{equation}\label{curyau1}
R^{j} _{ik\bar{l}}=-\sum_p g_{reg} ^{j\bar{p}} \frac{\partial ^2 g_{reg, i\bar{p}} }{\partial z^k \partial \bar{z}^l}+\sum_{p.q}g_{reg} ^{p\bar{q}}\frac{\partial g_{reg, p\bar{j}}}{\partial \bar{z}^l} 
\end{equation}





Thus we have
\begin{equation}\label{5tho}
\begin{split}
\phi_{i\bar{j}k\bar{\alpha}\alpha}=\phi_{i\bar{\alpha}\alpha \bar{j}k}+& \sum_{p} \phi_{i\bar{p}k} R^{\bar{p}} _{\bar{\alpha} \bar{j} \alpha}-\sum_p \phi_{p\bar{\alpha}k} R^p_{i\alpha\bar{j}} \\
\quad & + \sum_{p} \phi_{i\bar{p}\alpha} R^{\bar{p}} _{\bar{j}\bar{\alpha}  k}-\sum_p \phi_{p\bar{j}\alpha} R^p_{ik\bar{\alpha}}\\
\quad & +   \sum_{p} \phi_{i\bar{p}} (R^{\bar{p}} _{\bar{\alpha} \bar{j} \alpha})_k -\sum_p \phi_{p\bar{\alpha}} (R^p_{i\alpha\bar{j}})_k  + \sum_{p} \phi_{i\bar{p}} (R^{\bar{p}} _{\bar{j}\bar{\alpha}  k})_{\alpha}-\sum_p \phi_{p\bar{j}} (R^p_{ik\bar{\alpha}})_{\alpha}
\end{split}
\end{equation}

based on relation (A6-A8) in page (405-407)  of (\cite{y}) the remainder  $R$ in theorem (\ref{yau406}) contains the terms like
\begin{equation}\label{t1y}
\begin{split}
&T_1=g'^{\alpha\bar{\alpha}}g'^{i\bar{i}}g'^{j\bar{j}} g'^{k\bar{k}}\big[(\sum_{p} \phi_{i\bar{p}k} R^{\bar{p}} _{\bar{\alpha} \bar{j} \alpha}-\sum_p \phi_{p\bar{\alpha}k} R^p_{i\alpha\bar{j}}) \phi_{\bar{i}j\bar{k}}\\
\quad & + (\sum_{p} \phi_{i\bar{p}\alpha} R^{\bar{p}} _{\bar{j}\bar{\alpha}  k}-\sum_p \phi_{p\bar{j}\alpha} R^p_{ik\bar{\alpha}})\phi_{\bar{i}j\bar{k}} \big ]
\end{split}
\end{equation}

\begin{equation}\label{t11y}
\begin{split}
&T'_1=g'^{\alpha\bar{\alpha}}g'^{i\bar{i}}g'^{j\bar{j}} g'^{k\bar{k}}\big[ (\sum_{p} \phi_{i\bar{p}} (R^{\bar{p}} _{\bar{\alpha} \bar{j} \alpha})_k -\sum_p \phi_{p\bar{\alpha}} (R^p_{i\alpha\bar{j}})_k)\phi_{\bar{i}j\bar{k}}  \\
\quad & +  (\sum_{p} \phi_{i\bar{p}} (R^{\bar{p}} _{\bar{j}\bar{\alpha}  k})_{\alpha}-\sum_p \phi_{p\bar{j}} (R^p_{ik\bar{\alpha}})_{\alpha})\phi_{\bar{i}j\bar{k}} \big ]
\end{split}
\end{equation}

\begin{equation}\label{t23y}
T_2=\sum g'^{i\bar{i}}g'^{j\bar{j}}g'^{q\bar{q}}g'^{k\bar{k}} F_{j\bar{q}}|\phi_{i\bar{j}k}|^2, \hspace{0.3cm} T_3=\sum  g'^{i\bar{i}}g'^{j\bar{j}}g'^{k\bar{k}}\phi_{\bar{i}j\bar{k}} F_{i\bar{j}k}
\end{equation}

Since we have assumed  that  
$(\frac{\partial}{\partial z_1}(x),...,\frac{\partial}{\partial z_n}(x))=(V_1 (x),...,V_n(x))$ then  according to the relation (\ref{m'm'1})
in the appendix (\ref{app7})    we have
\[
 g'^{n\bar{n}}\leq \frac{\mathscr{M}' _1}{|S|^2} 
\]
 where $\mathscr{M}' _1$ depends on $G$ and $g_{reg}$. Also

\[
\begin{split}
| g'^{\alpha\bar{\alpha}}g'^{i\bar{i}}g'^{j\bar{j}} g'^{k\bar{k}} \phi_{i\bar{p}k} \phi_{\bar{i}j\bar{k}}R^{\bar{p}} _{\bar{\alpha} \bar{j} \alpha} |\leq & C'_{pj\alpha} \sqrt{g'^{j\bar{j}}} g'^{\alpha\bar{\alpha}}(\sqrt{g'^{i\bar{i}}}\sqrt{g'^{k\bar{k}}}|\phi_{i\bar{p}k}|)(\sqrt{g'^{i\bar{i}}}\sqrt{g'^{j\bar{j}}}\sqrt{g'^{k\bar{k}}} |\phi_{i\bar{j}\bar{k}}|) \\
\quad \leq &\frac{C_{pj\alpha}}{|S|^3}\Psi \\
\end{split}
\]

where $C'_{pj\alpha}$ is an upper bound for $|R^{\bar{p}} _{\bar{\alpha} \bar{j} \alpha}|$ and $C_{pj\alpha}$ is such that
\[
C'_{pj\alpha}\sqrt{g'_{p\bar{p}}} \sqrt{g'^{j\bar{j}}} g'^{\alpha\bar{\alpha}}\leq \frac{C_{pj\alpha}}{|S|^3}
\]

By repeating the same argument for all the other terms  of (\ref{t1y}) we find that
\begin{equation}\label{t1ue}
|T_1|\leq \frac{C_{\ref{t1ue}}}{|S|^3}\Psi
\end{equation}

where $C_{\ref{t1ue}}$ is a constant which only depends on $g_{reg}$ and $G$ on the right side of (\ref{ma3rd}).

  By similar arguments for $T'_1, T_2$ and $T_3$ we can prove that



 
\begin{prop}\label{remainder}
There exists constants $A_1, A_2$ and $A_3$  such that 
\[
|R|\leq \frac{A_1\Psi +A_2 \sqrt{\Psi} +A_3}{|S|^3}
\]

where $R$ denotes the remainder term in theorem (\ref{yau406}) and the constant $A_1, A_2$  and $A_3$ depend only on   $g_{reg}$ and $G$ on the right side of (\ref{ma3rd}).
\end{prop}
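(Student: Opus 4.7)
The plan is to decompose $R$ into the four families of terms $T_1, T_1', T_2, T_3$ identified in Yau's original computation (relations (A.6)--(A.8) of \cite{y}) and bound each one separately, applying the same Cauchy--Schwarz splitting that the excerpt has already carried out for $T_1$. The key analytic input is the estimate $g'^{n\bar n} \leq \mathscr{M}'_1 |S|^{-2}$ from the appendix, combined with the uniform upper bound $g'^{i\bar i} \leq C$ for $i<n$ that follows from Proposition \ref{secondorder} (since $g'$ restricts to an honest Kähler metric on $D$). In diagonalised coordinates every index contraction therefore costs at most one factor of $|S|^{-2}$ whenever the index $n$ occurs, and one factor of $|S|^{-1}$ whenever it occurs under a square root.

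For each term I would split the three metric factors $g'^{i\bar i}g'^{j\bar j}g'^{k\bar k}$ into pieces of the form $\sqrt{g'^{i\bar i}g'^{p\bar p}g'^{k\bar k}}\,|\phi_{i\bar p k}|$ (which collect into $\sqrt{\Psi}$ after summation) leaving behind a residual product of $g'^{\alpha\bar\alpha}$ with at most one square-root factor $\sqrt{g'^{j\bar j}}$. In the worst case $\alpha=j=n$, this residual is bounded by $\mathscr{M}'_1|S|^{-2}\cdot\sqrt{\mathscr{M}'_1}|S|^{-1}$, giving the common denominator $|S|^{3}$. Concretely:
\begin{itemize}
\item $T_1$: two third-derivative factors $\phi_{i\bar p k}\phi_{\bar i j\bar k}$ paired with curvature $R^{\bar p}_{\bar\alpha\bar j\alpha}$ (bounded), giving $C|S|^{-3}\Psi$ as already verified.
\item $T_1'$: one third-derivative factor $\phi_{\bar i j\bar k}$ paired with a bounded covariant derivative of curvature and a second-derivative factor $\phi_{i\bar p}$ (bounded via $n+\Delta_{g_{reg}}\phi\leq C$); gives $C|S|^{-3}\sqrt{\Psi}$.
\item $T_2$: two third-derivative factors paired with $F_{j\bar q}$ where $F = G+\log|S|^{2}$; since $\partial\bar\partial\log|S|^2$ equals (minus) the curvature of $(L,|\cdot|_\mathfrak{h})$, $F_{j\bar q}$ is smooth and bounded, contributing $C|S|^{-3}\Psi$.
\item $T_3$: one third-derivative factor paired with $F_{i\bar j k}$; again $F_{i\bar j k}=G_{i\bar j k}-(\partial_k$ of curvature coefficients of $L)$ is bounded, contributing $C|S|^{-3}\sqrt{\Psi}$.
\end{itemize}
Residual pieces in $T_1'$ where both derivatives land on curvature (and analogous pieces in $T_3$) produce the $\Psi$-independent summand $A_3|S|^{-3}$.

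The main technical obstacle is to verify that no index configuration produces a worse singularity than $|S|^{-3}$. This requires the observation that each factor $g'^{n\bar n}$ in the four-fold product $g'^{\alpha\bar\alpha}g'^{i\bar i}g'^{j\bar j}g'^{k\bar k}$ must be compensated by a matching index on a $\phi$-derivative or curvature component; the crucial fact is that the third-order quantity $\phi_{i\bar j k}$ bounded via $\Psi$ has the weighted norm $\sqrt{g'^{i\bar i}g'^{j\bar j}g'^{k\bar k}}|\phi_{i\bar j k}|$, so two factors of $\sqrt{g'^{n\bar n}}=O(|S|^{-1})$ get absorbed into $\sqrt{\Psi}$, and only the leftover factor $g'^{\alpha\bar\alpha}\sqrt{g'^{j\bar j}}$ contributes to the denominator. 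A careful enumeration of the remaining metric-index assignments then closes the estimate and yields the stated bound with constants $A_1,A_2,A_3$ depending only on $g_{reg}$, the curvature of $(L,|\cdot|_\mathfrak{h})$, and the $C^3$-norm of $G$.
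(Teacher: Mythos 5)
Your overall route is the one the paper takes: read off the remainder families $T_1,T_1',T_2,T_3$ from Yau's (A.6)--(A.8), peel off weighted third--derivative factors to produce $\Psi$ or $\sqrt{\Psi}$, and control the leftover inverse--metric weights through Lemma~\ref{m'm'1}. The difficulty is with one of your two declared inputs. The bound $g'^{i\bar i}\le C$ for $i<n$ does \emph{not} follow from Proposition~\ref{secondorder}: that proposition bounds $1+\phi_{i\bar i}$ from above, i.e.\ it bounds $g'^{i\bar i}$ from \emph{below}. An upper bound on the tangential $g'^{i\bar i}$ with a constant depending only on $g_{reg}$ and $G$ is exactly the quantitative nondegeneracy of $g'|_D$, which is Proposition~\ref{prop8}; that is proved only after Theorems~\ref{thirdorder} and~\ref{3rdorder2}, both of which use the present proposition, so invoking it here is circular (this is why the paper carries the lower bound $\mathscr{M}'$ of $g'|_{U_p\cap D}$ as a separate quantity throughout the third--order section, and asserts that the constants in Proposition~\ref{remainder} are free of it). The non-circular substitute, and the one implicit in the paper's sample computation for $T_1$, is the weaker bound $g'^{\alpha\bar\alpha}\le C|S|^{-2}$ for \emph{every} $\alpha$: it follows from the Monge--Amp\`ere equation combined with the second--order estimate (each eigenvalue of $g'$ is at least $e^{-\sup|G|}|S|^2 /C_{\ref{secondorder1}}^{\,n-1}$), and it suffices for $T_1$, whose leftover weight $\sqrt{g'^{j\bar j}}\,g'^{\alpha\bar\alpha}$ is then at most $C|S|^{-3}$.

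The second, more serious gap is the blanket claim that after extracting the $\sqrt{\Psi}$ factors the residual is always ``$g'^{\alpha\bar\alpha}$ times at most one square root.'' That is correct for $T_1$, and for $T_3$ the residual $\sqrt{g'^{i\bar i}g'^{j\bar j}g'^{k\bar k}}\,|F_{i\bar j k}|$ still gives $|S|^{-3}$, but it is false for $T_1'$ in (\ref{t11y}): there only one third--derivative factor is available, so the residual is $g'^{\alpha\bar\alpha}\sqrt{g'^{i\bar i}g'^{j\bar j}g'^{k\bar k}}\,|\phi_{i\bar p}|\,|\nabla R|$, and with $|\phi_{i\bar p}|$ merely bounded the configuration $\alpha=i=j=k=n$ yields $C\sqrt{\Psi}\,|S|^{-5}$, not $C\sqrt{\Psi}\,|S|^{-3}$; even trading the second derivative against one inverse factor via $g'^{i\bar i}\phi_{i\bar i}=1-g'^{i\bar i}g_{reg,i\bar i}$ only improves this to $|S|^{-4}$. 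So the ``careful enumeration'' you defer is precisely where the estimate still has to be closed, and it needs an input beyond the splitting you describe (the paper is admittedly terse here, dismissing $T_1'$, $T_2$, $T_3$ with ``similar arguments,'' but your proposal as written reproduces the paper's $T_1$ computation while leaving the $T_1'$ family unestablished and resting the tangential bound on a circular justification).
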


 We recall that  according to appendix (\ref{app7}) we also have
\begin{equation}\label{rel193}
(g'_{i\bar{i}})^{1/2}=\|V_i\|_{g'}\geq \mathscr{M} '\hspace{0.5cm} \text{ for } i=1,...,n-1
\end{equation}

Now we set 

\[
\begin{split}
II_{ijk\alpha}= g'^{i\bar{i}}  g'^{j\bar{j}} g'^{k\bar{k}} & \bigg [\phi_{\bar{i} j\bar{k}}\big (\phi_{i\bar{j}k\bar{\alpha}}-\sum_p \phi_{i\bar{p}k}\phi_{p\bar{j}\bar{\alpha}} g'^{p\bar{p}}\big )\\
\quad & + \phi_{i\bar{j}k} \big (\phi_{\bar{i}j\bar{k}\bar{\alpha}}- \sum_p (\phi_{p\bar{i}\bar{\alpha}}  \phi_{\bar{p}j\bar{k}} + \phi_{p\bar{i}\bar{k}} \phi_{\bar{p}j\bar{\alpha}}\big ) g'^{p\bar{p}} ) ) \bigg ]
\end{split}
\]

and
\[
\begin{split}
I_{ijk\alpha}= \sum_{p,q} \bigg [&-2 g'^{i\bar{i}}  g'^{q\bar{q}}  g'^{j\bar{j}}  g'^{k\bar{k}}  \phi_{q\bar{i}\bar{\alpha}} \phi_{i\bar{j}k} \phi_{\bar{q}j\bar{k}}\\
\quad & -  g'^{i\bar{i}}  g'^{j\bar{j}}  g'^{q\bar{q}}  g'^{k\bar{k}}   \phi_{j\bar{q}\bar{\alpha}}\phi_{i\bar{j}k}\phi_{\bar{i}q\bar{k}}\\
\quad & + g'^{i\bar{i}}  g'^{j\bar{j}}  g'^{k\bar{k}}  g'^{p\bar{p}}  \phi_{\bar{i} j\bar{k}}\phi_{i\bar{p}k}\phi_{p\bar{j}\bar{\alpha}}  \\
\quad & + g'^{i\bar{i}}  g'^{j\bar{j}}  g'^{k\bar{k}}  g'^{p\bar{p}}\phi_{i\bar{j}k}   (\phi_{p\bar{i}\bar{\alpha}}  \phi_{\bar{p}j\bar{k}} + \phi_{p\bar{i}\bar{k}} \phi_{\bar{p}j\bar{\alpha}}) \bigg ] \\
\end{split}
\]

Since in the last term we can interchange partial derivatives in the last line to get $ \phi_{p\bar{i}\bar{k}} \phi_{\bar{p}j\bar{\alpha}}= \phi_{\bar{i}p\bar{k}} \phi_{j\bar{p}\bar{\alpha}}$ hence the second line eliminates the last terms in the last line.
So  it is easy to see that 

\begin{lemma}\label{isimp}

\begin{equation}\label{lemijk}
\begin{split}
I_{ijk\alpha}=&
  \sum_p g'^{i\bar{i}}  g'^{j\bar{j}}  g'^{k\bar{k}}  g'^{p\bar{p}}\phi_{\bar{i} j\bar{k}}\phi_{i\bar{p}k}\phi_{p\bar{j}\bar{\alpha}} \\
\quad & -\sum_q  g'^{i\bar{i}}  g'^{q\bar{q}}  g'^{j\bar{j}}  g'^{k\bar{k}}  \phi_{q\bar{i}\bar{\alpha}} \phi_{i\bar{j}k} \phi_{\bar{q}j\bar{k}}
\end{split}
\end{equation}

\end{lemma}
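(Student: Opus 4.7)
The plan is a direct symbolic simplification: in the normal coordinates \eqref{eqlits} all partial derivatives commute, so the third derivatives of $\phi$ appearing in the four groups of summands of $I_{ijk\alpha}$ can be freely reindexed, and the bulk of the terms will cancel in pairs after a renaming of the dummy summation indices.

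To organize the bookkeeping, abbreviate $D_i := (1+\phi_{i\bar i})^{-1}$ and split the defining sum into the four pieces
\begin{align*}
 T_1 &= -2\sum_q D_iD_jD_kD_q\,\phi_{q\bar i\bar\alpha}\phi_{i\bar jk}\phi_{\bar qj\bar k},\\
 T_2 &= -\sum_q D_iD_jD_kD_q\,\phi_{j\bar q\bar\alpha}\phi_{i\bar jk}\phi_{\bar iq\bar k},\\
 T_3 &= \sum_p D_iD_jD_kD_p\,\phi_{\bar ij\bar k}\phi_{i\bar pk}\phi_{p\bar j\bar\alpha},\\
 T_4 &= \sum_p D_iD_jD_kD_p\,\phi_{i\bar jk}\bigl(\phi_{p\bar i\bar\alpha}\phi_{\bar pj\bar k}+\phi_{p\bar i\bar k}\phi_{\bar pj\bar\alpha}\bigr).
\end{align*}
The target of the lemma is exactly $T_3+\tfrac{1}{2}T_1$, so it suffices to establish $\tfrac{1}{2}T_1+T_2+T_4=0$.

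Write $T_4=T_4^{(a)}+T_4^{(b)}$ corresponding to the two summands inside the parenthesis, and rename $p\mapsto q$ in each. Using that partials commute (so $\phi_{p\bar i\bar\alpha}=\phi_{q\bar i\bar\alpha}$ after relabeling, and similarly for $\phi_{\bar pj\bar k},\phi_{\bar pj\bar\alpha}$), one checks term by term that
\[
T_4^{(a)} = -\tfrac12 T_1, \qquad T_4^{(b)} = -T_2,
\]
since the triple products of third derivatives match up to a sign once the outer $D$-factors are compared. Adding these two identities yields the desired cancellation, proving the lemma.

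The step I expect to require the most care is the index relabeling in $T_2$ and $T_4^{(b)}$: here one must use both the commutativity $\phi_{j\bar q\bar\alpha}=\phi_{\bar qj\bar\alpha}$ and $\phi_{\bar iq\bar k}=\phi_{q\bar i\bar k}$ and the symmetry $D_q=D_p$ under the renaming. There is no analytic difficulty (everything happens pointwise at the chosen normal-coordinates point), but one has to keep the barred and unbarred slots straight in order to see that the coefficients line up exactly, without any stray factor of $2$.
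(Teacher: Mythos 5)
Your cancellation argument is correct: after relabeling the dummy index $p\mapsto q$, the first summand of $T_4$ is literally the summand of $-\tfrac12 T_1$, and the second summand of $T_4$ matches the summand of $-T_2$ once one uses $\phi_{q\bar i\bar k}=\phi_{\bar i q\bar k}$ and $\phi_{\bar q j\bar\alpha}=\phi_{j\bar q\bar\alpha}$ (swaps of the two innermost derivatives of a scalar, which cost no curvature terms, and at the point $x$ of \eqref{eqlits} the covariant and ordinary third derivatives coincide anyway), so $\tfrac12 T_1+T_2+T_4=0$ and $I_{ijk\alpha}=T_3+\tfrac12 T_1$ as claimed. The paper offers no argument for Lemma \ref{isimp} beyond ``it is easy to see,'' and your direct relabeling computation is exactly the verification that is being left to the reader.
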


According to (\cite{y}) page 404 we   have
\begin{lemma}[\cite{y}]\label{si12}
\[
\Psi_{\bar{\alpha}}=\sum_{ijk}  (I_{ijk\bar{\alpha}}+II_{ijk\bar{\alpha}})
\]
\end{lemma}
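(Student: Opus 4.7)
The plan is to verify the identity by direct computation at the point $x\in X\setminus D$ where the normalized coordinates (\ref{eqlits}) hold. In those coordinates $g'_{i\bar{j}}=\delta_{ij}(1+\phi_{i\bar{i}})$ is diagonal, so $g'^{i\bar{j}}=\delta_{ij}(1+\phi_{i\bar{i}})^{-1}$. Moreover the first derivatives of $g_{reg}$ vanish at $x$, so $\partial_{\bar{\alpha}}g'_{j\bar{k}}=\phi_{j\bar{k}\bar{\alpha}}$, and the standard identity $\partial_{\bar{\alpha}}g'^{i\bar{r}}=-g'^{i\bar{k}}g'^{j\bar{r}}\partial_{\bar{\alpha}}g'_{j\bar{k}}$ collapses to
\[
\partial_{\bar{\alpha}}g'^{i\bar{r}}|_{x}=-\frac{\phi_{i\bar{r}\bar{\alpha}}}{(1+\phi_{i\bar{i}})(1+\phi_{r\bar{r}})}
\]
after using the Kronecker deltas.

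Applying the Leibniz rule to $\Psi=g'^{i\bar{r}}g'^{j\bar{s}}g'^{k\bar{t}}\phi_{i\bar{j}k}\phi_{\bar{r}s\bar{t}}$ and evaluating at $x$ yields five contributions. The two terms in which $\partial_{\bar{\alpha}}$ hits $\phi_{i\bar{j}k}$ or $\phi_{\bar{r}s\bar{t}}$ immediately produce, after setting $r=i$, $s=j$, $t=k$ from the diagonal structure of $g'^{-1}$, the leading fourth-derivative pieces $\phi_{\bar{i}j\bar{k}}\phi_{i\bar{j}k\bar{\alpha}}/D$ and $\phi_{i\bar{j}k}\phi_{\bar{i}j\bar{k}\bar{\alpha}}/D$, where $D=(1+\phi_{i\bar{i}})(1+\phi_{j\bar{j}})(1+\phi_{k\bar{k}})$. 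These are exactly the first pieces inside each half of $II_{ijk\bar{\alpha}}$ after summation over $i,j,k$. The remaining three contributions, from differentiating the three inverse-metric factors, each give a quadruple sum of shape $-\sum \phi_{??\bar{\alpha}}\phi_{i\bar{j}k}\phi_{\bar{r}s\bar{t}}/D'$ with $D'$ a product of four diagonal factors and one extra summation index (call it $p$ or $q$).

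These three inverse-metric contributions are then matched against the four ``correction'' sums sitting inside $II_{ijk\bar{\alpha}}$ (the single $-\sum_{p}\phi_{i\bar{p}k}\phi_{p\bar{j}\bar{\alpha}}(1+\phi_{p\bar{p}})^{-1}$ inside $A_{ijk\alpha 1}$ and the pair $-\sum_{p}(\phi_{p\bar{i}\bar{\alpha}}\phi_{\bar{p}j\bar{k}}+\phi_{p\bar{i}\bar{k}}\phi_{\bar{p}j\bar{\alpha}})(1+\phi_{p\bar{p}})^{-1}$ inside $A_{ijk\alpha 2}$), plus the two simplified terms of $I_{ijk\bar{\alpha}}$ given by Lemma \ref{isimp}. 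The matching is performed by (i) relabeling the auxiliary index $p$ or $q$ simultaneously with one of the outer summed indices $i,j,k$, and (ii) using the commutativity of mixed partial derivatives, e.g. $\phi_{\bar{p}j\bar{\alpha}}=\phi_{j\bar{p}\bar{\alpha}}$ and $\phi_{p\bar{i}\bar{k}}=\phi_{\bar{i}p\bar{k}}$. The main bookkeeping obstacle is exactly this matching: several of the terms differ only in which of two similar-looking indices plays the role of the $z$-derivative and which plays the role of the $\bar{z}$-derivative, and one must carefully verify that after the legitimate relabelings of summed indices the two sides agree. Once the correct relabelings are made, the identity holds term by term after summing over $i,j,k$, with no further geometric input.
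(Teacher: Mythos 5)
Your computation is correct and is essentially the same argument as the source the paper defers to: the paper states this lemma by citing Yau's appendix (p.~404), where it is obtained by exactly this Leibniz-rule differentiation of $\Psi$ at a point normalized as in (\ref{eqlits}), with the derivatives of the three inverse-metric factors producing the correction sums that are then redistributed between $I_{ijk\bar{\alpha}}$ and $II_{ijk\bar{\alpha}}$ via dummy-index relabeling and the commutation $\phi_{\bar{p}j\bar{\alpha}}=\phi_{j\bar{p}\bar{\alpha}}$, $\phi_{\bar{i}p\bar{k}}=\phi_{p\bar{i}\bar{k}}$. Your bookkeeping (two of the three inverse-metric terms coinciding by the symmetry of $\phi_{i\bar{j}k}$ in $i,k$, matching the doubled correction, and the third matching the remaining one) is exactly how the identity closes, so the proposal stands as written.
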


We  also set

\begin{equation}\label{12ab}
II_{ijk\alpha}=\sum_{s } A_{ijk\alpha s} B_{ijks}
\end{equation}

where

\[
A_{ijk\alpha p 1}=  (g'^{i\bar{i}} )^{1/2} (g'^{j\bar{j}} )^{1/2} (g'^{k\bar{k}} )^{1/2} ( (\phi_{i\bar{j}k\bar{\alpha}}-\sum_p \phi_{i\bar{p}k}\phi_{p\bar{j}\bar{\alpha}}g'^{p\bar{p}} )
\]

\[
A_{ijk\alpha p 2}=    (g'^{i\bar{i}} )^{1/2} (g'^{j\bar{j}} )^{1/2} (g'^{k\bar{k}} )^{1/2} (\phi_{\bar{i}j\bar{k}\bar{\alpha}}- \sum_p (\phi_{p\bar{i}\bar{\alpha}}  \phi_{\bar{p}j\bar{k}} + \phi_{p\bar{i}\bar{k}} \phi_{\bar{p}j\bar{\alpha}}) g'^{p\bar{p}} ) ) 
\]

\[
B_{ijk1} =    (g'^{i\bar{i}} )^{1/2} (g'^{j\bar{j}} )^{1/2} (g'^{k\bar{k}} )^{1/2} \phi_{\bar{i} j\bar{k}}
\]

 and 

\[
B_{ijk2}=   (g'^{i\bar{i}} )^{1/2} (g'^{j\bar{j}} )^{1/2} (g'^{k\bar{k}} )^{1/2} \phi_{i\bar{j}k}
\]

\begin{lemma}\label{ineqabsi}
\[
  |\sum_{ijks} A_{ijk\alpha s} B_{ijks} |^2\leq 2  \Psi \sum_{ijks} |A_{ijk\alpha   s } |^2
\]
\end{lemma}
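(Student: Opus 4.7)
The plan is to recognize the inequality as a direct application of the Cauchy--Schwarz inequality once we identify $\sum_{ijks}|B_{ijks}|^2$ with $2\Psi$.

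First I would work in the preferred holomorphic coordinates introduced at the beginning of Section~6.1, in which $g_{\mathrm{reg},i\bar{j}}=\delta_{ij}$ and $\phi_{i\bar{j}}=\delta_{ij}\phi_{i\bar{j}}$ at the chosen point $x$. In such coordinates $g'_{i\bar{j}}=(1+\phi_{i\bar i})\delta_{ij}$, and therefore $g'^{i\bar{i}}=(1+\phi_{i\bar i})^{-1}$. Substituting this into the defining relation $\Psi=\sum g'^{i\bar{r}}g'^{j\bar{s}}g'^{k\bar{t}}\phi_{i\bar{j}k}\phi_{\bar{r}s\bar{t}}$ gives the diagonal expression
\[
\Psi=\sum_{ijk}(1+\phi_{i\bar i})^{-1}(1+\phi_{j\bar j})^{-1}(1+\phi_{k\bar k})^{-1}|\phi_{i\bar{j}k}|^{2}.
\]

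Next I would compare this with the two components of $B_{ijks}$. Directly from the definitions,
\[
|B_{ijk2}|^{2}=(1+\phi_{i\bar i})^{-1}(1+\phi_{j\bar j})^{-1}(1+\phi_{k\bar k})^{-1}|\phi_{i\bar{j}k}|^{2},
\]
and since $\phi$ is real-valued we have $\phi_{\bar{i}j\bar{k}}=\overline{\phi_{i\bar{j}k}}$, hence $|B_{ijk1}|^{2}=|B_{ijk2}|^{2}$. Summing over $i,j,k$ yields
\[
\sum_{ijks}|B_{ijks}|^{2}=2\sum_{ijk}(1+\phi_{i\bar i})^{-1}(1+\phi_{j\bar j})^{-1}(1+\phi_{k\bar k})^{-1}|\phi_{i\bar{j}k}|^{2}=2\Psi.
\]

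Finally, the Cauchy--Schwarz inequality applied to the index set $(i,j,k,s)$ gives
\[
\Bigl|\sum_{ijks}A_{ijk\alpha s}B_{ijks}\Bigr|^{2}\le \Bigl(\sum_{ijks}|A_{ijk\alpha s}|^{2}\Bigr)\Bigl(\sum_{ijks}|B_{ijks}|^{2}\Bigr)=2\Psi\sum_{ijks}|A_{ijk\alpha s}|^{2},
\]
which is the claimed inequality. There is no real obstacle here; the only subtle point is verifying that the tensorial quantity $\Psi$ indeed equals $\sum_{ijk}|B_{ijk2}|^{2}$ in the chosen frame, which uses the diagonalization of both $g_{\mathrm{reg}}$ and $\phi_{i\bar{j}}$ at $x$, and that both values $s=1,2$ contribute the same amount to $\sum|B|^{2}$ by the reality of $\phi$.
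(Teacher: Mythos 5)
Your proposal is correct and follows essentially the same route as the paper: a single application of the Cauchy--Schwarz inequality over the index set $(i,j,k,s)$ together with the identification $\sum_{ijks}|B_{ijks}|^{2}=2\Psi$. The only difference is that you spell out why this identification holds (diagonalization of $g_{\mathrm{reg}}$ and $\phi_{i\bar j}$ at the point, and $|B_{ijk1}|=|B_{ijk2}|$ by reality of $\phi$), which the paper simply asserts.
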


\begin{proof}
It suffices to apply  Cauchy-Schwarz 
\[
\begin{split}
  |\sum_{ijks} A_{ijk\alpha s} B_{ijks} |^2 &\leq     (\sum_{ijks} |B_{ijks}| ^2)(\sum_{ijks}| A_{ijk\alpha  s}| ^2 ) =  2 \Psi  (\sum_{ijks}| A_{ijk\alpha  s}| ^2 ) 
\end{split}
\]
\end{proof}


  In the formula (\ref{lemijk}) for $I_{ijk\alpha}$ given in  lemma (\ref{isimp}) if $i=j=k=p=q$  we get
\[
\phi_{\bar{i}j\bar{k}}\phi_{i\bar{p}k}\phi_{p\bar{j}\bar{\alpha}}=\phi_{q\bar{i}\bar{\alpha}}\phi_{i\bar{j}k}\phi_{\bar{q}j\bar{k}}
\]
  In particular at least  one of the terms $\phi_{\bar{i}j\bar{k}}$ and  $\phi_{i\bar{p}k}$  multiplied in the first sum of formula (\ref{lemijk}) must be different from $\phi_{nnn}$ and the same is true for the tow terms $\phi_{i\bar{j}k}$  and $\phi_{\bar{q}j\bar{k}}$  in the second summation.


\begin{lemma}\label{lem260}
Over the  open set $U_p$ we have

\begin{equation}\label{lem262}
 (g'^{\alpha\bar{\alpha}} )^{1/2}I_{ijk\alpha}\leq C_{\ref{lem262}}\Psi \times \bigg [\sum_{p,q,l} g'^{p\bar{p}} g'^{q\bar{q}}  \phi_{p\bar{q}\bar{l}}\phi_{q\bar{p}l}\bigg ]^{1/2}
\end{equation}
where $C_{\ref{lem262}}=\frac{\tilde{C_{\ref{lem262}}}}{\mathscr{M}'}$and $\tilde{C_{\ref{lem262}}}$ depends on $n$. The constant $\mathscr{M}'$ is defined in the appendix (\ref{app7}).

We also have 

\begin{equation}\label{lem263}
(g'^{n\bar{n}})^{1/2}I_{ijkn}\leq C_{\ref{lem263}}\Psi^{1/2} \times\bigg [ \sum_{p,q,l} g'^{p\bar{p}} g'^{q\bar{q}}  (\phi)_{p\bar{q}\bar{l}}(\phi)_{q\bar{p}l}\bigg ]
\end{equation}.

where $C_{\ref{lem263}}=\frac{\tilde{C_{\ref{lem263}}}}{(\mathscr{M}')^2}$ and $\tilde{C_{\ref{lem263}}}$ only depends on $n$.
\end{lemma}

\begin{proof}


 For $a,b,c\in \{1,\bar{1},...,n,\bar{n}\}$ we set
\begin{equation}\label{siabc1}
\psi_{abc}:=(g'^{a\bar{a}})^{1/2}(g'^{b\bar{b}})^{1/2}(g'^{c\bar{c}})^{1/2}  \phi_{abc}
\end{equation}

 Obviously we have
\begin{equation}\label{siabc2}
|\psi_{abc}|\leq \Psi^{1/2}
\end{equation}

Then by lemma (\ref{lemijk})

\begin{equation}\label{128}
(g'^{\alpha\bar{\alpha}})^{1/2}I_{ijk\alpha}=\sum_{p}\psi_{\bar{i}j\bar{k}}\psi_{i\bar{p}k}\psi_{p\bar{j}\alpha}-\sum_{q}\psi_{q\bar{i}\bar{\alpha}}\psi_{i\bar{j}k}\psi_{\bar{q}j\bar{k}}
\end{equation}

Now if $a,b,c$ are not identical we can find  one of the letters $a,b,c$ for instance $a$ such that $a\neq n$. Therefore according to the relation (\ref{rel193})   we have $g'^{a\bar{a}} \leq \frac{1}{(\mathscr{M}')^2}$ from which it follows   that 
\begin{equation}\label{136}
\begin{split}
|\psi_{abc}|\leq  \frac{1}{\mathscr{M}'} (g'^{b\bar{b}})^{1/2}(g'^{c\bar{c}})^{1/2}  |\phi_{abc}| & \leq  \frac{1}{\mathscr{M}'}\sum_{p,q,l} (g'^{p\bar{p}})^{1/2}(g'^{q\bar{q}})^{1/2}  |\phi_{p\bar{q}\bar{l}}|\\
\quad &\leq \frac{\tilde{C}_{\ref{136}}}{\mathscr{M}'}\bigg [ \sum_{p,q,l}  (g'^{p\bar{p}})(g'^{q\bar{q}}) \phi_{p\bar{q}\bar{l}}\phi_{q\bar{p}l}\bigg ] ^{1/2}
\end{split}
\end{equation}

 where  $\tilde{C}_{\ref{136}}$ depends  on $n$  and the second line  results  from Cauchy-Schwarz inequality.

Since we know that at least one of the letters $\{i,j,k,p\}$ and at least one of the letters $\{i,j,k,q\}$ are different from $n$   thus from (\ref{128}),(\ref{136}) and the inequality (\ref{siabc2}) we obtain
\[
(g'^{\alpha\bar{\alpha}})^{1/2}I_{ijk\alpha}\leq C_{\ref{lem262}}\Psi \times\bigg [ \sum_{p,q,l} g'^{p\bar{p}} g'^{q\bar{q}}  (\phi)_{p\bar{q}\bar{l}}(\phi)_{q\bar{p}l}\bigg ]^{1/2}
\].
 
where $C_{\ref{lem262}}=\frac{\tilde{C_{\ref{lem262}}}}{\mathscr{M}'}$and $\tilde{C_{\ref{lem262}}}$ depends  on $n$.

Also if $\alpha=n$ then at least one of the letters $\{i,j,k,p\}$ and at least one of the letters $\{i,j,k,q\}$ are different from $n$.
Since all these letters are repeated twice we can derive the following inequality

\[
(g'^{n\bar{n}})^{1/2}I_{ijkn}\leq C_{\ref{lem263}}\Psi^{1/2} \times\bigg [ \sum_{p,q,l} g'^{p\bar{p}} g'^{q\bar{q}}  (\phi)_{p\bar{q}\bar{l}}(\phi)_{q\bar{p}l}\bigg ]
\].

where $C_{\ref{lem263}}=\frac{\tilde{C_{\ref{lem263}}}}{(\mathscr{M}')^2}$ and $\tilde{C_{\ref{lem263}}}$ only depends on $n$.
 \end{proof}


In order to prove the theorem (\ref{thirdorder}) we need the following computation
\[
\begin{split}
\Delta_{g'} (\log(\eta \Psi+\epsilon))=& \frac{(\Delta_{g'}\eta) \Psi +\eta (\Delta_{g'} \Psi)}{\eta \Psi +\epsilon}+\sum g'^{\alpha\bar{\beta}}\frac{\eta_{\bar{\beta}} \Psi_{\alpha} +\eta_{\alpha} \Psi_{\bar{\beta}}}{(\eta\Psi+\epsilon)}\\
\quad &\sum - g'^{\alpha\bar{\beta}}\frac{\eta_{\bar{\beta}} \eta_{\alpha} \Psi^2+\eta_{\bar{\beta}} \eta \Psi\Psi_{\alpha}+\eta\eta_{\alpha}\Psi\Psi_{\bar{\beta}} +\eta^2\Psi_{\alpha}\Psi_{\bar{\beta}} }{(\eta\Psi+\epsilon)^2}\\
\quad &= \frac{(\Delta_{g'}\eta) \Psi +\eta (\Delta_{g'} \Psi)}{\eta \Psi +\epsilon}+\sum g'^{\alpha\bar{\beta}}\frac{\eta_{\bar{\beta}} \Psi_{\alpha} +\eta_{\alpha} \Psi_{\bar{\beta}}}{(\eta\Psi+\epsilon)}\\
\quad &\sum - g'^{\alpha\bar{\beta}}\frac{ (\eta \Psi_{\alpha}+\eta_{\alpha}\Psi)(\eta \Psi_{\bar{\beta} }+\eta_{\bar{\beta}} \Psi) }{(\eta\Psi+\epsilon)^2}\\ 
\end{split}
\]

where  $\epsilon $ is a positive constant which will be determined later  and 
\[
\eta=\chi|S|^4
\]

Here we  fix a cut-off function 
\begin{equation}\label{chitild}
\tilde{\chi} :\mathbb{C}^n \rightarrow [0,1]
\end{equation}
such that $\supp (\tilde{\chi})\subset B_1 (0)$, where $B_r(0)$ denotes  the ball of radius $r$ centered at $0$  in $\mathbb{C}^n$ and such that $\tilde{\chi}^{-1} (\{1\}) =B_{1/2}(0) $. We  assume that the coordinates system $(\tilde{w}_1,...,\tilde{w}_{n-1}\tilde{z})$ over  $U_p$ is such that   it contains a ball  $B_{\delta}(0))$.  Then  we set 
\begin{equation}   \label{chitild}
\chi (\tilde{w}_1,...,\tilde{w}_{n-1},\tilde{z})=\tilde{\chi} (\frac{\tilde{w}_1}{\delta},...,\frac{\tilde{w}_{n-1}}{\delta} , \frac{\tilde{z}}{\delta}) 
\end{equation}

We assume that $\delta$ is so small that the coordinates system $(w_1,...,w_{n-1}, z)$ compatible with the relation (\ref{eqlits})
in which our computation is carried out can be taken in such a way that the $z$  axis passing through $x$ coincide with the $z$-axis with respect to a coordinates system over $\supp \chi$ which is  still denoted by $(w_1,...,w_{n-1},z)$  and which satisfies $D\cap \supp \chi =\{z=0\} $. The value of $\delta $ depends on the angle $\theta_n$ defined in the appendix (\ref{app7}) hence on $\mathscr{M}'$. Also the coordinates sytem above can be obtained by a linear transformation on the initial  coordinates on $U_p$.  This hypothesis  on $\delta$ is used  in the proof of lemma (\ref{lem20nn}) to  approximate $\frac{|\eta_z|^2}{\eta|\eta_{z\bar{z}}|}$.
  We have

 \[
\Delta_{g'} ( \log( \eta\Psi +\epsilon))=\mathcal{E}_1+\mathcal{E}_2
\]

where 
\[
\begin{split}
&\mathcal{E}_1  = \frac{\sum_{\alpha} g'^{\alpha\bar{\alpha}}\bigg [\sum_{ijks}\eta |A_{ijk\alpha s}|^2 +2\Re    \big (\sum_{ijk}\eta_{\alpha}I_{ijk\alpha} + \sum_{ijks}\eta_{\alpha} A_{ijk\alpha  s}B_{ijks}\big )\bigg] }{\eta\Psi  +\epsilon}\\
\quad  & -\frac{\sum_{\alpha} g'^{\alpha\bar{\alpha}}\bigg |\big ( \sum_{ijks} \eta A_{ijk\alpha s} B_{ijks} +\sum_{ijk}\eta I_{ijk\alpha}+\sum_{ijk}\eta_{\bar{\alpha}}|\phi _{i\bar{j}k}|^2 g'^{i\bar{i}}g'^{j\bar{j}}g'^{k\bar{k}} \big )\bigg|^2}{(\eta\Psi +\epsilon)^2} \\
\end{split}
\]

and

\begin{equation}\label{mce2}
\mathcal{E}_2 = \frac{(\Delta_{g'} \eta) \big (\sum |\phi _{i\bar{j}k}|^2 g'^{i\bar{i}}g'^{j\bar{j}}g'^{k\bar{k}}\big ) }{\eta\Psi +\epsilon}+\frac{\eta R}{\eta\Psi+\epsilon}
\end{equation}

Here $R$ is the remainder defined in theorem (\ref{yau406})  and  we are using lemma  (\ref{si12}), relation (\ref{12ab}) and theorem  (\ref{yau406}).

 Applying  lemma (\ref{ineqabsi}) and  Cauchy Schwarts inequality we obtain

\begin{equation}\label{no2}
\begin{split}
&\mathcal{E}_1  \\
\quad & \geq \frac{  \sum_{\alpha}  g'^{\alpha\bar{\alpha}} \bigg [\sum_{ijks}  \eta| A_{ijk\alpha s}|^2 -\sqrt{2} |\eta_{\alpha}|\Psi^{1/2}  ( \sum_{ijks} |A_{ijk\alpha s}|^2 )^{1/2}}{(\eta\Psi +\epsilon)^2}\\
\quad &\hspace{6cm} - \frac{\sum_{ijk}2 |\eta_{\bar{\alpha}} ||I_{ijk\alpha} |   \bigg ] \times (\eta\Psi+\epsilon) }{(\eta\Psi +\epsilon)^2}\\
\quad & -\frac{ \sum_{\alpha}  g'^{\alpha\bar{\alpha}}C_{\ref{no2}}\bigg [ 2\eta ^2 \sum_{ijks}B_{ijks} ^2\sum_{ijks} |A_{ijk\alpha s}|^2 +\sum_{ijk} \eta ^2  |I_{ijk\alpha}|^2 +|\eta _{\bar{\alpha}}|^2 |\Psi |^2 \bigg] }{(\eta \Psi +\epsilon) ^2} \\
\quad & = \frac{  \sum_{\alpha}\bigg [ g'^{\alpha\bar{\alpha}}  \eta\big [\epsilon- (2C_{\ref{no2}}-1)\eta \Psi \big ]  \sum_{ijks} |A_{ijk\alpha s}|^2 -\sqrt{2}(\epsilon+\eta\Psi)|\eta_{\alpha}|\Psi ^{1/2} (\sum_{ijks} | A_{ijk\alpha s}|^2 )^{1/2} \bigg ]}{(\eta\Psi+\epsilon)^2}\\
\quad &-\frac{\sum_{\alpha} g'^{\alpha\bar{\alpha}}\bigg [C_{\ref{no2}}\eta^2\sum_{ijk} |I _{ijk\alpha}|^2 + 2(\epsilon+\eta \Psi)\sum_{ijk}|\eta_{\bar{\alpha}}| |I_{ijk\alpha}| +C_{\ref{no2}}|\eta_{\bar{\alpha}}|^2 \Psi^2 \bigg]  }{(\eta\Psi +\epsilon)^2} 
\end{split}
\end{equation}

 The constant $C_{\ref{no2}}$ depends on the number of the terms.
We represent  the last expression as a quadratic function,
\[
T=\sum_{\alpha}  g'^{\alpha\bar{\alpha}} [a_{\alpha}x_{\alpha}^2+b_{\alpha}x_{\alpha}+c_{\alpha}]
\]

where
\begin{equation}\label{aslfa}
x_{\alpha}=(\sum_{ijks} |A_{ijk\alpha s}|^2)^{1/2}
\end{equation}

\begin{equation}\label{aalfa}
a_{\alpha}=\frac{\eta[\epsilon-(2C_{\ref{no2}}-1)\eta\Psi]}{(\eta\Psi+\epsilon)^2}
\end{equation}

\begin{equation}\label{balfa}
b_{\alpha}=\frac{-\sqrt{2}|\eta_{\alpha}| \Psi^{1/2}}{(\eta\Psi+\epsilon)}
\end{equation}

\begin{equation}\label{calfa}
c_{\alpha}=\frac{-C_{\ref{no2}}\eta^2 \bigg [ \sum_{ijk}|I_{ijk\alpha}|^2 \bigg ]+2(\epsilon+\eta\Psi) |\eta_{\bar{\alpha}} |\bigg [ \sum_{ijk}|I_{ijk\alpha}|\bigg ]+C_{\ref{no2}}|\eta_{\bar{\alpha}}|^2\Psi ^2}{(\eta\Psi+\epsilon)^2}
\end{equation}

We have thus proved 

\begin{lemma}\label{belakhare}
\[
\mathcal{E}_1 \geq T
\]
\end{lemma}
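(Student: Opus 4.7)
The plan is to observe that this lemma is essentially an algebraic repackaging of the chain of estimates that culminates in inequality (\ref{no2}), and that once that chain is justified step by step, collecting the resulting terms into the quadratic form $\sum_\alpha (1+\phi_{\alpha\bar\alpha})^{-1}[a_\alpha x_\alpha^2+b_\alpha x_\alpha+c_\alpha]$ becomes a matter of bookkeeping against the definitions (\ref{aalfa})--(\ref{calfa}). Concretely, I would first unpack the definition of $\mathcal{E}_1$ into three pieces: the positive square $\sum_{ijks}\eta|A_{ijk\alpha s}|^2$, the cross terms $2\Re[\sum_{ijk}\eta_\alpha I_{ijk\alpha}+\sum_{ijks}\eta_\alpha A_{ijk\alpha s}B_{ijks}]$ (divided by $\eta\Psi+\epsilon$), and the negative squared term with numerator $|\sum\eta A B+\sum\eta I+\sum\eta_{\bar\alpha}|\phi_{i\bar jk}|^2g'^{i\bar i}g'^{j\bar j}g'^{k\bar k}|^{2}$ (divided by $(\eta\Psi+\epsilon)^2$). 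The identity $\sum_{ijk}|\phi_{i\bar jk}|^2 g'^{i\bar i}g'^{j\bar j}g'^{k\bar k}=\Psi$ will already let me reshape the last summand as $|\eta_{\bar\alpha}|\Psi$, which is what ultimately produces the $|\eta_{\bar\alpha}|^2\Psi^2$ term inside $c_\alpha$.

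Second, I would dispatch the cross term $\Re[\sum_{ijks}\eta_\alpha A_{ijk\alpha s}B_{ijks}]$ with Cauchy--Schwarz together with Lemma~\ref{ineqabsi}: this gives $|\sum A B|\le \sqrt 2\,\Psi^{1/2}(\sum|A|^2)^{1/2}= \sqrt 2\,\Psi^{1/2}\,x_\alpha$, which—after multiplying by the common factor $(\epsilon+\eta\Psi)/(\eta\Psi+\epsilon)^2$—contributes exactly the coefficient of $x_\alpha$ recorded in $b_\alpha$ (see (\ref{balfa})). Simultaneously I would absorb the negative squared term using the elementary estimate $|u+v+w|^2\le C_{\ref{no2}}(|u|^2+|v|^2+|w|^2)$, with $C_{\ref{no2}}$ depending only on the (fixed) number of summands, and then apply Lemma~\ref{ineqabsi} once more to bound $|\sum\eta A B|^2\le 2\eta^2\Psi\sum|A|^2=2\eta^2\Psi\,x_\alpha^2$. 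Combining this $-2C_{\ref{no2}}\eta^2\Psi\,x_\alpha^2$ contribution with the positive $\eta\,x_\alpha^2$ term produces the prefactor $\eta[\epsilon-(2C_{\ref{no2}}-1)\eta\Psi]/(\eta\Psi+\epsilon)^2$, matching $a_\alpha$ as in (\ref{aalfa}).

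Third, the remaining $I$-terms are collected directly: the cross term $2|\eta_{\bar\alpha}|\sum_{ijk}|I_{ijk\alpha}|$ from the numerator, together with the squared contribution $C_{\ref{no2}}\eta^2\sum_{ijk}|I_{ijk\alpha}|^2$ extracted from the $|u+v+w|^2$ inequality and the $C_{\ref{no2}}|\eta_{\bar\alpha}|^2\Psi^2$ piece from the last summand, assemble into $c_\alpha$ exactly as displayed in (\ref{calfa}). No further estimation of $I_{ijk\alpha}$ is needed at this stage—Lemma~\ref{lem260} is intended for the next step, not for this one.

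The main obstacle, as far as I can see, is purely bookkeeping: keeping track of where the factors of $\epsilon+\eta\Psi$ in the numerator and $(\eta\Psi+\epsilon)^2$ in the denominator come from, and verifying that the signs line up so that all the discarded contributions are indeed nonpositive. In particular one must be careful that the cross term $2\Re[\sum\eta_\alpha I_{ijk\alpha}]$ in the first line of $\mathcal{E}_1$ combines with the $2(\epsilon+\eta\Psi)|\eta_{\bar\alpha}||I|$ term arising from the $|u+v+w|^2$ split, with matching coefficients; this is the one place where a miscount would be visible. Once this is done, the inequality $\mathcal{E}_1\ge T$ follows immediately by rewriting the final expression in (\ref{no2}) in terms of $x_\alpha$, $a_\alpha$, $b_\alpha$, $c_\alpha$.
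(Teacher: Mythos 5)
Your proposal is correct and follows essentially the same route as the paper: bound the cross terms with Cauchy--Schwarz and Lemma \ref{ineqabsi}, absorb the negative squared term through the elementary bound $|u+v+w|^2\le C_{\ref{no2}}(|u|^2+|v|^2+|w|^2)$ together with a second application of Lemma \ref{ineqabsi}, and then read the resulting expression (\ref{no2}) as the quadratic form $T$ with the coefficients (\ref{aslfa})--(\ref{calfa}). Your caution about sign bookkeeping is well placed, since for $\mathcal{E}_1\ge T$ all three $I$-contributions entering $c_\alpha$ must carry a negative sign (as they do in (\ref{no2})), whereas the display (\ref{calfa}) shows two of them with a flipped sign; this does not affect the later estimates, which only use $|c_\alpha|$.
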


Minimizing $T$  leads to 

\begin{equation}\label{tavval}
\mathcal{E}_1 \geq \sum_{\alpha}  g'^{\alpha\bar{\alpha}}(-\frac{b_{\alpha} ^2}{4a_{\alpha}} +c_{\alpha}) 
\end{equation}

According to  (\ref{aalfa}) and (\ref{balfa}) 

\begin{equation}\label{balaal}
\frac{b_{\alpha}^2}{4a_{\alpha}}= \frac{2|\eta_{\alpha}|^2\Psi}{4\eta[\epsilon-(2C_{\ref{no2}}-1)\eta\Psi]}
\end{equation}

We  define $\eta_{\tau}$ by

\begin{equation}
\eta_{\tau}= \chi^{\tau} |S|^4
\end{equation}

where $\tau$ is  a small constant that will be determined. We then  set
\begin{equation}\label{epskap}
\epsilon = \kappa \| \eta_{1-\tau} \Psi\|_{\infty}
\end{equation}

where  $\kappa$ is a large constant that depends on the open set $U_p$ and  will be discussed.

 \begin{lemma}\label{cor4}

\begin{equation}\label{epsi11}
\begin{split}
\mathcal{E}_1\geq &\frac{C'_{\ref{epsi11}}\chi^{\tau}}{\kappa^2}\bigg [ \sum_{p,q,l} g'^{p\bar{p}}g'^{q\bar{q}} (\phi)_{p\bar{q}\bar{l}}(\phi)_{q\bar{p}l}\bigg ]\\
\quad &\frac{C''_{\ref{epsi11}}\chi ^{\tau}}{\kappa}  \bigg [ \sum_{p,q,l} g'^{p\bar{p}}g'^{q\bar{q}} (\phi)_{p\bar{q}\bar{l}}(\phi)_{q\bar{p}l}\bigg ]^{1/2} \\
\quad &+C_{\ref{no2}}\sum_{\alpha}\frac{ g'^{\alpha\bar{\alpha}}|\eta_{\bar{\alpha}}|^2\Psi^2}{(\eta\Psi+\epsilon)^2}
 - \sum_{\alpha}g'^{\alpha\bar{\alpha}}\frac{|\eta_{\alpha}|^2\Psi}{2\eta [\epsilon-(2C_{\ref{no2}}-1)\eta\Psi]}
\end{split}
\end{equation}


for some constants $C'_{\ref{epsi11}}$ and $C'_{\ref{epsi11}}$    which  depend on $g_{reg}$ and the function $G$ on the r.h.s. of (\ref{ma3rd}), $\delta $, and $\mathscr{M}'$.  
Here $\delta$  is determined in relation (\ref{chitild}) and $\mathscr{M}'$ in the appendix (\ref{app7})
\ 

\end{lemma}

 \begin{proof}

To prove the above lemma   we need to estimate  $\sum_{\alpha} g'^{\alpha\bar{\alpha}}|c_{\alpha}|$ on the right hand side of (\ref{tavval}). By using lemma (\ref{lem260}) we can derive upper estimate for $\sum_{\alpha}  g'^{\alpha\bar{\alpha}}|c_{\alpha}|$ where $c_{\alpha}$ is defined by (\ref{calfa}):

\begin{equation}\label{calfa}
c_{\alpha}=\frac{-C_{\ref{no2}}\eta^2 \bigg [\sum_{ijk}|I_{ijk\alpha}|^2 \bigg ]+2(\epsilon+\eta\Psi) |\eta_{\bar{\alpha}} |\bigg [ \sum_{ijk}|I_{ijk\alpha}|\bigg ]+C_{\ref{no2}}|\eta_{\bar{\alpha}}|^2\Psi ^2}{(\eta\Psi+\epsilon)^2}
\end{equation}

\begin{equation}\label{tdoyom}
\begin{split}
\sum_{\alpha} g'^{\alpha\bar{\alpha}}&|c_{\alpha}|\leq  \frac{C_{\ref{tdoyom}}\eta^2 \Psi^2\times \bigg ( \sum_{p,q,l} g'^{p\bar{p}} g'^{q\bar{q}}  \phi_{p\bar{q}\bar{l}}\phi_{q\bar{p}l}\bigg  ) }{(\eta\Psi+\epsilon)^2}\\
\quad&+ \frac{2C_{\ref{tdoyom}}'(\epsilon+\eta\Psi) \bigg (\sum_{\alpha\neq n}g'^{\alpha\bar{\alpha}} |\eta_{\bar{\alpha}}|^2\bigg )^{1/2}\Psi\times \bigg (\sum_{p,q,l} g'^{p\bar{p}}g'^{q\bar{q}} \phi_{p\bar{q}\bar{l}}\phi_{q\bar{p}l} \bigg ) ^{1/2} }{(\eta\Psi+\epsilon)^2}\\
\quad &   \frac{2C_{\ref{tdoyom}}''(\epsilon+\eta\Psi) \bigg ( g'^{n\bar{n}} |\eta_{\bar{n}}|^2\bigg )^{1/2}(\Psi )^{1/2}\times \bigg (\sum_{p,q,l} g'^{p\bar{p}}  g'^{p\bar{p}} \phi_{p\bar{q}\bar{l}}\phi_{q\bar{p}l} \bigg )  }{(\eta\Psi+\epsilon)^2}\\
\quad &+ \sum_{\alpha}C_{\ref{no2}} \frac{ g'^{\alpha\bar{\alpha}}|\eta_{\bar{\alpha}}|^2\Psi^2}{(\eta\Psi+\epsilon)^2} \bigg ]\\
\quad
\end{split}
\end{equation}

Where $C_{\ref{tdoyom}}=\frac{\tilde{C}_{\ref{tdoyom}}}{(\mathscr{M}')^2}$ ,   $C'_{\ref{tdoyom}}=\frac{\tilde{C}'_{\ref{tdoyom}}}{\mathscr{M}'}$ and  $C''_{\ref{tdoyom}}=\frac{\tilde{C}''_{\ref{tdoyom}}}{(\mathscr{M}')^2}$  where $\tilde{C}_{\ref{tdoyom}}$,  $\tilde{C}'_{\ref{tdoyom}}$  and $\tilde{C}''_{\ref{tdoyom}}$  depend on $n$. Here for the first two lines we are applying the inequality (\ref{lem262}) and for the third line we use inequality $\ref{lem263}$.

Hence to prove the above lemma it suffices to verify the following two inequalities

\begin{equation}\label{kappa1}
\frac{\eta ^2 \Psi ^2}{(\eta \Psi + \epsilon )^2}\leq \frac{\chi^{2\tau} (\|\eta_{1-\tau} \Psi\|_{\infty})^2}{ \kappa^2 (\|\eta_{1-\tau} \Psi\|_{\infty})^2}\leq \frac{\chi ^{2\tau}}{(\kappa )^2}
\end{equation}



and 
 
\begin{equation}\label{hend}
\begin{split}
( g'^{n\bar{n}} &|\eta_{z}|^2)^{1/2} \Psi^{1/2}=[\eta_{1-\tau}\Psi]^{1/2}\chi^{\frac{\tau-1}{2}}|S|^{-2}\bigg [ g'^{n\bar{n}} \bigg |\chi_{z}|S|^4 +\chi (|S|^4)_{z}\bigg |^2\bigg ]^{1/2}\\
\quad &\leq [\eta_{1-\tau}\Psi]^{1/2}\chi^{\frac{\tau-1}{2}}|S|^{-2}\times
\bigg  [\mathscr{M}'_1\frac{ \bigg |\chi_{z}|S|^4 +\chi (|S|^4)_{z}\bigg |^2}{|S|^2} \bigg ]^{1/2}\\
\quad &\leq  4[\eta_{1-\tau}\Psi]^{1/2}\chi^{\frac{1+\tau}{2}}\times \sqrt{\mathscr{M}'_1}(|S|_z) +  [\eta_{1-\tau}\Psi]^{1/2}\chi^{\frac{1+\tau}{2}}\times \sqrt{\mathscr{M}'_1}|\chi|_z| |S|  \\ 
 \quad & \leq C_{\ref{hend} } [\eta_{1-\tau}\Psi]^{1/2}\chi^{\frac{1+\tau}{2}}
\end{split}
\end{equation}

Here we are using the inequality $g'^{n\bar{n}} |S|^2\leq \mathscr{M}' _1$ deduced in lemma (\ref{m'm'1}) in appendix (\ref{app7}).
In $\chi_z$ a  term $\frac{1}{\delta}$  is obtained   which is controlled by  $|S|$. Hence the constant $C_{\ref{hend} }$ only depends on $G$ and $g_{reg}$. 

\begin{equation}\label{hend2}
\begin{split}
(\sum_{\alpha\neq n} g'^{\alpha\bar{\alpha}} &|\eta_{\alpha}|^2)^{1/2} \Psi=[\eta_{1-\tau}\Psi]\chi^{\tau}|S|^{-4}\bigg [\sum_{\alpha\neq n} g'^{\alpha\bar{\alpha}} |[\chi_{\alpha}|S|^4 +\chi (|S|^4)_{\alpha}]|^2\bigg ]^{1/2}\\
\quad &\leq [\eta_{1-\tau}\Psi]\chi^{\tau}|S|^{-4}\times \bigg  [\frac{1}{\mathscr{M}'}\sum_{\alpha\neq n} \bigg | [\chi_{\alpha}|S|^4 +\chi (|S|^4)_{\alpha}]\bigg |^2\bigg ]^{1/2}\\
\quad &\leq   [\eta_{1-\tau}\Psi]\chi^{\tau}|S|^{-4}\times \bigg [ \frac{1}{\sqrt{\mathscr{M}'}}\sum_{\alpha\neq n} \bigg |\chi_{\alpha}|S|^4 +\chi (|S|^4)_{\alpha} \bigg|  \bigg ] \\
\quad & \leq \bigg (\frac{C_{\ref{hend}}}{\delta\sqrt{\mathscr{M}'}}) \bigg )[\eta_{1-\tau}\Psi](\chi)^{\tau}
\end{split}
\end{equation}

 Here we apply the inequality $g'^{\alpha\bar{\alpha}}\leq \mathscr{M}' $fo $\alpha\neq n$  deduced from  lemma (\ref{m'm'1}) in appendix (\ref{app7}). The constant $C_{\ref{hend}}$ epends on the hermitian metric over the line bundle $L=[D]$.

    From the inequalities (\ref{kappa1}), (\ref{hend})  and (\ref{hend2})  it follows that 

\begin{equation}\label{epsi11}
\begin{split}
\mathcal{E}_1\geq & -\frac{C_{\ref{epsi11}}}{(\mathscr{M}' )^2}\big ( \frac{\chi^{2\tau}}{\kappa ^2}+\frac{\chi^{\frac{1+\tau}{2}}}{\kappa(\|\eta_{1-\tau}\Psi\|_{\infty})^{1/2}} \big ) \bigg [ \sum_{p,q,l} g'^{p\bar{p}}g'^{q\bar{q}} (\phi)_{p\bar{q}\bar{l}}(\phi)_{q\bar{p}l}\bigg ]\\
\quad &-\frac{C_{\ref{epsi11}}}{\kappa} \frac{\chi^{\tau}}{\delta\sqrt{\mathscr{M}'}}  \bigg [ \sum_{p,q,l} g'^{p\bar{p}}g'^{q\bar{q}} (\phi)_{p\bar{q}\bar{l}}(\phi)_{q\bar{p}l}\bigg ]^{1/2} \\
\quad &+C_{\ref{no2}}\sum_{\alpha}\frac{ g'^{\alpha\bar{\alpha}}|\eta_{\bar{\alpha}}|^2\Psi^2}{(\eta\Psi+\epsilon)^2}
 - \sum_{\alpha}g'^{\alpha\bar{\alpha}}\frac{|\eta_{\alpha}|^2\Psi}{2\eta [\epsilon-(2C_{\ref{no2}}-1)\eta\Psi]}
\end{split}
\end{equation}

for some constants $C_{\ref{epsi11}}$    which  only depend on $g_{reg}$ and the function $G$ on the r.h.s. of (\ref{ma3rd}).  

If $\tau<\frac{1}{2}$ we have $\chi^{\tau}\geq \chi^{2\tau}\geq \chi^{\frac{1+\tau}{2}}$ so

\[
\begin{split}
-\frac{C_{\ref{epsi11}}}{(\mathscr{M}' )^2}\big ( \frac{\chi^{2\tau}}{\kappa ^2}+\frac{\chi^{\frac{1+\tau}{2}}}{\kappa(\|\eta_{1-\tau}\Psi\|_{\infty})^{1/2}} \big )&\geq  -\frac{C_{\ref{epsi11}}\chi^{\tau}}{(\mathscr{M}' )^2}\big ( \frac{1}{\kappa ^2}+\frac{1}{\kappa(\|\eta_{1-\tau}\Psi\|_{\infty})^{1/2}} \big )\\
\quad & =\frac{C'_{\ref{epsi11}}\chi^{\tau}}{\kappa^2}
\end{split}
\]

So if we set 
\begin{equation}\label{c'220}
C'_{\ref{epsi11}}= -\frac{C_{\ref{epsi11}}}{(\mathscr{M}' )^2}\big ( 1+\frac{\kappa}{(\|\eta_{1-\tau}\Psi\|_{\infty})^{1/2}} \big )
\end{equation}

and

\begin{equation}\label{c''220}
C''_{\ref{epsi11}}=-\frac{C_{\ref{epsi11}}}{\delta\sqrt{\mathscr{M}'}}
\end{equation}

So the proof of lemma is complete.

\end{proof}

 We want also to show that the last (negative term ) in (\ref{epsi11})   can be controlled  by  the term  
\[
\frac{(\Delta_{g'} \eta) (\sum |\phi _{i\bar{j}k}|^2 g'^{i\bar{i}}g'^{j\bar{j}}g'^{k\bar{k}}) }{\eta\Psi +\epsilon} 
\]
in  $\mathcal{E}_2$ defined by (\ref{mce2}).

\begin{lemma}\label{lem19n}
 We have
\begin{equation}\label{lem20nn}
\frac{(\Delta_{g'} \eta) (\sum |\phi _{i\bar{j}k}|^2 g'^{i\bar{i}}g'^{j\bar{j}}g'^{k\bar{k}}) }{\eta\Psi +\epsilon}  - \sum_{\alpha}g'^{\alpha\bar{\alpha}}\frac{|\eta_{\alpha}|^2\Psi}{2\eta [\epsilon-(2C_{\ref{no2}}-1)\eta\Psi]}\geq \frac{C_{\ref{lem20nn}} \chi^{\tau} }{\kappa|S|^4}+ \frac{\tilde{C}_{\ref{lem20nn}}}{\kappa }\chi^{\tau/2}
\end{equation}

 where $C_{\ref{lem20nn}}>0$  is a positive  constant and $C_{\ref{lem20nn}}$ and $\tilde{C}_{\ref{lem20nn}}$  depend on $\mathscr{M}'$, $G$ , $\delta$ and $g_{reg}$.
\end{lemma}

\begin{proof}
Here we work in the coordinates system described below the relation (\ref{chitild}) over $\supp \chi$.
In this  coordinates system  we can represent $S$ in the form  $S= azdw_1\wedge ...\wedge dw_{n-1}\wedge dz$ for some constant $|a|=O(\sin \theta_n (p))$ where $\theta_n (p)$ is the angel with respect to $g_{reg}$ between $V_n (p)$ and $D$.
 Thus  if we asssume that $\chi = e^u$  then we have $\eta=\chi |S|^4=a^4|z|^4 e^{4h+u}$ where $e^h$  represents the  hermitian metric on $K_X$ in local coordinates.
 
Derivating with respect to $V_{\alpha}$  for $\alpha=1,...,n-1$ at  $x$ in the coordinates compatible with (\ref{eqlits}) we obtain  $|\eta_{\alpha}|^2= |4h_{\alpha}+ u_{\alpha} |^2\eta ^2$
and    $\eta_{\alpha\bar{\alpha}}= (4h_{\alpha\bar{\alpha}} +u_{\alpha\bar{\alpha}})\eta+ |4h_{\alpha}+u_{\alpha}|^2 \eta$. 
Here by $f_{\alpha\bar{\alpha}}$ for a function map we mean $V_{\bar{\alpha}}. V_{\alpha}.f$

\[
\begin{split}
-\frac{g'^{\alpha\bar{\alpha}}|\eta_{\alpha}|^2\Psi}{2\eta [\epsilon-(2C_{\ref{no2}}-1)\eta\Psi]} &+\frac{g'^{\alpha\bar{\alpha}}\eta_{\alpha\bar{\alpha}} \Psi}{\eta\Psi+\epsilon}=\\
\quad & g'^{\alpha\bar{\alpha}} |4h_{\alpha}+u_{\alpha}|^2   (-\frac{\eta \Psi}{2[\epsilon-(2C_{\ref{no2}}-1)\eta\Psi]} +\frac{\eta\Psi}{\epsilon + \eta \Psi})+\\
\quad & +  (g'^{\alpha\bar{\alpha}}(4h_{\alpha\bar{\alpha}}+ u_{\alpha\bar{\alpha}})\frac{\eta \Psi}{\eta \Psi + \epsilon}\\
\quad & \geq 
 g'^{\alpha\bar{\alpha}} |4h_{\alpha}+u_{\alpha}|^2  (\chi)^{\tau}  (-\frac{\eta_{1-\tau} \Psi}{2[\epsilon-(2C_{\ref{no2}}-1)\eta\Psi]} +\frac{\eta_{1-\tau}\Psi}{\epsilon + \eta \Psi})+\\
\quad & -  g'^{\alpha\bar{\alpha}} |4h_{\alpha\bar{\alpha}}+ u_{\alpha\bar{\alpha}}|(\chi)^{\tau}\frac{\eta_{1-\tau} \Psi}{\eta \Psi + \epsilon}\\
\end{split}
\]

Since the bump function $\chi$ can be constructed such that 
$|u_{\alpha}|^{a}(\chi)^{\tau/2}$ and  $|u_{\alpha\bar{\alpha}}|^b (\chi)^{\tau/2}$ for any $a$ and $b$ have an upper bound which only depend on $\delta$.  thus since
$\epsilon = \kappa \|\eta_{1-\tau}\Psi\|_{\infty}$ we obtain

\begin{equation}
-\frac{g'^{\alpha\bar{\alpha}}|\eta_{\alpha}|^2\Psi}{2\eta [\epsilon-(2C_{\ref{no2}}-1)\eta\Psi]} +\frac{g'^{\alpha\bar{\alpha}}\eta_{\alpha\bar{\alpha}} \Psi}{\eta\Psi+\epsilon}\geq \frac{C_{\ref{lem20nn}}\chi^{\tau/2}}{\delta^2\kappa\mathscr{M}'}
\end{equation}
 where $C_{\ref{lem20nn}}$ depends on  $g_{reg}$.

For $\alpha=n$ we first show   for $\kappa $ large enough and for $|z|<\delta r_0$    the following  inequality holds where $r_0$ only depend on $g_{reg}$:

\begin{equation}\label{epsi2}
\frac{2}{3}\times \frac{g'^{n\bar{n}}\eta_{z\bar{z}} \Psi}{\eta\Psi+\epsilon}>\frac{g'^{n\bar{n}}|\eta_{z}|^2\Psi}{2\eta [\epsilon- (2C_{\ref{no2}}-1)\eta\Psi]}
\end{equation}

Equivalently we must have

\begin{equation}\label{34om}
\frac{[\epsilon- (2C_{\ref{no2}}-1)\eta\Psi]}{\eta\Psi+\epsilon}> \frac{3|\eta_z|^2}{4\eta\eta_{z\bar{z}}}
\end{equation}

If we assume that $\chi= e^u$ using the definition $\eta= \chi |z|^4 e^{4h}$ we  have
\[
\begin{split}
\eta_z &= e^{4h}( 2z\bar{z}^2\chi + 4h_z |z|^4 \chi+ |z|^4 \chi_{z})\\
\quad & = e^{4h}\chi (2z\bar{z}^2 +4h_z |z|^4 + |z|^4 u_z)\\
\end{split}
\]

 and

\begin{equation}\label{etazbarz}
\begin{split}
\eta_{z\bar{z}}=& e^{4h}4h_{\bar{z}} ( 2z\bar{z}^2\chi + 4h_z |z|^4 \chi+ |z|^4 \chi_{z})\\
\quad & +e^{4h} (4z\bar{z} \chi+ 2z \bar{z}^2 \chi_{\bar{z}} +4h_{z\bar{z}} |z|^4 \chi+ 8h_z z^2\bar{z} \chi + 4h_z |z|^4 \chi_{\bar{z}}+ \\
\quad &  2\bar{z}z^2 \chi_{z} + |z|^4 \chi_{z\bar{z}})
\end{split}
\end{equation}
The lowest order term in the above expression for $\eta_z$ and $\eta_{z\bar{z}}$ respectively are  $2e^{4h}\chi z\bar{z}^2$ and 
$4e^{4h}\chi z\bar{z}  $.  We have   $\lim_{z\rightarrow 0 } \frac{3|\eta_z |^2}{4\eta\eta_{z\bar{z}}}= \frac{3}{4}$.  More precisely we have

\[
\frac{|\eta_z|^2}{|\eta\eta_{z\bar{z}}|}=\frac{e^{8h}\bigg [ 4\chi^2 |z|^6+ |z|^6\chi(2\frac{\bar{z}}{\delta} \tilde{\chi}_{z}+2\frac{z}{\delta}\tilde{\chi}_{\bar{z}} )+|z|^6 \frac{|z|^2}{\delta ^2} |\tilde{\chi}_z|^2\bigg ] +f_1}{e^{8h} \bigg [ 4\chi^2 |z|^6+|z|^6\chi(2\frac{\bar{z}}{\delta} \tilde{\chi}_{z}+2\frac{z}{\delta}\tilde{\chi}_{\bar{z}} )+\chi\tilde{\chi}_{z\bar{z}}|z|^6 \frac{|z|^2}{\delta^2}\bigg ] +f_2 }
\]
 
where
\[
f_1 = e^{8h}\bigg[ 8\chi^2  |z|^6 (\bar{z}h_{\bar{z}})+8\chi^2  |z|^6 (zh_{z})+4h_z \chi\chi_{\bar{z}}|z|^8 + 4h_{\bar{z}} \chi\chi_{z}|z|^8+16 |h_z|^2\chi^2 |z|^8\bigg]
\]

\[
f_2=e^{8h}\bigg [ 8\chi^2 h_{\bar{z}}\bar{z}|z|^6+8 \chi^2 h_{z}z|z|^6+ 16|h_z|^2 |z|^8 \chi^2+ 4 h_{\bar{z}} \chi_z \chi|z|^8 +    4 h_{z} \chi_{\bar{z}} \chi|z|^8 +4h_{z\bar{z}} |z|^8 \chi^2\bigg ]
\]

Therefore it can be seen that for any $r_1>0$ there exists $r_0>0$ such that if $\frac{|z|}{\delta}<r_0 $ then $|\frac{|\eta_z|^2}{|\eta\eta_{z\bar{z}}|}-1|<r_1$ and the value of $r_0$ only depends on $r_1$ and hermtian metric on $L$. If $r_1$ is so small  that $\frac{3(1+r_1)}{4} <1$ then  the right hand side of (\ref{34om}) will become smaller than 1.

   On the other hand we have  $\lim_{\kappa\rightarrow \infty} \frac{\epsilon - (2C_{\ref{no2}}-1)\eta \Psi}{\eta \Psi + \epsilon} =1$, from which we conclude that for $\kappa $ large enough and for $|z|<\delta r_0$    the inequality (\ref{34om}) will hold.
In this case we can also deduce from (\ref{epsi2}) that

\begin{equation}\label{paka}
\frac{g'^{n\bar{n}}\eta_{z\bar{z}} \Psi}{\eta\Psi+\epsilon}-\frac{g'^{n\bar{n}}|\eta_{z}|^2\Psi}{2\eta [\epsilon- (2C_{\ref{no2}}-1)\eta\Psi]}\geq  \frac{1}{3}\frac{g'^{n\bar{n}}\eta_{z\bar{z}} \Psi}{\eta\Psi+\epsilon}
\end{equation}

It can be seen that the dominant term in $\frac{1}{3}\frac{g'^{n\bar{n}}\eta_{z\bar{z}} \Psi}{\eta\Psi+\epsilon}$ is $\frac{1}{|z|^4}$, therefore 

\begin{equation}\label{paka2}
\frac{g'^{n\bar{n}}\eta_{z\bar{z}} \Psi}{\eta\Psi+\epsilon}-\frac{g'^{n\bar{n}}|\eta_{z}|^2\Psi}{2\eta [\epsilon- (2C_{\ref{no2}}-1)\eta\Psi]}\geq   \frac{C_{\ref{paka2}}\chi ^{\tau}}{\kappa|S|^4}
\end{equation}

for $|S|<r$ where d $C_{\ref{paka2}}>0$ is a positive constant and $r$ and $C_{\ref{paka2}}$  depend on $g_{reg}$, $G$, $\delta$ and $\mathscr{M}'$.
 We can also choose $r_0$ such that $\eta_{z\bar{z}}>0$ for $|z|<\delta r_0$ hence the positivity

 \begin{equation}\label{paka}
\frac{g'^{n\bar{n}}\eta_{z\bar{z}} \Psi}{\eta\Psi+\epsilon}-\frac{g'^{n\bar{n}}|\eta_{z}|^2\Psi}{2\eta [\epsilon- (2C_{\ref{no2}}-1)\eta\Psi]}\geq 0
\end{equation}

Now if $|z|>r_0 \delta$ we can derive the following estimation

\begin{equation}\label{grand}
\begin{split}
\frac{g'^{n\bar{n}}\eta_{z\bar{z}} \Psi}{\eta\Psi+\epsilon}&-\frac{g'^{n\bar{n}}|\eta_{z}|^2\Psi}{2\eta [\epsilon- (2C_{\ref{no2}}-1)\eta\Psi]}\geq\\
\quad &  \frac{\eta_{z\bar{z}} \Psi e^{-G} (\mathscr{M}')^{2n-2}}{|S|^2(\eta\Psi+\epsilon)}-\frac{\mathscr{M}' _1|\eta_{z}|^2\Psi}{2 |S|^2\eta [\epsilon- (2C_{\ref{no2}}-1)\eta\Psi]}\\
\quad & \geq \frac{C_{\ref{grand}}}{\kappa \delta^4}\chi^{\tau}
\end{split}
\end{equation}

where $C_{\ref{grand}}$  constant and depends on $G$  and $g_{reg}$ and $\mathscr{M}' $.

 \end{proof}

 We  also have 

\begin{equation}\label{re205}
C_{\ref{no2}}\sum_{\alpha}\frac{ g'^{\alpha\bar{\alpha}}|\eta_{\bar{\alpha}}|^2\Psi^2}{(\eta\Psi+\epsilon)^2}\geq 0
\end{equation}

and  since $\Delta_{g'}  (\log (\eta\Psi+\epsilon))=\mathcal{E}_1+\mathcal{E}_2$ from lemma (\ref{cor4}) and (\ref{lem19n}) and relations (\ref{re205})  we obtain,
\begin{equation}\label{epsi1}
\begin{split}
\Delta_{g'}  (\log (\eta\Psi+\epsilon))\geq &   -\frac{C_{\ref{epsi11}}}{(\mathscr{M}' )^2}\big ( \frac{\chi^{2\tau}}{\kappa ^2}+\frac{\chi^{\frac{1+\tau}{2}}}{\kappa(\|\eta_{1-\tau}\Psi\|_{\infty})^{1/2}} \big ) \bigg [ \sum_{p,q,l} g'^{p\bar{p}}g'^{q\bar{q}} (\phi)_{p\bar{q}\bar{l}}(\phi)_{q\bar{p}l}\bigg ]\\
\quad &-\frac{C_{\ref{epsi11}}}{\kappa} \frac{\chi^{\tau}}{\delta\sqrt{\mathscr{M}'}}  \bigg [ \sum_{p,q,l} g'^{p\bar{p}}g'^{q\bar{q}} (\phi)_{p\bar{q}\bar{l}}(\phi)_{q\bar{p}l}\bigg ]^{1/2} + \frac{C_{\ref{lem20nn}} \chi^{\tau} }{\kappa|S|^4}+ \frac{\tilde{C}_{\ref{lem20nn}}}{\kappa }\chi^{\tau/2}+\frac{\eta R }{\eta \Psi+\epsilon}\\
\end{split}
\end{equation}



\begin{cor}\label{corsup}   

\begin{equation}\label{corsup1}
\begin{split}
\Delta_{g'}  (\log (\eta\Psi+\epsilon))\geq &  \frac{C'_{\ref{epsi11}}\chi^{\tau}}{\kappa^2}  \bigg [ \sum_{p,q,l} g'^{p\bar{p}}g'^{q\bar{q}} (\phi)_{p\bar{q}\bar{l}}(\phi)_{q\bar{p}l}\bigg ]\\
\quad &\frac{C''_{\ref{epsi11}}\chi ^{\tau}}{\kappa}   \bigg [ \sum_{p,q,l} g'^{p\bar{p}}g'^{q\bar{q}} (\phi)_{p\bar{q}\bar{l}}(\phi)_{q\bar{p}l}\bigg ]^{1/2} \\
\quad &   \frac{C_{\ref{lem20nn}} \chi^{\tau} }{\kappa|S|^4}+ \frac{C_{\ref{corsup1}}}{\kappa }\chi^{\tau/2}
\end{split}
\end{equation}

 where $C_{\ref{lem20nn}} $ is a positive constant and all the constants  $C'_{\ref{epsi11}}$,  $C''_{\ref{epsi11}}$  $C_{\ref{lem20nn}}$ and $C_{\ref{corsup1}}$  depend on $G$ and $g_{reg}$, $\mathscr{M}'$ and $\delta$. (Here the term  $ \frac{C_{\ref{corsup1}}}{\kappa }\chi^{\tau/2}$ is obtained from  $\frac{\eta R }{\eta \Psi+\epsilon}$ and $\frac{\tilde{C}_{\ref{lem20nn}}}{\kappa }\chi^{\tau/2}$ in the relation (\ref{lem20nn})
\end{cor}




According to (2.10) in \cite{y} we have
\begin{equation}\label{yau10}
\begin{split}
\Delta_{g'} (\Delta_{reg} (\phi) ) &\geq \Delta_{reg} F + (\sum_{p,q,l} g'^{p\bar{p}}g'^{q\bar{q}} |\phi_{p\bar{q}\bar{l}} |^2+
(\inf_{i\neq l} R_{i\bar{i}l\bar{l}})[\sum_{i,l} \frac{g'_{i\bar{i}}}{g'_{l\bar{l}}}-n^2]\\
\end{split}
\end{equation}


\begin{equation}\label{eq233}
\begin{split}
\Delta_{g'} (\chi^{\tau} \Delta_{reg} \phi)&= \chi^{\tau} \Delta_{g'} (\Delta_{reg} \phi)+ (\Delta_{g'} (\chi)^{\tau})(\Delta_{reg}\phi)+ g'^{i\bar{i}}(\chi ^{\tau} )_i  (\Delta_{reg}\phi)_{\bar{i}}\\
\quad & =  \chi^{\tau} \Delta_{g'} (\Delta_{reg} \phi)+ (\Delta_{g'} (\chi^{\tau}))(\Delta_{reg}\phi)+ \sum_{i,\alpha}g'^{i\bar{i}}(\chi^{\tau})_i  (\phi)_{\alpha\bar{\alpha}\bar{i}}\\
\quad & \geq \chi^{\tau} \bigg (\Delta_{reg} F + \sum_{p,q,l} g'^{p\bar{p}} g'^{q\bar{q}} |\phi_{p\bar{q}\bar{l}} |^2+
 \frac{1}{2} \sum_{i,l}\big (\inf_{i\neq l}  R_{i\bar{i}l\bar{l}}\big ) \big [ \frac{g'_{i\bar{i}}}{g'_{l\bar{l}}}\big ] \bigg )\\
\quad & +  (\Delta_{g'} (\chi^{\tau}))(\Delta_{reg}\phi)\\
\quad & +  \sum_{i,\alpha}g'^{i\bar{i}}(\chi ^{\tau})_i  (\phi)_{\alpha\bar{\alpha}\bar{i}}
\end{split}
\end{equation}

From lemma  (\ref{m'm'1}) in the appendix (\ref{app7}) and  proposition (\ref{secondorder})  we know that
\begin{equation}\label{rel234}
\frac{g'_{\alpha\bar{\alpha}}}{g'_{z\bar{z}}}\leq\frac{C_{\ref{secondorder1}} \mathscr{M}' _1}{|S|^2} 
 \end{equation}
  and  for $i\neq z$. 

\begin{equation}\label{rel235}
\frac{g'_{\alpha\bar{\alpha}}}{g'_{i\bar{i}}}\leq \frac{C_{\ref{secondorder1}}}{ (\mathscr{M}')^2}
 \end{equation}

 Therefore  we have

 \begin{equation}
\begin{split}
\sum_{i,l} \bigg [ \frac{g'_{i\bar{i}}}{g'_{l\bar{l}}} \bigg ]&= \sum_{i,l\neq z}  \bigg [\frac{g'_{i\bar{i}}}{g'_{l\bar{l}}}\bigg ]+\sum_{i\neq n} \bigg [\frac{g'_{i\bar{i}}}{g'_{z\bar{z}}}\bigg ]\\ 
\quad & \leq  (n-1)^2 \frac{C_{\ref{secondorder1}}}{\mathscr{M}' } +  (n-1) \frac{C_{\ref{secondorder1}}\mathscr{M}' _1}{|S|^2}\\
\quad & 
\end{split}
\end{equation}

 So 

\begin{equation}\label{riill}
|(\inf_{i\neq l} R_{i\bar{i}l\bar{l}})[\sum_{i,l} \frac{g'_{i\bar{i}}}{g'_{l\bar{l}}}-n^2]|\leq C_{\ref{riill}} (\frac{ 1}{|S|^2}+\frac{1}{(\mathscr{M}')^2})
\end{equation}

for a constant    $C_{\ref{riill}}$ which  only depends on $G$ and $g_{reg}$. In addition  $| \Delta_{g'} (\chi ^{\tau}) |= \sum g'^{\alpha\bar{\alpha}} [\chi^{\tau}]_{\alpha\bar{\alpha}}$.
If we assume that $\chi = e^{u}$ for some smooth map $u$ then we obtain
\[
\frac{(\chi ^{\tau})_{z\bar{z}}}{\chi^{\tau}}= \frac{\tau}{2}u_{z\bar{z}}+\frac{\tau}{4}|u_{z}|^2
\]

so according to relation (\ref{ttkmin}) in the appendix (\ref{app6}) we have
\begin{equation}\label{chizz}
g'^{z\bar{z}}(\chi^{\tau}) _{z\bar{z}}\leq   \mathscr{M}' _1\chi^{\tau}  \frac{ \tau u_{z\bar{z}}+\tau  |u_{z}|^2}{ |S|^2}\leq \frac{C_{\ref{chizz}}\tau \chi^{\tau/2}}{\delta^2  |S|^2}
\end{equation}

 Similarly  for $\alpha\neq z$ 
\begin{equation}\label{eq219}
g'^{\alpha\bar{\alpha}}\chi_{\alpha\bar{\alpha}}\leq  \frac{C_{\ref{eq219}} \tau\chi^{\tau/2}}{(\mathscr{M}')^2\delta ^2}
\end{equation}

where the  constants $C_{\ref{eq219}}$ and $C_{\ref{chizz}}$  depend on $g_{reg}$ and $G$.

  From  (\ref{chizz}) and  (\ref{eq219}) we conclude that

\begin{equation}\label{delgc}
| \Delta_{g'} (\chi ^{\tau}) |\leq  C_{\ref{delgc}}\big ( \frac{1} {\delta ^2 |S|^2}+ \frac{1}{(\mathscr{M}')^2\delta ^2}\big )\tau \chi^{\tau /2}
\end{equation}

for some constant $C_{\ref{delgc}}$ which depends  $g_{reg}$ and $G$. Therefore,
\begin{lemma}
\begin{equation}\label{c'''10}
\begin{split}
\chi^{\tau}(\inf_{i\neq l} R_{i\bar{i}l\bar{l}})[\sum_{i,l} \frac{g'_{i\bar{i}}}{g'_{l\bar{l}}}-n^2])+ &(\Delta_{g'} (\chi)^{\tau})(\Delta_{reg}\phi)+\chi^{\tau} \Delta_{reg} F \geq   C_{\ref{c'''10}}\chi^{\tau/2}\big ( \frac{\tau} {\delta ^2 |S|^2}+ \frac{\tau}{(\mathscr{M}')^2\delta ^2}\big )+\\
\quad & C_{\ref{c'''10}}\chi^{\tau/2}\big ( \frac{1} { |S|^2}+ \frac{1}{(\mathscr{M}')^2}\big )
\end{split}
\end{equation}

where $C_{\ref{c'''10}}< -\max\{C_{\ref{delgc}}, C_{\ref{riill}}\}$ only depends on $G$ and $g_{reg}$. Also the lower bound for $\chi^{\tau} \Delta_{reg} F$ is integrated in this constant.  .

  \end{lemma}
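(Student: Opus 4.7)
The plan is to lower-bound each of the three summands on the left-hand side separately, using estimates already derived immediately above. Specifically, the first summand is controlled by the pointwise bound (\ref{riill}) on the curvature contraction; the cross term $(\Delta_{g'}\chi^{\tau})(\Delta_{reg}\phi)$ is controlled by combining (\ref{delgc}) with the uniform bound $|\Delta_{reg}\phi|\leq C_{\ref{secondorder1}}$ from Proposition \ref{secondorder}; and the final term $\chi^{\tau}\Delta_{reg}F$ is trivially bounded because the hypothesis $F\in C^{3}(X)$ forces $\|\Delta_{reg}F\|_{\infty}$ to be finite and to depend only on $g_{reg}$ and $G$.

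For the first term I would multiply (\ref{riill}) by $\chi^{\tau}$ and use $0\leq \chi\leq 1$ (so that $\chi^{\tau}\leq \chi^{\tau/2}$) to obtain a lower bound of the form
\[
\chi^{\tau}\bigl(\inf_{i\neq l}R_{i\bar{i}l\bar{l}}\bigr)\Bigl[\sum_{i,l}\tfrac{1+\phi_{i\bar{i}}}{1+\phi_{l\bar{l}}}-n^{2}\Bigr]\geq -C_{\ref{riill}}\,\chi^{\tau/2}\bigl(\tfrac{1}{|S|^{2}}+\tfrac{1}{(\mathscr{M}')^{2}}\bigr).
\]
For the cross term I would simply multiply the bound (\ref{delgc}) by $C_{\ref{secondorder1}}$, giving
\[
(\Delta_{g'}\chi^{\tau})(\Delta_{reg}\phi)\geq -C_{\ref{delgc}}C_{\ref{secondorder1}}\,\tau\chi^{\tau/2}\bigl(\tfrac{1}{\delta^{2}|S|^{2}}+\tfrac{1}{(\mathscr{M}')^{2}\delta^{2}}\bigr).
\]
For the third term I would use $\chi^{\tau}\Delta_{reg}F\geq -\|\Delta_{reg}F\|_{\infty}\chi^{\tau/2}$, and then absorb the finite factor $\|\Delta_{reg}F\|_{\infty}$ into either of the two $\chi^{\tau/2}$-coefficients already appearing on the right-hand side.

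Adding these three lower bounds yields the claim, with the constant $C_{\ref{c'''10}}$ taken as the negative of the maximum of the constants from (\ref{riill}), (\ref{delgc}) (suitably enlarged by the factor $C_{\ref{secondorder1}}$), and $\|\Delta_{reg}F\|_{\infty}$. Each of these quantities depends only on $g_{reg}$ and $G$, so the same is true of $C_{\ref{c'''10}}$. There is no substantial obstacle here: the lemma is essentially a bookkeeping consolidation of the three preceding pointwise estimates. The only subtlety worth flagging is careful attention to signs, since the individual terms on the left-hand side can each have either sign, and the content of the inequality is that their sum is bounded below by an explicit negative multiple of the singular expression $\chi^{\tau/2}(|S|^{-2}+(\mathscr{M}')^{-2})+\tau\chi^{\tau/2}(\delta^{-2}|S|^{-2}+(\mathscr{M}')^{-2}\delta^{-2})$; this explains the definition $C_{\ref{c'''10}}=-\max\{C_{\ref{delgc}},C_{\ref{riill}}\}$ (after the enlargement described above).
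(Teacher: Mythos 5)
Your proposal is correct and is essentially the argument the paper intends: the lemma is stated right after (\ref{riill}) and (\ref{delgc}) with no further proof, so it is exactly the bookkeeping consolidation you describe, using the pointwise curvature bound, the cut-off Laplacian bound, the second-order estimate $0\leq n+\Delta_{reg}\phi\leq C_{\ref{secondorder1}}$, and the boundedness of $\Delta_{reg}F$ coming from $G\in C^{3}(X)$, together with $\chi^{\tau}\leq\chi^{\tau/2}$. Your remark that the constant must in fact be enlarged by the factor $C_{\ref{secondorder1}}$ and by $\|\Delta_{reg}F\|_{\infty}$ (rather than literally being $-\max\{C_{\ref{delgc}},C_{\ref{riill}}\}$) is accurate and harmless, since the enlarged constant still depends only on $g_{reg}$ and $G$.
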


 By applying (\ref{rel234})  and (\ref{rel235}) we can deduce that

\begin{equation}\label{b220}
\begin{split}
| \sum_{i,\alpha}g'^{i\bar{i}}(\chi ^{\tau}) _i  (\phi)_{\alpha\bar{\alpha}\bar{i}}|&=  \sum_{i,\alpha}(g'^{i\bar{i}})^{1/2}(\chi ^{\tau})_i    (g'^{i\bar{i}})^{1/2}(\phi)_{\alpha\bar{\alpha}\bar{i}}\\
\quad & \leq  B_{\ref{b220}}(\frac{1}{|S|}+\frac{1}{\mathscr{M}'})\tau \chi^{\tau/2} (\sum_{p,q,l} g'^{p\bar{p}}g'^{q\bar{q}} |\phi_{p\bar{q}\bar{l}} |^2)^{1/2}
\end{split}
\end{equation}

for some constant $B_{\ref{b220}}$ which depends on $g_{reg}$ and $G$.

From lemma (\ref{c'''10}), relations  (\ref{b220}) and (\ref{eq233})  one can deduce that

\begin{lemma}

\begin{equation}\label{eq209}
\begin{split}
\Delta_{g'} (\chi^{\tau} \Delta_{reg} \phi) \geq & \chi^{\tau} \sum_{p,q,l}g'^{p\bar{p}}g'^{q\bar{q}} |\phi_{p\bar{q}\bar{l}} |^2-\\
\quad  & \frac{C''_{\ref{eq209}}}{|S|}(\chi^{\tau}\sum_{p,q,l} g'^{p\bar{p}}g'^{q\bar{q}}|\phi_{p\bar{q}\bar{l}} |^2))^{1/2} \\
\quad & +\frac{C'''_{\ref{eq209}} \chi^{\tau/2}}{|S|^2}
\end{split}
\end{equation}

where 
\[
C''_{\ref{eq209}}=- \tau B_{\ref{b220}}(1+\frac{|S|}{\mathscr{M}'})
\], 

and  
\[
C'''_{\ref{eq209}}=C_{\ref{c'''10}}\big ( \frac{\tau} {\delta ^2 }+ \frac{\tau |S|^2}{(\mathscr{M}')^2\delta ^2}\big )+ C_{\ref{c'''10}}\chi^{\tau/2}\big ( 1+ \frac{|S|^2}{(\mathscr{M}')^2}\big )
\]

 and $B_{\ref{b220}}$ and  $C_{\ref{c'''10}}$ only depend on $G$ and $g_{reg}$.
\end{lemma}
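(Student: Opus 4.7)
The plan is to expand $\Delta_{g'}(\chi^{\tau}\Delta_{reg}\phi)$ via the product rule, apply Yau's inequality to the leading term, and then absorb all error terms using the already-established estimates together with Cauchy--Schwarz.

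First I would write
\[
\Delta_{g'}(\chi^{\tau}\Delta_{reg}\phi)=\chi^{\tau}\Delta_{g'}(\Delta_{reg}\phi)+(\Delta_{g'}\chi^{\tau})(\Delta_{reg}\phi)+\sum_{i,\alpha}(1+\phi_{i\bar{i}})^{-1}(\chi^{\tau})_i(\phi)_{\alpha\bar{\alpha}\bar{i}}+\text{c.c.},
\]
and substitute the Yau inequality~(\ref{yau10}) for $\chi^{\tau}\Delta_{g'}(\Delta_{reg}\phi)$. This produces three kinds of contributions that must be controlled separately: (a) the desired positive third-derivative term $\chi^{\tau}\sum(1+\phi_{p\bar p})^{-1}(1+\phi_{q\bar q})^{-1}|\phi_{p\bar q\bar l}|^2$, which is kept; (b) a curvature/Laplacian-of-$F$ contribution $\chi^{\tau}\bigl(\Delta_{reg}F+(\inf R_{i\bar il\bar l})[\sum\frac{1+\phi_{i\bar i}}{1+\phi_{l\bar l}}-n^2]\bigr)$ together with $(\Delta_{g'}\chi^{\tau})(\Delta_{reg}\phi)$; and (c) the first-order cross terms $\sum(1+\phi_{i\bar i})^{-1}(\chi^{\tau})_i(\phi)_{\alpha\bar\alpha\bar i}$.

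For contribution (b) I would invoke (\ref{riill}) to bound the sum $\sum\frac{1+\phi_{i\bar i}}{1+\phi_{l\bar l}}$ --- splitting between indices equal to $z$ and different from $z$, and using $(1+\phi_{z\bar z})\ge\mathscr M'_1|S|^2$ from lemma~(\ref{m'm'1}) together with the upper bound $C_{\ref{secondorder1}}$ on $1+\Delta_{g_{reg}}\phi$ --- and (\ref{delgc}) to bound $|\Delta_{g'}\chi^{\tau}|$, which in the end gives exactly the packaging (\ref{c'''10}), producing the last two summands of the claim.

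For contribution (c), which is where the mixed term $\tau B_{\ref{b220}}(|S|^{-1}+(\mathscr M')^{-1})\sqrt{\chi^{\tau}\sum(1+\phi_{p\bar p})^{-1}(1+\phi_{q\bar q})^{-1}|\phi_{p\bar q\bar l}|^2}$ originates, I would factor each summand as
\[
(1+\phi_{i\bar i})^{-1/2}(\chi^{\tau})_i\cdot (1+\phi_{i\bar i})^{-1/2}(\phi)_{\alpha\bar\alpha\bar i}
\]
and apply Cauchy--Schwarz with respect to the pair $(i,\alpha)$. The second factor is bounded via (\ref{ttkmin}) by the ``second-derivative square root'' appearing in the claim (with a factor $\frac{1+\phi_{\alpha\bar\alpha}}{1+\phi_{i\bar i}}$ under control by proposition~(\ref{secondorder})), while the first factor produces $\tau\chi^{\tau/2}(|S|^{-1}+(\mathscr M')^{-1})$ using that $\chi=e^u$ with $|u_\alpha|^a\chi^{\tau/2}$ uniformly bounded by a constant depending only on $\delta$. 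This is exactly (\ref{b220}); summing the complex-conjugate term doubles the constant, which is absorbed into $B_{\ref{b220}}$.

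The main obstacle will be (c): the third derivative $\phi_{\alpha\bar\alpha\bar i}$ is \emph{not} symmetric in the indices $(\alpha,\bar\alpha,\bar i)$ in the way one would like, so some care is required in bounding $\sum_{\alpha}(1+\phi_{i\bar i})^{-1}|\phi_{\alpha\bar\alpha\bar i}|^2$ by $\sum_{p,q,l}(1+\phi_{p\bar p})^{-1}(1+\phi_{q\bar q})^{-1}|\phi_{p\bar q\bar l}|^2$. Here one would use Cauchy--Schwarz again on the sum over $\alpha$ together with the uniform bounds on $(1+\phi_{\alpha\bar\alpha})/(1+\phi_{i\bar i})$ coming from lemma~(\ref{m'm'1}) and proposition~(\ref{secondorder}), which is what forces the factor $(|S|^{-1}+(\mathscr M')^{-1})$ to appear rather than a constant. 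Combining (a), (b), (c) then yields (\ref{eq209}) with the stated dependence of $B_{\ref{b220}}$ and $C_{\ref{c'''10}}$ only on $G$ and $g_{reg}$.
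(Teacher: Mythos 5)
Your proposal follows the paper's own derivation essentially verbatim: expand $\Delta_{g'}(\chi^{\tau}\Delta_{reg}\phi)$ by the product rule, insert Yau's inequality (\ref{yau10}) for the leading term, control the curvature and $\Delta_{g'}\chi^{\tau}$ contributions via (\ref{riill}) and (\ref{delgc}) to produce the $C_{\ref{c'''10}}$ terms, and handle the cross term by the same factoring $(1+\phi_{i\bar i})^{-1/2}(\chi^{\tau})_i\cdot(1+\phi_{i\bar i})^{-1/2}\phi_{\alpha\bar\alpha\bar i}$ with Cauchy--Schwarz and the ratio bounds from lemma (\ref{m'm'1}) and proposition (\ref{secondorder}), which is exactly how (\ref{b220}) is obtained. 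This matches the paper's argument, so no further comparison is needed.
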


From corollary (\ref{corsup}) the inequality (\ref{eq209})   we conclude that if we take 

\begin{equation}\label{AA}
A=A_0\frac{C''_{\ref{epsi11}}}{\kappa^2}  
\end{equation}

for some $A_0>1$, then

\begin{equation}\label{mainineqq2}
\begin{split}
\Delta_{g'} [\log (\eta\Psi+\epsilon)+A\chi^{\tau}\Delta_{reg} (\phi)  ]\geq &(A_0-1)\frac{\chi^{\tau} C'_{\ref{epsi11}}}{\kappa ^2}\sum_{p,q,l} g'^{p\bar{p}}g'^{q\bar{q}} |\phi)_{p\bar{q}\bar{l}} |^2\\
\quad &+A\big (  \frac{C''_{\ref{eq209}}}{|S|}+\frac{C''_{\ref{epsi11}}}{A\kappa}  \big )   (\chi^{\tau}\sum_{p,q,l} g'^{p\bar{p}}g'^{q\bar{q}} |\phi_{p\bar{q}\bar{l}} |^2 )^{1/2}\\
\quad & + \frac{C'''_{\ref{eq209}}\chi^{\tau/2}}{|S|^2}\\ 
\quad & +\frac{C_{\ref{lem20nn}} \chi^{\tau} }{\kappa|S|^4}
\end{split}
\end{equation}

Here since the last terms $  \frac{C_{\ref{corsup1}}}{\kappa }\chi^{\tau/2}$ in  (\ref{corsup1}) is bounded  they can be absorbed in the
 constant of the  unbounded terms of the type $\frac{1}{|S|^2} $ in the last line of the above inequality.  Also since $C_{\ref{lem20nn}}$ is positive the
 term $\frac{C_{\ref{lem20nn}} \chi^{\tau} }{\kappa|S|^4}$ can be ignored.  Therefore we have proved: 
    
\begin{lemma}\label{lem255}

\begin{equation}\label{mainineq2}
\begin{split}
\Delta_{g'} [\log (\eta\Psi+\epsilon)+A\Delta_{reg} (\chi^{\tau}\phi)  ]\geq & C'  \chi^{\tau}\sum_{p,q,l} g'^{p\bar{p}}g'^{q\bar{q}} |\phi_{p\bar{q}\bar{l}} |^2\\
\quad &+A\frac{C''}{|S|}  (\chi^{\tau}\sum_{p,q,l} g'^{p\bar{p}}g'^{q\bar{q}}|\phi_{p\bar{q}\bar{l}} |^2 )^{1/2}\\
\quad & +\frac{C'''}{|S|^2}
\end{split}
\end{equation}

where 
\begin{equation}\label{c'c'}
C' =\frac{(A_0-1)C'_{\ref{epsi11}}}{\kappa ^2}
\end{equation}

\begin{equation}\label{c''c''}
C'' :=  C''_{\ref{eq209}}+\frac{C''_{\ref{epsi11}}|S|}{A\kappa} 
\end{equation}

and

\begin{equation}\label{c'''}
C'''=AC'''_{\ref{eq209}}
\end{equation}

\end{lemma}

Now consider the point $p_0\in X$ where the maximum of  $[\log (\eta\Psi+\epsilon)+A\Delta_{reg} (\chi^{\tau}\phi)  ]$ occurs. Since the maximum can occur on $D$ we need to multiply both sides of \ref{mainineq2} by $|S|^2$ and repeat the argument as in the proof of the proposition (\ref{secondorder}). Then from the inequality (\ref{mainineq2})   we find that


\begin{equation}\label{c'c''}
\begin{split}
\bigg [ C' \bigg (\chi ^{\tau}|S|^2\sum_{p,q,l} g'^{p\bar{p}}g'^{q\bar{q}}& |\phi_{p\bar{q}\bar{l}} |^2\bigg ) + AC'' \bigg (|S|^2 \chi^{\tau} \sum_{p,q,l} g'^{p\bar{p}}g'^{q\bar{q}} |\phi_{p\bar{q}\bar{l}} |^2 \bigg )^{1/2}\\
\quad &   + C'''  \bigg ] (p_0)\leq 0 
\end{split}
\end{equation}

from the above relation and the definition of $C'$, $C''$ and $C'''$ in lemma (\ref{lem255}) it follows that if $\tau\leq \delta\leq \mathscr{M}'$
and by taking $\kappa$ and $A_0$ large enough    we can deduce that
\begin{equation}\label{ubound}
\begin{split}
[|S|^2\chi^{\tau}\sum_{p,q,l} g'^{p\bar{p}}g'^{q\bar{q}}|\phi_{p\bar{q}\bar{l}} |^2] (p_0 )&\leq \frac{-AC'' +\sqrt{(AC''  )^2 -4C' C'''  }}{2C'}\\
\quad &\leq  \tilde{C}_{\ref{ubound}} 
\end{split}
\end{equation}

where $\tilde{C}_{\ref{ubound}}$ depends only on $g_{reg}$ and $G$. We note that this term  is dominantly  generated by  $C_{\ref{c'''10}}\big ( 1+ \frac{\delta ^2}{(\mathscr{M}')^2}\big )$ in $C'''$.  We can now utilize (\ref{rel234}) and (\ref{rel235}) to coclude that:

\begin{equation}\label{ubound2}
\begin{split}
|S^4 \chi^{\tau} \sum_{p,q,l}& g'^{p\bar{p}}g'^{q\bar{q}}g'^{l\bar{l}} |\phi_{p\bar{q}\bar{l}} |^2]  \leq\\
\quad & |S|^2\chi^{\tau}\sum_{\substack{p,q\\ l\neq z}} [g'^{l\bar{l}} |S|^2] g'^{p\bar{p}}g'^{q\bar{q}} |\phi_{p\bar{q}\bar{l}} |^2] \\
\quad & +|S|^2\chi^{\tau}\sum_{p,q} [g'^{z\bar{z}} |S|^2] g'^{p\bar{p}}g'^{q\bar{q}} |\phi_{p\bar{q}\bar{z}} |^2]\\
\quad & \leq  |S|^2\chi^{\tau}[\sum_{\substack{p,q\\ l\neq z}}g'^{p\bar{p}}g'^{q\bar{q}} |\phi_{p\bar{q}\bar{l}} |^2] [\frac{|S|^2}{\mathscr{M}'}]\\
\quad & +|S|^2\chi^{\tau}\sum_{p,q} g'^{p\bar{p}}g'^{q\bar{q}} |\phi_{p\bar{q}\bar{z}} |^2][\mathscr{M}' _1 ]\\
\quad & = |S|^2\chi^{\tau}\sum_{p,q}  g'^{p\bar{p}}g'^{q\bar{q}} |\phi_{p\bar{q}\bar{z}} |^2] [(n-1 )\frac{|S|^2}{\mathscr{M}'}+\mathscr{M}' _1]
\end{split}
\end{equation}

   So from the relations (\ref{ubound}) and (\ref{ubound2})  since  $|S|\leq  \delta \leq \mathscr{M}'$ we deduce that





\begin{equation}\label{oet}
[|S|^4 \chi^{\tau}\sum_{p,q,l}g'^{p\bar{p}}g'^{q\bar{q}}g'^{l\bar{l}}|\phi_{p\bar{q}\bar{l}} |^2)] (p_0)\leq  C_{\ref{oet}} 
\end{equation}

for some constant $C_{\ref{oet}}$ which depends on $G$, $g_{reg}$.   
 Finaly from (\ref{oet}) we get 
\begin{lemma}   
\begin{equation}\label{constc}
(\eta_{\tau}\Psi)(p_0 )\leq  \mathscr{C}
\end{equation}
for some constant $\mathscr{C}$ which only depends on  $G$, $g_{reg}$. 
\end{lemma}

Since $\tau<1$  over $U_p$ we have
\[
\begin{split}
\log (\eta\Psi+\epsilon)+A\Delta_{reg} (\chi^{\tau}\phi)& \leq \log (\eta_{\tau}\Psi+\epsilon) (p_0)+A\Delta_{reg} (\chi^{\tau}\phi)(p_0)\\
\quad & \leq  \log ( \mathscr{C}+\epsilon)+A\Delta_{reg} (\chi^{\tau}\phi)(p_0 )
\end{split}
\]

where $A$ is defined by (\ref{AA}), thus we obtain
\[
\log \frac{\eta\Psi+\epsilon}{\mathscr{C}+\epsilon}\leq A[\Delta_{reg} (\chi^{\tau}\phi)(p_0 )-\Delta_{reg} (\chi^{\tau}\phi) ]
\]

\[
\begin{split}
\Delta_{reg} (\chi^{\tau} \phi)&=   \chi^{\tau} (\Delta_{reg}\phi )+(\Delta_{reg} \chi^{\tau} )\phi+ \sum_{\alpha} g_{reg} ^{\alpha\bar{\alpha}} (\chi^{\tau})_{\alpha} \phi_{\bar{\alpha}}\\
\end{split}
\]

So we have
\begin{equation}\label{b'b'}
|\Delta_{reg} (\chi^{\tau} \phi)|\leq B'
\end{equation}

where

\[
B' =  (C_{\ref{secondorder1}}+n)+ \|\phi\|_{\infty} \|\{\Delta_{reg} \chi^{\tau}\|_{\infty}+ \|\nabla_{reg} \phi\|_{\infty} \|g_{reg} ^{\alpha\bar{\alpha}} (\chi^{\tau})_{\alpha}\|_{\infty} 
\]

and by proposition (\ref{secondorder}) $B'$ it can also be seen that $B'$  depends on $G$ and 
$g_{reg}$ and $\delta$.

According tothe definition of $A$ and $C''_{\ref{epsi11}}$ respectively by (\ref{AA}) and (\ref{c''220}) Hence we get to the inequality 
\begin{equation}\label{kappa4}
\log \frac{\eta\Psi+\epsilon}{\mathscr{C} +\epsilon}\leq A_1
\end{equation}

where

\begin{equation}\label{AA1}
A_1=2B'A_0\frac{C_{\ref{epsi11}}}{(\mathscr{M}' )^2}(\frac{1}{\kappa^2} +\frac{1}{\kappa \sqrt{a} })
\end{equation}

and we have set 

\begin{equation}\label{a}
a:=\|\eta_{1-\tau} \Psi\|_{\infty},\hspace{1cm} \text{ and } \hspace{0.5cm} b=\|\eta\Psi\|_{\infty}
\end{equation}

then we get to 

 \[
\log \frac{b+\kappa a}{ \mathscr{C}+\kappa a}\leq A_1
\]
or
\[
\log \frac{\frac{b}{a}+\kappa}{\frac{\mathscr{C}}{a}+\kappa}\leq A_1
\]

From which we deduce 

\[
\kappa+\frac{\mathscr{C}}{a}\geq (\frac{b}{a}+\kappa) e^{-A_1}
\]

\[
a\leq \frac{ \mathscr{C}}{(\frac{b}{a}+\kappa)e^{-A_1}-\kappa}
\]

Now we note that    $\lim_{\kappa\rightarrow \infty}A_1 = 0$
 and so $\lim_{\kappa\rightarrow \infty }\frac{\mathscr{C}}{(\frac{b}{a}+\kappa)e^{-A_1}-\kappa} =\frac{a\mathscr{C}}{b} $. Hence from the above inequality we conclude that
\[
b\leq \mathscr{C}
\] 
\begin{theo}\label{thirdorder}  The following inequality holds on $X$
\[
|S|^4 \Psi\leq \mathscr{C} 
\]
  
 where $\mathscr{C}$ depends on $G$ and $g_{reg}$.
\end{theo}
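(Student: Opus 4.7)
The plan is to follow Yau's classical third-order estimate, adapted to accommodate the transverse degeneration of $g'$ along $D$. The key quantity is $\Psi=\|\nabla_{g_{reg}}\circ\bar\partial\circ\partial\phi\|_{g'}^{2}$, and the identity of Theorem \ref{yau406} gives $\Delta_{g'}\Psi=\sum_{ijk\alpha s}(1+\phi_{\alpha\bar\alpha})^{-1}|A_{ijk\alpha s}|^{2}+R$. The first step is to control the remainder $R$, which is done by Proposition \ref{remainder}: $|R|\leq(A_{1}\Psi+A_{2}\sqrt{\Psi}+A_{3})/|S|^{3}$, the factor $|S|^{-3}$ reflecting the dominant pole $g'^{n\bar n}=O(|S|^{-2})$ from Lemma \ref{m'm'1}.

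Next I introduce the weight $\eta=\chi|S|^{4}$ with $\chi$ a localizing cut-off supported in a canonical coordinate neighbourhood $U_{p}$ of a boundary point $p\in D$, and work with $\log(\eta\Psi+\epsilon)+A\chi^{\tau}\Delta_{reg}\phi$, where $\epsilon=\kappa\|\eta_{1-\tau}\Psi\|_{\infty}$ for $\kappa$ large. I would compute $\Delta_{g'}$ of this test function and split it as $\mathcal{E}_{1}+\mathcal{E}_{2}$ as in the body. The terms of $\mathcal{E}_{1}$ that are quadratic in $A_{ijk\alpha s}$ are treated as a quadratic form in $x_{\alpha}=(\sum_{ijks}|A_{ijk\alpha s}|^{2})^{1/2}$, completing the square (Lemma \ref{belakhare}) and using Lemma \ref{lem260} to dominate the $I_{ijk\alpha}$ terms by $\Psi$ times the third-order Hessian square. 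The weighted derivatives $\eta_{\alpha}$ are estimated using the splitting $\alpha=z$ versus $\alpha\neq z$: in the transverse direction Lemma \ref{m'm'1} supplies the crucial comparison $(1+\phi_{z\bar z})^{-1}\geq e^{-G}(\mathscr{M}')^{2n-2}/|S|^{2}$ which converts the $\eta_{z\bar z}$ term into a positive reservoir of size $(\kappa|S|^{4})^{-1}$, exactly matching the negative $|\eta_{z}|^{2}\Psi/(\eta\epsilon)$ term provided $|z|$ is sufficiently small relative to $\delta$.

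Then I add the term $A\chi^{\tau}\Delta_{reg}\phi$ and apply Yau's inequality (\ref{yau10}) to produce the positive reservoir $\chi^{\tau}\sum(1+\phi_{p\bar p})^{-1}(1+\phi_{q\bar q})^{-1}|\phi_{p\bar q\bar l}|^{2}$; together with the Cauchy--Schwarz estimate (\ref{b220}) for the cross term $(\chi^{\tau})_{i}\phi_{\alpha\bar\alpha\bar i}$ and the bounds (\ref{chizz})-(\ref{eq219}) on $\Delta_{g'}(\chi^{\tau})$. The constants $C',C'',C'''$ in Lemma \ref{lem255} are chosen so that, for $A_{0}$ and $\kappa$ large but fixed and $\tau\leq\delta\leq\mathscr{M}'$, the good term of order $\chi^{\tau}\sum|\phi_{p\bar q\bar l}|^{2}$ dominates the bad terms. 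The maximum principle, applied with the usual trick of multiplying through by $|S|^{2}$ to handle a possible maximum on $D$ (as in the proof of Proposition \ref{secondorder}), yields $[|S|^{2}\chi^{\tau}\sum(1+\phi_{p\bar p})^{-1}(1+\phi_{q\bar q})^{-1}|\phi_{p\bar q\bar l}|^{2}](p_{0})\leq\tilde C$, which after using Lemma \ref{m'm'1} once more to trade the missing $(1+\phi_{l\bar l})^{-1}$ factor for $|S|^{-2}$ gives $(\eta_{\tau}\Psi)(p_{0})\leq\mathscr{C}$.

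The final step is a self-improvement argument: the estimate at $p_{0}$ combined with $C^{0}$ bounds on $\Delta_{reg}\phi$ and $\phi$ (Proposition \ref{secondorder}) gives $\log((\eta\Psi+\epsilon)/(\kappa\mathscr{C}+\epsilon))\leq A_{1}$ globally, where $A_{1}=A_{1}(\kappa,a)\to 0$ as $\kappa\to\infty$. Writing $a=\|\eta_{1/2}\Psi\|_{\infty}$, $b=\|\eta\Psi\|_{\infty}$, this rearranges to $b\leq a\mathscr{C}/((b/a+\kappa)e^{-A_{1}}-\kappa)$ and, sending $\kappa\to\infty$ so that $A_{1}\to 0$, one concludes $b\leq\mathscr{C}$, which is exactly $|S|^{4}\Psi\leq\mathscr{C}$. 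The main obstacle throughout is the interaction between the transverse blow-up $|S|^{-2}$ in $g'^{n\bar n}$ and the various weighted derivatives of $\eta$: one must carefully balance the weights $\tau$, $\delta$, and $\kappa$, and use the sharp two-sided comparison of Lemma \ref{m'm'1} to ensure that the positive $\eta_{z\bar z}\Psi/(\eta\Psi+\epsilon)$ beats $|\eta_{z}|^{2}\Psi/(\eta\epsilon)$ in a neighbourhood of $D$ that is independent of the solution.
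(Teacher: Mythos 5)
Your proposal follows essentially the same route as the paper's own proof: the same test function $\log(\eta\Psi+\epsilon)+A\chi^{\tau}\Delta_{reg}\phi$ with $\eta=\chi|S|^{4}$ and $\epsilon=\kappa\|\eta_{1-\tau}\Psi\|_{\infty}$, the same quadratic-form treatment of $\mathcal{E}_{1}$ via Lemmas \ref{belakhare} and \ref{lem260}, the same use of Lemma \ref{m'm'1} to balance the transverse $\eta_{z\bar z}$ term against $|\eta_{z}|^{2}$, the same maximum-principle step with multiplication by $|S|^{2}$ to handle a maximum on $D$, and the same $\kappa\to\infty$ self-improvement passing from $\|\eta_{1-\tau}\Psi\|_{\infty}$ to $\|\eta\Psi\|_{\infty}$. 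This is the paper's argument, correctly assembled.
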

 

\subsection{Further Third Order  Estimates}
Consider an  open neighborhood $U_p$ of a point $p\in D$     and let  $(\tilde{w}_1,...,\tilde{w}_{n-1},\tilde{z})$  be a holomorphic coordinates system on $U_p$ with
$U_p\cap D=\{\tilde{z}=0\}$.  Let  $V_1,...,V_n$ be the $g'$-orthogonal $g_{reg}$-normal moving frame on $U_p$ as constructed in the appendix (\ref{app7}).

  We also fix the 
foliation generated by $V_1,...,V_{n-1}$.

\begin{defi}
The holomorphic foliation generated by $\{V_1,...,V_{n-1}\}$ is denoted by $\mathcal{F}_D$.
\end{defi}

We set 

\[
\nabla_{reg} V_i ^* =\sum \Gamma^{j} _{ki} V_j ^* \otimes V_k ^*
\]

where $\Gamma^{j} _{ki}$ for $i,j,k=1,...,n$ denotes the Christoffel symbols of the Levi-Civita connection associated to the regular metric $g_{reg}$ on the cotangent bundle of $X$. 

Consider a tensor $T\in \Gamma (U_p, T^* (U_p)\otimes \bar{T}^* (U_p) )$ 
\[
T=\sum_{ij} T_{i\bar{j}} V_i ^* \otimes \bar{V}^* _j
\]
If 
\[
\nabla_{reg} T=\sum T_{i\bar{ j}k}V^*_i \otimes  \bar{V}^* _j \otimes V_{k} ^*
\]

 then we have

\[
T_{i\bar{j}k} =V_{k}.T_{i\bar{j}}+\sum_a \Gamma_{ka} ^{i} T_{a\bar{j}}
\]

Hence

\begin{equation}\label{cdt3}
V_{k}.T_{i\bar{j}}=T_{i\bar{j}k}-\sum_a \Gamma_{ka} ^{i} T_{a\bar{j}}
\end{equation}

Similarly for derivatives with respect to conjugate vectors

\[
\bar{V}_{k}.T_{i\bar{j}}=T_{i\bar{j}\bar{k}}-\sum_b \Gamma_{\bar{k}\bar{b}} ^{\bar{j}}  T_{i\bar{b}}
\]

In particular if $g'=g_{reg}+\partial\bar{\partial} \phi$ then we obtain
\begin{equation}\label{hols1}
\begin{split}
\bar{V}_{\beta}. g'_{p\bar{q}} &= g'_{p\bar{q}\bar{\beta}}-\sum_b \Gamma_{\bar{\beta}\bar{b}} ^{\bar{q}}g'_{p\bar{b}}\\
\quad & = \phi_{p\bar{q}\bar{\beta}}-\sum_b \Gamma_{\bar{\beta}\bar{b}} ^{\bar{q}}g'_{p\bar{b}}
\end{split}
\end{equation}

The second line holds  due to the fact that $\nabla_{reg} g_{reg}=0$.
Likewise for 3-tensor 
\[
T=\sum_{ijk} T_{i\bar{j}k}V_i ^* \otimes \bar{V}^* _j \otimes V^* _k
\]

and its covariant derivative
\[
\nabla_{reg} T=\sum_{ijkl}T_{i\bar{j}kl}V_i ^* \otimes \bar{V}^* _j \otimes V^* _k\otimes V^* _l
\]

we have the following  identities 
\begin{equation}\label{cdt4}
T_{i\bar{j}kl}=V_l. T_{i\bar{j}k} -\sum_a T_{a\bar{j}k}\Gamma_{la} ^i-\sum_c T_{i\bar{j}c}\Gamma_{lc} ^k
\end{equation} 

\begin{equation}\label{cdt5}
T_{i\bar{j}k\bar{l}}=\bar{V}_l. T_{i\bar{j}k} -\sum_b T_{i\bar{b}k}\Gamma_{\bar{l}\bar{b}} ^{\bar{j}}
\end{equation}


Associated with the foliation  $\mathcal{F}$  we consider the operator
\[
\nabla_{D}: T^*X\otimes T^* X\rightarrow T^* \mathcal{F}\otimes T^* X\otimes T^* X
\]

 where $\nabla_D$ denotes the covariant derivative with respect to the Levi-Civita connection restricted to the  distribution generating  the holomorphic foliation  $\mathcal{F}$.

In this subsection we work with    

\begin{equation}\label{sayek}
\Psi_1:=\| \nabla_D \circ \bar{\partial} \circ  \partial\phi \|_{g'} ^2 =\sum_{\substack{1\leq i,r,j,s\leq n \\ 1\leq k,t\leq n-1} } g'^{i\bar{r}}g'^{\bar{j}s}g'^{k\bar{t}}\phi_{i\bar{j}k}\phi_{\bar{r}s\bar{t}}
\end{equation}

We observe  that $\Psi_1$ depends only on the foliation $\mathcal{F}$ generated by $\{V_1,...,V_{n-1}\}$ and not on the vectors $V_1,...,V_{n-1}$ themselves.
\\

The following theorem is the fundamental approximation we will prove in this section.

\begin{theo}[Main Theorem]\label{3rdorder3}
There exists a constant $\mathscr{C}_1$ such that
\[
|\Psi_1| \leq \mathscr{C}_1
\]
in a small neighborhood of $p $  and     $\mathscr{C}_1$ depends only on $g_{reg}$, $G$.
\end{theo}

 
\subsection{Proof of theorem  \ref{3rdorder3}}

In order to do computation and make approximations on $U_p$ we take  a point $x$  and in a neighborhood of this point we consider an other holomorphic 
coordinates system $(z_1,...,z_n)$ as described for (\ref{eqlits}). 
We need the following definition,

\begin{defi}Let $A,B:U_p \rightarrow \mathbb{R}$ be two real valued functions defined on $U_p$.

i) We say that 
\[
A\sim B
\]
if 
\[
|A- B|\leq  \frac{\mathcal{C}_1\sqrt{\Psi_1} +\mathcal{C}'_1}{|S|^2} 
\]
\\
and,
 \\

ii) 
\[
A\simeq B
\]
\\
if 
\\
\[
|A-B|\leq \frac{\mathcal{C}_4\Psi_1 +\mathcal{C}'_4 \sqrt{\Psi_1}+\mathcal{C}''_4}{|S|^4}  + \frac{\mathcal{C}_5\sqrt{\Psi_1}+ \mathcal{C}_6}{|S|^2}
\]

where $ \mathcal{C}_1, \mathcal{C}'_1, \mathcal{C}_2, \mathcal{C}_4, \mathcal{C}'_4, \mathcal{C}''_4,  \mathcal{C}_5 $ and $\mathcal{C}_6$ are constants which depend only on $g_{reg}$, $G$,  $\delta$ and the lower bound of $g'|_{U_p\cap D}$.
\end{defi}




\subsection{Estimation on $\Delta_{g'}\Psi_1$}\label{esti62}
We carry out the computation of $\Delta_{g'}$ without the hypothesis that $\Gamma(q) =0$ according to the correction introduced in lemma  (\ref{lem30}) in the appendix (\ref{app4}). 
Since  the moving frame $(V_1,...,V_n)$ is holomorphic we  have
 
\[
\Delta_{g'} \Psi_1 :=\sum g'^{\alpha\bar{\beta}}  V_{\alpha}.\bar{V}_{\beta} .\Psi_1 -\sum \big ( \nabla_{reg, V_{\alpha}} \bar{V}_{\beta} \big ).\Psi_1= \sum g'^{\alpha\bar{\beta}}  V_{\alpha}.\bar{V}_{\beta} .\Psi_1
\]



On the other hand $\bar{V}_{\alpha}. (g'^{-1} g')=0$ from which we can deduce that $\bar{V}_{\alpha} .g'^{-1} = -g'^{-1} \bar{V}_{\alpha}. g' g'^{-1}$, and

\begin{equation}\label{komaki}
\begin{split}
\bar{V}_{\alpha}.g'^{i\bar{j}}=&\sum _{p,q}\big [ -g'^{i\bar{p}}(\bar{V}_{\alpha}.g'_{p\bar{q}})g'^{q\bar{j}}  \big ]\\
\quad & =  -\sum g'^{i\bar{p}}( \phi_{p\bar{q}\bar{\alpha}})g'^{q\bar{j}}+g'^{i\bar{p}}(\sum_b  \Gamma_{\bar{\alpha}\bar{b}} ^{\bar{q}}g'_{p\bar{b}})  g'^{q\bar{j}}\\
\quad & =  -\sum g'^{i\bar{p}}\big [ \phi_{p\bar{q}\bar{\alpha}}-\sum_b  \Gamma_{\bar{\alpha}\bar{b}} ^{\bar{q}} g'_{p\bar{b}}\big ]   g'^{q\bar{j}}
\end{split}
\end{equation}

\begin{equation}\label{mg1}
\begin{split}
V_{\alpha}.g'^{i\bar{j}}&=   -\sum g'^{i\bar{p}}( \phi_{p\bar{q}\alpha})g'^{q\bar{j}}+g'^{i\bar{p}}(\sum_b  \Gamma^{p} _{\alpha c}g'_{c\bar{q}})  g'^{q\bar{j}}\\
\quad & = -\sum g'^{i\bar{p}}\big [ \phi_{p\bar{q}\alpha}-\sum_c  \Gamma^{p} _{\alpha c}g'_{c\bar{q}}\big ]  g'^{q\bar{j}}
\end{split}
\end{equation}

Also using  (\ref{cdt4}) and (\ref{cdt5}) we obtain 


\begin{equation}\label{ford}
\begin{split}
\sum g'^{\alpha\bar{\beta}} V_\alpha. \bar{V}_{\beta}.\Psi_1 =\sum g'^{\alpha\bar{\beta}} V_\alpha.& \bigg [-g'^{i\bar{p}}g'^{q\bar{r}}g'^{\bar{j}s}g'^{k\bar{t}}\big [ \phi_{p\bar{q}\bar{\beta}}-\sum_b  \Gamma_{\bar{\beta}\bar{b}} ^{\bar{q}} g'_{p\bar{b}}\big ]\phi_{i\bar{j}k}\phi_{\bar{r}s\bar{t}}\\
\quad & -g'^{i\bar{r}}g'^{\bar{j}p}g'^{\bar{q}s}g'^{k\bar{t}}\big [ \phi_{p\bar{q}\bar{\beta}}-\sum_b  \Gamma_{\bar{\beta}\bar{b}} ^{\bar{q}} g'_{p\bar{b}}\big ]\phi_{i\bar{j}k}\phi_{\bar{r}s\bar{t}}\\
\quad & -g'^{i\bar{r}}g'^{\bar{j}s}g'^{k\bar{p}}g'^{q\bar{t}}\big [ \phi_{p\bar{q}\bar{\beta}}-\sum_b  \Gamma_{\bar{\beta}\bar{b}} ^{\bar{q}} g'_{p\bar{b}}\big ]\phi_{i\bar{j}k}\phi_{\bar{r}s\bar{t}}\\
\quad & +g'^{i\bar{r}}g'^{\bar{j}s}g'^{k\bar{t}}
\big [ \phi_{i\bar{j}k\bar{\beta}} -\sum \Gamma ^{\bar{j}} _{\bar{\beta}\bar{c}} \phi_{i\bar{c}k}\big ]\phi_{\bar{r}s\bar{t}} \\
\quad & +g'^{i\bar{r}}g'^{\bar{j}s}g'^{k\bar{t}}
\big [ \phi_{\bar{r}s\bar{t}\bar{\beta}}-\sum \Gamma^{\bar{r}} _{\bar{\beta}\bar{c}} \phi_{\bar{c}s\bar{t}}  
-\sum \Gamma^{\bar{t}} _{\bar{\beta}\bar{c}} \phi_{\bar{r}s\bar{c}} \big ] \phi_{i\bar{j}k}
\bigg ]
\end{split}
\end{equation}


Sample terms of the above expansion    are computed in the relations (\ref{ap2a0}) and (\ref{ap2b0}) in  the appendix (\ref{app2}).  By assuming    orthogonality assumption at $q$   the  relations  (\ref{ap2a}) and (\ref{ap2b}) in the appendix (\ref{app2}) can be derived.


According to appendix (\ref{app2})  and (\ref{app3}), $\Delta_{g'}\Psi$ has the following form
\[
\begin{split}
\Delta_{g'} \Psi_1=\sum_{\theta (ijk) \leq 2}& g'^{\alpha\bar{\alpha}} g'^{i\bar{i}}g'^{j\bar{j}}g'^{k\bar{k}}( |\phi_{i\bar{j}k\alpha}|^2 +|\phi_{\bar{i}j\bar{k}\alpha}|^2)\\
\quad & +\sum_{\theta (ijk) \leq 2}  B_{i\bar{j}k\alpha}(g'^{\alpha\bar{\alpha}} g'^{i\bar{i}}g'^{j\bar{j}}g'^{k\bar{k}})^{1/2}\phi_{i\bar{j}k\alpha}+\sum_{\theta (ijk) \leq 2}
(g'^{\alpha\bar{\alpha}} g'^{i\bar{i}}g'^{j\bar{j}}g'^{k\bar{k}})^{1/2} B_{\bar{i}j\bar{k}\alpha}\phi_{\bar{i}j\bar{k}\alpha}+\sum C_{ijk\alpha}
\end{split}
\]
where $\theta$ is defined by

\begin{equation}\label{thet}
\theta (ijk)= \delta_{in}+\delta_{jn}+\delta_{kn}
\end{equation}

The fact that  the terms $|\phi_{i\bar{j}k\alpha}|^2 +|\phi_{\bar{i}j\bar{k}\alpha}|^2$ only have $\theta(ijk)<3$ is straightforward. But at first the terms $\phi_{n\bar{n}nn}$ and $\phi_{\bar{n}n\bar{n}n}$ also appear and we use lemma (\ref{lem30})  in appendix (\ref{app3}) to replace them by  the terms satisfying $\theta(ijk)<3$. 
Our claim is that
 
\begin{equation}\label{bsim}
|B _{i\bar{j}k\alpha}|\sim 0 \hspace{1cm} \text{and} \hspace{1cm}| B_{\bar{i}j\bar{k}\alpha} |\sim 0 \hspace{1cm} \text{and} \hspace{1cm} C _{ijk\alpha}\simeq 0
\end{equation}
to see this we need a case by case consideration applying the computation carried out  in the appendices \ref{app2} and \ref{app3}.
Since according to our assumption  the correction discussed in   lemma (\ref{lem29}) in the appendix (\ref{app4}) is   applied   we can set $\Gamma=0$. So  for instance on the right hand side of the equation (\ref{ap2a}) in the appendix (\ref{app3}) the first four lines  all belong to $C_{ijk\alpha}$ and  can be upper estimated by $\Psi_1\Psi$ which is itself dominated by  a term of the form $\frac{C\Psi_1}{|S|^4}$ where by theorem (\ref{thirdorder}) $C$  is a constant that only depends on $g_{reg}$  and $G$, $\delta$ and the lower bound of  $g'|_{U_p\cap D}$. 

In the last   line of  (\ref{ap2a})   we have the following term  

\begin{equation}\label{g'1}
\begin{split}
g'^{\alpha\bar{\alpha}}g'^{i\bar{i}}g'^{q\bar{q}}g'^{j\bar{j}}g'^{k\bar{k}}&|\phi_{i\bar{q}\bar{\alpha}}\phi_{i\bar{j}k}\phi_{\bar{q}w\bar{k}}| \leq \frac{(g'^{\alpha\bar{\alpha}})^{1/2} (g'^{j\bar{j}})^{1/2}}{(g'^{w\bar{w}})^{1/2}}\sqrt{\Psi}\Psi_1\leq \frac{C_{\ref{g'1}}}{|S|^4}\Psi_1
\end{split}
\end{equation}

for some constant $C_{\ref{g'1}}$ that can be approximated in terms of $g_{reg}$, $G$ and the lower bound of $g'|_{U_p\cap D}$.\\
 Similarly the coefficient of $(g'^{\alpha\bar{\alpha}}g'^{q\bar{q}}g'^{\bar{j}j}g'^{k\bar{k}})^{1/2}\phi_{\bar{q}j\bar{k}\alpha}$ in this term is  upper-estimated by 

\begin{equation}\label{cps12}
C_{\ref{cps12}}\Psi^{1/2}\Psi_1 ^{1/2}\sim 0 
\end{equation}

 where $C_{\ref{cps12}}$ is a constant that can be approximated.

The forth order derivative  $\phi_{i\bar{q}\bar{\alpha}\alpha}$ in the right hand side of (\ref{ap2a}) 
\[
 g'^{\alpha\bar{\alpha}}g'^{i\bar{i}}g'^{q\bar{q}}g'^{\bar{j}j}g'^{k\bar{k}}\big [ \phi_{i\bar{q}\bar{\alpha}\alpha} -\sum \Gamma^i _{\alpha u}\phi_{u\bar{q} \bar{\alpha}}-\sum_b  V_{\alpha}.(\Gamma_{\bar{\alpha}\bar{i}} ^{\bar{q}} g'_{i\bar{i}}) \big ]\phi_{i\bar{j}k}\phi_{\bar{q}j\bar{k}}
\]
can  be equal to 
 $\phi_{n\bar{n}\bar{n}n}$ if $i=q=\alpha=n$. In this case since $k$ is different from $n$  and  for the coefficient of this term we have
\[
|(g'^{i\bar{i}}g'^{\bar{j}j}g'^{k\bar{k}})^{1/2}\phi_{i\bar{j}k}\times (  g'^{q\bar{q}}g'^{\bar{j}j}g'^{k\bar{k}})^{1/2}\phi_{\bar{q}j\bar{k}}|\leq \Psi_1
\]

 . Now  using lemma (\ref{lem30}) in the appendix (\ref{app3}) we can replace
 $\phi_{n\bar{n}\bar{n}n}$ by the right hand side of (\ref{4th}). By theorem (\ref{thirdorder}) we also know that $\Psi_1\leq \frac{\sqrt{\Psi_1}}{|S|^2}$   therefore the relations (\ref{bsim}) remains true after substitution.

About the  term containing 5-th  order derivative in (\ref{ap2b}) in the appendix (\ref{app2})
\begin{equation}\label{5ord}
 g'^{\alpha\bar{\alpha}}g'^{i\bar{i}}g'^{\bar{j}j}g'^{k\bar{k}}\phi_{i\bar{j}k\bar{\alpha}\alpha}\phi_{\bar{i}j\bar{k}}
\end{equation}
we first apply the relation (\ref{5tho}) to replace $\phi_{i\bar{j}k\bar{\alpha}\alpha} $ by
$\phi_{\alpha\bar{\alpha} i\bar{j}k} $. Then from relation (\ref{bib0}) in the appendix (\ref{app3} and lemma (\ref{lem29}) in the appendix (\ref{app4}) we can see that the terms containing only third order derivatives of $\phi$ in $(\ref{5ord})$  is upper estimated by $\Psi\sqrt{\Psi_1}$.  It is also required that we use  lemma (\ref{lem30}) in appendix (\ref{app3}) to replace the terms of the form $\phi_{n\bar{n}\bar{n}n}$  by  the terms of the form 
$\phi_{abcd}$ such that at least one of the letters $a,b,c$ or $d$ is different from $n$ and $\bar{n}$. In conclusion over the open set $U_p$ we have proved the following proposition:


\begin{prop}\label{cor6}
\[
\begin{split}
\Delta_{g'} \Psi_1 =\sum_{\theta (ijk) \leq 2}& g'^{\alpha\bar{\alpha}} g'^{i\bar{i}}g'^{j\bar{j}}g'^{k\bar{k}}( |\phi_{i\bar{j}k\alpha}|^2 +|\phi_{\bar{i}j\bar{k}\alpha}|^2)\\
\quad & +\sum_{\theta (ijk) \leq 2}  B_{i\bar{j}k\alpha}(g'^{\alpha\bar{\alpha}} g'^{i\bar{i}}g'^{j\bar{j}}g'^{k\bar{k}})^{1/2}\phi_{i\bar{j}k\alpha}+
\sum_{\theta (ijk) \leq 2}
(g'^{\alpha\bar{\alpha}} g'^{i\bar{i}}g'^{j\bar{j}}g'^{k\bar{k}})^{1/2} B_{\bar{i}j\bar{k}\bar{\alpha}}\phi_{\bar{i}j\bar{k}\alpha}+\sum C_{ijk\alpha}
\end{split}
\]

where
\[
|B_{i\bar{j}k\alpha}|\sim 0 \hspace{1cm} \text{and} \hspace{1cm}| B_{\bar{i}j\bar{k}\alpha}|\sim 0 \hspace{1cm} \text{and} \hspace{1cm} C _{ijk\alpha}\simeq 0
\]
\end{prop}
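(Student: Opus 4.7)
The plan is to obtain this proposition by a direct expansion of $\Delta_{g'}\Psi_1$ via the product rule, already laid out in equation (\ref{ford}), and then to classify every resulting monomial according to how many fourth derivatives of $\phi$ it contains. Starting from $\Delta_{g'}\Psi_1 = \sum g'^{\alpha\bar\beta} V_\alpha \bar V_\beta \Psi_1$ (the covariant correction disappears because the frame $V_1,\dots,V_n$ is holomorphic), I would apply $V_\alpha$ and $\bar V_\beta$ to each of the four metric factors and the two third-derivative factors in $\Psi_1$, using the identities (\ref{komaki})--(\ref{mg1}) for $V_\alpha g'^{i\bar j}$ together with (\ref{hols1}) for $\bar V_\beta g'_{i\bar j}$. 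Under the orthogonality Assumption~\ref{assum} at $q$, together with the Christoffel normalization from Lemma~\ref{lem29} of Appendix~\ref{app4} (which allows $\Gamma(q)=0$), all off-diagonal metric factors collapse, leaving precisely the three classes of terms written in the statement: the dominant squares, the cross terms $B\cdot\phi_{i\bar jk\alpha}$, and the ``pure'' third-derivative polynomials collected in $C_{ijk\alpha}$.

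The dominant squares $g'^{\alpha\bar\alpha}g'^{i\bar i}g'^{j\bar j}g'^{k\bar k}(|\phi_{i\bar jk\alpha}|^2+|\phi_{\bar i j\bar k\alpha}|^2)$ arise when both derivatives $V_\alpha, \bar V_\beta$ land on the two copies of $\phi_{i\bar jk}$; every other distribution of derivatives produces a strictly smaller object. I would then collect all terms in which exactly one fourth derivative appears as the $B$-terms, and all terms in which no fourth derivative appears as the $C$-terms, following the template in (\ref{ap2a})--(\ref{ap2b}) of Appendix~\ref{app2}. The weights in the definitions of $\sim$ and $\simeq$ are chosen exactly so that, combining Cauchy--Schwarz with Theorem~\ref{thirdorder} (which yields $|S|^4\Psi\leq \mathscr{C}$), any bound of the form $\Psi^{1/2}\Psi_1^{1/2}$ controls a $B$-term and bounds of the form $\Psi\Psi_1$, $\Psi\Psi_1^{1/2}$ or $\Psi$ control a $C$-term; the sample estimates (\ref{g'1}) and (\ref{cps12}) display both patterns.

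The main obstacle is that the naive expansion produces a handful of contributions with $\theta(ijk)=3$, i.e., with $i=j=k=n$, which violate the range in the statement. Two mechanisms must be invoked to eliminate them. First, the fifth-order derivative term (\ref{5ord}) is reduced by the commutation identity (\ref{5tho}) to $\phi_{\alpha\bar\alpha i\bar jk}$ plus curvature$\cdot$third-derivative corrections; this trades a high-order derivative for lower-order terms that fit into the $B$ or $C$ bucket. Second, whenever a fourth derivative of type $\phi_{n\bar n\bar n n}$ appears (for instance in the $i=q=\alpha=n$ case of the fourth-order term discussed after (\ref{cps12})), I would invoke Lemma~\ref{lem30} of Appendix~\ref{app3}, whose identity (\ref{4th}) expresses $\phi_{n\bar n\bar n n}$ in terms of $\phi_{abcd}$ with at least one index outside $\{n,\bar n\}$, producing contributions that again satisfy $\theta\leq 2$.

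After these substitutions every surviving monomial has $\theta(ijk)\leq 2$, and the bookkeeping consists in verifying that each of the finitely many prototype terms satisfies the relevant $\sim$ or $\simeq$ bound. The hardest step in practice is keeping track of which terms still carry a factor $g'^{n\bar n}$ (which is of order $|S|^{-2}$ by Lemma~\ref{m'm'1}) and ensuring, via the restriction $k,t\leq n-1$ built into $\Psi_1$ and via Theorem~\ref{thirdorder}, that no product of metric factors produces a singularity worse than $|S|^{-4}$ after extraction of the $B$-ratio $(g'^{\alpha\bar\alpha}g'^{i\bar i}g'^{j\bar j}g'^{k\bar k})^{1/2}$. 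Once this accounting is complete, the decomposition of $\Delta_{g'}\Psi_1$ into squares plus $\sim$-small linear terms plus $\simeq$-small scalar terms is exactly the statement of the proposition.
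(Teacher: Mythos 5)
Your proposal follows essentially the same route as the paper's own proof: the expansion (\ref{ford}) with the appendix computations (\ref{ap2a})--(\ref{ap2b}), the normalizations of Assumption \ref{assum} and Lemma \ref{lem29} giving $\Gamma(q)=0$, the replacement of the $\theta(ijk)=3$ contributions via Lemma \ref{lem30}, and the $\sim$/$\simeq$ bookkeeping driven by Cauchy--Schwarz, Lemma \ref{m'm'1} and Theorem \ref{thirdorder}. One caveat: the commutation identity (\ref{5tho}) by itself does not lower the order of the fifth-derivative term (\ref{5ord}) --- after commuting, the contracted term $\sum_\alpha g'^{\alpha\bar{\alpha}}\phi_{\alpha\bar{\alpha}i\bar{j}k}$ must still be eliminated through the differentiated Monge--Amp\`ere equation, i.e. relation (\ref{bib0}) in Appendix \ref{app3}, which is the step the paper performs before invoking Lemma \ref{lem30}; your phrase ``trades a high-order derivative for lower-order terms'' should be understood as resting on that substitution rather than on the commutation alone.
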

\vspace{1cm}

We now consider the cut-off function $\chi:X\rightarrow [0,1]$   defined by (\ref{chitild}) with the same properties described below that relation.
 Let 
\begin{equation}\label{eps}
\epsilon_1 := \kappa_1\sup_{U_p} \Psi_1 
\end{equation}
where $\kappa_1$ is the  a parameter that will be determined .
Then

 \begin{equation}\label{laplog}
\Delta_{g'} (\log(\chi \Psi_1+\epsilon_1))= \frac{(\Delta_{g'}\chi) \Psi_1 +\chi (\Delta _{g'} \Psi_1 )}{\chi \Psi_1 +\epsilon_1}+\sum g'^{\alpha\bar{\alpha}}\frac{2\Re(\chi_{\alpha} \Psi_{1,\bar{\alpha}})}{(\chi\Psi_1+\epsilon_1)}
-\sum  g'^{\alpha\bar{\alpha}}\frac{ |\chi \Psi_{1,\bar{\alpha}}+\chi_{\bar{\alpha}} \Psi_1 |^2 }{(\chi\Psi_1+\epsilon_1)^2} 
\end{equation}

 so by Cauchy- Schwarz

\begin{equation}\label{laplap2}
\begin{split}
\Delta_{g'} (\log(\chi \Psi_1+\epsilon_1)) \geq & \frac{\chi}{\chi\Psi_1 + \epsilon_1}[ \Delta_{g'} \Psi_1 ] -\sum\frac{2|\chi_{\alpha}|g'^{\alpha\bar{\alpha}}}{\chi\Psi_1+\epsilon_1}| \Psi_{1,\bar{\alpha}}|\\
\quad & -\sum \frac{2(\chi ) ^2 g'^{\alpha\bar{\alpha}}}{(\chi \Psi_1 +\epsilon_1 )^2} | \Psi_{1,\bar{\alpha}}|^2\\
\quad & -\frac{|\Delta_{g'} (\chi)| }{\chi\Psi_1 +\epsilon_1 }\Psi_1 -\sum \frac{2|\chi_{\bar{\alpha}}|^2}{(\chi\Psi_1 +\epsilon_1)^2}|\Psi_1 |^2
\end{split}
\end{equation}

We utilize orthogonality assumption at $q$ to the relation   (\ref{ford}) and  we set 

\begin{equation}\label{laplap22}
\Psi_{1,\bar{\alpha}}=\bar{V}_{\alpha}.\Psi_1 = \mathcal{T}_1+\mathcal{T}_2 
\end{equation}

where $\mathcal{T}_1$ and $\mathcal{T}_2$ are definde by    
\begin{equation}\label{tcal1}
\begin{split}
\mathcal{T}_1:=&  \sum_{\theta(ijk)\leq 2} \bigg [-g'^{i\bar{i}}g'^{q\bar{q}}g'^{\bar{j}j}g'^{k\bar{k}}\big [ \phi_{i\bar{q}\bar{\alpha}}-  \Gamma_{\bar{\alpha}\bar{i}} ^{\bar{q}} g'_{i\bar{i}}\big ]\phi_{i\bar{j}k}\phi_{\bar{q}j\bar{k}}\\
\quad & -g'^{i\bar{i}}g'^{\bar{j}j}g'^{\bar{q}q}g'^{k\bar{k}}\big [ \phi_{j\bar{q}\bar{\alpha}}-  \Gamma_{\bar{\alpha}\bar{j}} ^{\bar{q}} g'_{j\bar{j}}\big ]\phi_{i\bar{j}k}\phi_{\bar{i}q\bar{k}}\\
\quad & -g'^{i\bar{i}}g'^{\bar{j}j}g'^{k\bar{k}}g'^{q\bar{q}}\big [ \phi_{k\bar{q}\bar{\alpha}}- \Gamma_{\bar{\alpha}\bar{k}} ^{\bar{q}} g'_{k\bar{k}}\big ]\phi_{i\bar{j}k}\phi_{\bar{i}j\bar{q}}\bigg ]\\
\end{split}
\end{equation}

and

\begin{equation}\label{tcal2}
\begin{split}
\mathcal{T}_2&:=\sum_{\theta (ijk)\leq 2  } g'^{i\bar{i}}g'^{\bar{j}j}g'^{k\bar{k}}
\big [ \phi_{i\bar{j}k\bar{\alpha}} -\sum \Gamma ^{\bar{j}} _{\bar{\alpha}\bar{c}} \phi_{i\bar{c}k}\big ]\phi_{\bar{i}j\bar{k}} \\
\quad & +g'^{i\bar{i}}g'^{\bar{j}j}g'^{k\bar{k}}
\big [ \phi_{\bar{i}j\bar{k}\bar{\alpha}}-\sum \Gamma^{\bar{i}} _{\bar{\alpha}\bar{c}} \phi_{\bar{c}j\bar{k}}  
-\sum \Gamma^{\bar{k}} _{\bar{\alpha}\bar{c}} \phi_{\bar{i}j\bar{c}} \big ] \phi_{i\bar{j}k}
\end{split}
\end{equation}
  As before by using   lemma (\ref{lem29}) we can  assume $\Gamma (q)=0.$ Hence

\begin{equation}\label{tcal12}
\begin{split}
\mathcal{T}_1=&  \sum_{\theta(ijk)<3} \bigg [-g'^{i\bar{i}}g'^{q\bar{q}}g'^{\bar{j}j}g'^{k\bar{k}}  \phi_{i\bar{q}\bar{\alpha}} \phi_{i\bar{j}k}\phi_{\bar{q}j\bar{k}}
-g'^{i\bar{i}}g'^{\bar{j}j}g'^{\bar{q}q}g'^{k\bar{k}} \phi_{j\bar{q}\bar{\alpha}} \phi_{i\bar{j}k}\phi_{\bar{i}q\bar{k}} -g'^{i\bar{i}}g'^{\bar{j}j}g'^{k\bar{k}}g'^{q\bar{q}}  \phi_{k\bar{q}\bar{\alpha}} \phi_{i\bar{j}k}\phi_{\bar{i}j\bar{q}}\bigg ]\\
\end{split}
\end{equation}

and

\begin{equation}\label{tcal22}
\mathcal{T}_2=\sum_{\theta (ijk)\leq 2  }\bigg [ g'^{i\bar{i}}g'^{\bar{j}j}g'^{k\bar{k}}
 \phi_{i\bar{j}k\bar{\alpha}}\phi_{\bar{i}j\bar{k}}  +g'^{i\bar{i}}g'^{\bar{j}j}g'^{k\bar{k}}
 \phi_{\bar{i}j\bar{k}\bar{\alpha}} \phi_{i\bar{j}k}
\bigg ]
\end{equation}

 From (\ref{laplap22}) we deduce that

\begin{equation}\label{lapsi1}
\sum_{\alpha}\frac{|\chi_{\alpha}|g'^{\alpha\bar{\alpha}}}{\chi\Psi_1+\epsilon_1}| \Psi_{1,\bar{\alpha}}|\leq  \sum\frac{|\chi_{\alpha}|g'^{\alpha\bar{\alpha}}}{\chi\Psi_1+\epsilon_1} (|\mathcal{T}_1|+|\mathcal{T}_2| )
\end{equation}
 
and from the inequality

\[
|\mathcal{T}_1+\mathcal{T}_2 |^2\leq 2( \mathcal{T}_1 ^2 +\mathcal{T}_2 ^2  )
\]


we also have

\begin{equation}\label{lapsi2}
\sum_{\alpha} \frac{2\chi ^2 g'^{\alpha\bar{\alpha}}}{(\chi \Psi_1 +\epsilon_1)^2} | \Psi_{1,\bar{\alpha}}|^2\leq \sum_{\alpha} \frac{4\chi ^2 g'^{\alpha\bar{\alpha}}}{(\chi \Psi_1 +\epsilon_1)^2}(\mathcal{T}_1 ^2 +\mathcal{T}_2 ^2 )
\end{equation}

Therefore by (\ref{laplap2}),  (\ref{lapsi1}),  (\ref{lapsi2}), and  (\ref{laplap22})  we obtain

\begin{equation}\label{laplap222}
\begin{split}
\Delta_{g'} (\log(\chi \Psi_1+\epsilon_1)) \geq & \frac{|\chi|}{\chi\Psi_1 + \epsilon_1}[ \Delta_{g'} \Psi_1 ]\\
\quad &  -\sum\frac{2|\chi_{\alpha}|g'^{\alpha\bar{\alpha}}}{\chi\Psi_1+\epsilon_1}| \mathcal{T}_1+\mathcal{T}_2 |\\
\quad & -\sum \frac{4\chi ^2g'^{\alpha\bar{\alpha}}}{(\chi \Psi_1 +\epsilon_1)^2} (\mathcal{T}_1^2+\mathcal{T}_2^2 )\\
\quad & -\frac{|\Delta_{g'} (\chi)| }{\chi\Psi_1 +\epsilon_1}\Psi_1 -\sum \frac{2|\chi_{\bar{\alpha}}|^2}{(\chi\Psi_1 +\epsilon_1 )^2}|\Psi_1 |^2
\end{split}
\end{equation}

\begin{lemma}\label{lemsi}

\begin{equation}\label{k2s41}
\sum_{\alpha} \frac{ |\chi^2 |g'^{\alpha\bar{\alpha}}}{(\chi \Psi_1 +\epsilon_1)^2}\mathcal{T}_1 ^2\leq \frac{\chi^{2\tau}C_{\ref{k2s41}}}{\kappa _1 ^2 |S|^4}
\end{equation}
\begin{equation}\label{k2s42}
\sum_{\alpha}\frac{|\chi_{\alpha}|g'^{\alpha\bar{\alpha}}}{\chi\Psi_1+\epsilon_1} |\mathcal{T}_1|\leq   \frac{C_{\ref{k2s42}}\chi^{\tau}}{\kappa_1 \delta |S|^2}
\end{equation}
 
 where $C_{\ref{k2s41}}$ and $C_{\ref{k2s42}}$ depend only on $G$ and $g_{reg}$. We also have 

\begin{equation}\label{lap2t20}
\begin{split}
\sum\frac{|\chi_{\alpha}|g'^{\alpha\bar{\alpha}}}{\chi\Psi_1+\epsilon_1}|\mathcal{T}_2|\leq  \sum\frac{C_{\ref{lap2t20}}|\chi_{\alpha}|(g'^{\alpha\bar{\alpha}})^{1/2}(\Psi_1 )^{1/2}}{\chi\Psi_1+\epsilon_1} &(\big [g'^{\alpha\bar{\alpha}}g'^{i\bar{i}}g'^{j\bar{j}}g'^{k\bar{k}}|\phi_{i\bar{j} k\bar{\alpha}}|^2 \big ] ^{1/2}\\
\quad & +[g'^{\alpha\bar{\alpha}}g'^{i\bar{i}}g'^{j\bar{j}}g'^{k\bar{k}}|\phi_{\bar{i}j \bar{k}\bar{\alpha}}|^2 \big ] ^{1/2} )
\end{split}
\end{equation}

\begin{equation}\label{lap2t220}
\sum \frac{\chi ^2 g'^{\alpha\bar{\alpha}}}{(\chi\Psi_1 +\epsilon_1)^2} \mathcal{T}_2 ^2 \leq   \sum \frac{\chi ^2 C_{\ref{lap2t220}}\Psi_1}{(\chi \Psi_1 +\epsilon_1)^2}g'^{\alpha\bar{\alpha}}g'^{i\bar{i}}g'^{j\bar{j}}g'^{k\bar{k}}( |\phi_{i\bar{j} k\bar{\alpha}}|^2+ |\phi_{\bar{i}j\bar{k}\bar{\alpha}}|^2)
\end{equation}

where the constants $C_{\ref{lap2t20}}$  and $C_{\ref{lap2t220}}$ depend on the number of the terms..
\end{lemma}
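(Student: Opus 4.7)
The plan is to reduce all four inequalities to Cauchy--Schwarz manipulations of the explicit sums defining $\mathcal{T}_1$ and $\mathcal{T}_2$ in (\ref{tcal12}) and (\ref{tcal22}), combined with the pointwise bound $\Psi \leq \mathscr{C}/|S|^4$ from Theorem \ref{thirdorder} and the choice $\epsilon_1 = \kappa\sup_{U_p}\Psi_1$ from (\ref{eps}), which gives $\Psi_1/(\chi\Psi_1+\epsilon_1) \leq 1/\kappa$. A convenient normalization is
\[
\psi_{abc} := (g'^{a\bar a}g'^{b\bar b}g'^{c\bar c})^{1/2}\phi_{abc},
\]
so that each factor satisfies $|\psi_{abc}|^2 \leq \Psi$, while sums of the form $\sum_{k\neq n}|\psi_{i\bar j k}|^2$ are bounded by $\Psi_1$.

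For (\ref{k2s41}) and (\ref{k2s42}), I would rewrite each summand of $\mathcal{T}_1$ as $(g'^{\alpha\bar\alpha})^{-1/2}\psi_{\ast\ast\bar\alpha}\,\psi_{\ast\ast\ast}\,\psi_{\ast\ast\ast}$. Because the constraint $\theta(ijk) < 3$, which descends from the $k\neq n$ condition in the definition of $\Psi_1$, pairs two of the three $\psi$-factors so that Cauchy--Schwarz in those indices yields a full power of $\Psi_1$, one obtains
\[
|\mathcal{T}_1| \;\leq\; C\,(g'^{\alpha\bar\alpha})^{-1/2}\,\Psi^{1/2}\,\Psi_1.
\]
Squaring and applying Theorem \ref{thirdorder} gives $g'^{\alpha\bar\alpha}\mathcal{T}_1^2 \leq C\Psi_1^2/|S|^4$; plugging into (\ref{k2s41}), the ratio $\chi^2\Psi_1^2/(\chi\Psi_1+\epsilon_1)^2 \leq \chi^2/\kappa^2 \leq \chi^{2\tau}/\kappa^2$ yields the claim. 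For (\ref{k2s42}) the same bound on $|\mathcal{T}_1|$, combined with $\Psi^{1/2}\leq\mathscr{C}^{1/2}/|S|^2$, the factor $\Psi_1/(\chi\Psi_1+\epsilon_1)\leq 1/\kappa$, and the cutoff estimate $|\chi_\alpha|(g'^{\alpha\bar\alpha})^{1/2} \leq C\chi^\tau/\delta$ analogous to the rescaling used in Section 6, produces (\ref{k2s42}).

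For (\ref{lap2t20}) and (\ref{lap2t220}), the argument is a direct Cauchy--Schwarz on each term of $\mathcal{T}_2$ in (\ref{tcal22}), since $\mathcal{T}_2$ is built from inner products of a fourth-order derivative with a third-order derivative: specifically
\[
\Bigl|\sum_{i,j,k}g'^{i\bar i}g'^{j\bar j}g'^{k\bar k}\phi_{i\bar j k\bar\alpha}\phi_{\bar i j\bar k}\Bigr| \leq \Bigl[\sum g'^{i\bar i}g'^{j\bar j}g'^{k\bar k}|\phi_{i\bar j k\bar\alpha}|^2\Bigr]^{1/2}\cdot \Psi_1^{1/2},
\]
and similarly for the conjugate summand. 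Multiplying by $(g'^{\alpha\bar\alpha})^{1/2}$ and distributing the remaining prefactor $|\chi_\alpha|(g'^{\alpha\bar\alpha})^{1/2}/(\chi\Psi_1+\epsilon_1)$ reproduces (\ref{lap2t20}); squaring and using $(a+b)^2\leq 2(a^2+b^2)$ with the prefactor $\chi^2 g'^{\alpha\bar\alpha}/(\chi\Psi_1+\epsilon_1)^2$ gives (\ref{lap2t220}). The main technical subtlety sits in the $\mathcal{T}_1$-case: the three $\psi$-factors in each summand of (\ref{tcal12}) carry different index positions, and only by exploiting the symmetry $\phi_{i\bar j k}=\phi_{k\bar j i}$ of the holomorphic third derivatives can one align two of them so that the Cauchy--Schwarz step produces a full factor of $\Psi_1$ rather than only $\Psi$. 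This sharper pairing is what distinguishes the estimates (\ref{k2s41})--(\ref{k2s42}) from the weaker bound $\Psi^{3/2}$ furnished by the naive application of Cauchy--Schwarz, and it is the only nontrivial step in the whole lemma.
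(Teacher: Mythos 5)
Your proposal follows essentially the same route as the paper: Cauchy--Schwarz applied directly to the triple products in $\mathcal{T}_1$ (giving $g'^{\alpha\bar\alpha}\mathcal{T}_1^2\lesssim \Psi_1^2\Psi$, exactly the paper's (\ref{lap2t1})) and to the pairings in $\mathcal{T}_2$, combined with $\epsilon_1=\kappa\sup\Psi_1$ and the bound $\Psi\leq\mathscr{C}/|S|^4$ from Theorem \ref{thirdorder}; your additional remark on using the symmetry $\phi_{i\bar j k}=\phi_{k\bar j i}$ to secure the $\Psi_1$ factor is a harmless refinement of the same step. For (\ref{k2s42}) you supply the cutoff/derivative bookkeeping at the same level of detail as the paper, which itself only says ``similar argument,'' so there is no substantive difference in approach.
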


\begin{proof}
By applying Cauchy-Shcwarz  inequality to the definition of $\mathcal{T}_1 $ (\ref{tcal12}) on can deduce 
\begin{equation}\label{lap2t1}
\sum_{\alpha} \frac{ \chi ^2 g'^{\alpha\bar{\alpha}}}{(\chi \Psi_1 +\epsilon_1)^2} \mathcal{T}_1 ^2  \leq C_{\ref{lap2t1}}  \frac{  \chi^2 \Psi_1 ^2 \Psi }{(\chi \Psi_1 +\epsilon_1)^2}
\end{equation}

 where $C_{\ref{lap2t1}}$ depends on the number of terms in the left hand side.
\begin{equation}\label{sichel1}
\frac{ \chi^2 \Psi_1 ^2 \Psi }{(\chi \Psi_1 +\epsilon_1)^2}\leq \frac{1}{\kappa_1 ^2}\chi^{2} \Psi\leq \frac{\mathscr{C} \chi ^{2}}{\kappa_1 ^2|S|^4}
\end{equation}
where $\mathscr{C}$ is the constant introduced in theorem (\ref{thirdorder}). 
Similar argument shows that

\begin{equation}\label{mohsa}
\sum\frac{|\chi_{\alpha}|g'^{\alpha\bar{\alpha}}}{\chi\Psi_1+\epsilon_1} |\mathcal{T}_1|\leq \frac{C_{\ref{k2s42}}\chi}{\kappa_1 \delta |S|^2}
\end{equation}
where $C_{\ref{k2s42}}$ depends on $g_{reg}$, $G$.

By Cauchy-Schwarz inequality we get  
\begin{equation}\label{t22t}
\begin{split}
\mathcal{T}_2 ^2 \leq C_{\ref{lap2t220}} \sum g'^{i\bar{i}}g'^{j\bar{j}}g'^{k\bar{k}}|  &\phi_{i\bar{j} k\bar{\alpha}}|^2g'^{i\bar{i}}g'^{j\bar{j}}g'^{k\bar{k}}(|\phi_{\bar{i}j\bar{k}}|^2 )\\
\quad  &+C_{\ref{lap2t220}} \sum g'^{i\bar{i}}g'^{j\bar{j}}g'^{k\bar{k}} |\phi_{\bar{i}j\bar{k}\bar{\alpha}}|^2g'^{i\bar{i}}g'^{j\bar{j}}g'^{k\bar{k}} (|\phi_{i\bar{j} k}|^2)
\end{split}
\end{equation}
where  $C_{\ref{lap2t220}}$ depends on the number of terms in the sum of the right hand side. Hence 
\begin{equation}\label{22tt}
\sum \frac{4\chi ^2 g'^{\alpha\bar{\alpha}}}{(\chi\Psi_1 +\epsilon_1 )^2} \mathcal{T}_2 ^2 \leq   \sum \frac{4\chi ^2C_{\ref{lap2t220}}\Psi_1}{(\chi \Psi_1 +\epsilon_1)^2}g'^{\alpha\bar{\alpha}}g'^{i\bar{i}}g'^{j\bar{j}}g'^{k\bar{k}}( |\phi_{i\bar{j} k\bar{\alpha}}|^2+ |\phi_{\bar{i}j\bar{k}\bar{\alpha}}|^2)
\end{equation}
and again by Cauchy-Schwarz  
\begin{equation}\label{lap2t2}
\begin{split}
\sum\frac{|\chi_{\alpha}|g'^{\alpha\bar{\alpha}}}{\chi\Psi_1+\epsilon_1}|\mathcal{T}_2|\leq  \sum\frac{C_{\ref{lap2t20}}|\chi_{\alpha}|(g'^{\alpha\bar{\alpha}})^{1/2}(\Psi_1 )^{1/2}}{\chi\Psi_1+\epsilon_1} &(\big [g'^{\alpha\bar{\alpha}}g'^{i\bar{i}}g'^{j\bar{j}}g'^{k\bar{k}}|\phi_{i\bar{j} k\bar{\alpha}}|^2 \big ] ^{1/2}\\
\quad & +[g'^{\alpha\bar{\alpha}}g'^{i\bar{i}}g'^{j\bar{j}}g'^{k\bar{k}}|\phi_{\bar{i}j \bar{k}\bar{\alpha}}|^2 \big ] ^{1/2} )
\end{split}
\end{equation}


 \end{proof}

Consequently   using  proposition (\ref{cor6}), relation (\ref{laplap22}) lemma (\ref{lemsi})   we obtain

\begin{prop}\label{corya}
\[
\begin{split}
\Delta_{g'}(\log (\chi\Psi_1 + \epsilon)) \geq \sum_{\substack{\theta(ijk)\leq 2\\ \alpha}} \big [ ( F_{i\bar{j}k\alpha}x_{i\bar{j}k\alpha} ^2 +F_{\bar{i}j\bar{k}\alpha}x_{\bar{i}j\bar{k}\alpha} ^2)+ G_{i\bar{j}k\alpha} x_{i\bar{j}k\alpha}+ G_{\bar{i}j\bar{k}\alpha} x_{\bar{i}j\bar{k}\alpha}+H_{ijk\alpha}\big ]\\
\end{split}
\]

where 

\[
x_{i\bar{j}k\alpha} =\big [ g'^{\alpha\bar{\alpha}}g'^{i\bar{i}}g'^{j\bar{j}}g'^{k\bar{k}}|\phi_{i\bar{j}k\alpha}|^2 \big ]^{1/2}
\]

\[
x_{\bar{i}j\bar{k}\alpha} =\big [ g'^{\alpha\bar{\alpha}}g'^{i\bar{i}}g'^{j\bar{j}}g'^{k\bar{k}}|\phi_{\bar{i}j\bar{k}\alpha}|^2 \big ]^{1/2}
\]

\[
F_{\bar{i}j\bar{k}\alpha}=F_{i\bar{j}k\alpha}= \frac{|\chi |}{\chi\Psi_1 +\epsilon_1}-\frac{4\chi ^2C_{\ref{lap2t20}}\Psi_1}{(\chi\Psi_1 +\epsilon _1)^2}
\]
\[
\begin{split}
G_{i\bar{j}k\alpha}=& \frac{|\chi |}{\chi\Psi_1 +\epsilon_1 }B_{i\bar{j}k\alpha}-\frac{C_{\ref{lap2t220}}|\chi_{\alpha}| (g'^{\alpha\bar{\alpha}})^{1/2}(\Psi_1)^{1/2}}{\chi \Psi_1 +\epsilon_1}\\
\end{split}
\]

\[
\begin{split}
\sum H_{ijk\alpha}=&\frac{\chi}{\chi\Psi_1+\epsilon_1}\sum C_{ijk\alpha}-\sum \frac{2|\chi_{\alpha}|g'^{\alpha\bar{\alpha}} }{\chi\Psi_1+\epsilon_1}|\mathcal{T}_1| - \sum \frac{4\chi ^2g'^{\alpha\bar{\alpha}}}{(\chi \Psi_1 +\epsilon_1)^2} \mathcal{T}_1^2 \\
\quad & -\frac{|\Delta_{g'} (\chi)| }{\chi\Psi_1 +\epsilon_1}\Psi_1 -\sum \frac{2|\chi_{\bar{\alpha}}|^2}{(\chi\Psi_1 +\epsilon_1)^2}|\Psi_1 |^2
\end{split}
\]
 
\end{prop}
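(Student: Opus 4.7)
The plan is purely algebraic: starting from inequality \eqref{laplap22}, I would substitute the identity for $\Delta_{g'}\Psi_1$ supplied by Proposition \ref{cor6} and the $\mathcal{T}_j$-estimates of Lemma \ref{lemsi}, then group the resulting terms according to how they depend on the fourth-derivative weights $x_{i\bar j k\alpha}$ and $x_{\bar i j\bar k\alpha}$. The non-trivial fourth-derivative content of the right-hand side of \eqref{laplap22} lives entirely in the positive piece $|\chi|(\chi\Psi_1+\epsilon_1)^{-1}\Delta_{g'}\Psi_1$ and in the upper bounds for $|\mathcal{T}_2|$ and $\mathcal{T}_2^2$, while $\mathcal{T}_1$, $\Delta_{g'}\chi$, $|\chi_{\bar\alpha}|^2\Psi_1^2$ and $C_{ijk\alpha}$ carry no fourth derivatives and can be packaged into the remainder $H_{ijk\alpha}$.

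Concretely, I would first rewrite
\[
\Delta_{g'}\Psi_1 \;=\; \sum_{\theta(ijk)\le 2,\,\alpha}\!\!\big(x_{i\bar j k\alpha}^{2}+x_{\bar i j\bar k\alpha}^{2}\big)\;+\;\sum B_{i\bar j k\alpha}x_{i\bar j k\alpha}\;+\;\sum B_{\bar i j\bar k\alpha}x_{\bar i j\bar k\alpha}\;+\;\sum C_{ijk\alpha},
\]
which is just Proposition \ref{cor6} with $\phi_{i\bar j k\alpha}$ and $\phi_{\bar i j\bar k\alpha}$ written in terms of the $g'$-normalised variables $x_{i\bar j k\alpha}$ and $x_{\bar i j\bar k\alpha}$. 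Multiplying by $|\chi|/(\chi\Psi_1+\epsilon_1)$ produces the leading $|\chi|/(\chi\Psi_1+\epsilon_1)\cdot x^2$ contribution of $F_{i\bar j k\alpha}$ and $F_{\bar i j\bar k\alpha}$, and the leading $|\chi|B_{i\bar j k\alpha}/(\chi\Psi_1+\epsilon_1)\cdot x$ contribution of $G_{i\bar j k\alpha}$ and $G_{\bar i j\bar k\alpha}$, together with a pure $C$-remainder which is absorbed into $H_{ijk\alpha}$.

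Next I would apply the two $\mathcal{T}_2$-bounds of Lemma \ref{lemsi}. The square estimate \eqref{lap2t220}, when inserted into $-\sum 4\chi^{2}g'^{\alpha\bar\alpha}(\chi\Psi_1+\epsilon_1)^{-2}\,\mathcal{T}_2^{2}$, produces precisely the negative correction $-4\chi^{2}C_{\ref{lap2t20}}\Psi_1/(\chi\Psi_1+\epsilon_1)^{2}$ to each $F$-coefficient. The linear estimate \eqref{lap2t20}, inserted into $-\sum 2|\chi_\alpha|g'^{\alpha\bar\alpha}(\chi\Psi_1+\epsilon_1)^{-1}|\mathcal{T}_2|$, yields exactly the negative correction $-C_{\ref{lap2t220}}|\chi_\alpha|(g'^{\alpha\bar\alpha})^{1/2}\Psi_1^{1/2}/(\chi\Psi_1+\epsilon_1)$ to each $G$-coefficient. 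Everything that remains — the $\mathcal{T}_1$ and $\mathcal{T}_1^{2}$ contributions controlled by \eqref{k2s41}, \eqref{k2s42}, together with the $-|\Delta_{g'}\chi|\Psi_1/(\chi\Psi_1+\epsilon_1)$ and $-2|\chi_{\bar\alpha}|^{2}\Psi_1^{2}/(\chi\Psi_1+\epsilon_1)^{2}$ tail of \eqref{laplap22}, and the $|\chi|\sum C_{ijk\alpha}/(\chi\Psi_1+\epsilon_1)$ collected in the first step — carries no fourth-derivative weight and defines $H_{ijk\alpha}$ verbatim as in the statement.

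The main obstacle is purely bookkeeping: one must carefully track signs coming from the inequality $|\mathcal{T}_1+\mathcal{T}_2|^{2}\le 2(\mathcal{T}_1^{2}+\mathcal{T}_2^{2})$ and from Cauchy--Schwarz in Lemma \ref{lemsi}, and verify that each term of \eqref{laplap22} ends up assigned uniquely to $F$, $G$ or $H$ with the correct coefficient. Since no new inequality is required beyond Proposition \ref{cor6} and Lemma \ref{lemsi}, the proposition follows once this collection of terms is carried out; the quadratic-plus-linear form of the bound will then be ready for the maximum-principle argument used to control $\Psi_1$.
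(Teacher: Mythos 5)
Your proposal is correct and takes essentially the same route as the paper, which obtains Proposition \ref{corya} precisely by combining (\ref{laplap22}) with Proposition \ref{cor6} and Lemma \ref{lemsi}; your term-by-term assignment of the fourth-derivative contributions to $F$ and $G$ and of the $\mathcal{T}_1$, $\Delta_{g'}\chi$, $|\chi_{\bar\alpha}|^2\Psi_1^2$ and $C_{ijk\alpha}$ pieces to $H$ reproduces the paper's grouping, up to the paper's own harmless interchange of the labels $C_{\ref{lap2t20}}$ and $C_{\ref{lap2t220}}$ (both constants only count terms).
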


As an immediate corollary we can derive

\begin{cor}\label{altcor1}

\begin{equation}\label{altcor}
\Delta_{g'}(\log (\chi\Psi_1 + \epsilon_1)) \geq -\sum_{}\bigg ( \frac{G_{\bar{i}j\bar{k}\alpha}^2}{4F_{\bar{i}j\bar{k}\alpha}}+ \frac{G_{i\bar{j}k\alpha}^2}{4F_{i\bar{j}k\alpha}}\bigg )+\sum H_{ijk\alpha}
\end{equation}

\end{cor}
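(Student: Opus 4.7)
The plan is to treat the right-hand side of Proposition \ref{corya} as a sum of real-variable quadratic forms in the auxiliary variables $x_{i\bar{j}k\alpha}$ and $x_{\bar{i}j\bar{k}\alpha}$, and then pointwise minimize each quadratic. Concretely, for fixed indices $(i,j,k,\alpha)$ with $\theta(ijk)\le 2$, the contribution
\[
F_{i\bar{j}k\alpha}\, x_{i\bar{j}k\alpha}^{2}+G_{i\bar{j}k\alpha}\, x_{i\bar{j}k\alpha}
\]
is a quadratic in the nonnegative real number $x_{i\bar{j}k\alpha}$, and analogously for the barred variable. Since Proposition \ref{corya} asserts an inequality that holds with the actual values of $x_{i\bar{j}k\alpha}$ inserted, it certainly holds when each such quadratic is replaced by its infimum over $x\in\mathbb{R}$, provided the leading coefficient $F_{i\bar{j}k\alpha}$ is strictly positive.

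The first step is therefore to verify positivity of $F_{i\bar{j}k\alpha}$ (and $F_{\bar{i}j\bar{k}\alpha}$) on the support of $\chi$. Multiplying the defining formula
\[
F_{i\bar{j}k\alpha}=\frac{|\chi|}{\chi\Psi_{1}+\epsilon_{1}}-\frac{4\chi^{2}C_{\ref{lap2t20}}\Psi_{1}}{(\chi\Psi_{1}+\epsilon_{1})^{2}}
\]
through by $(\chi\Psi_{1}+\epsilon_{1})^{2}$ reduces positivity to the inequality $|\chi|(\chi\Psi_{1}+\epsilon_{1})>4C_{\ref{lap2t20}}\chi^{2}\Psi_{1}$, i.e.\ $|\chi|\epsilon_{1}>(4C_{\ref{lap2t20}}-1)\chi^{2}\Psi_{1}$. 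Because $\epsilon_{1}=\kappa\sup_{U_{p}}\Psi_{1}$ and $0\le\chi\le 1$, this holds on $U_{p}$ as soon as $\kappa>4C_{\ref{lap2t20}}-1$, a purely numerical restriction on the parameter $\kappa$ already at our disposal from the definition \eqref{eps} (and consistent with the choices made earlier in Section 6 for $\epsilon$).

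Once positivity of the leading coefficients is secured, elementary calculus gives, for each real $x$ and each pair $(F,G,H)$ with $F>0$,
\[
Fx^{2}+Gx+H\;\ge\;H-\frac{G^{2}}{4F},
\]
the minimum being attained at $x=-G/(2F)$. Applying this inequality to each summand of Proposition \ref{corya} with $(F,G,H)=(F_{i\bar{j}k\alpha},G_{i\bar{j}k\alpha},0)$ and $(F_{\bar{i}j\bar{k}\alpha},G_{\bar{i}j\bar{k}\alpha},0)$, and retaining the index-free pieces $H_{ijk\alpha}$ separately (they do not involve the $x$ variables and pass through unchanged), we arrive at
\[
\Delta_{g'}\bigl(\log(\chi\Psi_{1}+\epsilon_{1})\bigr)\;\ge\;-\sum\Bigl(\frac{G_{\bar{i}j\bar{k}\alpha}^{2}}{4F_{\bar{i}j\bar{k}\alpha}}+\frac{G_{i\bar{j}k\alpha}^{2}}{4F_{i\bar{j}k\alpha}}\Bigr)+\sum H_{ijk\alpha},
\]
which is exactly the assertion of the corollary. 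The only real content is the verification $F_{\bullet}>0$; once that is in hand the statement is a purely algebraic consequence of Proposition \ref{corya}, and no new analytic ingredient is required. The main (mild) obstacle is ensuring that the choice of $\kappa$ made here is compatible with all the other constraints on $\kappa$ imposed in Section 6 (e.g.\ in Lemma \ref{cor4} and Lemma \ref{lem19n}), which is immediate since each of those constraints is of the form ``$\kappa$ sufficiently large.''
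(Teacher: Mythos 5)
Your proposal is correct and follows essentially the same route as the paper: the corollary is obtained from Proposition \ref{corya} by the elementary pointwise bound $Fx^{2}+Gx\geq -G^{2}/(4F)$ applied to each quadratic in $x_{i\bar{j}k\alpha}$ and $x_{\bar{i}j\bar{k}\alpha}$, with the $H_{ijk\alpha}$ terms passing through unchanged. The only point you make explicit inside the proof — positivity of $F_{i\bar{j}k\alpha}$ via the size of $\epsilon_{1}$ (a lower bound on $\kappa$) — is exactly what the paper arranges immediately after stating the corollary through its choice of $\epsilon_{1}$, so there is no substantive difference.
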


According to proposition (\ref{cor6}) relations (\ref{sichel1}) and (\ref{mohsa}),  and  theorem (\ref{thirdorder}) it can be seen that
\begin{equation}\label{sichel2}
\big | \sum H_{ijk\alpha} \big | \leq C_{\ref{sichel2}}  \frac{\chi}{\kappa_1 ^2|S|^4} 
\end{equation}

where $C_{\ref{sichel2}}$ depends on $g_{reg}$, $G$, $\delta$ and the lower bound of $g'|_{U_p \cap D}$.
This means in particular that $H_{ijk\alpha}\simeq 0$.
 Hence  by proposition (\ref{cor6})   $B_{i\bar{j}k\alpha}\sim 0$ and we get

\begin{equation}\label{giefom}
\begin{split}
|\frac{G_{\bar{i}j\bar{k}\alpha}^2}{F_{\bar{i}j\bar{k}\alpha}} |\leq   |\frac{ (\frac{\chi B_{i\bar{j}k\alpha}}{\chi\Psi_1 +\epsilon_1})^2}{\frac{(1-4C_{\ref{lap2t20}})\chi^2\Psi_1+\chi\epsilon_1}{(\chi\Psi_1+\epsilon_1)^2}}| + |\frac{ \frac{C_{\ref{lap2t220}}^2|\chi_{\alpha}|^2 g'^{\alpha\bar{\alpha}} \Psi_1}{(\chi\Psi_1+\epsilon_1 )^2} }{\frac{(1-4 C_{\ref{lap2t20}})\chi^2\Psi_1+\chi\epsilon_1}{(\chi\Psi_1+\epsilon_1)^2}}|  
\end{split}
\end{equation}
\begin{equation}\label{c'lp1}
 |\frac{ (\frac{\chi B_{i\bar{j}k\alpha}}{\chi\Psi_1 +\epsilon_1})^2}{\frac{(1-4C_{\ref{lap2t20}})\chi^2\Psi_1+\chi\epsilon_1}{(\chi\Psi_1+\epsilon_1)^2}}|=\frac{\chi (B_{i\bar{j}k\alpha})^2}{|(1-4 C_{\ref{lap2t20}})\chi\Psi_1+\epsilon_1 |}\leq \frac{\chi\Psi_1}{|(1-4C_{\ref{lap2t20}})\Psi_1+\epsilon_1|}\times\frac{C_{\ref{c'lp1}}}{|S|^4}\leq \frac{C_{\ref{c'lp1}} \chi}{\kappa_1 |S|^4}
\end{equation}

\begin{equation}
 |\frac{ \frac{C_{\ref{lap2t220}}^2|\chi_{\alpha}|^2 g'^{\alpha\bar{\alpha}} \Psi_1}{(\chi\Psi_1+\epsilon_1 )^2} }{\frac{(1-4C_{\ref{lap2t20}})\chi^2\Psi_1+\chi\epsilon_1}{(\chi\Psi_1+\epsilon_1)^2}}|  =\frac{C_{\ref{lap2t220}}^2\frac{|\chi_{\alpha}|^2}{\chi^2} g'^{\alpha\bar{\alpha}} \chi\Psi_1}{|(1-4C_{\ref{lap2t20}})\chi\Psi_1+\epsilon_1|} 
\end{equation}
 
So if $\alpha = z$  by appying lemma (\ref{m'm'1})in the appendix (\ref{app7})  we get 

\begin{equation}\label{c'lp2}
\frac{C_{\ref{lap2t220}}^2\frac{|\chi_{\alpha}|^2}{\chi^2} g'^{\alpha\bar{\alpha}} \chi\Psi_1}{|(1-4C_{\ref{lap2t20}})\chi\Psi_1+\epsilon_1|} \leq \frac{C_{\ref{c'lp2}} \chi}{\kappa_1 \delta ^2 |S|^2}
\end{equation}\label{c'lp3}
for some constant  $C_{\ref{c'lp2}}$  which only depends on $G$ and $g_{reg}$.
 Also if $\alpha \neq z$ we can see that 

\begin{equation}
\frac{C_{\ref{lap2t220}}^2\frac{|\chi_{\alpha}|^2}{\chi^2} g'^{\alpha\bar{\alpha}} \chi\Psi_1}{|(1-4C_{\ref{lap2t20}})\chi\Psi_1+\epsilon_1|} \leq \frac{C_{\ref{c'lp2}} \chi }{\kappa_1 \delta ^2 \mathscr{M}'^2}
\end{equation}

    From corollary (\ref{altcor1}) and the above estimations we can  deduce that

\begin{equation}\label{theo1010}
\Delta_{g'}  (\log (\chi\Psi_1 + \epsilon_1))\geq  -C_{\ref{sichel2}}  \frac{\chi}{\kappa_1 ^2|S|^4} - \frac{C_{\ref{c'lp1}} \chi}{\kappa_1 |S|^4}- \frac{C_{\ref{c'lp2}} \chi}{\kappa_1 \delta ^2 |S|^2}- \frac{C_{\ref{c'lp2}} \chi }{\kappa_1 \delta ^2 \mathscr{M}'^2}
\end{equation}

\begin{theo}\label{theo1010}   For $\epsilon_1$ defined by  the relation (\ref{eps}) we have
\begin{equation}\label{theo10}
\Delta_{g'}  (\log (\chi\Psi_1 + \epsilon_1))\geq C_{\ref{theo10}}\frac{\chi}{\kappa_1^2 |S|^4} 
\end{equation}
where $C_{\ref{theo10}}$ depends on $g_{reg}$ and $G$, $\delta$ and the lower bound of $g'|_{U_p\cap D}$.
\end{theo}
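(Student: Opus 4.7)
The plan is to assemble the pointwise lower bound by feeding the structural decomposition of Proposition~\ref{cor6} into the quadratic completion already isolated in Corollary~\ref{altcor1}, and then to dominate every negative contribution by the two strictly positive ``reservoir'' terms $\chi/(\kappa|S|^4)$ and $\chi/(\kappa\delta^2(\mathscr{M}')^2)$ produced by the choice $\epsilon_1=\kappa\sup_{U_p}\Psi_1$.

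First I would fix an arbitrary point $q\in U_p$ and invoke Assumption~\ref{assum}, i.e.\ take a holomorphic frame $(V_1,\dots,V_{n-1},V_n)$ that diagonalises both $g'$ and $g_{\mathrm{reg}}$ at $q$ and corrects $\omega_{\mathrm{reg}}$ as in the appendix so that $\Gamma(q)=0$. In that frame the decomposition of $\Delta_{g'}\Psi_1$ given by Proposition~\ref{cor6} becomes
\[
\Delta_{g'}\Psi_1=\sum_{\theta(ijk)\le 2,\alpha}\bigl(x_{i\bar jk\alpha}^2+x_{\bar ij\bar k\alpha}^2\bigr)+\sum B_{\cdot}x_{\cdot}+\sum C_{ijk\alpha},
\]
with $B_{\cdot}\sim 0$ and $C_{\cdot}\simeq 0$ in the notation of the paper. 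The restriction $\theta(ijk)\le 2$, obtained from Lemma~\ref{lem30} of the appendix to trade the missing term $\phi_{n\bar n n\bar n}$ for lower-valence ones, is what prevents a spurious singularity like $g'^{n\bar n}|\phi_{n\bar n n\alpha}|^2\sim|S|^{-2}\cdot(\text{stuff})$ from wrecking the estimate.

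Next I would specialise Corollary~\ref{altcor1} by choosing
\[
\epsilon_1=\tfrac12+(4C_{\ref{lap2t20}}-1)\|\chi^{1/2}\Psi_1\|_\infty=\kappa\sup_{U_p}\Psi_1
\]
(for $\kappa$ large), which forces $F_{i\bar jk\alpha},F_{\bar ij\bar k\alpha}\ge \tfrac12$ everywhere on $V_p=\chi^{-1}(1)$. With this one-sided control on the leading quadratic form, the negative contribution $-G_{\cdot}^2/(4F_{\cdot})$ splits into two pieces, one stemming from $B_{\cdot}$ and the other from $\chi_\alpha$. For the $B$-piece I would use $|B_{i\bar jk\alpha}|^2\lesssim |S|^{-4}$ (granted by Proposition~\ref{cor6}) together with $\chi\Psi_1/(\chi\Psi_1+\epsilon_1)\le 1/\kappa$ to obtain inequality~(\ref{c'lp1}), i.e.\ a term bounded by $C\chi/(\kappa|S|^4)$. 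For the $\chi_\alpha$-piece I would split the sum according to $\alpha=z$ versus $\alpha\ne z$: in the first case the inverse metric entry $g'^{z\bar z}$ is absorbed by Lemma~\ref{m'm'1} into a factor $\mathscr M'_1/|S|^2$, but since the cut-off derivative $|\chi_z|^2$ carries an extra power of $|S|$ (one can choose $\chi=e^u$ with $u_z(0)=0$ to an appropriate order, cf.\ the construction in Section~3), the result is again a term of type $C\chi/(\kappa\delta^2|S|^2)\lesssim C\chi/(\kappa|S|^4)$ after reabsorbing into the $|S|^{-4}$ reservoir; in the second case $g'^{\alpha\bar\alpha}\le 1/\mathscr M'$, and $|\chi_\alpha|^2/\chi\le C/\delta^2$, producing the second reservoir $C\chi/(\kappa\delta^2(\mathscr M')^2)$. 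These are exactly the bounds~(\ref{c'lp2}) already recorded in the excerpt.

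Finally I would handle the term $\sum H_{ijk\alpha}$. Using the first and second estimates of Lemma~\ref{lemsi}, Theorem~\ref{thirdorder} (which bounds $|S|^4\Psi\le\mathscr C$), and $C_{ijk\alpha}\simeq 0$ from Proposition~\ref{cor6}, each of the five summands contributing to $H$ is of order $\chi/(\kappa|S|^4)$ or better, yielding~(\ref{sichel2}). Collecting: the only strictly negative contributions are absorbed into $C\chi/(\kappa|S|^4)+C\chi/(\kappa\delta^2(\mathscr M')^2)$, and these are precisely the two terms appearing on the right of (\ref{theo10}); choosing $C_{\ref{theo10}}$ as the maximum of the various implicit constants (all of which depend only on $g_{\mathrm{reg}}$, $G$, $\delta$, and $\mathscr M'$) closes the argument.

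The main obstacle I anticipate is the bookkeeping near the divisor: the degeneracy forces $g'^{z\bar z}$ to blow up like $|S|^{-2}$, so every time a summation index equals $n$ one picks up a factor that must be balanced either by the $\theta(ijk)\le 2$ constraint (so at most two $z$-indices in a cubic form), by a vanishing order of $\chi_\alpha$ at $D$, or by the uniform bound on $|S|^4\Psi$ from Theorem~\ref{thirdorder}. Verifying this balance case by case for all index patterns in the expansion (\ref{ford}) and (\ref{tcal12})--(\ref{tcal22}) is where the routine but delicate work lies, and is precisely why the appendix corrections that set $\Gamma(q)=0$ and replace $\phi_{n\bar nn\bar n}$ by lower-valence derivatives are indispensable.
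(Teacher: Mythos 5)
Your proposal is correct and follows essentially the same route as the paper: Proposition \ref{cor6} fed into the quadratic completion of Corollary \ref{altcor1}, with the choice of $\epsilon_1$ forcing $F\geq\tfrac12$, the $B$-piece and the $\chi_\alpha$-piece of $G^2/F$ handled via (\ref{c'lp1})--(\ref{c'lp2}) and Lemma \ref{m'm'1} (splitting $\alpha=z$ from $\alpha\neq z$), and $\sum H_{ijk\alpha}$ controlled by Lemma \ref{lemsi}, Theorem \ref{thirdorder} and (\ref{sichel2}). The only cosmetic difference is your appeal to an extra vanishing order of $\chi_z$ at $D$, which is unnecessary since $|S|\leq\delta$ already lets the $\frac{1}{\delta^2|S|^2}$ term be absorbed into the $\frac{1}{|S|^4}$ term, exactly as the paper does.
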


\begin{theo}\label{3rdorder2}There exists a constant $\mathscr{C}_1$ which depends on $G$ and $g_{reg}$ such that the following inequality holds on $X$
\[
 \Psi_1\leq \mathscr{C}_1
\]
 
\end{theo}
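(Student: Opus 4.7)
The strategy mirrors the maximum-principle argument used to prove Theorem \ref{thirdorder}, but driven by Theorem \ref{theo1010} in place of Corollary \ref{corsup}. The plan is to introduce the auxiliary function
\[
F:=\log(\chi\Psi_{1}+\epsilon_{1})+A\chi^{\tau}\Delta_{reg}\phi,
\]
with $A$ large, $0<\tau\ll 1$, and $\epsilon_{1}=\kappa\sup_{U_{p}}\Psi_{1}$ as in (\ref{eps}); one then applies the maximum principle to $F$ at a point $p_{0}\in X$ where it attains its maximum. Since $X$ is compact it suffices, by a finite-covering argument using the cutoffs from (\ref{chitild}), to bound $\chi\Psi_{1}$ inside one neighborhood $U_{p}$ of a point $p\in D$.

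The first step is to combine Theorem \ref{theo1010} with the classical Yau inequality (\ref{yau10}) for $\Delta_{g'}\Delta_{reg}\phi$, together with the Laplacian of the cutoff as in the derivation of (\ref{mainineq2}). After absorbing the negative $\tau$-weighted contributions with the same square-completion device used in (\ref{c'c''}), this yields a bound of the form
\[
\Delta_{g'}F\;\geq\;C'\,\chi^{\tau}\!\!\sum_{p,q,l}(1+\phi_{p\bar p})^{-1}(1+\phi_{q\bar q})^{-1}|\phi_{p\bar q\bar l}|^{2}-\frac{C''\chi^{\tau/2}}{|S|}\Bigl(\sum_{p,q,l}(1+\phi_{p\bar p})^{-1}(1+\phi_{q\bar q})^{-1}|\phi_{p\bar q\bar l}|^{2}\Bigr)^{1/2}+\frac{C\,\chi}{\kappa|S|^{4}}+(\text{lower order}),
\]
in which the quadratic-in-third-derivatives term is positive (for $A_{0}>1$) and the linear one can be completed into a square by Cauchy--Schwarz, exactly as in the chain leading from (\ref{epsi1}) to (\ref{mainineq2}).

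At the maximum point $p_{0}$ one has $|S|^{2}\Delta_{g'}F(p_{0})\leq 0$; this remains valid when $p_{0}\in D$ by the same $\limsup$-along-the-$z$-axis argument used in the proof of Proposition \ref{secondorder} (cf.\ (\ref{limsup})). Viewing the resulting inequality as a quadratic in $x:=\bigl[|S|^{2}\chi^{\tau}\sum(1+\phi_{p\bar p})^{-1}(1+\phi_{q\bar q})^{-1}|\phi_{p\bar q\bar l}|^{2}\bigr]^{1/2}$ and applying the same $\mathscr{M}'$, $\mathscr{M}'_{1}$ book-keeping from Lemma \ref{m'm'1} as in (\ref{ubound2}), one extracts a bound $\chi^{\tau}\Psi_{1}(p_{0})\leq \tilde C$ depending only on $g_{reg}$ and $G$. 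Using $F(x)\leq F(p_{0})$ for every $x\in X$ together with the uniform bound on $\chi^{\tau}\Delta_{reg}\phi$ from Proposition \ref{secondorder} produces, in strict analogy with (\ref{kappa4}), the inequality $\sup_{X}(\chi\Psi_{1}+\epsilon_{1})\leq e^{A_{1}}(\mathscr{C}+\epsilon_{1})$ where $A_{1}\to 0$ as $\kappa\to\infty$. Since $\epsilon_{1}=\kappa\sup_{U_{p}}\Psi_{1}$, the bootstrap argument from the paragraph following (\ref{kappa4}) forces $\sup_{U_{p}}\chi\Psi_{1}\leq \mathscr{C}_{1}$, and the finite cover of $D$ together with the smoothness of $\Psi_{1}$ on the compact set $\{|S|\geq\delta_{0}\}$ yields $\Psi_{1}\leq \mathscr{C}_{1}$ on $X$.

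The main obstacle, and the reason this is sharper than Theorem \ref{thirdorder}, is that no weight of $|S|$ is available in front of $\Psi_{1}$: the Ricci contribution $|(\inf R_{i\bar i l\bar l})[\sum(1+\phi_{i\bar i})/(1+\phi_{l\bar l})-n^{2}]|$ estimated in (\ref{riill}) blows up like $1/|S|^{2}$ near $D$, and must be dominated by the $C\chi/(\kappa|S|^{4})$ gain of Theorem \ref{theo1010} after multiplying through by $|S|^{2}$. The delicate point is therefore the order of the two limits $A\to\infty$ then $\kappa\to\infty$, which must be taken so that the square-completion from (\ref{c'c''}) remains effective while $A_{1}$ in (\ref{kappa4}) tends to $0$; the proof of Theorem \ref{thirdorder} provides the template for this choice and I expect the adaptation to be essentially mechanical, the genuinely new input being only the improved gain $1/|S|^{4}$ (versus $1/|S|^{3}$ controlled by the remainder in Proposition \ref{remainder}) coming from Theorem \ref{theo1010}.
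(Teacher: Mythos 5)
Your overall template (maximum principle for a logarithmic auxiliary function, multiplication by $|S|^2$ to handle a maximum on $D$, then the $\kappa\to\infty$ bootstrap) is the right one and matches the paper, but your choice of auxiliary function $F=\log(\chi\Psi_1+\epsilon_1)+A\chi^\tau\Delta_{reg}\phi$ has a genuine gap. The paper's function is $\log(\chi\Psi_1+\epsilon_1)+C_{\ref{ineq3rd}}\bigl(\log(\eta\Psi+\epsilon)+A\,\Delta_{reg}(\chi^\tau\phi)\bigr)$, i.e.\ it carries a large multiple of the \emph{entire} auxiliary function from Theorem \ref{thirdorder}, including the term $\log(\eta\Psi+\epsilon)$ with $\eta=\chi|S|^4$, and this extra term is not optional. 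The reason is that Theorem \ref{theo1010} is not a positive ``gain'' of order $1/(\kappa|S|^4)$: its constant $C_{\ref{theo10}}$ is only an error bound and may be negative (it collects the $\mathcal{T}_1$, $H_{ijk\alpha}$ and $G^2/F$ remainders, each estimated in absolute value by $C\chi/(\kappa|S|^4)$, cf.\ (\ref{sichel1}), (\ref{sichel2}), (\ref{c'lp1})). So $\Delta_{g'}\log(\chi\Psi_1+\epsilon_1)$ contributes a possibly negative term of size $\chi/(\kappa|S|^4)$, and nothing in your $F$ produces a compensating positive term of the same order: the Yau-type quadratic term $A\chi^\tau\sum(1+\phi_{p\bar p})^{-1}(1+\phi_{q\bar q})^{-1}|\phi_{p\bar q\bar l}|^2$ has no pointwise lower bound, and after multiplying by $|S|^2$ at the maximum point you are still left with an uncontrolled $1/|S|^2$ singularity on the right-hand side, so the quadratic-in-$x$ argument of (\ref{c'c''}) cannot be closed.

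In the paper this is exactly where the added $C_{\ref{ineq3rd}}\log(\eta\Psi+\epsilon)$ term earns its keep: by Lemma \ref{lem19n}, the $\Delta_{g'}\eta$ contribution (with $\eta=\chi|S|^4$, so $\eta_{z\bar z}\sim|z|^2$ against $(1+\phi_{z\bar z})^{-1}\gtrsim e^{-G}(\mathscr{M}')^{2n-2}/|S|^2$) produces the strictly positive term $C_{\ref{kappa44}}\chi^\tau\chi_0\,{\mathscr{M}'}^{2n-2}/(\kappa|S|^4)$ with $C_{\ref{kappa44}}>0$, and choosing $C_{\ref{ineq3rd}}$ large cancels the negative $C_{\ref{theo10}}\chi/(\kappa|S|^4)$ term for $|S|<\delta_1$ (with the region $|S|\geq\delta_1$ handled as in (\ref{ineqf2})); only the residual $1/|S|^2$-order terms survive, and these are what the $|S|^2$-multiplication removes. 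Your proposal inverts this logic (treating the $1/|S|^4$ term of Theorem \ref{theo1010} as the helpful term and the $1/|S|^2$ Ricci term (\ref{riill}) as the enemy), which is why the absorption step fails as written. A secondary point: the final bootstrap in the paper also uses that the ratio $(\eta\Psi+\epsilon)(x)/(\eta\Psi+\epsilon)(q_0)$ tends to $1$ as $\kappa\to\infty$ (relation (\ref{nesbat})) so that $A_2\to 0$; once you restore the missing $\log(\eta\Psi+\epsilon)$ term you must include this step as well.
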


\begin{proof}

To prove the above theorem we apply the relation  (\ref{mainineq2}) and theorem (\ref{theo1010}) to derive

\begin{equation}\label{ineq3rd}
\begin{split}
\Delta_{g'} (\log(\chi\Psi_1 +\epsilon_1 ) &+B ( \log (\eta \Psi +\epsilon) +A \Delta_{reg}( \chi^{\tau}\phi ))) \geq \\
\quad   &B C' \chi^{\tau}\sum_{p,q,l} g'^{p\bar{p}}  g'^{p\bar{p}}  |\phi_{p\bar{q}\bar{l}} |^2\\
\quad &+BA\frac{C''  }{|S|}  (\chi^{\tau}\sum_{p,q,l}  g'^{p\bar{p}}  g'^{p\bar{p}} |\phi_{p\bar{q}\bar{l}} |^2 )^{1/2}\\
\quad &  C_{\ref{theo10}}  \frac{\chi}{\kappa_1^2 |S|^4} + B\frac{C'''}{|S|^2} \\ 
\quad & +B\bigg [\frac{C_{\ref{lem20nn}} \chi^{\tau} }{\kappa|S|^4}\bigg ]
\end{split}
\end{equation}
where $B>0$ is a positive constant and  $C'$ , $C''$ and $C'''$ are defined  by (\ref{c'c'}), (\ref{c''c''}) and (\ref{c'''}) in lemma (\ref{lem255}). 
 Since $C_{\ref{lem20nn}}>0$ if $B$ is large enough then  $B\frac{C_{\ref{lem20nn}} \chi^{\tau} }{\kappa|S|^4}$
can dominate $C_{\ref{theo10}}  \frac{\chi}{\kappa_1^2 |S|^4} $.  From this we can deduce that

\begin{lemma}
For $B$ large enough we have 
\begin{equation}\label{ineq3rd}
\begin{split}
\Delta_{g'} (\log(\chi\Psi_1 +\epsilon_1 ) &+B ( \log (\eta \Psi +\epsilon) +A \Delta_{reg}( \chi^{\tau}\phi ))) \geq \\
\quad   & BC'  \chi^{\tau}\sum_{p,q,l}  g'^{p\bar{p}}  g'^{p\bar{p}}|\phi_{p\bar{q}\bar{l}} |^2\\
\quad &+BA\frac{C'' }{|S|}  (\chi^{\tau}\sum_{p,q,l}  g'^{p\bar{p}}  g'^{p\bar{p}} |\phi_{p\bar{q}\bar{l}} |^2 )^{1/2}\\
 \quad & +B\frac{C'''}{|S|^2}
\end{split}
\end{equation}
where   $C'$ , $C''$ and $C'''$ are defined  by (\ref{c'c'}), (\ref{c''c''}) and (\ref{c'''}) in lemma (\ref{lem255}).

\end{lemma}

Likewise in previous section we consider the point $q_0\in X$ where the maximum of $ \log(\chi\Psi_1 +\epsilon_1 ) +B ( \log (\eta \Psi +\epsilon) +A \Delta_{reg}( \chi^{\tau}\phi ))$ occurs. Since the maximum can occur on $D$ we need to multiply both sides of \ref{mainineq2} by $|S|^2$ and repeat the argument as in the proposition (\ref{secondorder}). Then from the above lemma we find that


\begin{equation}\label{c'c''}
\begin{split}
\bigg [ C' _1 \bigg (\chi ^{\tau}\sum_{p,q,l} g'^{p\bar{p}}  g'^{p\bar{p}}& |\phi_{p\bar{q}\bar{l}} |^2\bigg ) + AC''_1 \bigg ( \chi^{\tau} \sum_{p,q,l}  g'^{p\bar{p}}  g'^{p\bar{p}} |\phi_{p\bar{q}\bar{l}} |^2 \bigg )^{1/2}\\
\quad &   + C''' _1 \bigg ] (q_0)\leq 0 
\end{split}
\end{equation}

from this inequality  and the definition of $C'$, $C''$ and $C'''$ in lemma (\ref{lem255}) it follows that if $\tau\leq \delta\leq \mathscr{M}'$
 then we get 
\begin{equation}\label{ubound1}
\begin{split}
[\chi^{\tau}\sum_{p,q,l} g'^{p\bar{p}}  g'^{p\bar{p}}|^2] (q_0 )&\leq \frac{-AC'' _1 +\sqrt{(AC''_1  )^2 -4C'  _1C''' _1  }}{2C' _1}\\
\quad &\leq  \tilde{C}_{\ref{ubound1}} 
\end{split}
\end{equation}

where $\tilde{C}_{\ref{ubound1}}$ depends only on $g_{reg}$ and $G$.    We have thus proved that

\begin{lemma}   \label{lemc1}
\begin{equation}\label{constc}
(\chi ^{\tau}\Psi_1)(q_0 )\leq  \mathscr{C}_1
\end{equation}
for some constant $\mathscr{C}_1$ which only depends on  $G$, $g_{reg}$. 
\end{lemma}


From the definition of $q_0$  we know that for all  $x\in U_p $ we have
\begin{equation}\label{kok}
\begin{split}
( \log(\chi\Psi_1 +\epsilon_1 ) +B \log (\eta \Psi +\epsilon) +& AB \Delta_{reg}( \chi^{\tau}\phi ))(x)\\
\quad &\leq ( \log(\chi\Psi_1 +\epsilon_1 ) +B  \log (\eta \Psi +\epsilon) +A B\Delta_{reg}( \chi^{\tau}\phi ))(q_0)
\end{split}
\end{equation}

We set  
\[
a_1:=\|\chi^{1/2} \Psi_1\|_{\infty} , \hspace{1cm} b_1 := \|\chi \Psi_1\|_{\infty} 
\]
  and we recall   the definition  (\ref{a}) for   $a:=\|\eta_{1/2} \Psi\|_{\infty}$. Therefore we  have

\[
\epsilon = \kappa \|\eta_{1/2} \Psi\|_{\infty}=\kappa a
\]

 and 

\[
\epsilon_1 = \kappa \|\chi^{1/2} \Psi_1\|_{\infty}= \kappa a_1
\]
 



\begin{equation}\label{nesbat}
\lim _{\kappa \rightarrow \infty}\log \frac{ (\eta \Psi +\epsilon)(x) }{(\eta \Psi +\epsilon) (q_0)}=0
\end{equation}

uniformly with respect to $x$. Hence from (\ref{kok}) and (\ref{b'b'}) we conclude that 
\[
\log \frac{b_1+\kappa a_1}{\mathscr{C}_1+\kappa a_1} \leq  A_2 
 \]

where   $A_2=  2 AC_{\ref{ineq3rd}} B' - \log \frac{ (\eta \Psi +\epsilon)(x) }{(\eta \Psi +\epsilon) (q_0)}$ and where $B'$ is defined by relation (\ref{b'b'}). In order to repeat the argument as int the proof of theorem  (\ref{thirdorder})
 we need to  to assure that $\lim_{\kappa\rightarrow \infty} A_2= 0 $.  This occurs due to (\ref{nesbat}) and the fact that  as we mentioned before $C_{\ref{ineq3rd}} $  can be determined in such a way that it does not  dependent on $\kappa$. So we can repeat the proof of theorem (\ref{thirdorder}) to conclude that 
\[
b_1\leq \mathscr{C}_1
\]

 where $\mathscr{C}_1$ is presented in lemma (\ref{lemc1}). This completes the proof of theorem (\ref{3rdorder2}).

\end{proof}

Using theorem (\ref{3rdorder2}) we now prove that 

\begin{prop}\label{prop8}
The lower bound  of $g'|_{D}$  can be determined in terms of $G$ and $g_{reg}$.
\end{prop}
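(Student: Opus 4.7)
The plan is to reduce the lower bound on the smallest eigenvalue of $g'|_D$ to two ingredients: the upper bound on the eigenvalues of $g'$ provided by Proposition \ref{secondorder}, and a lower bound on $\det(g'|_D)$ extracted from the Monge-Amp\`ere equation. If $\lambda_1(p)\leq\cdots\leq\lambda_{n-1}(p)$ denote the eigenvalues of $g'|_D$ with respect to $g_{reg}|_D$ at $p\in D$, then $\lambda_i\leq C_{\ref{secondorder1}}$ immediately implies $\lambda_1\geq \det(g'|_D)(p)/(C_{\ref{secondorder1}})^{n-2}\cdot \det(g_{reg}|_D)(p)$, so it is enough to bound $\det(g'|_D)$ below uniformly.

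For this, fix $p\in D$ and work in canonical coordinates $(w_1,\dots,w_{n-1},z)$ as in Remark \ref{rem1}, so that $g'_{w_i\bar z}=O(|z|^3)$ and $g'_{z\bar z}=O(|z|^2)$ along $N_p=\{w=0\}$. Define $u(w):=\lim_{z\to 0}g'_{z\bar z}(w,z)/|z|^2$, which is well defined and smooth by Theorem \ref{theoloc} together with the Schauder regularity developed in Section 5. Writing $|S|^2=|h|^2|z|^2$ with $h$ smooth and nonvanishing on $D$, the block decomposition of the matrix $g'$ in these coordinates combined with the equation $\det g'=|S|^2 e^G\det g_{reg}$ yields, in the leading order as $z\to 0$,
\begin{equation*}
u(w)\,\det(g'|_D)(w)=|h(w)|^2\, e^{G(w)}\det(g_{reg})(w),
\end{equation*}
so everything reduces to showing $u(w)\leq C$ with $C$ depending only on $G$ and $g_{reg}$.

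For the upper bound on $u$, the second-order estimate $g'_{z\bar z}\leq C_{\ref{secondorder1}}g_{reg,z\bar z}$ alone is insufficient since $g_{reg,z\bar z}/|z|^2$ blows up. The idea is instead to combine this bound with the third-order estimate $\Psi_1\leq\mathscr{C}_1$ of Theorem \ref{3rdorder2}, which provides uniform control of the $D$-tangential derivatives of $g'|_D$, and with the Schauder estimate of Theorem \ref{the2} applied to the PDE satisfied by $u$ obtained by differentiating the Monge-Amp\`ere identity twice in the normal direction. The output is a uniform $C^{0,\alpha}$ bound on $u$ depending only on $G$ and $g_{reg}$.

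The main obstacle is that this program appears to contain a circularity: the constants $\mathscr{C}$ and $\mathscr{C}_1$ in Theorems \ref{thirdorder} and \ref{3rdorder2} were derived with explicit dependence on the quantity $\mathscr{M}'$, which is precisely the lower bound for $g'|_D$ we are now attempting to control. The resolution is a bootstrap argument: the dependence of $\mathscr{C}_1$ on $\mathscr{M}'$ is polynomial, so the inequality $u\leq F(\mathscr{M}')$ combined with $\mathscr{M}'\geq c/u$ (from the displayed identity above and the upper bound on $\det(g'|_D)$) forces an a priori uniform bound on $u$ and hence on $\mathscr{M}'$. Carrying out this bookkeeping carefully --- and verifying that the Schauder reduction for $u$ is indeed uniformly elliptic in the rescaled coordinates --- is the delicate part; once it is done, the bound on $\lambda_1$ follows by the algebraic reduction of the first paragraph.
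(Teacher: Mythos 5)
There is a genuine gap. Your reduction replaces the statement by an equivalent one rather than proving it: writing $\det g'=|S|^2e^G\det g_{reg}$ in canonical coordinates gives, at leading order, $u(w)\,\det(g'|_D)=|h|^2e^{G}\det(g_{reg})$ with $u=\lim_{z\to0}g'_{z\bar z}/|z|^2$, so an upper bound on $u$ \emph{is} a lower bound on $\det(g'|_D)$; the two are interchangeable through the very identity you display (this is also the content of Lemma \ref{m'm'1}, where the upper bound on $|S|^2/\|V_n\|^2_{g'}$ is exactly what carries the $(\mathscr{M}')^{2n-2}$ factor). Your plan to obtain the bound on $u$ from a Schauder argument whose constants depend on $\mathscr{M}'$, and then to close a bootstrap because "the dependence is polynomial," is not carried out and does not obviously close: the constants entering the third-order machinery involve negative powers of $\mathscr{M}'$ such as $(\mathscr{M}')^{-2}$ and $(\mathscr{M}')^{-6}$, and a chain of the form $u\leq F(\mathscr{M}')$, $\mathscr{M}'\gtrsim c/u$ only yields information when the resulting power of $u$ on the right is strictly less than one, which you neither verify nor have any mechanism to enforce. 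So the step you yourself flag as "the delicate part" is precisely where the proof is missing.

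What actually breaks the circle in the paper is different in two respects. First, Theorem \ref{3rdorder2} is invoked with a constant $\mathscr{C}_1$ depending only on $G$ and $g_{reg}$, and it is used not to bound $u$ pointwise but to bound the \emph{tangential logarithmic derivative} of the determinant ratio: setting $v=\big[\det g'|_D/\det g_{reg}|_D\big]^{\alpha}$ and $u(t)=v\circ w(t)$ along a curve in $D$, one gets $|u'u^{-\alpha}|\leq C_{\ref{ude}}$ from $\Psi_1\leq\mathscr{C}_1$ together with Proposition \ref{secondorder}, since $(g_{w_j\bar w_j})^{-2}|g_{w_j\bar w_j w_k}|^2=g_{w_k\bar w_k}\big(g^{w_k\bar w_k}(g^{w_j\bar w_j})^2|g_{w_j\bar w_j w_k}|^2\big)\leq C_{\ref{secondorder1}}\mathscr{C}_1$. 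Second, a gradient bound alone cannot produce a lower bound; the paper anchors it with the global normalization $\int_D(\omega'|_D)^{n-1}=\int_D(\omega_{reg}|_D)^{n-1}$, which forces the ratio to be at least $1$ at its maximum point, and then integrates the differential inequality (with the choice $\alpha=-\tfrac32$) along curves of $g_{reg}$-length at most $\operatorname{diam}D$ to get a uniform lower bound. Your proposal contains no analogue of this cohomological anchor, and without it even a correct bound on the derivative of $u$ or on $\mathscr{C}_1$ would not yield the proposition.
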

\begin{proof}
 
We  define $v: D\rightarrow \mathbb{R}$ by

\[
v=\bigg [\frac{ \det g'|_D }{\det g_{reg}|_{D}} \bigg ]^{\alpha}  
\] 

for some  constant  $\alpha$  that will be determined.
We consider a local holomorphic coordinates system  $(w_1,...,w_{n-1})$ around some point $p\in D$ on $D$  
in such a way that $\{\frac{\partial}{\partial w_1}(p),...,\frac{\partial}{\partial w_{n-1}}(p)\}$ form an orthogonal basis at $p$ with respect to both  $g_{reg}$ and $g'$. 
 We can  assume that this basis is orthonormal with respect to $g_{reg}$ as well. We take a smooth  curve $w(t)= (w_1 (t),...,w_{n-1} (t))$  of normal velocity  
with respect to the euclidean norm in this coordinates  and satisfying $w(0)=p$. We  then set $u(t):= v\circ w (t)$. So at the point $p$  we have 
 
\[
\begin{split}
\frac{du}{dt}&=\sum_{j=1} ^{n-1} \alpha\frac{d}{dt}(\prod_{i}  g_{w_i \bar{w}_i} ) (\prod_{i\neq j}  g_{w_i \bar{w}_i})^{\alpha-1} \\
\quad & =\sum_{j=1} ^{n-1} \alpha \sum_j \frac{dg_{w_j\bar{w}_j}}{dt}\big (  \prod_{i\neq j}  g_{w_i \bar{w}_i}\big)(\prod_{i\neq j}  g_{w_i \bar{w}_i})^{\alpha-1}\\
\quad & =\sum_{j=1} ^{n-1}  \alpha(g_{w_j\bar{w}_j})^{-1}\frac{dg_{w_j\bar{w}_j}}{dt}(\prod_{1\leq i \leq n-1}  g_{w_i \bar{w}_i})^{\alpha}\\
\end{split}
\]

\begin{equation}
\begin{split}
\frac{du}{dt} u^{-\alpha}&= \sum_{j=1} ^{n-1}  \alpha(g_{w_j\bar{w}_j})^{-1}\frac{d(g_{w_j\bar{w}_j})}{dt} \\
\quad &=  \sum_{j=1} ^{n-1}  \alpha(g_{w_j\bar{w}_j})^{-1}(\sum_k g_{w_j\bar{w}_j w_k} \frac{d w_k}{dt} + \sum_k g_{w_j\bar{w}_j\bar{w}_k} \frac{d\bar{w}_k }{dt}) \\
\end{split}
\end{equation}

Since  $w(t)$  has  normal velocity,

\begin{equation}\label{u'ualpha}
\begin{split}
|\frac{du}{dt} u^{-\alpha}|&\leq \sum_{j=1} ^{n-1}  \alpha(g_{w_j\bar{w}_j})^{-1}(\sum_k  |g_{w_j\bar{w}_j w_k}|^2 +\sum_k  |g_{w_j\bar{w}_j\bar{w}_k}|^2)^{1/2} \big(\sum_k |\frac{d w_k}{dt} |^2  + \sum_k |\frac{d\bar{w}_k }{dt}|^2\big  )^{1/2}\\
\quad & \leq   \sum_{j=1} ^{n-1}  \alpha  (\sum_k (g_{w_j\bar{w}_j})^{-2}  |g_{w_j\bar{w}_j w_k}|^2 +\sum_k  (g_{w_j\bar{w}_j})^{-2}|g_{w_j\bar{w}_j\bar{w}_k}|^2)^{1/2}
\end{split}
\end{equation}

Thus from proposition (\ref{secondorder}) andtheorem (\ref{3rdorder2}) we obtain
\begin{equation}\label{u'ualpht}
\begin{split}
(g_{w_j\bar{w}_j})^{-2}  |g_{w_j\bar{w}_j w_k}|^2 =g_{w_k\bar{w}_k} \big ( g^{w_k \bar{w}_{k}}(g^{w_j\bar{w}_j})^{2}  |g_{w_j\bar{w}_j w_k}|^2 \big ) \leq & C_{\ref{secondorder1}}\mathscr{C}_1 \\
\end{split}
\end{equation}

This means that

\begin{equation}\label{ude}
|\frac{du}{dt} u^{-\alpha}| \leq C_{\ref{ude}}
\end{equation}

 for some constant $C_{\ref{ude}} $ which only depends on $G$ and $g_{reg}$.
 Now if we set $\alpha=-\frac{3}{2}$ we get

\[
u^{-\frac{3}{2}}\frac{du}{dt}\leq C_{\ref{ude}}
\]

  or

\[
-2 \frac{du^{-\frac{1}{2}}}{dt}\leq C_{\ref{ude}}
\]

Assuming  that $t<t_0 $ then  by integrating over the interval $[t, t_0]$  we find

\[
-2u^{-\frac{1}{2}} (t_0) +2u^{-\frac{1}{2}} (t) \leq  C_{\ref{ude}} (t-t_0)
\]

\[
u^{-\frac{1}{2}} (t)\leq  u^{-\frac{1}{2}} (t_0) +\frac{C_{\ref{ude}} }{2}(t-t_0)
\]

Finally we deduce that 
\begin{equation}\label{ulb}
u(t) \geq \sqrt{\frac{1}{u^{-\frac{1}{2}} (t_0) +\frac{C_{\ref{ude}} }{2}(t-t_0)}}
\end{equation}

Now we assume that   $t_0$ is the point where the maximum of $u$ occurs. Since $\int_D (\omega'|_{D} )^{n-1}=\int_D (\omega_{reg}|_D)^{n-1}$ we must have $u(t_0 )\geq 1$. 
Since $t$ is proportional  to the parameter of length with respect to $g_{reg}$ then from  the  inequality (\ref{ulb}) it follows  that 
\begin{equation}\label{ude2}
u(t) \geq \sqrt{\frac{1}{1+\frac{C_{\ref{ude2}} }{2}  diam D}}
\end{equation}

where $diam D$ denotes the diameter of $D$ with respect to $g_{reg}$ and $C_{\ref{ude2}}$ only depends on $G$ and $g_{reg}$. This completes the proof of the proposition \ref{prop8}.

\end{proof}



\section{Solution of the equation}\label{sec7}
Using the estimations in sections 5 and 6 as well as Schauder theory we are now able to prove our main theorem.

\begin{theo}
Assume that $X$ is a compact K\"ahler manifold  of complex dimension $n$ with the metric $\omega= \sum g_{i\bar{j}} dz^i \otimes d\bar{z}^{j}$. Let $D\subset X$ be a smooth divisor and $S$ be a holomorphic section of $L:=[D]$ vanishing along $D$. Let $G$ be $C^{k} (X)$ with $k\geq 3$ and $\int_X \exp \{G\} |S|^2  =Vol (X)$. 
Then there exists   a function $\phi$ in $C^{k+1, \alpha} (X ) $ for $0\leq \alpha < \frac{1}{2}$ such that 
$\omega':=\sum (g_{i\bar{j}} +\partial ^2 \phi / \partial z^i \partial \bar{z}^j  ) dz^i \otimes d\bar{z}^j $ defines a K\"ahler metric on $X\setminus D$ and $\omega'|_{D}$ is a nondegenerate  K\"ahler metric on $D$ and 
\begin{equation}\label{maineq}
{\omega '} ^n=|S|^2 \exp\{G\} \omega ^n 
\end{equation}
\end{theo}

\begin{proof} We consider the set
\[
\begin{split}
\mathcal{R}=\{t &\in [0,1] | \text{ the equation }  \det (g_{i\bar{j}} +\frac{\partial^2 \phi }{\partial z^i \partial \bar{z}^j}) \det (g_{i\bar{j}})^{-1}=\\
\quad & \frac{Vol (X)  |S|^2\exp \{tG\}}{ \int  |S|^2 \exp{tG}}\text{ has a solution in } C^{k+1,\alpha}  (X)  \}
\end{split}
\]

\[
\begin{split}
\Theta =\{\phi &\in C^{k+1,\alpha} (X)\cap C^1 (X) |  \omega+\partial\bar{\partial} \phi >0 \text{  outside } D  \text{ and }  \\
\quad &  \det (g_{i\bar{j}} +\frac{\partial^2 \phi }{\partial z^i \partial \bar{z}^j}) \det (g_{i\bar{j}})^{-1} |S|^{-2}\in C^{k-1,\alpha} (X) \}
\end{split}
\]
 
Let also
\[
B=\{f\in C^{k-1,\alpha} (X) |\int_X f |S|^2 e^G \omega^n =Vol(X) \}
\]

To see that $\Theta $ is a Banach space we consider a holomorphic coordinates system $(w_1,...,w_{n-1},z)$ over an open neighborhood $U_p$ of  a point $p\in D$ such that  
 and   $D=\{z=0\}$ and there exists a potential $\phi_0$ for $g_{i\bar{j}}|_{U_p}$. The fact that $\phi\in \Theta$ is equivalent to say that 
\begin{equation}\label{det1}
\det (\frac{\partial^2 (\phi+ \phi_0 )}{\partial z^{i} \partial \bar{z}^{j}}) =O(|S|^2)
\end{equation}

Since $\phi_0 + \phi$ is suposed to belong to $C^{k+1 , \alpha}  (X)$ and $k\geq3$ we have   $(\phi+ \phi_0) $ belongs at least to
$ C^4$.  We consider  the Taylor series expansion of $\det (\frac{\partial^2 (\phi+ \phi_0 )}{\partial z^{i} \partial \bar{z}^{j}}) $
upto secont order  in terms of $S$ (or $z$ in the holomorphic  coordinates system above) 
\[
\det (\frac{\partial^2 (\phi+ \phi_0 )}{\partial z^{i} \partial \bar{z}^{j}}) =A_0 + A_1 z+ \bar {A}_{1} \bar{z}+
A_2 z^2 + \bar{A}_2 \bar{z}^2 + A_{1,1}|z|^2 +R
\]
 where $\lim_{z\rightarrow 0} \frac{R}{|z|^2} =0$. Thus 
we see that the coefficients of 
$A_0 $, $A_1$ and $A_2$ are polynomials in terms of partial derivatives of  $\phi_0 +\phi$ upto  4th order  and if $\phi\in \Theta$
then due to (\ref{det1})  all the coefficients $A_0 ,A_1$ and $A_2$
vanish on $U_p$.

Thus for a sequence  $\phi_n\in  \Theta$ such that $\phi_n\rightarrow \phi$ in $C^{k+1}   (X)$   we consider the Taylor series expansions

\[
\det (\frac{\partial^2 (\phi_n+ \phi_0 )}{\partial z^{i} \partial \bar{z}^{j}}) =A_0 ^n+ A_1 ^n z+ \bar {A}_{1} ^n \bar{z}+
A_2 ^n z^2 + \bar{A}_2 ^n \bar{z}^2 + A_{1,1} ^n|z|^2 +R^n
\]

with $\lim_{z\rightarrow 0} \frac{R^n}{|z|^2} =0$ Since $A^n _{i} \rightarrow A_i$ in $C^0 (U_p)$ we have 
$A_0 = A_1 = A_2 =0$. In other words $\phi$ also  belongs  to $\Theta$.

Now  we consider the mapping $\mathscr{G}: \Theta\rightarrow B$:
\[
\mathscr{G}(\phi)=\det(g_{i\bar{j}}+\frac{\partial^2 \phi }{\partial z^i \partial \bar{z}^j}) \det (g_{i\bar{j}})^{-1}
\]

The tangent space of $B$ consists of   the space of the  functions $\{ f\in C^{k-1, \alpha}|\int_X |S|^2 f=0\}$.  In order to prove the  openness of $\mathcal{R}$ we have to show that the equation
\begin{equation}\label{dlap}
\det(g_{i\bar{j}}+\frac{\partial^2 \phi_1 }{\partial z^i \partial \bar{z}^j}) \det (g_{i\bar{j}})^{-1} \Delta_{\phi_1} \phi =|S|^2f
\end{equation}

has a solution in $C^{k+1 , \alpha}  (X) $. And we know that a solution to the equation 
$\Delta_{\phi_1} =u$ exists iff $\int udVol_{\phi_1}=0$.  
 In fact the metric  $g_{i\bar{j}}+\frac{\partial^2 \phi_1 }{\partial z^i \partial \bar{z}^j}$ is conic of the type studied by Cheeger and Dai in (\cite{ch}). We apply zero order Hodge theory developed in (\cite{ch}) to prove the existence of  a solution to $\Delta_{\phi_1}= u$.
 Then we utilize  Schauder theory.

To prove that $\mathcal{R}$ is closed we take a sequence $\{t_e \}$ in $\mathcal{R}$, such that there exists $\phi_e \in C^{k+1, \alpha} (X)$ such that
\[
\det (g_{i\bar{j} } +\frac{\partial^2 \phi_e}{\partial z^i \partial \bar{z}^{j}}) \det (g_{i\bar{j}})^{-1}= 
\frac{Vol (X)|S|^2\exp\{t_e G\}}{ \int_X |S|^2\exp \{t_e G\} }
\]

We can also assume that $\int_X \phi_e =0$.

Differentiating the above equation we get

\begin{equation}\label{pr}
\begin{split}
\det (g_{i\bar{j}}+\frac{\partial^2 \phi_e }{\partial z^i \partial \bar{z}^j})&\sum_{i,j} g_e ^{i\bar{j}}  \frac{\partial ^2}{\partial z^i \partial \bar{z}^j} (\frac{\partial \phi_e}{\partial z^s})\\
\quad & =\frac{1}{\int_X  \exp\{t_e G\}}Vol(X) \frac{\partial}{\partial z^s} [|S|^2\exp \{t_e G\} \det (g_{i\bar{j}})],
\end{split}
\end{equation}

The coefficients of the above matrix are the coefficients of the adjoint matrix of 
\[
[g_{e,i\bar{j}}]:=[g_{i\bar{j}}+\frac{\partial \phi_e}{\partial z^i\partial \bar{z}^j}]
\]

If  we use the notation $(z_1,...,z_{n-1},z_n)$ the left hand side of the  above equation  is expressed  like

\[
A_{z_n\bar{z}_n}\partial_{z_n} \partial_{\bar{z}_n}+\sum_{i\leq n-1} A_{z_{i}\bar{z}_j} \partial_{z_i}\partial_{\bar{z}_j} +\sum_{i\leq n-1} A_{z_n\bar{z}_i}\partial_{z_n}\partial_{\bar{z_i}} +\sum_{i\leq n-1} A_{z_i\bar{z}_n}\partial_{z_i}\partial_{\bar{z}_n} 
\]

We then  set 

\[
\rho_s = \frac{1}{\det (g_{i\bar{j}}+\frac{\partial^2 \phi_e }{\partial z^i \partial \bar{z}^j})}\times \frac{1}{\int_X  \exp\{t_e G\}}Vol(X) \frac{\partial}{\partial z^s} [|S|^2\exp \{t_e G\} \det (g_{i\bar{j}})]
\]

  Clearly $ \rho_s\in C^{\alpha}$ for $1\leq s\leq n-1$, and $z_n\rho_n\in C^{\alpha}$. 
Moreover    by theorems  (\ref{thirdorder}) and (\ref{3rdorder2}), lemma (\ref{m'm'1}) and  proposition (\ref{prop8})   it turns out that 
\[
 |\frac{\partial}{\partial z_i} \frac{g_{e,n\bar{n}}}{|z_n|} |,|\frac{\partial}{\partial z_n} \frac{g_{e,n\bar{n}}}{|z_n|} |, 
\]

and 

\[
|\frac{\partial}{\partial z_n} g_{e,n\bar{n}}|, |\frac{\partial}{\partial z_i} g_{e,n\bar{n}}|, 
 |\frac{\partial}{\partial z_i} \frac{g_{e,n\bar{k}}}{\sqrt{|z_n|}}| 
\]


  are   all uniformly bounded, for $1\leq i,k,l\leq n-1$. Thus 
\begin{equation}\label{azij}
A_{z_n\bar{z}_n},\frac{A_{z_{i}\bar{z}_j}}{|z_n|},\frac{A_{z_n\bar{z}_i}}{\sqrt{|z_n|}}\in C^{\alpha}
\end{equation}

with bounded  norms $[A_{z_n\bar{z}_n}]_{\alpha}$,
  $[\frac{A_{z_{i}\bar{z}_j}}{|z_n|}]_{\alpha}$ and   $[\frac{A_{z_n\bar{z}_i}}{\sqrt{|z_n|}}]_{\alpha}$
for $1\leq i,j\leq n-1$. This can be seen by taking the derivatives  of   $A_{z_n\bar{z}_n}$,  $\frac{A_{z_{i}\bar{z}_j}}{|z_n|}$ and   $\frac{A_{z_n\bar{z}_i}}{\sqrt{|z_n|}}$,    at a point $q$ near $D$ with $g_{e,z_n \bar{z}_i}(q)=0$ for $i=1,...,n-1$. 
 Thus we can apply Schauder theorem (\ref{schth}) for $\gamma_1= \gamma_2 =\frac{1}{2}$ to deduce that

\begin{equation}\label{fie}
\begin{split}
|&z_n|\partial_{z_i} \partial_{\bar{z}_j} \partial_{z_k} \phi_e]_{\alpha} , [\sqrt{|z_n|}\partial_{z_n} \partial_{\bar{z}_j} \partial_{z_k} \phi_e]_{\alpha}]_{\alpha}, [\partial_{z_n} \partial_{\bar{z_n}} \partial_{z_k} \phi_e]_{\alpha}
\\
\quad & 
[|z_n|\partial_{z_i} \partial_{\bar{z}_j} \partial_{z_n} \phi_e]_{\alpha} , [\sqrt{|z_n|}\partial_{z_n} \partial_{\bar{z}_j} \partial_{z_n} \phi_e]_{\alpha}]_{\alpha}, [\partial_{z_n} \partial_{\bar{z_n}} \partial_{z_n} \phi_e]_{\alpha}
\end{split}
\end{equation}

are all bounded for $i,j,k=1,...,n-1$.

  Now in the following equation
\[
\begin{split}
(A_{z_n\bar{z}_n}\partial_{z_n} \partial_{\bar{z}_n}+\sum_{i,j\leq n-1}  \partial_{z_i}\partial_{\bar{z}_j})  \frac{\partial \phi_e}{\partial z_s}=&\det (g_{i\bar{j}}+\frac{\partial^2 \phi_e }{\partial z^i \partial \bar{z}^j}) \rho_s +
 \sum_{i,j\leq n-1} (1-A_{z_{i}\bar{z}_j}) \partial_{z_i}\partial_{\bar{z}_j}  \frac{\partial \phi_e}{\partial z^s}\\
\quad & -\sum_{i\leq n-1} A_{z_n\bar{z}_i}\partial_{z_n}\partial_{\bar{z}_i} \frac{\partial \phi_e}{\partial z^s} -\sum_{i\leq n-1} A_{z_i\bar{z}_n}\partial_{z_i }\partial_{\bar{z}_n}  \frac{\partial \phi_e}{\partial z^s}
\end{split}
\]

the left hand side is a non-degenerate operator and the right hand side has bounded $C^{\alpha}$ norm by the above relations (\ref{fie}) and (\ref{azij}).
Now by ordinary Schauder  estimate we get upper bound for  ordinary third order derivatives of $\phi_e$. This argument can obviously be repeated inductively.

Finally we note that  if $\phi:=\lim_{i\rightarrow}  \phi_{e_i}$    then   according to proposition (\ref{prop8}) the restriction $(g_{reg}+\partial\bar{\partial}\phi)|_{D}$  has nonzero lower bound hence is nondegenerate.  Also according to the relation (\ref{sincase2})
in the appendix (\ref{app7}) the angle $\theta_{n}$  with respect to $g_{reg} $ between the orthogonal direction to $D$ and $D$  with respect to $\omega_{reg}+\partial\partial \phi_{e_i}$ has a lower bound which only depends on $G$ and $g_{reg}$. So the same lower bound  works for  $(g_{reg}+\partial\bar{\partial}\phi)|_{D}$.
  \end{proof}



 \appendix
\section{Appendix} 


\subsection{}\label{app2}
 
Assume that $1\leq k\leq n-1$ and $1\leq i,j\leq n$. According to (\ref{cdt4}), (\ref{cdt5}) and (\ref{komaki}) we have

 

\begin{equation}\label{valfsi1}
\begin{split}
  \bar{V}_{\alpha}.\Psi_1 =  & \sum \bigg [-g'^{i\bar{i}}g'^{q\bar{q}}g'^{\bar{j}j}g'^{k\bar{k}}\big [ \phi_{i\bar{q}\bar{\alpha}}-  \Gamma_{\bar{\alpha}\bar{i}} ^{\bar{q}} g'_{i\bar{i}}\big ]\phi_{i\bar{j}k}\phi_{\bar{q}j\bar{k}}\\
\quad & -g'^{i\bar{i}}g'^{\bar{j}j}g'^{\bar{q}q}g'^{k\bar{k}}\big [ \phi_{j\bar{q}\bar{\alpha}}-  \Gamma_{\bar{\alpha}\bar{j}} ^{\bar{q}} g'_{j\bar{j}}\big ]\phi_{i\bar{j}k}\phi_{\bar{i}q\bar{k}}\\
\quad & -g'^{i\bar{i}}g'^{\bar{j}j}g'^{k\bar{k}}g'^{q\bar{q}}\big [ \phi_{k\bar{q}\bar{\alpha}}- \Gamma_{\bar{\alpha}\bar{k}} ^{\bar{q}} g'_{k\bar{k}}\big ]\phi_{i\bar{j}k}\phi_{\bar{i}j\bar{q}}\\
\quad & +g'^{i\bar{i}}g'^{\bar{j}j}g'^{k\bar{k}}
\big [ \phi_{i\bar{j}k\bar{\alpha}} -\sum \Gamma ^{\bar{j}} _{\bar{\alpha}\bar{c}} \phi_{i\bar{c}k}\big ]\phi_{\bar{i}j\bar{k}} \\
\quad & +g'^{i\bar{i}}g'^{\bar{j}j}g'^{k\bar{k}}
\big [ \phi_{\bar{i}j\bar{k}\bar{\alpha}}-\sum \Gamma^{\bar{i}} _{\bar{\alpha}\bar{c}} \phi_{\bar{c}j\bar{k}}  
-\sum \Gamma^{\bar{k}} _{\bar{\alpha}\bar{c}} \phi_{\bar{i}j\bar{c}} \big ] \phi_{i\bar{j}k}
\bigg ]
\end{split}
\end{equation}

\begin{equation}\label{valfsi1}
\begin{split}
  \bar{V}_{\beta}.\Psi_1 =  & \sum \bigg [- g'^{i\bar{p}}g'^{q\bar{r}}g'^{\bar{j}s}g'^{k\bar{t}}\big [ \phi_{p\bar{q}\bar{\beta}}-\sum_b  \Gamma_{\bar{\beta}\bar{b}} ^{\bar{q}} g'_{p\bar{b}}\big ]\phi_{i\bar{j}k}\phi_{\bar{r}s\bar{t}} \\
\quad & -g'^{i\bar{r}}  g'^{\bar{j}p}g'^{\bar{q} s}g'^{k\bar{t}}\big [ \phi_{q\bar{p}\bar{\beta}}-\sum_b  \Gamma_{\bar{\beta}\bar{b}} ^{\bar{p}} g'_{q\bar{b}}\big ]\phi_{i\bar{j}k}\phi_{\bar{r}s\bar{t}}
\\
\quad  &-g'^{i\bar{r}}g'^{\bar{j}s} g'^{k\bar{p}}g'^{q\bar{t}} \big [ \phi_{p\bar{q}\bar{\beta}}-\sum_b  \Gamma_{\bar{\beta}\bar{b}} ^{\bar{q}} g'_{p\bar{b}}]\phi_{i\bar{j}k}\phi_{\bar{r}s\bar{t}}\\
\quad & +g'^{i\bar{r}}g'^{\bar{j}s}g'^{k\bar{t}}
\big [ \phi_{i\bar{j}k\bar{\beta}} -\sum \Gamma ^{\bar{j}} _{\bar{\beta}\bar{c}} \phi_{i\bar{c}k}\big ]\phi_{\bar{r}s\bar{t}} \\
\quad & +g'^{i\bar{r}}g'^{\bar{j}s}g'^{k\bar{t}}
\big [ \phi_{\bar{r}s\bar{t}\bar{\beta}}-\sum \Gamma^{\bar{r}} _{\bar{\beta}\bar{c}} \phi_{\bar{c}s\bar{t}}  
-\sum \Gamma^{\bar{t}} _{\bar{\beta}\bar{c}} \phi_{\bar{r}s\bar{c}} \big ] \phi_{i\bar{j}k}
\bigg ]
\end{split}
\end{equation}

 To compute $\Delta_{g'}\Psi_1=\sum g'^{\alpha\bar{\beta}}V_{\alpha}.V_{\bar{\beta}}.\Psi_1$ we do computation for  the first line and the last line  in (\ref{valfsi1}),

\begin{equation}\label{ap2a0}
\begin{split}
 \sum & g'^{\alpha\bar{\beta}} V_\alpha.\bigg [-g'^{i\bar{p}}g'^{q\bar{r}}g'^{\bar{j}s}g'^{k\bar{t}}\big [ \phi_{p\bar{q}\bar{\beta}}-\sum_b  \Gamma_{\bar{\beta}\bar{b}} ^{\bar{q}} g'_{p\bar{b}}\big ]\phi_{i\bar{j}k}\phi_{\bar{r}s\bar{t}}\bigg ]=\\
\quad= & - \sum g'^{\alpha\bar{\beta}}g'^{i\bar{a}}g'^{b\bar{p}}g'^{q\bar{r}}g'^{\bar{j}s}g'^{k\bar{t}}\big [ \phi_{a\bar{b}\alpha}-\sum_c  \Gamma_{\alpha c} ^{a} g'_{c\bar{b}}\big ]\big [ \phi_{p\bar{q}\bar{\beta}}-\sum_b  \Gamma_{\bar{\beta}\bar{b}} ^{\bar{q}} g'_{p\bar{b}}\big ]\phi_{i\bar{j}k}\phi_{\bar{r}s\bar{t}}\\
\quad & - \sum g'^{\alpha\bar{\beta}}g'^{i\bar{p}}g'^{q\bar{a}}g'^{b\bar{r}}g'^{\bar{j}s}g'^{k\bar{t}}\big [ \phi_{a\bar{b}\alpha}-\sum_c  \Gamma_{\alpha c} ^{a} g'_{c\bar{b}}\big ]\big [ \phi_{p\bar{q}\bar{\beta}}-\sum_b  \Gamma_{\bar{\beta}\bar{b}} ^{\bar{q}} g'_{p\bar{b}}\big ]\phi_{i\bar{j}k}\phi_{\bar{r}s\bar{t}}\\
\quad & - \sum g'^{\alpha\bar{\beta}}g'^{i\bar{p}}g'^{q\bar{r}}g'^{\bar{j}b}g'^{\bar{a}s}g'^{k\bar{t}}\big [ \phi_{a\bar{b}\alpha}-\sum_c  \Gamma_{\alpha c} ^{a} g'_{c\bar{b}}\big ]\big [ \phi_{p\bar{q}\bar{\beta}}-\sum_b  \Gamma_{\bar{\beta}\bar{b}} ^{\bar{q}} g'_{p\bar{b}}\big ]\phi_{i\bar{j}k}\phi_{\bar{r}s\bar{t}}\\
\quad & - \sum g'^{\alpha\bar{\beta}}g'^{i\bar{p}}g'^{q\bar{r}}g'^{\bar{j}s}g'^{k\bar{a}}g'^{b\bar{t}}\big [ \phi_{a\bar{b}\alpha}-\sum_c  \Gamma_{\alpha c} ^{a} g'_{c\bar{b}}\big ]\big [ \phi_{p\bar{q}\bar{\beta}}-\sum_b  \Gamma_{\bar{\beta}\bar{b}} ^{\bar{q}} g'_{p\bar{b}}\big ]\phi_{i\bar{j}k}\phi_{\bar{r}s\bar{t}}\\
\quad &- \sum g'^{\alpha\bar{\beta}}g'^{i\bar{p}}g'^{q\bar{r}}g'^{\bar{j}s}g'^{k\bar{t}}\big [ \phi_{p\bar{q}\bar{\beta}\alpha} -\sum \Gamma^p _{\alpha u}\phi_{u\bar{q} \bar{\beta}}-\sum_b  V_{\alpha}.(\Gamma_{\bar{\beta}\bar{b}} ^{\bar{q}} g'_{p\bar{b}}) \big ]\phi_{i\bar{j}k}\phi_{\bar{r}s\bar{t}}\\
\quad & -\sum g'^{\alpha\bar{\beta}} g'^{i\bar{p}}g'^{q\bar{r}}g'^{\bar{j}s}g'^{k\bar{t}}\big [ \phi_{p\bar{q}\bar{\beta}}-\sum_b  \Gamma_{\bar{\beta}\bar{b}} ^{\bar{q}} g'_{p\bar{b}}\big ]\big [ \phi_{i\bar{j}k\alpha} -\sum \Gamma^{i} _{\alpha u} \phi_{u\bar{j}k}-\sum \Gamma^k _{\alpha v} \phi_{i\bar{j}v} \big ] \phi_{\bar{r}s\bar{t}}\\
\quad & -\sum  g'^{\alpha\bar{\beta}} g'^{i\bar{p}}g'^{q\bar{r}}g'^{\bar{j}s}g'^{k\bar{t}}\big [ \phi_{p\bar{q}\bar{\beta}}-\sum_b  \Gamma_{\bar{\beta}\bar{b}} ^{\bar{q}} g'_{p\bar{b}}\big ]\phi_{i\bar{j}k}\big [ \phi_{\bar{r}s\bar{t}\alpha} -\sum 
\Gamma^s _{\alpha w} \phi_{\bar{r}w\bar{t}}\big ]
\end{split}
\end{equation}

\begin{equation}\label{ap2b0}
\begin{split}
\sum & g'^{\alpha\bar{\beta}}V_{\alpha}.\bigg [ g'^{i\bar{r}}g'^{\bar{j}s}g'^{k\bar{t}}
\big [ \phi_{i\bar{j}k\bar{\beta}} -\sum \Gamma ^{\bar{j}} _{\bar{\beta}\bar{c}} \phi_{i\bar{c}k}\big ]\phi_{\bar{r}s\bar{t}} \bigg ]=\\
\quad & =\sum  g'^{\alpha\bar{\beta}} g'^{i\bar{a}}g'^{b\bar{p}}g'^{\bar{j}s}g'^{k\bar{t}}\big [ \phi_{a\bar{b}\alpha}-\sum_c  \Gamma_{\alpha c} ^{a} g'_{c\bar{b}}\big ]
\big [ \phi_{i\bar{j}k\bar{\beta}} -\sum \Gamma ^{\bar{j}} _{\bar{\beta}\bar{c}} \phi_{i\bar{c}k}\big ]\phi_{\bar{r}s\bar{t}}\\
\quad & +\sum  g'^{\alpha\bar{\beta}}g'^{i\bar{r}}g'^{\bar{j}b}g'^{\bar{a}s}g'^{k\bar{t}}\big [ \phi_{a\bar{b}\alpha}-\sum_c  \Gamma_{\alpha c} ^{a} g'_{c\bar{b}}\big ]
\big [ \phi_{i\bar{j}k\bar{\beta}} -\sum \Gamma ^{\bar{j}} _{\bar{\beta}\bar{c}} \phi_{i\bar{c}k}\big ]\phi_{\bar{r}s\bar{t}}\\
\quad &   +\sum  g'^{\alpha\bar{\beta}}g'^{i\bar{r}}g'^{\bar{j}s}g'^{k\bar{a}}g'^{b\bar{t}}\big [ \phi_{a\bar{b}\alpha}-\sum_c  \Gamma_{\alpha c} ^{a} g'_{c\bar{b}}\big ]
\big [ \phi_{i\bar{j}k\bar{\beta}} -\sum \Gamma ^{\bar{j}} _{\bar{\beta}\bar{c}} \phi_{i\bar{c}k}\big ]\phi_{\bar{r}s\bar{t}}\\
\quad & + \sum  g'^{\alpha\bar{\beta}}g'^{i\bar{r}}g'^{\bar{j}s}g'^{k\bar{t}}
\big [ \phi_{i\bar{j}k\bar{\beta}\alpha}-\sum \Gamma^{i} _{\alpha u} \phi _{u\bar{j}k\bar{\beta}}-\sum \Gamma ^k _{\alpha v}\phi_{i\bar{j}v\bar{\beta}} -\sum (V_{\alpha}.\Gamma ^{\bar{j}} _{\bar{\beta}\bar{c}}) \phi_{i\bar{c}k}-\\
\quad & \hspace{4cm }  -\sum \Gamma ^{\bar{j}} _{\bar{\beta}\bar{c}} [\phi_{i\bar{c}k\alpha} -\sum \Gamma^i _{\alpha u} \phi_{u\bar{c}k}-\sum \Gamma ^k _{\alpha v} \phi_{i\bar{c} v}] \big ]\times \phi_{\bar{r}s\bar{t}} \\
\quad &+\sum  g'^{\alpha\bar{\beta}}g'^{i\bar{r}}g'^{\bar{j}s}g'^{k\bar{t}}
\big [ \phi_{i\bar{j}k\bar{\beta}} -\sum \Gamma ^{\bar{j}} _{\bar{\beta}\bar{c}} \phi_{i\bar{c}k}\big ][\phi_{\bar{r}s\bar{t}\alpha}- \sum \Gamma^s _{\alpha w}\phi_{\bar{r} w\bar{t}}]
\end{split}
\end{equation}

\begin{equation}\label{ap2a}
\begin{split}
 \sum & g'^{\alpha\bar{\alpha}} V_\alpha.\bigg [-g'^{i\bar{i}}g'^{q\bar{q}}g'^{\bar{j}j}g'^{k\bar{k}}\big [ \phi_{i\bar{q}\bar{\alpha}}-  \Gamma_{\bar{\alpha}\bar{i}} ^{\bar{q}} g'_{i\bar{i}}\big ]\phi_{i\bar{j}k}\phi_{\bar{q}j\bar{k}}\bigg ]=\\
\quad= & - \sum g'^{\alpha\bar{\alpha}}g'^{i\bar{i}}g'^{p\bar{p}}g'^{q\bar{q}}g'^{\bar{j}j}g'^{k\bar{k}}\big [ \phi_{i\bar{p}\alpha}- \Gamma_{\alpha p} ^{i} g'_{p\bar{p}}\big ]\big [ \phi_{p\bar{q}\bar{\alpha}}-  \Gamma_{\bar{\alpha}\bar{p}} ^{\bar{q}} g'_{p\bar{p}}\big ]\phi_{i\bar{j}k}\phi_{\bar{q}j\bar{k}}\\
\quad & - \sum g'^{\alpha\bar{\alpha}}g'^{i\bar{i}}g'^{q\bar{q}}g'^{b\bar{b}}g'^{\bar{j}j}g'^{k\bar{k}}\big [ \phi_{q\bar{b}\alpha}-  \Gamma_{\alpha b} ^{q} g'_{b\bar{b}}\big ]\big [ \phi_{i\bar{q}\bar{\alpha}}- \Gamma_{\bar{\alpha}\bar{i}} ^{\bar{q}} g'_{i\bar{i}}\big ]\phi_{i\bar{j}k}\phi_{\bar{b}j\bar{k}}\\
\quad & - \sum g'^{\alpha\bar{\alpha}}g'^{i\bar{i}}g'^{q\bar{q}}g'^{\bar{j}j}g'^{\bar{s}s}g'^{k\bar{k}}\big [ \phi_{s\bar{j}\alpha}- \Gamma_{\alpha j} ^{s} g'_{j\bar{j}}\big ]\big [ \phi_{i\bar{q}\bar{\alpha}}-  \Gamma_{\bar{\alpha}\bar{i}} ^{\bar{q}} g'_{i\bar{i}}\big ]\phi_{i\bar{j}k}\phi_{\bar{q}s\bar{k}}\\
\quad & - \sum g'^{\alpha\bar{\alpha}}g'^{i\bar{i}}g'^{q\bar{q}}g'^{\bar{j}j}g'^{k\bar{k}}g'^{t\bar{t}}\big [ \phi_{k\bar{t}\alpha}-  \Gamma_{\alpha t} ^{k} g'_{c\bar{b}}\big ]\big [ \phi_{i\bar{q}\bar{\alpha}}-  \Gamma_{\bar{\alpha}\bar{i}} ^{\bar{q}} g'_{i\bar{i}}\big ]\phi_{i\bar{j}k}\phi_{\bar{q}j\bar{t}}\\
\quad &- \sum g'^{\alpha\bar{\alpha}}g'^{i\bar{i}}g'^{q\bar{q}}g'^{\bar{j}j}g'^{k\bar{k}}\big [ \phi_{i\bar{q}\bar{\alpha}\alpha} -\sum \Gamma^i _{\alpha u}\phi_{u\bar{q} \bar{\alpha}}-\sum_b  V_{\alpha}.(\Gamma_{\bar{\alpha}\bar{i}} ^{\bar{q}} g'_{i\bar{i}}) \big ]\phi_{i\bar{j}k}\phi_{\bar{q}j\bar{k}}\\
\quad & -\sum g'^{\alpha\bar{\alpha}} g'^{i\bar{i}}g'^{q\bar{q}}g'^{\bar{j}j}g'^{k\bar{k}}\big [ \phi_{i\bar{q}\bar{\alpha}}-  \Gamma_{\bar{\alpha}\bar{i}} ^{\bar{q}} g'_{i\bar{i}}\big ]\big [ \phi_{i\bar{j}k\alpha} -\sum \Gamma^{i} _{\alpha u} \phi_{u\bar{j}k}-\sum \Gamma^k _{\alpha v} \phi_{i\bar{j}v} \big ] \phi_{\bar{q}j\bar{k}}\\
\quad & -\sum  g'^{\alpha\bar{\alpha}} g'^{i\bar{i}}g'^{q\bar{q}}g'^{\bar{j}j}g'^{k\bar{k}}\big [ \phi_{i\bar{q}\bar{\alpha}}-  \Gamma_{\bar{\alpha}\bar{i}} ^{\bar{q}} g'_{i\bar{i}}\big ]\phi_{i\bar{j}k}\big [ \phi_{\bar{q}j\bar{k}\alpha} -\sum \Gamma^j _{\alpha w} \phi_{\bar{q}w\bar{k}}\big ]
\end{split}
\end{equation}

\begin{equation}\label{ap2b}
\begin{split}
\sum & g'^{\alpha\bar{\beta}}V_{\alpha}.\bigg [ g'^{i\bar{r}}g'^{\bar{j}s}g'^{k\bar{t}}
\big [ \phi_{i\bar{j}k\bar{\beta}} -\sum \Gamma ^{\bar{j}} _{\bar{\beta}\bar{c}} \phi_{i\bar{c}k}\big ]\phi_{\bar{r}s\bar{t}} \bigg ]=\\
\quad & =\sum  g'^{\alpha\bar{\alpha}} g'^{i\bar{i}}g'^{r\bar{r}}g'^{\bar{j}j}g'^{k\bar{k}}\big [ \phi_{i\bar{r}\alpha}-  \Gamma_{\alpha r} ^{i} g'_{r\bar{r}}\big ]
\big [ \phi_{i\bar{j}k\bar{\alpha}} -\sum \Gamma ^{\bar{j}} _{\bar{\alpha}\bar{c}} \phi_{i\bar{c}k}\big ]\phi_{\bar{r}j\bar{k}}\\
\quad & +\sum  g'^{\alpha\bar{\alpha}}g'^{i\bar{i}}g'^{\bar{j}j}g'^{\bar{s}s}g'^{k\bar{k}}\big [ \phi_{s\bar{j}\alpha}-  \Gamma_{\alpha j} ^{s} g'_{j\bar{j}}\big ]
\big [ \phi_{i\bar{j}k\bar{\alpha}} -\sum \Gamma ^{\bar{j}} _{\bar{\alpha}\bar{c}} \phi_{i\bar{c}k}\big ]\phi_{\bar{i}s\bar{k}}\\
\quad &   +\sum  g'^{\alpha\bar{\alpha}}g'^{i\bar{i}}g'^{\bar{j}j}g'^{k\bar{k}}g'^{t\bar{t}}\big [ \phi_{k\bar{t}\alpha}-  \Gamma_{\alpha t} ^{k} g'_{t\bar{t}}\big ]
\big [ \phi_{i\bar{j}k\bar{\alpha}} -\sum \Gamma ^{\bar{j}} _{\bar{\alpha}\bar{c}} \phi_{i\bar{c}k}\big ]\phi_{\bar{i}j\bar{t}}\\
\quad & + \sum  g'^{\alpha\bar{\alpha}}g'^{i\bar{i}}g'^{\bar{j}j}g'^{k\bar{k}}
\big [ \phi_{i\bar{j}k\bar{\alpha}\alpha}-\sum \Gamma^{i} _{\alpha u} \phi _{u\bar{j}k\bar{\alpha}}-\sum \Gamma ^k _{\alpha v}\phi_{i\bar{j}v\bar{\alpha}} -\sum (V_{\alpha}.\Gamma ^{\bar{j}} _{\bar{\alpha}\bar{c}}) \phi_{i\bar{c}k}-\\
\quad & \hspace{4cm }  -\sum \Gamma ^{\bar{j}} _{\bar{\alpha}\bar{c}} [\phi_{i\bar{c}k\alpha} -\sum \Gamma^i _{\alpha u} \phi_{u\bar{c}k}-\sum \Gamma ^k _{\alpha v} \phi_{i\bar{c} v}] \big ]\times \phi_{\bar{i}j\bar{k}} \\
\quad &+\sum  g'^{\alpha\bar{\alpha}}g'^{i\bar{i}}g'^{\bar{j}j}g'^{k\bar{k}}
\big [ \phi_{i\bar{j}k\bar{\alpha}} -\sum \Gamma ^{\bar{j}} _{\bar{\alpha}\bar{c}} \phi_{i\bar{c}k}\big ][\phi_{\bar{i}j\bar{k}\alpha}- \sum \Gamma^j _{\alpha w}\phi_{\bar{i} w\bar{k}}]
\end{split}
\end{equation}

 

\subsection{}\label{app3}
 We first apply  relations (\ref{cdt4}) and (\ref{cdt3}) to obtain

\begin{equation}\label{bib1}
\begin{split}
\bar{V}_{j}.V_{i}.\bar{V}_{\beta}.V_{\alpha}. \phi&= \phi_{\alpha\bar{\beta}i\bar{j}}- \sum \Gamma^{\bar{\beta}} _{\bar{j}\bar{a}}\phi_{\alpha\bar{a}i}-\sum (\bar{V}_j.\Gamma^{\alpha} _{ib} )\phi_{b\bar{\beta}}\\
\quad & -\sum\Gamma^{\alpha} _{ia} (\phi_{a\bar{\beta}\bar{j}}
 -\sum \Gamma^{\bar{\beta}}_{\bar{j}\bar{c}} \phi_{a\bar{c}})
\end{split}
\end{equation}

 Taking one more derivative   and multiplying by $g'^{\alpha\bar{\alpha}} g'^{i\bar{i}}g'^{j\bar{j}}g'^{k\bar{k}}\phi_{\bar{i}j\bar{k}}$  leads to

\[
\begin{split}
g'^{\alpha\bar{\alpha}} g'^{i\bar{i}}g'^{j\bar{j}}g'^{k\bar{k}}\phi_{\alpha\bar{\alpha}i\bar{j}k} \phi_{\bar{i}j\bar{k}}&=
g'^{\alpha\bar{\alpha}} \{g'^{i\bar{i}}g'^{j\bar{j}}g'^{k\bar{k}}\}^{1/2}\bigg [V_k. \bar{V}_{j}.V_{i}.\bar{V}_{\alpha}.V_{\alpha}. \phi\\
\quad &  +\sum (V_k.\Gamma^{\bar{\alpha}} _{\bar{j}\bar{a}}\phi_{\alpha\bar{a}i}+\Gamma^{\bar{\alpha}} _{\bar{j}\bar{a}}V_k.\phi_{\alpha\bar{a}i})+\sum [(V_k.\bar{V}_j.\Gamma^\alpha _{ib} )\phi_{b\bar{\alpha}}+(\bar{V}_j.\Gamma^\alpha _{ib} )V_k.\phi_{b\bar{\alpha}}]\\
\quad & +\sum V_k. \Gamma^\alpha _{ia} (\phi_{a\bar{\alpha}\bar{j}}
 -\sum \Gamma^{\bar{\alpha}}_{\bar{j}\bar{c}} \phi_{a\bar{c}})
+\sum\Gamma^\alpha _{ia} (V_k.\phi_{a\bar{\alpha}\bar{j}})\\
\quad & -\sum\Gamma^\alpha _{ia} (\sum (V_k.\Gamma^{\bar{\alpha}}_{\bar{j}\bar{c}}) \phi_{a\bar{c}})-\sum\Gamma^\alpha _{ia} \sum \Gamma^{\bar{\alpha}}_{\bar{j}\bar{c}}( V_k.\phi_{a\bar{c}})\\
\quad & +\sum \Gamma^{\alpha} _{ka}  \phi_{a\bar{\alpha}i\bar{j}}+\sum \Gamma^{i} _{kb}  \phi_{\alpha\bar{\alpha}b\bar{j}}
\bigg ]\times \{ g'^{i\bar{i}}g'^{j\bar{j}}g'^{k\bar{k}}\}^{1/2}  \phi_{\bar{i}j\bar{k}}
\end{split}
\]

In order to compute the term $V_k. \bar{V}_{j}.V_{i}.\bar{V}_{\alpha}.V_{\alpha}. \phi$ we apply the relation (2.5) in (\cite{y})
\begin{equation}\label{bib0}
\begin{split}
\sum g'^{\alpha\bar{\alpha}}\bar{V}_{j}.V_{i}.\bar{V}_{\alpha}.V_{\alpha}. \phi &= V_i .\bar{V}_{j}. F +\sum g'^{\alpha\bar{\alpha}}g'^{\alpha\bar{\alpha}}(\bar{V_j}. \bar{V}_{\alpha}.V_{\alpha}.\phi)(V_i.\bar{V}_{\alpha}. V_{\alpha}.\phi)\\
\quad & +\sum g'^{\alpha\bar{\alpha}}(\bar{V}_j. g_{reg, \alpha\bar{\alpha}}) V_i.\bar{V}_{\alpha}.V_{\alpha}.\phi+\sum 
g'^{\alpha\bar{\alpha}}(\bar{V}_i. g_{reg, \alpha\bar{\alpha}}) \bar{V}_j.\bar{V}_\alpha.V_\alpha.\phi
\end{split}
\end{equation}

As well as the following relation which was proved in (\ref{mg1}) 
\[
V_k. g'^{\alpha\bar{\alpha}}=-(g'^{\alpha\bar{\alpha}})^2 \phi_{\alpha\bar{\alpha}k}-(g'^{\alpha\bar{\alpha}})^2\sum \Gamma^\alpha _{ka}\phi_{a\bar{\alpha}}
\]

\begin{equation}\label{bib0}
\begin{split}
\sum g'^{\alpha\bar{\alpha}}V_k.\bar{V}_{j}.V_{i}.\bar{V}_{\alpha}.V_{\alpha}. \phi &=\sum ((g'^{\alpha\bar{\alpha}})^2 \phi_{\alpha\bar{\alpha}k}+(g'^{\alpha\bar{\alpha}})^2\sum \Gamma^\alpha _{ka}\phi_{a\bar{\alpha}}) \bar{V}_{j}.V_{i}.\bar{V}_{\alpha}.V_{\alpha}. \phi \\
\quad & 
+V_k. V_i .\bar{V}_{j}. F +\sum g'^{\alpha\bar{\alpha}}g'^{\alpha\bar{\alpha}}(\bar{V_j}. \bar{V}_{\alpha}.V_{\alpha}.\phi)(V_i.\bar{V}_{\alpha}. V_{\alpha}.\phi)\\
\quad & +\sum g'^{\alpha\bar{\alpha}}g'^{\alpha\bar{\alpha}}(V_k.\bar{V_j}. \bar{V}_{\alpha}.V_{\alpha}.\phi)(V_i.\bar{V}_{\alpha}. V_{\alpha}.\phi)+\sum g'^{\alpha\bar{\alpha}}g'^{\alpha\bar{\alpha}}(V_k.\bar{V_j}. \bar{V}_{\alpha}.V_{\alpha}.\phi)(V_i.\bar{V}_{\alpha}. V_{\alpha}.\phi)\\
\quad & -2\sum  [(g'^{\alpha\bar{\alpha}})^2 \phi_{\alpha\bar{\alpha}k}+(g'^{\alpha\bar{\alpha}})^2\sum \Gamma^\alpha _{ka}\phi_{a\bar{\alpha}}]g'^{\alpha\bar{\alpha}}(\bar{V_j}. \bar{V}_{\alpha}.V_{\alpha}.\phi)(V_i.\bar{V}_{\alpha}. V_{\alpha}.\phi)\\
\quad & +\sum g'^{\alpha\bar{\alpha}}(V_k.\bar{V}_j. g_{reg, {\alpha}\bar{\alpha}}) V_i.\bar{V}_{\alpha}.V_{\alpha}.\phi+\sum g'^{\alpha\bar{\alpha}}(V_k.\bar{V}_i. g_{reg, \alpha\bar{\alpha}}) \bar{V}_j.\bar{V}_{\alpha}.V_{\alpha}.\phi\\
\quad & +\sum g'^{\alpha\bar{\alpha}}( \bar{V}_j. g_{reg, \alpha\bar{\alpha}}) V_k.V_i.\bar{V}_\alpha.V_\alpha.\phi+\sum g'^{\alpha\bar{\alpha}}(\bar{V}_i. g_{reg, i\bar{i}}) V_k.\bar{V}_j.\bar{V}_\alpha .V_\alpha.\phi\\
\quad & -\sum [(g'^{\alpha\bar{\alpha}})^2 \phi_{\alpha\bar{\alpha}k}+(g'^{\alpha\bar{\alpha}})^2\sum \Gamma^\alpha _{ka}\phi_{a\bar{\alpha}}]g'^{\alpha\bar{\alpha}}(\bar{V}_j. g_{reg, \alpha\bar{\alpha}}) V_i.\bar{V}_\alpha.V_\alpha.\phi \\
\quad & -\sum [(g'^{\alpha\bar{\alpha}})^2 \phi_{\alpha\bar{\alpha}k}+(g'^{\alpha\bar{\alpha}})^2\sum \Gamma^\alpha _{ka}\phi_{a\bar{\alpha}}]g'^{\alpha\bar{\alpha}}(\bar{V}_i. g_{reg, \alpha\bar{\alpha}}) \bar{V}_j.\bar{V}_\alpha.V_\alpha.\phi \\
\end{split}
\end{equation}
\newpage

Assuming that at the point $q$ we have $\Gamma_{ab} ^c (q) =0$ then by 

\begin{lemma}\label{lem30}
\begin{equation}
\begin{split}
 (g'^{l\bar{l}} )^{1/2} (g'^{k\bar{k}})^{1/2}g'^{n\bar{n}}\phi_{n\bar{n} k\bar{l}}&= -\sum_{i\neq n} (g'^{l\bar{l}} )^{1/2} (g'^{k\bar{k}})^{1/2} g'^{i\bar{i}} \phi_{i\bar{i} k\bar{l}}+ (g'^{l\bar{l}} )^{1/2} (g'^{k\bar{k}})^{1/2} V_k .\bar{V}_{l}. F \\
\quad &+\sum_{i,j} (g'^{l\bar{l}} )^{1/2} (g'^{k\bar{k}})^{1/2} g'^{j\bar{j}}g'^{i\bar{i}}( \phi_{j\bar{i}\bar{l}})( \phi_{i\bar{j}k })\\  
\end{split}
\end{equation}
where $R_2$ satisfies
\begin{equation}\label{ar2}
|R_2|\leq \frac{C}{|S|^4}
\end{equation}

In particular we have (2.5) in (\cite{y}) we can get

\begin{equation}\label{4th}
\begin{split}
(g'^{n\bar{n}})^2\phi_{n\bar{n} n\bar{n}} &= -\sum_{i\neq n} g'^{i\bar{i}} g'^{n\bar{n}}\phi_{i\bar{i} n\bar{n}}+ g'^{n\bar{n}}V_n .\bar{V}_{n}. F +g'^{n\bar{n}}\sum_{i,j} g'^{j\bar{j}}g'^{i\bar{i}}( \phi_{j\bar{i}\bar{n}})( \phi_{i\bar{j}n }) \\
\end{split}
\end{equation}

Similarly we can write
\begin{equation}\label{4th2}
\begin{split}
(g'^{n\bar{n}})^2\phi_{n\bar{n} nn} &= -\sum_{i\neq n} g'^{i\bar{i}}g'^{n\bar{n}} \phi_{i\bar{i} nn}+ g'^{n\bar{n}}V_n .V_n. F +g'^{n\bar{n}}\sum_{i,j} g'^{j\bar{j}}g'^{i\bar{i}}( \phi_{j\bar{i}n})( \phi_{i\bar{j}n }) \\
\end{split}
\end{equation}

\end{lemma}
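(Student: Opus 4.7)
The plan is to derive Lemma \ref{lem30} by taking $\log$ of the Monge--Amp\`ere equation \eqref{ma3rd} and differentiating it twice. Writing $\log\det g' = G + \log|S|^2 + \log\det g_{reg}$ in local coordinates and setting $F := G + \log|S|^2 + \log\det g_{reg}$ (as in the notation of the appendix \ref{app3}, cf.~\eqref{bib0}), a single $V_k$-differentiation produces the identity
\[
\sum_{\alpha,\beta} g'^{\alpha\bar\beta}\phi_{\alpha\bar\beta k} = V_k F .
\]
Applying $\bar V_l$ to this and using formula \eqref{komaki}, namely $\bar V_l g'^{\alpha\bar\beta} = -g'^{\alpha\bar p}\bigl[\phi_{p\bar q\bar l}-\sum_b \Gamma_{\bar l\bar b}^{\bar q}g'_{p\bar b}\bigr]g'^{q\bar\beta}$, yields the classical Yau identity
\[
\sum_{\alpha,\beta} g'^{\alpha\bar\beta}\phi_{\alpha\bar\beta k\bar l}
= V_k\bar V_l F + \sum_{\alpha,\beta,p,q} g'^{\alpha\bar p}g'^{q\bar\beta}\phi_{p\bar q\bar l}\phi_{\alpha\bar\beta k} + \text{(Christoffel terms)}.
\]

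Next, I would invoke Assumption \ref{assum} together with Lemma \ref{lem29} from the appendix \ref{app4} to work in coordinates where $\{V_1,\dots,V_n\}$ is simultaneously $g'$- and $g_{reg}$-orthonormal at the distinguished point $q$, with all Christoffel symbols $\Gamma$ of $g_{reg}$ vanishing at $q$. In these coordinates $g'^{\alpha\bar\beta}=\delta_{\alpha\beta}g'^{\alpha\bar\alpha}$ at $q$, the Christoffel-dependent remainders in \eqref{komaki} drop out, and Yau's identity reduces cleanly at $q$ to
\[
\sum_{\alpha} g'^{\alpha\bar\alpha}\phi_{\alpha\bar\alpha k\bar l} = V_k\bar V_l F + \sum_{i,j} g'^{i\bar i}g'^{j\bar j}\phi_{j\bar i\bar l}\phi_{i\bar j k}.
\]
Isolating the $\alpha=n$ term on the left and transposing the remaining $\alpha\neq n$ terms to the right gives
\[
g'^{n\bar n}\phi_{n\bar n k\bar l} = -\sum_{i\neq n} g'^{i\bar i}\phi_{i\bar i k\bar l} + V_k\bar V_l F + \sum_{i,j} g'^{i\bar i}g'^{j\bar j}\phi_{j\bar i\bar l}\phi_{i\bar j k}.
\]
Multiplying through by $(g'^{k\bar k})^{1/2}(g'^{l\bar l})^{1/2}$ yields the stated formula. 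The special case $k=l=n$ is exactly \eqref{4th}; the variant \eqref{4th2} is obtained by the parallel calculation using $V_k V_l F$ rather than $V_k\bar V_l F$, starting from the $V_l$-derivative of the first-order identity.

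The technical obstacle lies not in producing the identity, but in justifying the claimed error bound $|R_2|\leq C/|S|^4$ for the implicit remainder. This remainder collects three sources of slop: (i) Christoffel terms that are not exactly zero away from the basepoint $q$, (ii) off-diagonal entries of $g'^{-1}$ appearing when \eqref{komaki} is applied globally, and (iii) the factors coming from $V_k\bar V_l(\log|S|^2)$ hidden inside $F$. Each of these is controlled by combining the second-order estimate from Proposition \ref{secondorder}, the sharp upper bound $g'^{n\bar n}\leq \mathscr{M}'_1/|S|^2$ from Lemma \ref{m'm'1} in appendix \ref{app7}, and the third-order bound $|S|^4\Psi\leq \mathscr{C}$ from Theorem \ref{thirdorder}; the worst contribution comes from pairing one $g'^{n\bar n}$ with two third-order terms lying in $\Psi$, giving $|S|^{-2}\cdot \Psi \leq C|S|^{-6}$ in the worst naive estimate, which requires a careful cancellation (via the index $\theta(ijk)<3$ appearing in Proposition \ref{cor6}) to sharpen to the desired $C/|S|^4$.
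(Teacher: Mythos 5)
Your derivation is essentially the paper's own: the identity is obtained from Yau's relation (2.5) (the twice-differentiated logarithm of the Monge--Amp\`ere equation, recorded as \eqref{bib0} in appendix \ref{app3}), evaluated in the frame of Assumption \ref{assum} and Lemma \ref{lem29} where $\Gamma(q)=0$ and $g'$ is diagonal at $q$, after which one isolates the $\alpha=n$ term and rescales by $(g'^{k\bar k})^{1/2}(g'^{l\bar l})^{1/2}$, with \eqref{4th2} following from the analogous $V_nV_n$ computation. Your closing remarks on controlling the remainder $R_2$ via Proposition \ref{secondorder}, Lemma \ref{m'm'1} and Theorem \ref{thirdorder} are consistent with how the paper handles such terms, so no substantive divergence from the paper's argument.
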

 
\subsection{}\label{app4}

\begin{lemma}\label{lem29}
Let $U$ be a neighborhood of a point $q\in X$ and assume that $(z_1,..,z_n)$ is a coordinate system on $U$
such that  $q=(0,...,0)$ in this coordinates system. Assume that $D_i$ for $i=1,...,2n$ denotes each of the $2n$
real coordinate vectors  $\frac{\partial}{\partial x_i}$ or $\frac{\partial}{\partial y_i}$ where $z_i=x_i+i y_i$.  Then there exists a K\"ahler correction $\omega =\omega_{reg}+\partial\bar{\partial}\phi_0$ to the initial metric $\omega_{reg}$ such that
 \[
(D_i g)  (q)= (D_i D_j g) (q)=0, \hspace{1cm} \text{ for } i,j=1,...,2n 
\]
where $g=[g_{i\bar{j}}]$ represents the matrix associated to the K\"ahler metric $\omega=\frac{i}{2}\sum_{i,j} g_{i\bar{j}}dz^i\wedge d\bar{z} ^j$.
In particular we can  assume that the coordinates system $(z_1,...,z_n)$ is such that the hyper surfaces $\{z_n=constant\}$ coincide with the leaves of the foliation $\mathcal{F}_D$ and we  have   
\[
 \Gamma_{ij} ^k (p)=0 \hspace{1cm} \text{and}\hspace{1cm} R^{j} _{ik\bar{l}}(p)=0 \hspace{1cm} i,j,k,l=1,...,n 
\]
where $ \Gamma_{ij} ^k$ and $R^{j} _{ik\bar{l}}$ denote respectively the connection and the curvature tensors associated to $\omega$  
with respect to the frame
$V_i=\frac{\partial}{\partial z_i}$, $i=1,...,n$.
\end{lemma}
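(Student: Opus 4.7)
The plan is to produce $\phi_0$ as a polynomial in $(z,\bar z)$ of degree four whose partial derivatives at $q$ are prescribed so as to cancel the first and second Taylor coefficients of $g_{reg,i\bar j}$ at $q$. First, since $\mathcal{F}_D$ is a holomorphic foliation in a neighborhood of $p\in D$, I would pick a holomorphic chart $(z_1,\dots,z_n)$ centered at $q$ in which the leaves of $\mathcal{F}_D$ are the slices $\{z_n=\mathrm{const}\}$. A linear change of the $z_1,\dots,z_{n-1}$ variables (together with a rescaling of $z_n$)---which preserves the foliation---then arranges $g_{reg,i\bar j}(q)=\delta_{ij}$.

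Next, I would write $\phi_0(z,\bar z)=\sum_{2\le|I|+|J|\le 4}\frac{c_{I,\bar J}}{I!\,J!}\,z^I\bar z^J$, imposing the Hermitian symmetry $c_{I,\bar J}=\overline{c_{J,\bar I}}$ so that $\phi_0$ is real. A direct computation gives
\[
\partial^{I'}\partial^{\bar J'}\bigl(\partial_i\partial_{\bar j}\phi_0\bigr)(q)=c_{I'+e_i,\,\bar J'+\bar e_j},
\]
so the vanishing of $(D_i g)(q)$ and $(D_iD_j g)(q)$ translates into prescribing certain $c_{I,\bar J}$ in terms of the corresponding partial derivatives of $g_{reg,i\bar j}$ at $q$: for instance $c_{e_i+e_k,\bar e_j}=-\partial_kg_{reg,i\bar j}(q)$, $c_{e_i+e_k,\bar e_j+\bar e_l}=-\partial_k\partial_{\bar l}g_{reg,i\bar j}(q)$, and similarly for the remaining mixed third-order cases and their complex conjugates.

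The main obstacle is \emph{consistency}: each unknown $c_{I,\bar J}$ is typically determined by several of the above equations at once (via different ways of splitting its multi-index), and the resulting prescriptions must agree. This is exactly where the K\"ahler identities enter. Closedness of $\omega_{reg}$ yields $\partial_k g_{reg,i\bar j}=\partial_i g_{reg,k\bar j}$ and its higher-order derived analogues, which guarantee that the prescribed data are symmetric in the way the multi-index symmetrizations demand. Once this compatibility is verified, $\phi_0$ is well defined, and since $\partial\bar\partial\phi_0=O(|z|^2)$, the $(1,1)$-form $\omega:=\omega_{reg}+\partial\bar\partial\phi_0$ remains K\"ahler in a smaller neighborhood of $q$.

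Finally, the remaining conclusions are immediate consequences. Formula (\ref{curyau1}), together with $g_{i\bar j}(q)=\delta_{ij}$ and $\partial_k g_{i\bar j}(q)=0$, forces $\Gamma^{k}_{ij}(q)=0$; and in such osculating coordinates the curvature reduces to $R^{j}_{ik\bar l}(q)=-\partial_k\partial_{\bar l}g_{i\bar j}(q)$, which also vanishes by construction. Compatibility with $\mathcal{F}_D$ is preserved throughout since the correction $\partial\bar\partial\phi_0$ modifies only the metric and not the underlying holomorphic chart.
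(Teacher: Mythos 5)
Your construction of the jet is essentially sound, and it is in substance equivalent to what the paper does: the paper simply takes a local potential $\phi_{reg}$ with $\omega_{reg}=\partial\bar\partial\phi_{reg}$ and subtracts a function $\phi_1$ whose third- and fourth-order derivatives at $q$ match those of $\phi_{reg}$, which automatically encodes your coefficient prescriptions and sidesteps the compatibility discussion entirely; your direct prescription of the $c_{I,\bar J}$ from derivatives of $g_{reg,i\bar j}$ works too, and you correctly identify that the only thing to check is consistency of the overlapping prescriptions, which indeed follows from $d\omega_{reg}=0$ (i.e.\ $\partial_k g_{i\bar j}=\partial_i g_{k\bar j}$, $\partial_{\bar l}g_{i\bar j}=\partial_{\bar j}g_{i\bar l}$ and their derivatives). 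Two small slips: the degree-two terms in your ansatz are superfluous (and if nonzero would alter $g(q)$ itself), and $\partial\bar\partial\phi_0$ is only $O(|z|)$, not $O(|z|^2)$, since the cubic terms contribute linearly to the complex Hessian; neither affects the vanishing of the first and second derivatives of $g$ at $q$, hence $\Gamma(q)=0$ and $R(q)=0$.

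The genuine omission is the step that carries most of the weight in the paper's own proof: turning the local polynomial modification into an actual K\"ahler correction of the metric $\omega_{reg}$, rather than a $(1,1)$-form that is positive only on some unspecified neighborhood of $q$. The paper multiplies the correction by a rescaled cut-off $\chi_\lambda(z)=\chi(\lambda z)$ supported in a ball of radius $\delta/\lambda$; since the uncorrected part $\phi_1$ vanishes to third order at $q$, the scaling gives $|D_iD_j(\chi_\lambda\phi_1)|=O(1/\lambda)$, so for $\lambda$ large the perturbation is compactly supported, arbitrarily $C^2$-small, and $\omega_{reg}+\partial\bar\partial\phi_\lambda$ remains a K\"ahler metric everywhere, coinciding with $\omega_{reg}$ outside a tiny ball while still having the prescribed vanishing of first and second derivatives at $q$ (because $\chi_\lambda\equiv 1$ near $q$). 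Your proposal stops at ``remains K\"ahler in a smaller neighborhood of $q$,'' which is weaker than what the lemma asserts and is used for: in the third-order estimates the corrected metric replaces $g_{reg}$ inside global maximum-principle arguments, so one needs the correction to be a bona fide (compactly supported, uniformly small) modification of the metric, not merely a locally positive form. Adding the cut-off-and-rescaling argument would close this gap.
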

\begin{proof}
We assume that $\omega_{reg} =\partial \bar{\partial } \phi_{reg}$ for a potential  $\phi_{reg}:U\rightarrow \mathbb{R}$.
We consider $\phi_1:U\rightarrow \mathbb{R}$ such that 
\begin{equation}\label{omcor}
D_i \phi_1 (q)=D_iD_j\phi_1 (q)= D_i \phi_1 (q)=0, 
\end{equation}
and such that 

\[
D_i D_jD_k \phi_1 (q) = D_i D_jD_k \phi_{reg} (q)\hspace{1cm}  D_i D_jD_k D_l\phi_1 (q) = D_i D_jD_kD_l \phi_{reg} (q)
\]
 We then consider 
a cut-off function $\chi:X\rightarrow \mathbb{R}$ such that
\[
Supp \, \chi \subset \{\sqrt{\sum |z_i|^2}\leq \delta \}\subset U
\]
for some $\delta >0$
 and $\chi^{-1} (\{1\})$ contains a neighborhood of $p$.  For  $\lambda \in \mathbb{R}$ we set $\chi_{\lambda}(z_1,...,z_n) = \chi (\lambda z_1,....,\lambda z_n)$.
We claim that $\phi_\lambda:=-\chi_{\lambda}\phi_1$ for $\lambda$ large enough  is the desired potential which creates the expected correction in the  lemma.
In fact since $\chi_{\lambda}=1$ in a neighborhood of $q$ the third order and forth order derivatives of $\phi_{reg}+\phi_{\lambda}$
all vanish at $q$. Thus one  needs to prove that for $\lambda$ large enough $\omega_{reg}+\partial\bar{\partial}\phi_{\lambda}$
is a K\"ahler metric. To show this we first note that the support of $\chi_{\lambda}$ lies in the disc $\sqrt{\sum |z_i|^2}\leq \frac{\delta}{\lambda}$ and due to (\ref{omcor}) on such a disc we have
\[
|\phi_1|\leq C\frac{1}{\lambda ^3},\hspace{1cm} |D_i\phi_1|\leq C\frac{1}{\lambda^2} ,\hspace{1cm}  |D_iD_j\phi_1|\leq C\frac{1}{\lambda}
\]
Also sacaling leads to 
\[
|D_i D_j\chi_{\lambda}|\leq C' \lambda^2, \hspace{1cm} |D_i\chi_{\lambda} |\leq C' \lambda , \hspace{1cm} |\chi_{\lambda}|\leq C
\]
where $i,j=1,...,2n$ and $C$ and $C'$ are constants which depend  on $\phi_0$ and $\chi$ respectively.
 Hence $D_i D_j (\chi_{\lambda} \phi_1)$ scales  by $\frac{1}{\lambda}$  hence for $\lambda$ large enough $\partial \bar{\partial}\phi_{\lambda}$ can be arbitrarily small and the lemma is proved.
\end{proof}

\subsection{}\label{app5}

\textbf{Canonical  Coordinates Around A point $p\in D$.} In order to simplify computation we need to fix an appropriate holomorphic coordinates $(w_1,...,w_{n-1},z)$ near the point $p\in D$. As before we assume that the divisor $D$ in this coordinates is given by $z=0$ and  
$p=(0,0,...,0)$. 


Let also $\Phi'$ be a local potential for $\omega'$ in this holomorphic coordinates system 
 and let

 \[
\Phi'= \sum_{k,l\geq 0} B_{k,\bar{l}} z^k \bar{z}^l + \sum_{k,l\geq 0} \bar{B}_{k,\bar{l}} \bar{z}^k z^l  
\]

be the Taylor series expansion of $\Phi'$ around the origin in this local coordinates system. Here $B_{k,\bar{l}} (w,\bar{w})$ for $k,l\geq 0$ are assumed to be functions of $(w,\bar{w})$ where $w=(w_1,...,w_{n-1})$.
 
\begin{lemma}\label{rela}
Given a positive integer $m$, there exists  a holomorphic  change of coordinates like $z\rightarrow z$, $w_k\rightarrow w_k+\sum_{i=1} ^m  b_{ki} z^i$ for $k=1,...,n-1$ such that

\[
 (B_{0,\bar{0}})_{w\bar{w}w}(0)=(B_{0,\bar{0}})_{w\bar{w}\bar{w}}(0)=0
\]

\[
(B_{i,\bar{0}})_{\bar{w}_k}(0)=(B_{0,\bar{i}})_{w_k}(0)=0 \hspace{1cm} \text{ for } i=1,...,m, k=1,...,n-1
\]

\[
B_{1,\bar{1}} (0)=(B_{1,\bar{1}})_{w_k} (0), \hspace{1cm}\text{ for } k=1,...,n-1
\]

\[
(B_{2,\bar{1}})_{w_k}(0)=(B_{1,\bar{2}})_{\bar{w}_k}(0)=0\hspace{1cm} \text{ for }  k=1,...,n-1
\]

Consequently in this coordinates the $z$-axis $w_1=...=w_{k-1}=0$  is orthogonal to the hyperplane $z=0$ at $p=(0,...,0)$
and the restriction of the metric $g'=\partial \bar{\partial}\Phi'$  to the $z$-axis in this coordinate is of the order
\[
g'_{w_i\bar{z}}=O(|z|^3), \hspace{1cm} \text{ for } i=1,...,n-1
\]
 \end{lemma}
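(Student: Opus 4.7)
My plan is to construct the normalized coordinates in two phases. In the first phase I would apply a $w$-only biholomorphism $w \mapsto \tilde w(w)$ (with $z$ held fixed) to place the restricted Kähler metric $\omega'|_D = \partial\bar\partial B_{0,\bar 0}$ into Bochner--Kähler normal form at $p$. This is the classical construction of Kähler normal coordinates on $D$, which yields holomorphic coordinates on $D$ centered at $p$ in which the restricted potential expands as $B_{0,\bar 0}(w,\bar w) = \sum_j |w_j|^2 + O(|w|^4)$, so that $g'_{w_i \bar w_j}|_D(0) = \delta_{ij}$ and every third-order mixed derivative of $B_{0,\bar 0}$ at the origin vanishes. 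This immediately establishes property~1.

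In the second phase I use coordinate changes of the announced form $w_k \mapsto w_k + \sum_{i=1}^m b_{ki} z^i$, with the constants $b_{ki}$ chosen inductively in $i$ from $1$ up to $m$. Substituting $w = \tilde w - \sum_{i=1}^m b_{ji} z^i$ into $\Phi'$ and Taylor-expanding each $B_{k,\bar l}(w,\bar w)$ about $\tilde w$, the new coefficient of $z^K \bar z^L$ takes the form
\begin{equation*}
\tilde B_{K,\bar L}(\tilde w) = B_{K,\bar L}(\tilde w) - \sum_{i=1}^{\min(K,m)}\sum_j b_{ji}\,(\partial_{w_j} B_{K-i,\bar L})(\tilde w) - \sum_{i=1}^{\min(L,m)}\sum_j \bar b_{ji}\,(\partial_{\bar w_j} B_{K,\overline{L-i}})(\tilde w) + O(b^2).
\end{equation*}
Differentiating in $\bar w_k$ at $\tilde w = 0$ and using $\partial_{w_j}\partial_{\bar w_k} B_{0,\bar 0}(0) = \delta_{jk}$ from Phase~1, the condition $(\tilde B_{i,\bar 0})_{\bar w_k}(0) = 0$ becomes a triangular linear system in the $b_{ki}$ which is solved uniquely order by order; the companion identity $(\tilde B_{0,\bar i})_{w_k}(0) = 0$ is then automatic because the coordinate change preserves reality of $\Phi'$, so the conjugate relation $B_{k,\bar l} = \overline{B_{l,\bar k}}$ still holds for the new coefficients.

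The mixed conditions on $B_{1,\bar 1}$, $B_{2,\bar 1}$, $B_{1,\bar 2}$ in properties~3 and~4 are analyzed by the same transformation law: once the $b_{ki}$ of Phase~2 have been fixed, the induced corrections to $\tilde B_{1,\bar 1}$ and so on involve second derivatives of the lower-order $B_{k',\bar l'}$'s at $0$, and any residual first-derivative values can be absorbed by augmenting Phase~1 with an additional cubic $w$-only correction (the Bochner construction is only used in its quadratic part, so higher-order coordinate changes on $D$ remain available). The main obstacle is purely bookkeeping: verifying at each inductive step that the new coordinate refinement does not disturb a coefficient already normalized at a lower order, which follows from the triangular structure of the displayed transformation law. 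Once properties~1--4 hold, the geometric consequences are immediate: at $p$ we have $g'_{w_i \bar z}(0) = (B_{0,\bar 1})_{w_i}(0) = 0$, so the $z$-axis $\{w = 0\}$ is orthogonal to $D$ with respect to $g'$; and along this axis the expansion $g'_{w_i \bar z}(0,z) = \sum_{k \geq 0,\,l \geq 1} l\,(B_{k,\bar l})_{w_i}(0)\, z^k \bar z^{l-1}$ has every term with $k + l - 1 \leq 2$ killed by properties~2--4, leaving $g'_{w_i \bar z}(0,z) = O(|z|^3)$ as claimed.
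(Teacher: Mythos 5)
Your Phase 2 (the shear $w_k\mapsto w_k+\sum_i b_{ki}z^i$ with the $b_{ki}$ determined inductively from the invertibility of $[(B_{0,\bar 0})_{w_i\bar w_j}(0)]$, i.e.\ of $\omega'|_D$ at $p$) is exactly the paper's argument for the second family of identities, and using normal coordinates on $D$ for the first one is also in line with the paper. The genuine gap is in how you propose to obtain the conditions on $B_{1,\bar 1}$, $B_{2,\bar 1}$, $B_{1,\bar 2}$: these are \emph{not} normalizations that any holomorphic change of coordinates can produce, so "absorbing any residual first-derivative values by an additional cubic $w$-only correction" cannot work. Indeed $B_{1,\bar 1}(0)=g'_{z\bar z}(p)$; a $w$-only change $w\mapsto \psi(w)$ leaves $\partial/\partial z$, hence $g'_{z\bar z}$, untouched (and transforms the gradient of $B_{1,\bar 1}$ at $0$ by the invertible Jacobian of $\psi$, so it cannot create the vanishing of $(B_{1,\bar 1})_{w_k}(0)$ either), while after your Phase 2 has arranged $g'_{z\bar w_k}(p)=0$ the quantity $g'_{z\bar z}(p)=\|\partial_z(p)\|_{g'}^2$ is the minimum over all admissible shears, so no further shear can make it vanish. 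For a metric nondegenerate at $p$ these quantities are simply nonzero, which shows a purely coordinate-theoretic proof is impossible.

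What the paper actually uses at this step — and what is missing from your proposal — is the degenerate Monge--Amp\`ere equation itself: since ${\omega'}^n=|S|^{2}e^{F}\omega^n$, the potential satisfies $\det\bigl[g'_{i\bar j}\bigr]=O(|z|^2)$ in these coordinates. Expanding this determinant identity in powers of $z,\bar z$ at $w=0$, the zeroth-order term gives $\det[(B_{0,\bar 0})_{w_i\bar w_j}(0)]\,B_{1,\bar 1}(0)=0$, hence $B_{1,\bar 1}(0)=0$; differentiating in $w_k$ (using the normal coordinates on $D$ so that first derivatives of $g'|_D$ vanish at $0$) gives $(B_{1,\bar 1})_{w_k}(0)=0$; and the coefficient of $z$ together with its $w_k$-derivative yields $B_{2,\bar 1}(0)=0$ and $(B_{2,\bar 1})_{w_k}(0)=0$ (with the conjugate statements for $B_{1,\bar 2}$). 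Without invoking this transversal degeneracy of $g'$ along $D$, the third and fourth displayed families of identities — and therefore the final estimate $g'_{w_i\bar z}=O(|z|^3)$ along the $z$-axis — cannot be established, so your argument as written does not close.
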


\begin{proof} 

Let $\Phi_D:=\Phi'|_{D}$ and let 
\[
\Phi_{D}(w,\bar{w}):=B_{0,\bar{0}}(w,\bar{w})=\sum_{\substack{0\leq i+j\leq m \\1\leq k,l\leq n-1}} s^{k,l} _{i,j}(w_k)^i (\bar{w}_l)^j +O(|w|^{m+1})
\]

be the Taylor series expansion of $\Phi_D$
where $s^{k,l} _{i,j}$ for $0\leq i+j\leq m $ and $1\leq k,l\leq n-1$ are complex numbers.
First note that the coefficient of $\bar{w}_kz$ in $\Phi'$ after change of variable can be generated by the terms  $B_{1,\bar{0}}z +\sum_{i,j,l}  s^{k,l}_{1,1} w_k\bar{w}_{l}$. Converting  coordinates  as described in   lemma (\ref{rela}) leads to
\[
\sum_{i,j,l}s^{l,k}_{1,1}  \big (\bar{w}_k+\sum_{i=1} ^n  \bar{b}_{k,i} \bar{z}^i \big )  \big (\sum_{l}  ( w_l+\sum_{i=1} ^n  b_{l,i} z^i ) \big )+ B_{1,\bar{0}}(w_1+\sum_{i=1} ^n  b_{1,i} z^i,...,w_{n-1}+\sum_{i=1} ^n  b_{n-1,i} z^i  )z
\]
and thus the   coefficient of the term $\bar{w}_k z$ is given by
\[
\big [\sum_{l} s^{l,k}_{1,1}  b_{l,1}+ (B_{1,\bar{0}})_{\bar{w}_k}(0) \big ] \bar{w}_kz
\]
The matrix $[s^{l,k} _{1,1}]_{1\leq l,k,\leq n-1}$ is nothing but the  matrix of the restriction of $\omega'$ to $D$ at the origin and thus  is invertible.  Hence the  system of equations $\sum_{l} s^{l,k}_{1,1}  b_{l,1}+ (B_{1,\bar{0}})_{\bar{w}_k}(0) =0$ is  solvable for $b_{l,1}$,for $l=1,...,n-1$.


Now one can inductively determine the coefficients $b_{l,i} $ for $l=1,...,n-1$ and $2\leq i\leq m$ in such a way that all the terms $(B_{i,\bar{0}})_{\bar{w}_k} (0)$ which are the coefficients of $\bar{w}_k z^i$ in the Taylor series expansion of $\Phi '$ around $0$ disapear.
In fact after doing the change of coordinates described in  lemma (\ref{rela}) the term $\bar{w}_k z^l$ for $1\leq k \leq n-1$ and $2\leq l\leq m$ can be generated by any of the terms like $B_{i,0}(w,\bar{w})z^i$ for $0\leq i \leq l$.
The term $\bar{w}_k z^l$  after change of coordinates can occur in    the following sum
\[
\sum_{i=0} ^l [(\sum_{\substack{1\leq i_1,...,i_a\leq n-1\\ 1\leq a\leq l}} \frac{\partial^{a+1}B_{i,\bar{0}}}{\partial\bar{w}_k\partial w_{i_1}...\partial w_{i_a}}(0)\bar{w}_k\prod_{1\leq j\leq a} (w_{i_j}+\sum_{s=1} ^m b_{i_js}z^s)]z^i
\]
thus it is given by

\begin{equation}\label{ap51}
 \sum_{i=0} ^l \sum_{\substack{1\leq i_1,...,i_a\leq n-1\\ 1\leq a\leq l}} \frac{\partial^{a+1}B_{i,\bar{0}}}{\partial\bar{w}_k\partial w_{i_1}...\partial w_{i_a}}(0)(\sum_{\substack{s_1+...+s_a=l-i\\ 1\leq s_1,...,s_{a}\leq m\\1\leq i_1,....,i_a\leq n-1\\ 1\leq a \leq l}} b_{i_1,s_1}...b_{i_a,s_a})] =0
\end{equation}
 From the above relation we can compute all the sums $\sum_{i=1} ^{n-1} \frac{\partial ^2 B_{0,\bar{0}}}{\partial \bar{w}_k \partial w_i} (0)b_{i,l}$ for $k=1,...,n-1$ in terms of $b_{i,j}$ for $i=1,...,n-1$, $j<l$. Thus using induction  and invertibility of the matrix $[\frac{\partial ^2 B_{0,\bar{0}}}{\partial \bar{w}_k \partial w_i} (0)]_{1\leq k,i\leq n-1}$ we can find  $b_{i,j}$ in such a way that  the equation (\ref{ap51}) is satisfied.

Now consider  the equation
\begin{equation}\label{ap52}
\det [g'_{i\bar{j}}]=O(|z|^2)
\end{equation}

according to the above discussion we can assume that  we have $(B_{1,\bar{0}})_{w_i} (0) = (B_{0,\bar{1}})_{\bar{w}_i} (0) =0$.  The zero order term in the left hand side of  (\ref{ap52})  with respect to $z$-expansion consists of
\[
\det [ (B_{0,\bar{0}})_{w_i \bar{w}_j} (0) ]_{1\leq i,j \leq n-1}\times B_{1,\bar{1}}(0)
\] 

by the right hand side of the relation (\ref{ap52} )   this term vanishes. Since $[ (B_{0,\bar{0}})_{w_i \bar{w}_j} (0) ]_{1\leq i,j \leq n-1}$ represents the matrix of the restriction of the metric $g'$ on $D$ at $p$ it is invertible and  we obtain
\begin{equation}\label{ap53}
 B_{1,\bar{1}}(0)=0
\end{equation}

By taking the derivative of (\ref{ap52}) with respect to $w_k$ for $k=1,...,n-1$ and using the coordinates system on $D$ in such a way that 
\begin{equation}\label{ap2cn}
\frac{\partial g_{w_i\bar{w}_j} }{\partial  w_k } (0) =\frac{\partial g_{w_i\bar{w}_j} }{\partial  \bar{w}_k } (0) =0\hspace{1cm} \text{ for all } 1\leq i,j,k\leq n-1, 
\end{equation}
we also  obtain

\begin{equation}\label{ap54}
(B_{1,1})_{w_i}(0)=(B_{1,1})_{\bar{w}_i}(0)=0
\end{equation}


Setting the coefficient of $z$ in the left hand side of (\ref{ap52}) equal to zero and applying (\ref{ap53}) and (\ref{ap54}) one obtains
\[
2B_{2,1}(0)\det [(B_{0,0})_{w_i\bar{w}_j} (0)]=0
\]

hence 
\begin{equation}
B_{2,1}(0) =0
\end{equation}

Similarly   taking the derivative of both sides of (\ref{ap52}) with respect to $w_k$ and using (\ref{ap2cn}), (\ref{ap53}) and (\ref{ap54}) yields 
\[
2(B_{2,1}(0))_{w_k}\det [(B_{0,0})_{w_i\bar{w}_j} (0)]=0
\]

which is equivalent to

\[
(B_{2,1}(0))_{w_k}=0
\]
This completes the proof.

\end{proof}

\subsection{}\label{app6}

\begin{equation}\label{secordloc}
\begin{split}
\Delta_{g'} (\chi\exp \{-C\phi\} (m+\Delta_{reg} \phi))=& \exp \{-C\phi\}(\sum g'^{i\bar{i}} (C^2 \chi\phi_{i}\phi_{\bar{i}}  -C\phi_{i}\chi_{\bar{i}}-C\chi_i \phi_{\bar{i}}) )(m+\Delta_{reg} \phi)\\
\quad& +\exp \{-C\phi\}\sum g'^{i\bar{i}} (\chi_i -C\phi_{i}\chi) (\Delta_{reg} \phi)_{\bar{i}}\\
\quad  & +\exp \{-C\phi\}\sum g'^{i\bar{i}}  (\Delta_{reg} \phi)_{i}(\chi_{\bar{i}} -C\phi_{\bar{i}}\chi)\\
\quad & +\exp \{-C\phi\} (-C\chi\Delta _{g'} \phi+\Delta_{g'} \chi )  (m+\Delta_{reg} \phi)\\
\quad & +\chi \exp \{-C\phi\}\Delta_{g'} (\Delta_{reg} \phi)
\end{split}
\end{equation}

 By Cauchy-Schwarz inequality  we have

\begin{equation}\label{secordloc2}
\begin{split}
\sum g'^{i\bar{i}} [(-\chi_i +C\phi_{i}\chi)& (\Delta_{reg} \phi)_{\bar{i}}
 +  (\Delta_{reg} \phi)_{i}(-\chi_{\bar{i}} +C\phi_{\bar{i}}\chi)]\leq\\
\quad & \frac{1}{\chi} \sum g'^{i\bar{i}}(\chi_{\bar{i}} -C\phi_{\bar{i}}\chi )(\chi_i -C\phi_{i}\chi)  (m+\Delta_{reg} \phi )+\\
\quad &\hspace{3cm} \chi\sum   g'^{i\bar{i}}      (\Delta_{reg} \phi)_{i}  (\Delta_{reg} \phi)_{\bar{i}} (m+\Delta_{reg} \phi )^{-1}\\
\quad & =\sum   g'^{i\bar{i}}  (C^2 \chi\phi_{i}\phi_{\bar{i}}  -C\phi_{i}\chi_{\bar{i}}-C\chi_i \phi_{\bar{i}}) )(m+\Delta_{reg} \phi )\\
\quad & +\chi\sum   g'^{i\bar{i}}      (\Delta_{reg} \phi)_{i}  (\Delta_{reg} \phi)_{\bar{i}} (m+\Delta_{reg} \phi )^{-1}+\sum  g'^{i\bar{i}}\frac{\chi_{i}\chi_{\bar{i}}}{\chi}(m+\Delta_{reg} \phi )
\end{split}
\end{equation}
From (\ref{secordloc}) and (\ref{secordloc2})    
\begin{equation}
\begin{split}
\Delta_{g'} (\chi\exp \{-C\phi\} &(m+\Delta_{reg} \phi))\geq \\ 
\quad & - \chi\exp \{-C\phi\}\sum   g'^{i\bar{i}}      (\Delta_{reg} \phi)_{i}  (\Delta_{reg} \phi)_{\bar{i}} (m+\Delta_{reg} \phi )^{-1}-\exp\{-C\phi\}\sum  g'^{i\bar{i}}\frac{\chi_{i}\chi_{\bar{i}}}{\chi}(m+\Delta_{reg} \phi )\\
\quad & +\exp \{-C\phi\} (-C\chi\Delta_{g'} \phi+\Delta_{g'} \chi )  (m+\Delta_{reg} \phi)\\
\quad & +\chi \exp \{-C\phi\}\Delta_{g'} (\Delta_{reg} \phi)\\
\quad & =  -\chi \exp \{-C\phi\}\sum   g'^{i\bar{i}}      (\Delta_{reg} \phi)_{i}  (\Delta_{reg} \phi)_{\bar{i}} (m+\Delta_{reg} \phi )^{-1}-C\chi \exp \{-C\phi\} (\Delta_{g'} \phi)  (m+\Delta_{reg} \phi)\\
\quad &  +\exp \{-C\phi\} (\Delta_{g'} \chi -\frac{|\nabla_{g'} \chi|^2 _{g'}}{\chi} )(m+\Delta_{reg} \phi)+\chi \exp \{-C\phi\}\Delta_{g'} (\Delta_{reg} \phi)
\end{split}
\end{equation}

According to relation (2.15)   reference (\cite{y})  by taking  a coordinates such that $g_{i\bar{j}}=\delta_{ij}$ and $\phi_{i\bar{j}}=\phi_{i\bar{i}}\delta_{ij}$ we know that
\[
  \sum   (1+\phi_{i\bar{i}})^{-1}      (\Delta_{reg} \phi)_{i}  (\Delta_{reg} \phi)_{\bar{i}} (m+\Delta_{reg} \phi )^{-1}\leq \sum (1+\phi_{k\bar{k}})^{-1} (1+\phi_{i\bar{i}})^{-1}\phi_{k\bar{i}\bar{j}}\phi_{i\bar{k}j}
\]
therefore using relation (2.14) in (\cite{y}),
\[
\begin{split}
  -\chi (m+\Delta_{reg} \phi )^{-1}\sum   g^{i\bar{i}}   &   (\Delta_{reg} \phi)_{i}  (\Delta_{reg} \phi)_{\bar{i}}+\chi\Delta_{g'} (\Delta_{reg} \phi)\geq \\
\quad & \geq  \chi\Delta_{reg} G+ \chi(\inf _{i\neq l} R_{i\bar{i}l\bar{l}})[\sum _{i,l} \frac{1+\phi_{i\bar{i}}}{1+\phi_{l\bar{l}}}-m^2]
\end{split}
\]

This leads to 

\begin{equation}\label{f}
\begin{split}
\Delta_{g'} (\chi \exp \{-C\phi\} (m+\Delta_{g_{reg}} \phi))\geq & \exp\{-C\phi\} \bigg ( \chi  \Delta_{reg} (G )- C\chi m^2 |S|^2 \inf _{i\neq l} R_{i\bar{i} l\bar{l}}\bigg ) \\
\quad &\exp \{-C\phi\} (-Cm\chi +\Delta_{g'} \chi-\frac{| \nabla_{g'} \chi|_{g'}^2}{\chi}) (m+\Delta_{reg} \phi ) \\
&\quad +\exp \{-C\phi\}   \chi (C+\inf_{i\neq l} R_{i\bar{i}l\bar{l}})\exp \{\frac{-G}{m-1}\}  (m+\Delta_{reg} \phi) ^{1+1/(m-1)}\\
\end{split}
\end{equation}

If we set $\chi = e^u$ then we have
\[
\chi_{i\bar{i}}-\frac{\chi_{i}\chi_{\bar{i}}}{\chi}= u_{i\bar{i}}e^u
\]
So 
\[
\Delta_{g'} \chi -\frac{|\nabla_{g'} \chi|^2 _{g'}}{\chi}=(\Delta_{g'} u) e^u 
\]
\newpage

\subsection{}\label{app7}

Let $(w_1,...,w_{n-1},z)$ be a holomorphic coordinate system on an open neighborhood $U_p$ of a point $p\in D$.  We construct  two smooth  $(1,0)$- moving frames  $(V_1,...,V_{n-1})$ and $(e_1,...,e_{n})$ over $U_p$ by an  inductive process explained below. We take $e_n$ in such a way that  $e_n$ is $g_{reg}$-perpendicular to $Span \{\frac{\partial }{\partial w_1},...,\frac{\partial }{\partial w_{n-1}}\}$
\[
e_n\perp_{g_{reg}}Span \{\frac{\partial }{\partial w_1},...,\frac{\partial }{\partial w_{n-1}}\},  \text{ everywhere }  \text{ and } \hspace{1cm} \|e_n\|_{g_{reg}}=1
\]
 
the vector $V_n$ is also constructed in such a way that 

\[
V_n\perp_{g'} Span \{\frac{\partial }{\partial w_1},...,\frac{\partial }{\partial w_{n-1}}\} \text{ everywhere and }\hspace{1cm} \|V_n\|_{g_{reg}}=1
\]

Assuming $e_n,..., e_{n-k+1}$ and $V_n,...,V_{n-k}$ are given. 
 We consider the subspace 

\[
W_{k}:=\bigg ( Span\{V_n,..., V_{n-k
}\} \bigg )^{\perp_{g'}}
\]

consisting  of the subspace  orthogonal to $Span\{V_n,..., V_{n-k}\} $ with respect to $g'$. 
Then we choose $\tilde{V}_{n-k-1}$ to be the  orthogonal projection of $V_{n-k} $ on $W_{k}$ with respect to $g_{reg}$ and we set $V_{n-k-1}:= \frac{\tilde{V}_{n-k-1}}{\|V_{n-k-1}\|_{g_{reg}}}$.  The vector  $e_{n-k}\in Span \{V_{n-k},V_{n-k-1}\}$ is selected  such that 

\[
\|e_{n-k}\|_{g_{reg}}=1 \hspace{1cm} \text{ and }\hspace{1cm} e_{n-k}\perp_{g_{reg}} V_{n-k-1}
\]
 Then  $\tilde{V}_{n-k-1}$ is selected  to be the  orthogonal projection of $V_{n-k} $ on $W_{k}$ with respect to $g_{reg}$ and we set $V_{n-k-1}:= \frac{\tilde{V}_{n-k-1}}{\|V_{n-k-1}\|_{g_{reg}}}$. 
  Hence 

\[
V_{n} = \cos \theta_n V_{n-1} + \sin\theta_n e_n,
\]
\[
V_{n-1} = \cos \theta_{n-1} V_{n-2} + \sin\theta_{n-1} e_{n-1} 
\]
\centerline{...}
\[
V_{n-k+1} = \cos \theta_{n-k+1} V_{n-k} + \sin\theta_{n-k+1} e_{n-k+1} 
\]
\centerline{...}

for some angles $\theta_{n},...,\theta_{2}$.

\begin{lemma}\label{m'm'1}
Let $\mathscr{M}'$ be a lower bound for $g'|_{Span\{\frac{\partial}{\partial w_1},...,\frac{\partial}{\partial w_{n-1}}\}}$ so that $\|V_{i}\|_{g'}\geq \mathscr{M}'$ for $i=1,...,n$.
\begin{equation}
e^{-G}(\mathscr{M}')^{2n-2} \leq \frac{|S|^2}{\|V_n\|_{g'}^2}\leq \mathscr{M'}_1
\end{equation}

for some constant $\mathscr{M'}_1$ only depends on  $G$ and $g_{reg}$.
\end{lemma}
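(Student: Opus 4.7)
My proof would read both inequalities directly off the Monge--Amp\`ere equation, once it is rewritten in the frame $(V_1,\dots,V_n)$. The frame is $g'$-orthogonal by construction, and the normalization in the construction makes each $V_i$ a $g_{reg}$-unit vector. Therefore the equation $(\omega')^n=|S|^2 e^G \omega_{reg}^n$, evaluated on this basis, becomes
\[
\prod_{i=1}^{n}\|V_i\|^2_{g'}\;=\;|S|^2 e^G\,\det\bigl[g_{reg}(V_i,\bar V_j)\bigr],
\]
which isolates the ratio we care about:
\[
\frac{|S|^2}{\|V_n\|^2_{g'}}\;=\;\frac{\prod_{i=1}^{n-1}\|V_i\|^2_{g'}}{e^G\,\det\bigl[g_{reg}(V_i,\bar V_j)\bigr]}.
\]

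For the lower bound, I would apply the hypothesis $\|V_i\|_{g'}\geq\mathscr{M}'$ to the numerator, and Hadamard's inequality to the denominator: since each $V_i$ is $g_{reg}$-unit and the Gram matrix $[g_{reg}(V_i,\bar V_j)]$ is positive definite, Hadamard gives $\det[g_{reg}(V_i,\bar V_j)]\leq\prod_i g_{reg}(V_i,\bar V_i)=1$. Together these yield $|S|^2/\|V_n\|^2_{g'}\geq e^{-G}(\mathscr{M}')^{2n-2}$ on the nose, matching the stated bound exactly.

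For the upper bound, I would use proposition \ref{secondorder} to bound the numerator and a geometric argument for the denominator. Proposition \ref{secondorder} gives $g'\leq C_{\ref{secondorder1}}\,g_{reg}$, hence $\|V_i\|^2_{g'}\leq C_{\ref{secondorder1}}$, so the numerator is at most $C_{\ref{secondorder1}}^{n-1}$. For the denominator, observe that on $D$ the vectors $V_1,\dots,V_{n-1}$ lie in $TD$ (they are $g'$-orthogonal to $V_n$, which is $g'$-orthogonal to $TD$, and $TD$ together with $\mathrm{span}(V_n)$ fills $TX$). Decomposing $V_n=\cos\theta_n V_{n-1}+\sin\theta_n e_n$ with $e_n\perp_{g_{reg}}D$ and $\|e_n\|_{g_{reg}}=1$, only the $e_n$-component contributes to the $V_n$-column of $g_{reg}(V_i,\bar V_j)$ in a way orthogonal to $V_1,\dots,V_{n-1}$, so
\[
\det\bigl[g_{reg}(V_i,\bar V_j)\bigr]\;\geq\; |\sin\theta_n|^2\,\det\bigl[g_{reg}(V_i,\bar V_j)\bigr]_{i,j<n}.
\]
The angle factor is bounded below by (\ref{sint}) (which is proved via (\ref{sincase1})--(\ref{sincase2}) using only $G$ and $g_{reg}$), and the tangential $(n-1)\times(n-1)$ determinant is bounded below because $V_1,\dots,V_{n-1}$ is a $g_{reg}$-unit basis of $T_qD$ and $g_{reg}|_D$ is a fixed, positive-definite Hermitian metric with a uniform positive lower eigenvalue on the compact divisor. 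Combining, one obtains the constant $\mathscr{M}'_1$ depending only on $G$, on $g_{reg}$, on $C_{\ref{secondorder1}}$, and on the lower bound in (\ref{sint}), all of which depend only on $G$ and $g_{reg}$.

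\textbf{Main obstacle.} The decomposition $\det[g_{reg}(V_i,\bar V_j)]\geq |\sin\theta_n|^2\det[g_{reg}(V_i,\bar V_j)]_{i,j<n}$ is cleanest at points of $D$; off $D$, the vectors $V_1,\dots,V_{n-1}$ need not be exactly $g_{reg}$-orthogonal to the normal direction, so the block-decomposition acquires cross terms. The hard part will be showing these cross terms are subdominant. I expect to handle this by working in the canonical coordinates of lemma \ref{rela}, in which $g'_{w_i\bar z}=O(|z|^3)$, $g'_{z\bar z}=O(|z|^2)$, so that the Gram--Schmidt-type construction of $V_1,\dots,V_{n-1}$ against $V_n$ introduces only $O(|z|)$ departures from tangency to $D$; the resulting perturbation of the tangential determinant is then $O(|z|^2)=O(|S|^2)$, which does not affect the lower bound for $|z|$ small, while for $|z|$ bounded away from zero one falls back on the uniform ellipticity of $g'$ on the compact region $X\setminus\{|S|<\varepsilon\}$, a region where the whole quantity $|S|^2/\|V_n\|^2_{g'}$ is manifestly bounded in terms of $G$ and $g_{reg}$ alone.
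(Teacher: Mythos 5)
Your reduction of both bounds to the identity $\prod_{i=1}^{n}\|V_i\|^2_{g'}=|S|^2e^{G}\det[g_{reg}(V_i,\bar V_j)]$ is exactly the paper's relation (\ref{v1vn}), where the determinant appears as $\prod_{k=2}^{n}\sin^2\theta_k$; and your lower bound (Hadamard's inequality for the $g_{reg}$-Gram matrix of the $g_{reg}$-unit frame, combined with $\|V_i\|_{g'}\geq\mathscr{M}'$) is a clean equivalent of the paper's use of $\sin^2\theta_k\leq 1$ in (\ref{tkmin2}). That half is correct and essentially the paper's argument.

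The upper bound has a genuine gap. You claim $\det[g_{reg}(V_i,\bar V_j)]_{i,j<n}$ is bounded below ``because $V_1,\dots,V_{n-1}$ is a $g_{reg}$-unit basis of $T_qD$ and $g_{reg}|_D$ has a uniform positive lower eigenvalue.'' That is not a valid argument: the Gram determinant of unit vectors with respect to a fixed positive metric can be arbitrarily small when the vectors become nearly linearly dependent, and nothing in your setup rules this out --- the $V_i$ are orthogonal for $g'$, not for $g_{reg}$, and where $g'|_D$ is close to degenerate a $g'$-orthogonal frame can be nearly $g_{reg}$-collinear. Controlling exactly this degeneration is the core of the paper's proof: from $\langle V_{n-k},V_{n-k+1}\rangle_{g'}=0$ it derives (\ref{tg2}), hence $\tan^2\theta_{n-k+1}\geq(\mathscr{M}')^2/C_{\ref{secondorder1}}$ and the angle bounds (\ref{sincase1})--(\ref{sincase2}), which use both the hypothesis $\|V_i\|_{g'}\geq\mathscr{M}'$ and the second-order estimate; the resulting factor $\mathscr{M}''=\max\{2,\,C_{\ref{secondorder1}}/(2(\mathscr{M}')^2)\}$ therefore involves $\mathscr{M}'$, a dependence your argument silently discards (note also that (\ref{sint}) controls only the single angle $\theta_n$, not the tangential angles $\theta_2,\dots,\theta_{n-1}$ that govern the determinant you need). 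The repair is forced back onto the paper's mechanism: either reproduce the angle estimates, or argue matricially that $g'\leq C_{\ref{secondorder1}}\,g_{reg}$ gives $\det[g_{reg}(V_i,\bar V_j)]_{i,j<n}\geq C_{\ref{secondorder1}}^{-(n-1)}\prod_{i<n}\|V_i\|^2_{g'}\geq C_{\ref{secondorder1}}^{-(n-1)}(\mathscr{M}')^{2n-2}$; in either case you should track the $\mathscr{M}'$-dependence explicitly rather than asserting the constant depends only on $G$ and $g_{reg}$. Finally, the ``main obstacle'' you single out (cross terms off $D$, to be handled with lemma (\ref{rela})) is not where the difficulty lies: once $e_n$ is chosen $g_{reg}$-orthogonal to the span of $V_1,\dots,V_{n-1}$, your block factorization is exact at every point, and the real issue is the possible $g_{reg}$-degeneration of the $g'$-orthogonal tangential frame just described.
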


\begin{proof}
$\langle V_{n-k}, V_{n-k+1}\rangle_{g'} =0 $ so 

\[
  \langle V_{n-k} ,e_{n-k+1}\rangle_{g'}= -\cot \theta_{n-k+1} \|V_{n-k}\|^2 _{g'} 
\] 

From $\|V_{n-k+1}-V_{n-k} \cos \theta_{n-k+1}\|_{g'} ^2 =\|e_{n-k+1}\|_{g'} ^2$ we can deduce that  

\[
\|V_{n-k+1}\|^2 _{g'}=-\cos ^2 \theta_{n-k+1} \|V_{n-k}\| ^2 _{g'} +\sin ^2 \theta_{n-k+1}  \|e_{n-k+1}\|_{g'} ^2 
\]

In particular

\begin{equation}\label{tg2}
\tan^2 \theta  _{n-k+1} \|e_{n-k+1}\|_{g'} ^2 \geq \|V_{n-k}\|^2 _{g'} 
\end{equation}
 
equivalently we have
\begin{equation}\label{tannk}
\tan ^2  \theta_{n-k+1} \geq \frac{(\mathscr{M}')^2}{ C_{\ref{secondorder1}}}
\end{equation}

If $\frac{\pi}{4}\leq \theta_{n-k+1}\leq\frac{\pi}{2}$ then 

\begin{equation}\label{sincase1}
\sin^2 \theta_{n-k+1} \geq \frac{1}{2},
\end{equation}

 and if $0< \theta_{n-k}\leq \frac{\pi}{4}$
 then since $\frac{1}{\cos \theta_{n-k}}\leq \frac{1}{\sqrt{2}}$ from \ref{tannk} it  can be  deduced that
\begin{equation}\label{sincase2}
\sin^2 \theta_{n-k+1}\geq \frac{2(\mathscr{M}')^2}{ C_{\ref{secondorder1}}}
\end{equation}

From (\ref{sincase1}) and (\ref{sincase2}) we conclude that

\begin{equation}
\frac{1}{\sin ^2 \theta_{n-k+1}}\leq \max\{ 2,\frac{ C_{\ref{secondorder1}}}{2(\mathscr{M}')^2} \}
\end{equation}

therefore if we set 
\begin{equation}\label{scrm''}
\mathscr{M}'':=  \max\{ 2,\frac{ C_{\ref{secondorder1}}}{2(\mathscr{M}')^2} \}
\end{equation}

 Then  the Monge Amp\`ere equation implies
 \begin{equation}\label{v1vn}
\|V_n\|^2 _{g'}...\|V_1\|^2 _{g'}= e^G |S|^2 \prod_{k=2} ^n |\sin \theta_k|^2
\end{equation}
  
\begin{equation}\label{tkmin}
\frac{|S|^2}{\|V_n\|^{2} _{g'}}=e^{-G}\frac{\prod_{k=1} ^{n-1} \|V_k\| ^2 _{g'}}{\prod_{k=2} ^n |\sin \theta_k|^2}\leq \frac{C_{\ref{tkmin}}}{(\mathscr{M}'' )^{n-2}}
\end{equation}

where according to  proposiiton (\ref{secondorder}) $C_{\ref{tkmin}}$ only depends on  $G$ and $g_{reg}$.
Since $\mathscr{M}'' >2$ we have 
\begin{equation}\label{ttkmin}
\frac{|S|^2}{\|V_n\|^{2} _{g'}}\leq \mathscr{M'}_1
\end{equation}
where $\mathscr{M'}_1$ depends only on $G$ and $g_{reg}$.
 
\begin{equation}\label{tkmin2}
\frac{|S|^2}{\|V_n\|^{2} _{g'}}=e^{-G}\frac{\prod_{k=1} ^{n-1} \|V_k\| ^2 _{g'}}{\prod_{k=2} ^n |\sin \theta_k|^2} \geq e^{-G}(\mathscr{M}')^{2n-2}
\end{equation}

\end{proof}

\begin{rem}\label{lowerb}
We remark  here that if  $\|V_i\|_{g'}\geq \mathscr{M}'$, for some constant $ \mathscr{M}'$ and   for $1\leq i \leq n-1$  then $\prod_{1\leq i \leq n-1 } \|V_i\| ^2 _{g'}\geq ( \mathscr{M}')^{2n-2}$. Conversely if  $\prod_{1\leq i \leq n-1 } \|V_i\| ^2 _{g'}\geq  \mathscr{M}$ for some constant $ \mathscr{M}$ then we will have 
$\|V_i \|_{g'}\geq \frac{\sqrt { \mathscr{M}}}{C_{\ref{secondorder1}}^{n-2}}$, where $C_{\ref{secondorder1}}$ is an upper bound for $\|V_{i}\|_{g'}$ which according to   proposition (\ref{secondorder}) only depends on $G$ and $g_{reg}$. If we  set
\begin{equation}\label{ndet}
det_{g_{reg}} (g'_D):=\prod_{1\leq i \leq n-1 } \|V_i\| ^2 _{g'}
\end{equation}

so a lower bound for $g'|_{D}$ on  $U_p\cap D$ can be  determined in terms of $G$, $g_{reg} $ and  a lower bound $\mathscr{M}$ for  $det_{g_{reg}} (g'|_D)$  and vice versa.
 
 \end{rem}

\end{document}